\definecolor{liens}{rgb}{1,0,0}
\newtheorem{thm}{Theorem}[section]
\newtheorem{lem}[thm]{Lemma}
\newtheorem{prop}[thm]{Proposition}
\newtheorem{prop/def}[thm]{Proposition/Definition}
\newtheorem{cor}[thm]{Corollary}
\theoremstyle{definition}
\newtheorem{defn}[thm]{Definition}
\theoremstyle{remark}
\newtheorem{rem}[thm]{Remark}
\newcommand{\bZZ}{\boldsymbol{Z}}
\newcommand{\Q}{\mathbb{Q}}
\newcommand{\Z}{\mathbb{Z}}
\newcommand{\T}{\vartheta}
\newcommand{\R}{\mathbb{R}}
\newcommand{\N}{\mathbb{N}}
\newcommand{\C}{\mathbb{C}}
\newcommand{\ZZ}{Z}
\newcommand{\y}{\boldsymbol{y}}
\newcommand{\zJS}{\boldsymbol{z}}
 \newcommand{\longversion}[1]{ {\color{teal}  
{\small  Version longue de la preuve disponible en d\'ecommentant dans la pr\'eambule.}
 }}    
  \newcommand{\Itwo}{\mathrm{I}_2}     
  \newcommand{\qOka}[1]{q\mathrm{Oka}_{#1}}
   \newcommand{\qOkaMod}[1]{q\widetilde{\mathrm{Oka}}_{#1}}
 \newcommand{\qDom}{\mathfrak{Q}}
\author[T. Dreyfus]{Thomas Dreyfus}
\address{Institut de Recherche Math\'ematique Avanc\'ee, U.M.R. 7501 Universit\'e de Strasbourg et C.N.R.S. 7, rue Ren\'e Descartes 67084 Strasbourg, FRANCE}
\email{dreyfus@math.unistra.fr}
\author[V. Heu]{Viktoria Heu}
\address{ IRMA\\
 7 rue Ren\'e Descartes\\
 67084 Strasbourg\\France}
\email{heu@math.unistra.fr}
\begin{document}
\title{Degeneration from difference to differential Okamoto spaces for the sixth Painlev\'e equation}
\subjclass[2010]{14D05, 14F35, 34M56, 39A13}
 \sloppy
\begin{abstract} 
In the current paper we study the $q$-analogue introduced by Jimbo and Sakai of the well known Painlev\'e VI differential equation. We explain how it can be deduced from a $q$-analogue of Schlesinger equations and show that for a convenient change of variables and auxiliary parameters, it admits a $q$-analogue of Hamiltonian formulation. This allows us to show that Sakai's $q$-analogue of Okamoto space of initial conditions for $qP_\mathrm{VI}$ admits the differential Okamoto space  \emph{via} some natural limit process.  \end{abstract}
  \thanks{This work took place at IRMA. It was supported by ANR-13-JS01- 0002-01 and ANR-19-CE40-0008.}
\maketitle
\tableofcontents

 \section*{Introduction}
 
In \cite{jimbo1996q}, Jimbo and Sakai have introduced a $q$-analogue of the Painlev\'e VI equation, namely the following system of $q$-difference equations: 
$$(qP_{\mathrm{JS,VI}}) : \left\{\begin{array}{rcl}
\displaystyle  \frac{\y \cdot \sigma_{q,t} \y }{a_3 a_4}  &=&\displaystyle  \frac{\left( \sigma_{q,t}  \zJS - t \frac{a_1 a_2}{\vartheta_1} \right)\left(\sigma_{q,t}  \zJS - t\frac{a_1 a_2}{\vartheta_2} \right)}{\left(\sigma_{q,t}  \zJS -\frac{1}{q\kappa_1}  \right)\left(\sigma_{q,t}  \zJS - \frac{1}{\kappa_2} \right)} 
\vspace{.2cm} \\
\displaystyle   \frac{\zJS \cdot \sigma_{q,t}\zJS }{\frac{1}{q\kappa_1\kappa_2}} &=&\displaystyle  \frac{\left(\y -ta_1 \right)\left(\y -ta_2 \right)}{\left(\y -a_3\right)\left(\y -a_4 \right)}\, ,  
\end{array}\right.$$
where $\kappa_1, \kappa_2, \vartheta_1, \vartheta_2, a_1,a_2,a_3,a_4 \in \C^*$ are parameters subject to the relation 
$$a_1 a_2 a_3 a_4 \kappa_1 \kappa_2 =\vartheta_1 \vartheta_2 .$$ 
Here $q$ is a  complex parameter that is neither zero nor one, and  $\sigma_{q,t}$ is the operator which to a function $f(t)$ associates $f(q\cdot t)$. The $q$-derivative $$\partial_{q,t}:=\frac{\sigma_{q,t}-1}{(q-1)t}$$ formally converges, when $q\to 1$, to the classical derivative $\partial_t$ (differentiation with respect to $t$). It has been shown in \cite{jimbo1996q} that the classical Painlev\'e~VI equation may be obtained by some limit process, when $q$ goes to $1$, from its $q$-analogue. More precisely, by a series of changes of variables and parameters, $qP_{\mathrm{JS,VI}}$ formally yields a certain system of differential equations with eight complex parameters, subject to one relation. As one can easily check, one can then further normalize these parameters to a quadruple $\boldsymbol{\theta}=(\theta_0,\theta_1,\theta_t,\theta_\infty)$ of complex parameters  such that this  system of differential equations  is the non-autonomous Hamiltonian system 
$$(P_{\mathrm{VI}}) : \left\{\begin{array}{rcl}
\displaystyle  \partial_t \y &=& \displaystyle  \phantom{+}\partial_{\bZZ} H_{\mathrm{VI}}^{\boldsymbol{\theta}}(\y,\bZZ,t)
\vspace{.2cm} \\
\displaystyle   \partial_t \bZZ &=&\displaystyle  -\partial_{\y} H_{\mathrm{VI}}^{\boldsymbol{\theta}}(\y,\bZZ,t)\, ,
\end{array}\right.$$ 
where $H_{\mathrm{VI}}^{\boldsymbol{\theta}}(\y,\bZZ,t)$ is given by 
$$\begin{array}{l}\displaystyle   \frac{\y (\y -1)(\y -t)}{t(t-1)}\left(\bZZ^2+\frac{\bZZ}{\y -t}\right)-\frac{1}{4}\left(\frac{(\theta_\infty -1)^2-1}{t(t-1)}\y +\frac{\theta_0^2}{(t-1)\y }+\frac{\theta_t^2 }{\y -t}-\frac{\theta_1^2}{t(\y -1)}\right)\, .\end{array}$$
This non-autonomous Hamiltonian system $(P_{\mathrm{VI}})$ is actually the one discovered in \cite{okamoto1986studies} that, when reformulated as a single second order differential equation in $\y$, yields the sixth Painlev\'e differential equation with auxiliary parameters
  $\boldsymbol{\theta}$. Given a generic initial condition, \emph{i.e.} $\y_{0}\in \C\setminus \{0,1,t_0\}$, and $\bZZ_{0} \in \C$, Cauchy's theorem implies the existence and uniqueness of a germ at $t_0\neq 0,1$ of associated holomorphic solution of $(P_{\mathrm{VI}})$. As shown in  \cite{okamoto1986studies}, it is moreover possible to give a meaning  to solutions including non-generic initial conditions. More precisely, 
Okamoto's space of initial conditions  at a fixed time $t_0\in \C\setminus \{0,1\}$ is the second Hirzebruch surface $\mathbb{F}_2$ blown up in eight points, whose position is encoded by $\boldsymbol{\theta}$ and $t_0$, minus a divisor formed by five irreducible components of self-intersection number $(-2)$ related to each other according to the following intersection diagram:

\begin{center}
\begin{tikzpicture}[scale=0.25]
 
\draw[thick] (0,-3)--(0,3);
\draw[thick] (-3,0)--(3,0);
  \draw (0,3) node {$\bullet$} ;
    \draw (0,0) node {$\bullet$} ;
    \draw (0,-3) node {$\bullet$} ;
      \draw (3,0) node {$\bullet$} ;
        \draw (-3,0) node {$\bullet$} ;
          \end{tikzpicture}
\end{center}

Here each node represents an irreducible component, and nodes share a common edge if and only if they intersect each other. For each point in Okamoto's space of initial conditions at $t_0$, there exists a unique associated germ (at $t_0$) of meromorphic solution of 
$(P_{\mathrm{VI}})$. 
A $q$-analogue of Okamoto's space for $(qP_{\mathrm{JS,VI}})$ for some fixed generic time $t_0$ was found in \cite{sakai2001rational}. It is given by $\mathbb{F}_0=\mathbb{P}^1\times \mathbb{P}^1$, blown up in eight points, whose positions are encoded by $a_1, a_2 , a_3, a_4 , \kappa_1,  \kappa_2 , \vartheta_1,  \vartheta_2$ and $t_0$, minus a divisor formed of four irreducible components of self-intersection number $(-2)$, arranged according to the following intersection diagram:
\begin{center}
\begin{tikzpicture}[scale=0.25]
 
\draw[thick] (-3,-3)--(-3,3);
\draw[thick] (3,-3)--(3,3);
\draw[thick] (-3,-3)--(3,-3);
\draw[thick] (-3,3)--(3,3);
  \draw (3,3) node {$\bullet$} ;
    \draw (3,-3) node {$\bullet$} ;
      \draw (-3,3) node {$\bullet$} ;
        \draw (-3,-3) node {$\bullet$} ;
          \end{tikzpicture}
\end{center}
For each point in Sakai's $q$-analogue of Okamoto's space of initial conditions at $t_0$, there exists a unique \emph{discrete} solution of $(qP_{\mathrm{JS,VI}})$, which, roughly speaking, encodes the values at $q^\Z t_0$ that a meromorphic solution with prescribed value at $t_0$, if it exists,  should interpolate. The  questions adressed in the present paper are the following. 
\begin{enumerate}
\item[Q1)] How can Okamoto's space of initial values at $t_0$ for $(P_{\mathrm{VI}})$ be obtained \emph{via} a natural limit process from its discrete analogue?\vspace{.2cm}
\item[Q2)] How can meromorphic solutions of $(P_{\mathrm{VI}})$ be obtained  \emph{via} a natural limit process from their discrete analogue?\vspace{.2cm}
\end{enumerate}
Let us first answer question $(Q1)$ informally. What we will obtain in Section \ref{sec:4} is that one of the four irreducible components of the boundary of the $q$-Okamoto space at $t_0$ does degenerate at the limit $q\to 1$: for $q=1$, it is no longer irreducible, but is itself the union of three irreducible components of self-intersection $(-2)$, one of which coincides with the limit of a non-degenerating one, and the other two of which intersect only the latter. 
So in terms of intersection diagrams, the informal answer to question $(Q1)$ is the following. 
\begin{center}
\begin{tikzpicture}[scale=0.25]
 
\draw[thick] (-13,-3)--(-13,3);
\draw[thick] (-7,-3)--(-7,3);
\draw[thick] (-13,-3)--(-7,-3);
\draw[thick] (-13,3)--(-7,3);
  \draw (-7,-3) node {$\bullet$} ;
    \draw (-7,3) node {\textcolor{blue}{$\bullet$}} ;
      \draw (-13,3) node {$\bullet$} ;
        \draw (-13,-3) node {\textcolor{red}{$\bullet$}} ;
        \draw(0,0) node {\large $\stackrel{q\to 1}{\longrightarrow }$} ;
        \draw[thick] (10,-3)--(10,3);
            \draw[thick, red] (10.3,-3)--(10.3,0);
\draw[thick] (7,0)--(13,0);
\draw[thick, red] (7,0.3)--(10,0.3);
  \draw (10,3) node {$\bullet$} ;
    \draw (10,0) node {\textcolor{blue}{$\bullet$}} ;
\draw[thick,red] (10,0.05) circle (18pt);
    \draw (10,-3) node {\textcolor{red}{$\bullet$}} ;
      \draw (13,0) node {$\bullet$} ;
        \draw (7,0) node {\textcolor{red}{$\bullet$}} ;

          \end{tikzpicture}
\end{center}
The key to this result (see Section \ref{sec:ConfOkaSpaces} for a precise formulation) is to coveniently identify normalizations and changes of variables in $(qP_{\mathrm{JS,VI}})$ \emph{before} considering the limit process, such that the limit when $q\to 1$ \emph{is} $(P_{\mathrm{VI}})$. To this end, we retrace, with some alterations, the method by which in \cite{jimbo1996q}, the $q$-difference equation  $(qP_{\mathrm{JS,VI}})$ has been obtained from a $q$-analogue of isomonodromic deformations. 
 Here  the isomonodromy condition to be considered concerns certain families, parametrized by a time variable $t$, of $q$-Fuchsian systems  of rank $2$, which for fixed $q$  are of the form
$$\sigma_{q,x} Y(x,t)=\mathfrak{A}(x,t)Y(x,t),\quad  \textrm{ with }\quad   \mathfrak{A}(x,t)=\mathfrak{A}_{0}(t)+x\frac{\mathfrak{A}_{1}(t)}{x-1}+x\frac{\mathfrak{A}_{t}(t)}{t(x-t)}\, ,$$
  where $x$ is the standard coordinate on $\C\subset \mathbb{P}^1.$ As shown in \cite{jimbo1996q}, under certain generic conditions, for given spectral parameters  $\kappa_1, \kappa_2, \vartheta_1, \vartheta_2, a_1,a_2,a_3,a_4 \in \C^*$ as above, such a family can be encoded by a triple of functions $(w(t),\y(t),\zJS(t))$. We show in Section \ref{sec:confFuchsSys} that a convenient change of variables and parameters (including a normalization) that is both compatible with the definition of the considered $q$-Fuchsian systems in \cite{jimbo1996q} and that yields tracefree differential Fuchsian systems at the limit when $q\to 1$ is the following setting:
\begin{equation}\label{introeq1}
\boldsymbol{\lambda}=\frac{\kappa_2w}{q-1}\, , \quad \quad \bZZ=\frac{\frac{(\y-ta_1)(\y-ta_2)}{q(\y-1)(\y-t)\zJS}-1}{(q-1)\y}\, , 
\end{equation}
\begin{equation}\label{introeq2}
\left\{ \begin{array}{rclcrclcrclcrclc}\displaystyle 
a_1&=&1+(q-1)\frac{\theta_t}{2}, &&a_2&=&\frac{1}{a_1},&&
a_3&=&1+(q-1)\frac{\theta_1}{2}, &&\vspace{.2cm}\\ \displaystyle  a_4&=&\frac{1}{a_3},&&
\vartheta_1&=&1+(q-1)\frac{\theta_0}{2}, &&\vartheta_2&=&\frac{1}{\vartheta_1},&&\vspace{.2cm}\\
\displaystyle \kappa_1&=&\frac{1}{\kappa_2},&&\kappa_2&=&1+(q-1)\frac{\theta_\infty}{2} \, .
\end{array}\right.
\end{equation}
In Section \ref{sec1}, we both simplify and generalize the definition of $q$-isomonodromy in \cite{jimbo1996q} into the requirement that there exists a certain matrix 
$\mathfrak{B}(x,t)$, depending rationally on $x$, such that the system of $q$-difference equations
$$\left\{ \begin{array}{rcl} \sigma_{q,x}Y(x,t)&=&\mathfrak{A}(x,t)Y(x,t)\vspace{.2cm}\\
\sigma_{q,t}Y(x,t)&=&\mathfrak{B}(x,t)Y(x,t)\end{array}\right.$$
satisfies the $q$-analogue of integrability, namely $\sigma_{q,t}\mathfrak{A}\cdot \mathfrak{B}=\sigma_{q,x}\mathfrak{B}\cdot \mathfrak{A}\, .$ We further introduce a sufficient condition for $q$-isomonodromy, which we call $q$-Schlesinger isomonodromy. We show that under some generic conditions such as the non-resonancy condition $\frac{\vartheta_1}{\vartheta_2}\not \in q^{\Z^*}$ and $\frac{\kappa_1}{\kappa_2}\not \in q^{\Z^*}$,  this $q$-Schlesinger isomonodromy of $\sigma_{q,x} Y=\mathfrak{A}Y$ is equivalent to the following \emph{$q$-Schlesinger equations}
    $$\left\{ \begin{array}{rcl}
   \sigma_{q,t}\mathfrak{A}_0&=&B_0\mathfrak{A}_0 B_0^{-1} \vspace{.2cm}\\
    \sigma_{q,t}\mathfrak{A}_1&=&\frac{t-1}{qt-1}\frac{(qta_1-1)(qta_2-1)}{q(ta_1-1)(ta_2-1)}\cdot (q\Itwo +B_0) \, \mathfrak{A}_1\left(\Itwo +B_0 \right)^{-1} \vspace{.2cm}\\
     \sigma_{q,t}\mathfrak{A}_t&=& - B_0\mathfrak{A}_0  \left(\frac{1}{a_1a_2}\Itwo +qtB_0^{-1}\right)\vspace{.2cm}\\
     &&-\frac{t(t-1)}{(ta_1-1)(ta_2-1)}\cdot (q\Itwo +B_0) \, \mathfrak{A}_1 \left( \Itwo +\frac{(qta_1-1)(qta_2-1)}{qt-1}\left(\Itwo +B_0 \right)^{-1}\right) \, ,
   \end{array} \right.$$
where   $$B_0:=-qt\left(\mathfrak{A}_0+\mathfrak{A}_1+\frac{1}{t}\mathfrak{A}_t+\frac{t-1}{(ta_1-1)(ta_2-1)}\mathfrak{A}_1\right)\left(\frac{1}{a_1a_2}\mathfrak{A}_0+\frac{t(t-1)}{(ta_1-1)(ta_2-1)} \mathfrak{A}_1\right)^{-1}\, .$$
Moreover, we show in Section \ref{sec:confSchlesinger}  that when the spectral values are functions of $q$ given by \eqref{introeq2}, then these $q$-Schlesinger equations yield the usual (differential) Schlesinger equation at the limit $q\to 1$. When $\mathfrak{A}$ is expressed with respect to $(w(t),\y(t),\zJS(t))$  and the spectral parameters, then these  $q$-Schlesinger equations are (generically) equivalent to a system of $q$-difference equations given by $(qP_{\mathrm{JS,VI}})$ and an additional equation for $\sigma_{q,t}w$. However, our weaker definition of $q$-isomonodromy is actually (generically) equivalent to $(qP_{\mathrm{JS,VI}})$.  We conclude that the change of variables and parameters \eqref{introeq1} combined with \eqref{introeq2} is a natural setting for the study of confluence of the $q$-Painlev\'e VI equation. And indeed, for generic values of $q$, the change of variable \eqref{introeq1}  defines a biregular transformation of Sakai's $q$-Okamoto space (see Section \ref{sec:qoka}), such that the obtained modified $q$-Okamoto space yields, for spectral values \eqref{introeq2},  the differential Okamoto space when $q \to 1$ (see Section \ref{sec:ConfOkaSpaces}). 

Under convenient assumptions on $q$ and the spectral data, some meromorphic solutions of $(qP_{\mathrm{JS,VI}})$ defined in a convenient sectorial neighborhood of $t=0$ have been constructed in \cite{mano2010asymptotic,Ohyama}. On the other hand, contrary to the differential setting, the following question remains open:  for a given generic value of $t_{0}$ and generic initial condition $(\y(t_0),\zJS(t_0))$,  does there exist an associated meromorphic solution of $(qP_{\mathrm{JS,VI}})$, defined on a connected subset of $\C^*$ stable under multiplication by $q^{\pm 1}$? Of course the answer is likely to depend on particular choices of $q$ and the spectral parameters. For example, when $q$ is a $n$-th root of unity, then a necessary condition for the existence of meromorphic solution is that the spectral parameters are chosen in a way such that the $n$-th iterate of  $(qP_{\mathrm{JS,VI}})$  is the identity. The question whether meromorphic solutions of $(qP_{\mathrm{JS,VI}})$ over  convenient set, parametrized by $q$, admit a limit when $q\to 1$ also seems difficult and remain open.
\par 
Much more abordable is the question of the existence of discrete solutions, essentially solved in \cite[Prop. 1]{sakai2001rational}, which, roughly speaking, encode the values at $q^\Z t_0$ that  meromorphic solutions with prescribed value at $t_0$, if they exist,  should interpolate. More precisely, a discrete solution with initial value $(\y_0, \zJS_0)\in \C^*\times \C^*$ at $t_0\in \C^*$ is a sequence $(  \y_{\ell} ,  \zJS_{\ell}, t_\ell)_{\ell \in \Z}$ of points in $ \mathbb{P}^{1}\times \mathbb{P}^{1}\times\C^*$ such that for $\ell \neq 0$, we have $t_{\ell}=q^{\ell } t_0$ and  $( \y_{\ell} ,  \zJS_{\ell})=\left(f_\ell(\y_0, \zJS_0,t_0,q),g_\ell(\y_0, \zJS_0,t_0,q)\right)$, where $f_\ell,g_\ell$ are the rational functions invariables $\y,\zJS,t,q$ such that the $\ell$-th iterate of $(qP_{\mathrm{JS,VI}})$ is of the form 
$$\left\{ \begin{array}{rcl}
  \sigma_{q,t}^\ell \y &=&  f_\ell(\y, \zJS,t,q) \vspace{.2cm}\\
 \sigma_{q,t}^\ell \zJS &=&  g_\ell(\y, \zJS,t,q)\, .
  \end{array} \right.$$

We refer to Section \ref{sec:32} for more details. It is shown in \cite[Prop. 1]{sakai2001rational} that discrete solutions with initial value in $\C^*\times \C^*$ at $t_0$ are well defined, in particular they exist and are unique, if $t_0$ and the spectral values are generic. Moreover, under this assumption, one can consider a space of initial values bigger than $\C^*\times \C^*$, namely the $q$-Okamoto space. We specify in Section \ref{sec:qoka} which are the special values for $t_0$ and the spectral parameters that need to be excluded here. Note that a discrete solution, as a sequence, does make sense even if $q$ is a root of unity. 

Of course there is an analogous notion of discrete solution  for the \emph{modified} $q$-Painlev\'e VI equation obtained by applying the change of variables and parameters \eqref{introeq1}, \eqref{introeq2} to $(qP_{\mathrm{JS,VI}})$. We prove in Section \ref{sec:confPainl} that the therby obtained system of $q$-difference equations is a $q$-analogue of Hamiltonian system. More precisely, it is given by 
$$ (q\widetilde{P}_{\mathrm{VI}})  : \left\{ \begin{array}{rcl}
  \partial_{q,t}\y &=& \phantom{+}\partial_{q,\bZZ}H^{\boldsymbol{\theta}}_{\mathrm{VI}}(\y,\bZZ,t)+  (q-1) \mathcal{R}^{\boldsymbol{\theta}}_1(\y,\bZZ,t,q) \vspace{.2cm}\\
 \partial_{q,t}\bZZ &=&  \, -\partial_{q,\y}H^{\boldsymbol{\theta}}_{\mathrm{VI}}(\y,\bZZ,t)+  (q-1) \mathcal{R}^{\boldsymbol{\theta}}_2(\y,\bZZ,t,q)\, , 
    \,    \end{array} \right. $$
    where $H^{\boldsymbol{\theta}}_{\mathrm{VI}}$ is the Hamiltonian from the (differential) $({P}_{\mathrm{VI}})$ and for $i\in \{1,2\}$,  $\mathcal{R}^{\boldsymbol{\theta}}_i$ is some rational function such that  
    $\mathcal{R}^{\boldsymbol{\theta}}_i|_{q=1}$ is well defined and does not have poles outside the polar locus of~$H^{\boldsymbol{\theta}}_{\mathrm{VI}}$. 
    
    From this we deduce, also in Section \ref{sec:confPainl}, the answers to question $(Q2)$.
For $t_0\in \C^*$ and $(\y_0,\bZZ_0)\in \left(\C\setminus \{0,1,t_0\}\right)\times \C$, the sequence $(  \y_{\ell}(q) ,  \bZZ_{\ell}(q), q^\ell t)_{\ell \in \N}$ of triples defining the corresponding discrete solution, but seen as rational functions of $q$, is well defined, and encodes in some precise manner the Taylor series coefficients of the unique solution of $(P_{\mathrm{VI}})$ with initial condition $(\y_0,\bZZ_0)$ at $t_0$.\\  \par 

Each of the four sections following this introduction is decomposed into three parts. Each time, in the first part we briefly recall  some notions and known results in the differential case. In the second part, their $q$-analogues are discussed, and in the third part, confluence is adressed. Concerning the $q$-analogues, we usually recall some results from \cite{jimbo1996q} and \cite{sakai2001rational}, complemented by some precisions that we deemed helpful, and to which we add  new results.  We finish by an  appendix, see Section \ref{app:qFundsols}, explaining how our notion of $q$-isomonodromy is related to the one in \cite{jimbo1996q}.\\ \par 
 
\begin{center}
In this paper, we adopt the following (standard) notation.\end{center}
\begin{center}
\begin{tabular}{ll}
$\mathrm{GL}_2(R)$& the ring of invertible $2\times 2$ matrices with coefficients in a ring $R$.\\
$\mathrm{SL}_2(R)$& the ring of invertible $2\times 2$ matrices of determinant $1$ with coefficients in $R$.\\
$\mathrm{M}_2(R)$& the algebra
of $2\times 2$ matrices with coefficients in a ring $R$. \\
$\mathfrak{sl}_2(M)$& the vector space  
of $2\times 2$ matrices of trace $0$ with coefficients in $M$, where \\ & $M$ is a $\C$-module.\\
$\Itwo $ & the identity matrix in $\mathrm{M}_2(R)$\\
$A^{(i,j)}$& the $(i,j)$-entry of a matrix $A$.\\
$\mathcal{O}(U)$& the ring of holomorphic functions on some complex domain $U\subset \C^n$.\\
$\mathcal{M}(U)$& the field of meromorphic functions on $U$.\\
$k[x_1, \dots, x_n]$ & the ring of polynomials in $n$ variables, named $x_1, \dots, x_n$, with\\
&coefficients in a field $k$.\\
$k(x_1, \dots, x_n)$& the fraction field of $k[x_1, \dots, x_n]$. \\  $\phantom{I}$\end{tabular}
\\
\end{center}

We would like to emphasize that some results in the sequel require stronger assumptions on the complex variable $q$ than $q\neq 0,1$.  These assumptions will of course be duly specified when needed. 
We choose not to accumulate these requirements along the way towards the $q$-Painlev\'e  VI  equation (which in and by itself is well-defined for $q\neq 0,1$), in order to get the full picture of possible $q$-Okamoto spaces. 
 
\section{Fuchsian systems}
\subsection{Differential case}\label{setup}

Let $\boldsymbol{\theta}=(\theta_0,\theta_1,\theta_t,\theta_\infty)\in  \mathbb{C}^4$ with $\theta_\infty \neq 0$. We say that $\boldsymbol{\theta}$ satisfies the   \emph{non-resonancy condition} if 
\begin{equation}\label{eq:resonance}
\forall i\in \{0,1,t,\infty\}\, , \quad \theta_i \not \in \Z^*\, .\end{equation}

We consider a linear partial differential equation of the form 
\begin{equation}\label{sl2System}\partial_x  Y(x,t)=A(x,t) Y(x,t)\, , \quad \textrm{with} \quad A(x,t)=\frac{A_0(t)}{x}+\frac{A_1(t)}{x-1}+\frac{A_t(t)}{x-t} \, .\end{equation}
Here $x$ is the standard coordinate on $\C$, seen as a subset of $\mathbb{P}^1=\C\cup\{\infty\}$, and $t$ is the standard coordinate on an open connected subset $U \subset \mathbb{C}\setminus \{0,1\}$.    
  
  \begin{defn}\label{defi1} We shall say that \eqref{sl2System} is a  \emph{family of $\mathfrak{sl}_2$-Fuchsian systems  with spectral data} $\boldsymbol{\theta}$ if the following hold:\vspace{.2cm} 
   \begin{itemize}
   \item[$\bullet$] for each $i\in \{0,1,t\}$, $A_i\in \mathfrak{sl}_2(\mathcal{O}(U))$, 
\item[$\bullet$] for all $i\in \{0,1,t\}$ and all $t\in U $, we have $$\mathrm{Spec}(A_i(t))=\left\{\frac{1}{2}\theta_i,-\frac{1}{2}\theta_i\right\}\, , $$
\item[$\bullet$] the residue $A_\infty:=-A_0-A_1-A_t$ at infinity is constant and  normalized as follows: $$ A_\infty\equiv \begin{pmatrix} \frac{\theta_\infty}{2}& 0 \\0 &- \frac{\theta_\infty}{2}\end{pmatrix}\, .$$
   \end{itemize}
   \end{defn}
  With this normalization, the $(1,2)$ entry of $x(x-1)(x-t)A$, seen as an element of $\mathcal{O}(U)[x]$, is a polynomial of degree at most one. Let us assume that it has degree one and define a non-zero holomorphic function $\lambda(t) \in \mathcal{O}(U )$ and a meromorphic function 
  $y(t)  \in \mathcal{M}(U )$ by 
\begin{equation}\label{defyZ} A^{(1,2)}(x,t)=\frac{\lambda(t)(x-y(t))}{x(x-1)(x-t)}\,  .\end{equation}
Assuming moreover that $y(t)\not \equiv 0,1,t$,  we may define a meromorphic function 
  $\ZZ(t)  \in \mathcal{M}(U )$ by 
\begin{equation}\label{defZ}   A^{(1,1)}(y(t),t)=\ZZ(t)\, .\end{equation}
The next Lemma shows that the matrix $A$ is determined by  the triple $(\lambda,y,\ZZ)$.
\begin{lem}\label{AparlamyZ} If a family of $\mathfrak{sl}_2$-Fuchsian systems \eqref{sl2System} with spectral data $\boldsymbol{\theta}$, with $\theta_\infty \neq 0$, gives rise to 
\begin{equation}\label{lamyZcond} 
(\lambda,y,\ZZ)\in  (\mathcal{O}(U )\setminus \{0\})\times (\mathcal{M}(U )\setminus\{0,1,\mathrm{id}\}) \times \mathcal{M}(U )
\end{equation}
as above, then the coefficients of the matrix $A$ necessarily are the following functions of $\lambda,y,\ZZ, x,t$ and $\boldsymbol{\theta}$: 
 $$\begin{array}{rccl} {A}^{(1,1)}&=&& \frac{(y-x)}{4\theta_\infty x(x-1)(x-t)}\left[y(y-1)(y-t)\left(2Z+\frac{\theta_\infty}{y-x}\right)^2-a\right]-\frac{\theta_\infty }{4}\left( \frac{1}{x}+\frac{1}{x-1}+\frac{1}{x-t}+\frac{1}{y-x}\right).\vspace{.2cm}\\
{A}^{(1,2)}&=&&\frac{\lambda(x-y)}{x(x-1)(x-t)}
\vspace{.2cm}\\
{A}^{(2,2)}&=&-&{A}^{(1,1)}\vspace{.2cm}\\
{A}^{(2,1)}&=&&\frac{1}{\lambda}\left(\frac{c_0}{x}+\frac{c_1}{x-1}-\frac{c_0+c_1}{x-t} \right) 
\vspace{.2cm}\\
c_0&=&&\frac{y}{t\theta_\infty^2}\left( {(y-1)(y-t)} (y\ZZ +\theta_\infty)\ZZ+\frac{(y-1-t)\theta_\infty^2-a}{4 } \right)^2- \frac{t\theta_0^2}{4y}
\vspace{.2cm}\\
c_1&=&& \frac{y-1}{(1-t)\theta_\infty^2}\left( {y(y-t)} ((y-1)\ZZ +\theta_\infty)\ZZ+\frac{(y+1-t)\theta_\infty^2-a}{4 } \right)^2- \frac{(1-t)\theta_1^2}{4(y-1)}.
\end{array}$$
Here we denote $$a:=\frac{t\theta_0^2}{y}-\frac{(t-1)\theta_1^2}{y-1}+\frac{t(t-1)\theta_t^2}{y-t}\, .$$
 \end{lem}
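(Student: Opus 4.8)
The plan is to reconstruct the matrix $A$ entrywise from the three defining data $(\lambda,y,\ZZ)$ together with the spectral constraints in Definition \ref{defi1}, exploiting the fact that a trace-free Fuchsian system with prescribed residue eigenvalues has only finitely many degrees of freedom. First I would record what the normalization already forces: since $A$ is trace-free, $A^{(2,2)}=-A^{(1,1)}$ is immediate, and since $A_\infty$ is the fixed diagonal matrix $\mathrm{diag}(\tfrac{\theta_\infty}{2},-\tfrac{\theta_\infty}{2})$, the residues satisfy $A_0+A_1+A_t=-A_\infty$. The entry $A^{(1,2)}$ is handed to us directly by \eqref{defyZ}, so the real content is to determine $A^{(1,1)}$ and $A^{(2,1)}$.

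For $A^{(1,1)}$, I would write it as a rational function with simple poles at $x=0,1,t$ (the residues being $A_0^{(1,1)},A_1^{(1,1)},A_t^{(1,1)}$) plus the value $\tfrac{\theta_\infty}{2}$ at infinity dictated by $A_\infty$. The two pieces of input that pin down the numerator are: the value $A^{(1,1)}(y,t)=\ZZ$ from \eqref{defZ}, and the determinant/spectral conditions $\det(A_i)=-\tfrac{1}{4}\theta_i^2$ for $i\in\{0,1,t\}$. The cleanest route is to use the characteristic polynomial of $A(x,t)$ itself: since $A$ is trace-free, $\det A(x,t)=-A^{(1,1)}(x,t)^2-A^{(1,2)}(x,t)A^{(2,1)}(x,t)$, and the global structure of $\det A$ as a rational function (with known double poles at $0,1,t$ governed by the $\theta_i$, a zero related to $y$ coming from the factor in $A^{(1,2)}$, and prescribed behaviour at infinity) constrains everything. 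I would combine this with the local quadratic relation at $x=y$, where $A^{(1,2)}$ vanishes, so that $A^{(1,1)}(y,t)^2=-\det A(y,t)$ links $\ZZ$ to the residue data; this is what produces the square $\left(2\ZZ+\tfrac{\theta_\infty}{y-x}\right)^2$ and the auxiliary quantity $a$ appearing in the statement.

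Having fixed $A^{(1,1)}$, I would recover $A^{(2,1)}$ from the determinant relation, writing $A^{(2,1)}=-\bigl(A^{(1,1)}\bigr)^2/A^{(1,2)}-\det A/A^{(1,2)}$; because $A^{(2,1)}$ is a priori rational in $x$ of the form $\tfrac{1}{\lambda}\left(\tfrac{c_0}{x}+\tfrac{c_1}{x-1}-\tfrac{c_0+c_1}{x-t}\right)$ (the coefficients summing to zero so that there is no pole at infinity, forced by $A_\infty$ being diagonal), I only need to compute the residues $c_0,c_1$ at $x=0,1$. These residues are extracted by evaluating the local determinant conditions at $x=0$ and $x=1$ respectively, i.e. $\det A_0=-\tfrac14\theta_0^2$ and $\det A_1=-\tfrac14\theta_1^2$, which yield the explicit quadratic-in-$\ZZ$ expressions for $c_0$ and $c_1$; the residue at $x=t$ is then $-(c_0+c_1)$ by the vanishing of the sum.

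The main obstacle is purely computational bookkeeping rather than conceptual: one must carry out the partial-fraction expansions and the elimination cleanly enough to recognize that the local spectral conditions at $0,1,t$ and the evaluation \eqref{defZ} at $y$ together leave exactly the stated formulas, with no residual freedom. In particular the delicate point is verifying consistency — that the value of $A^{(1,1)}$ obtained from the global ansatz indeed satisfies $A^{(1,1)}(y,t)=\ZZ$ and simultaneously reproduces $\det A_t=-\tfrac14\theta_t^2$ at the third pole — since this is where the constraint $a_1a_2a_3a_4\kappa_1\kappa_2=\vartheta_1\vartheta_2$-type relation (here the interdependence of the $\theta_i$ through $A_\infty$) is silently used. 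I would organize the verification so that the condition at $x=t$ is the one checked \emph{last}, as a consistency check confirming that the system is not overdetermined, which is what guarantees that $A$ is \emph{uniquely} determined by $(\lambda,y,\ZZ)$ as claimed.
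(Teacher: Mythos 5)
Your overall strategy is sound, but it is genuinely different from the paper's proof, which is a one-line reduction to the literature: the authors tensor $\partial_x Y=AY$ with the scalar equation $\partial_x\zeta=\bigl(\tfrac{\theta_0}{2x}+\tfrac{\theta_1}{2(x-1)}+\tfrac{\theta_t}{2(x-t)}\bigr)\zeta$, which shifts the eigenvalues of each residue so that the system falls into the normalization of Jimbo--Miwa, and then quote the explicit formulas of \cite[p.~443--444]{jimbo1981monodromy}. That route buys brevity at the price of an external reference and a normalization dictionary; your direct reconstruction from the constraints (tracefreeness, the residue identity $A_0+A_1+A_t=-A_\infty$, the spectral conditions $\det A_i=-\theta_i^2/4$, and the defining relations \eqref{defyZ}, \eqref{defZ}) is self-contained and makes visible where each hypothesis is used. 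One small correction of wording: $A^{(1,1)}$ vanishes at infinity; what the normalization of $A_\infty$ prescribes is the coefficient of $1/x$, namely $-\theta_\infty/2$, not a ``value at infinity''.

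The step you dismiss as ``purely computational bookkeeping'' is, however, where the whole content of ``necessarily'' sits, and as you have set it up it does not obviously leave ``no residual freedom''. Write $a_i$ for the residues of $A^{(1,1)}$ and $b_i=c_i/\lambda$ for those of $A^{(2,1)}$, and $p_i$ for the (known) residues of $A^{(1,2)}$. Eliminating the $b_i$ via $\det A_i=-\theta_i^2/4$ leaves, for $(a_0,a_1,a_t)$, two \emph{linear} equations ($\sum_i a_i=-\theta_\infty/2$ and $A^{(1,1)}(y)=Z$) and one \emph{quadratic} equation $\sum_i(\theta_i^2/4-a_i^2)/p_i=0$ expressing $A_\infty^{(2,1)}=0$; two hyperplanes and a quadric in $\C^3$ generically meet in two points, which would contradict uniqueness. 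The rescue is structural and should be made explicit: the direction vector of the line cut out by the two linear equations is proportional to $(p_0,p_1,p_t)$ itself (a rational function with simple poles at $0,1,t$, vanishing at $y$ and of order $x^{-2}$ at infinity is a multiple of $A^{(1,2)}/\lambda$), so along that line the quadratic term of the third equation collapses, $\sum_i p_i^2/p_i=\sum_i p_i=0$, while its linear coefficient equals $-2\sum_i a_i=\theta_\infty$. The restricted equation is therefore affine with slope $\theta_\infty$, and the solution is unique precisely because $\theta_\infty\neq 0$ --- this is exactly where that hypothesis is consumed. (Equivalently, one can first determine $\det A$ outright: its numerator $x^2(x-1)^2(x-t)^2\det A$ is a degree-four polynomial fixed by its leading coefficient $-\theta_\infty^2/4$, its values at $0,1,t$ forced by the $\theta_i$, and its value at $y$, which is $-y^2(y-1)^2(y-t)^2Z^2$ since $A^{(1,2)}(y)=0$; after that, all remaining conditions on $A^{(1,1)}$ are linear.) Finally, your closing remark is off target: in this trace-free differential setting there is no relation among $\theta_0,\theta_1,\theta_t,\theta_\infty$ analogous to $a_1a_2a_3a_4\kappa_1\kappa_2=\vartheta_1\vartheta_2$; the only global constraint is the residue identity already used, and the condition at $x=t$ is automatically satisfied because the lemma assumes the family exists --- it is not the place where a hidden relation is spent.
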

\begin{proof} This lemma can be deduced from the formulae in \cite[p. 443-444]{jimbo1981monodromy} by considering the tensor product of $\partial_xY=AY$ with $\partial_x\zeta = \left(\frac{\theta_0}{2x}+\frac{\theta_1}{2(x-1)}+\frac{\theta_t}{2(x-t)}\right)\zeta$.   \end{proof}
  \begin{rem} If we have an arbitrary meromorphic triple $(\lambda,y,\ZZ)$ as in \eqref{lamyZcond}, then \emph{via} the formulae in Lemma \ref{AparlamyZ} we can associate a family of Fuchsian systems. Note however that the coefficients of the matrix functions $A_0, A_1$ and $A_t$ then are meromorphic functions of $t$. If one wants to obtain holomorphic coefficients, one might have to restrict to the complement of a discrete subset in  $U$. Indeed, for example the product $\lambda y$ needs to be holomorphic. 
 \end{rem}
 \begin{rem}As explained in \cite[Sec. 4]{loray2016isomonodromic}, the condition $\theta_\infty \neq 0$ can actually be overcome if one works in a (conjugated) setting where $A_\infty$ is normalized to $\left( \begin{smallmatrix} \theta_\infty/2 &0\\1&-\theta_\infty/2\end{smallmatrix}\right)$.
 \end{rem}

\subsection{A discrete analogue}\label{AboutAfrak}

Let $q\in \C\setminus \{0,1\}.$
Let $\boldsymbol{\Theta}:=(\Theta_0,\Theta_1,\Theta_t,\Theta_\infty) \in (\C^*)^4$ and let $\overline{\boldsymbol{\Theta}}:=(\overline{\Theta}_0, \overline{\Theta}_1,\overline{\Theta}_t,\overline{\Theta}_\infty) \in (\C^*)^4$ with $\Theta_\infty\neq \overline{\Theta}_\infty$
 be two quadrupels subject to the following relation:
\begin{equation}\label{thetarel}
\Theta_0\overline{\Theta}_0=\Theta_\infty\overline{\Theta}_\infty\Theta_t\overline{\Theta}_t\Theta_1\overline{\Theta}_1
 \, .\end{equation}
 
 \begin{rem}  
 In order to motivate our choice of notation, let us indicate that with respect to confluence, will be led to consider $\Theta_i$ satisfying some relation with the $\theta_i$ from the differential context, and $\overline{\Theta}_i$ satisfying   $\overline{\Theta}_i=1/\Theta_i$ (see Section \ref{sec:confFuchsSys}). \end{rem} 
We say that $(\boldsymbol{\Theta},\overline{\boldsymbol{\Theta}})$ satisfies the  \emph{non-resonancy condition} if 
\begin{equation}\label{eq:qresonance}
 \forall i\in \{0,1,t,\infty\}\, , \quad \frac{\Theta_i}{\overline{\Theta}_i} \not \in q^{\Z^{*}} .\end{equation}
If $\zeta$ is a standard coordinate in a complex domain that is stable under multiplication by $q$ and $\frac{1}{q}$, then we define the following operators on functions of $\zeta$:
$${\sigma_{q,\zeta}:f(\zeta)\mapsto f(q\zeta)}\, \, , \quad \partial_{q,\zeta} :f(\zeta)\mapsto \frac{ f(q\zeta) -f(\zeta)}{(q-1)\zeta }\, .$$
 We consider families of linear $q$-difference systems of the form
\begin{equation}\label{eqq1}
\sigma_{q,x} Y(x,t)=\mathfrak{A}(x,t)Y(x,t),\quad  \textrm{ with }\quad   \mathfrak{A}(x,t)=\mathfrak{A}_{0}(t)+x\frac{\mathfrak{A}_{1}(t)}{x-1}+x\frac{\mathfrak{A}_{t}(t)}{t(x-t)}\, ,
\end{equation}
or, equivalently, by setting $\widetilde{\mathfrak{A}}_0=\frac{\mathfrak{A}_0-\Itwo }{q-1}\,,  \widetilde{\mathfrak{A}}_1=\frac{\mathfrak{A}_1}{q-1}\,, \widetilde{\mathfrak{A}}_t=\frac{\mathfrak{A}_t}{t(q-1)}$, 
$$
\partial_{q,x} Y(x,t)=\widetilde{\mathfrak{A}}(x,t)Y(x,t),\quad  \textrm{ with }\quad  \widetilde{\mathfrak{A}}(x,t)=\frac{\widetilde{\mathfrak{A}}_0(t)}{x}+\frac{\widetilde{\mathfrak{A}}_1(t)}{x-1}+\frac{\widetilde{\mathfrak{A}}_t(t)}{x-t}
\, .
$$
Here $x$ is again the standard coordinate on $\C$, and $t$ is the standard coordinate on  an open connected  subset $\mathfrak{D}$ of $\C^{*}$.  

\begin{defn}\label{qfuchsian}
We shall say that \eqref{eqq1} is a family of $q$-Fuchsian systems with spectral data $(\boldsymbol{\Theta}\, , \overline{\boldsymbol{\Theta}})$ if the following hold:
\begin{itemize}
\item[$\bullet$] for each $i\in \{0,1,t\}$, $\mathfrak{A}_i\in \mathrm{M}_{2}(\mathcal{O}(\mathfrak{D}))$, 
\item[$\bullet$] for all $t\in \mathfrak{D}$, we have 
$$\mathrm{Spec}(\mathfrak{A}_0(t))=\left\{\Theta_0, \overline{\Theta}_0\right\}, $$
\item[$\bullet$] we have $\det(\mathfrak{A}) =  \frac{\Theta_\infty\overline{\Theta}_\infty\mathfrak{p}(x,t)}{(x-1)^2(x-t)^2}$, where 
 \begin{equation}\label{pfrakeq}
 \mathfrak{p}(x,t)=(x-t\Theta_t)(x-t\overline{\Theta}_t)(x-\Theta_1)(x-\overline{\Theta}_1)\, ,\end{equation}
\item[$\bullet$]  the matrix $\mathfrak{A}_\infty:=\mathfrak{A}_0+\mathfrak{A}_1+\frac{1}{t}\mathfrak{A}_t$ is constant and normalized as follows:
\begin{equation}\label{eq10}
\mathfrak{A}_{\infty}=\begin{pmatrix}\overline{\Theta}_\infty & 0\\0& \Theta_\infty\end{pmatrix}\, .
\end{equation}
\end{itemize}
\end{defn}
\begin{rem} The entries of the matrix
$(x-1)(x-t)\mathfrak{A}(x,t)\in  \mathrm{M}_2(\mathcal{O}(\mathfrak{D})[x])$ have degree two. Then, the determinant of the latter, is a degree four polynomial. The $x^{4}$ coefficient is ${\det(\mathfrak{A}_{\infty})=\Theta_\infty\overline{\Theta}_\infty}$, which is coherent with $\det((x-1)(x-t)\mathfrak{A}(x,t))=\Theta_\infty\overline{\Theta}_\infty\mathfrak{p}(x,t)$.
On the other hand, the constant coefficient is 
 $t\det(\mathfrak{A}_0(t))=t^{2}\Theta_0  \overline{\Theta}_0$.  Note that \eqref{thetarel} is then equivalent to $\Theta_\infty\overline{\Theta}_{\infty}\mathfrak{p}(0,t)=t^{2}\Theta_0  \overline{\Theta}_0$.
The assumption that that two zeros of  $\mathfrak{p}$ are proportional to $t$, and the two others are independent of $t$ will be needed for instance in the proof of Proposition \ref{qLaxPair}. 
\end{rem}
With the normalization  \eqref{eq10}, the $(1,2)$ entry of $(x-1)(x-t)\mathfrak{A}$ is a polynomial of degree at most one in $x$. Let us assume that is has degree one and define a non-zero holomorphic function $\boldsymbol{\lambda}(t) \in \mathcal{O}(\mathfrak{D})$ and a meromorphic function 
  $\y(t)  \in \mathcal{M}(\mathfrak{D})$ by 
\begin{equation}\label{defqyZ} \mathfrak{A}^{(1,2)}(x,t)=\frac{(q-1)\boldsymbol{\lambda}(t)(x-\y(t))}{(x-1)(x-t)}\,  ,\end{equation}
 so that
$\mathfrak{A}^{(1,2)}(\y (t),t)=0.$ 
Assuming moreover that $\y(t)\not \equiv 0,1,t$,  we may define a meromorphic function 
  $\bZZ(t)  \in \mathcal{M}(\mathfrak{D})$ by 
 \begin{equation}\label{eqdefbZZ}\mathfrak{A}^{(1,1)}(\y (t),t) =1+(q-1)\y(t)\bZZ(t).\end{equation}

\begin{rem}
We chose here to  slighlty  modify the notation from \cite{jimbo1996q} because we are mainly interested at the limit when $q\to 1$. 
Towards this goal, it is worth mentioning that our variables satisfy $\widetilde{\mathfrak{A}}^{(1,2)}(x,t)=\frac{\boldsymbol{\lambda}(t)(x-\y(t))}{(x-1)(x-t)}$, and $\widetilde{\mathfrak{A}}^{(1,1)}(\y (t),t)=\bZZ(t)$. More details are given in Section~\ref{sec:confFuchsSys}.
\end{rem}
Analogously to the differential case of  families of $\mathfrak{sl}_2$-Fuchsian  systems, we have the following lemma, which is a slight adaptation of the formulas in \cite[p. 4]{jimbo1996q}.

 \begin{lem}\label{AfrakfromTriple} 
If a family of $q$-Fuchsian systems \eqref{eqq1}  with spectral data $(\boldsymbol{\Theta}\, , \overline{\boldsymbol{\Theta}})$, with $\Theta_\infty \neq \overline{\Theta}_\infty$, gives rise to  $$(\boldsymbol{\lambda},\y ,\bZZ)\in (\mathcal{O}(\mathfrak{D})\setminus \{0\})\times (\mathcal{M}(\mathfrak{D})\setminus \{0,1,\mathrm{id}\})\times\mathcal{M}(\mathfrak{D})$$ as above, and if $1+(q-1)\y(t)\bZZ(t)$ does not vanish identically, then the coefficients of the matrix $\mathfrak{A}$ are necessarily the following functions of $\boldsymbol{\lambda}, \y, \bZZ, x, t$ and $(\boldsymbol{\Theta}\, , \overline{\boldsymbol{\Theta}})$:  \begin{equation}\label{Afrak}\mathfrak{A}(x) =\frac{1}{(x-1)(x-t)}\left(\begin{array}{cc}
\overline{\Theta}_\infty \left((x-\y )(x-\alpha)+z_1\right) &(q-1)\boldsymbol{\lambda}(x-\y )\vspace{.2cm}\\
\Theta_\infty\overline{\Theta}_\infty  \frac{(\gamma x+\delta)}{(q-1)\boldsymbol{\lambda}} &  \Theta_\infty((x-\y )(x-\beta)+z_2)
 \end{array}\right)\end{equation}
with 
\begin{equation}\label{AfrakEntries}
\left\{
\begin{array}{rcl}
z_1&=& \frac{ (\y-1)(\y-t)(1+(q-1)\y\bZZ)}{\overline{\Theta}_\infty}\vspace{.2cm}\\
z_2 &=& \frac{\mathfrak{p}(\y )}{z_1}\vspace{.2cm}\\
\beta &=&-(\alpha-1)-(\y -t)+\frac{\mathfrak{p}(0)-\mathfrak{p}(\y )}{t\y } +\frac{\mathfrak{p}(t)-\mathfrak{p}(\y )}{t(t-1)(\y -t)}-\frac{\mathfrak{p}(1)-\mathfrak{p}(\y )}{(t-1)(\y -1)}  \vspace{.2cm}\\
\gamma &=&z_1+z_2+\alpha\beta+(\alpha-1+\beta)(\y -1)-\frac{\mathfrak{p}(0)-\mathfrak{p}(\y )}{\y } + \frac{\mathfrak{p}(1)-\mathfrak{p}(\y )}{ \y -1}\vspace{.2cm}\\
 \delta &=& \frac{\mathfrak{p}(0)-(\alpha \y +z_1)(\beta \y +z_2)}{\y },
 \end{array}
 \right.
  \end{equation}
  and 
  \begin{equation}\label{alphaval}
\begin{array}{rcl}
\alpha &=& -\frac{t(\Theta_0+\overline{\Theta}_0)-(\overline{\Theta}_\infty z_1+\Theta_\infty z_2)}{(\Theta_\infty-\overline{\Theta}_\infty)\y }\vspace{.2cm}\\&&+\frac{ \Theta_\infty  }{\Theta_\infty-\overline{\Theta}_\infty}\left(1+t-\y +\frac{\mathfrak{p}(0)-\mathfrak{p}(\y )}{t\y }+\frac{\mathfrak{p}(t)-\mathfrak{p}(\y )}{t(t-1)(\y -t)} -\frac{\mathfrak{p}(1)-\mathfrak{p}(\y )}{(t-1)(\y -1)}\right).\end{array}
  \end{equation}
Recall that $\mathfrak{p}$ is defined in \eqref{pfrakeq}. 
  Here we dropped the dependence on $t$ in order to simplify the formulas, \emph{i.e.} we write $\mathfrak{p}(x)$ instead of $\mathfrak{p}(x,t)$ and similarly  $\y =\y (t),\bZZ=\bZZ(t),\boldsymbol{\lambda}=\boldsymbol{\lambda}(t), \mathfrak{A}(x)=\mathfrak{A}(x,t)$.
\end{lem}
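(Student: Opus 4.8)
The plan is to reconstruct the matrix $M(x):=(x-1)(x-t)\mathfrak{A}(x,t)$, whose entries are polynomials of degree at most two in $x$, and then to recover $\mathfrak{A}_0,\mathfrak{A}_1,\mathfrak{A}_t$ by evaluation, since expanding $M(x)=(x-1)(x-t)\mathfrak{A}_0+x(x-t)\mathfrak{A}_1+\frac{x(x-1)}{t}\mathfrak{A}_t$ gives $M(0)=t\mathfrak{A}_0$, $M(1)=(1-t)\mathfrak{A}_1$ and $M(t)=(t-1)\mathfrak{A}_t$. The normalization \eqref{eq10} says that the leading ($x^2$) coefficient of $M$ is $\mathfrak{A}_\infty$; in particular $M^{(1,2)},M^{(2,1)}$ have degree at most one. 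Thus $M^{(1,2)}=(q-1)\boldsymbol{\lambda}(x-\y)$ by \eqref{defqyZ}, and I may write $M^{(1,1)}=\overline{\Theta}_\infty\big((x-\y)(x-\alpha)+z_1\big)$, $M^{(2,2)}=\Theta_\infty\big((x-\y)(x-\beta)+z_2\big)$ and $M^{(2,1)}=\Theta_\infty\overline{\Theta}_\infty(\gamma x+\delta)/((q-1)\boldsymbol{\lambda})$ for as yet undetermined scalars $\alpha,\beta,z_1,z_2,\gamma,\delta$. This is merely a reparametrization of the general degree-two diagonal entries, chosen so as to isolate the value at $x=\y$. It then suffices to show that the remaining hypotheses force these scalars to take the stated values.

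\textbf{Reading off $z_1,z_2,\gamma,\delta$.} First I would extract $z_1$ from \eqref{eqdefbZZ}: since $M^{(1,1)}(\y)=\overline{\Theta}_\infty z_1$ while $M^{(1,1)}(\y)=(\y-1)(\y-t)\mathfrak{A}^{(1,1)}(\y)=(\y-1)(\y-t)(1+(q-1)\y\bZZ)$, the formula for $z_1$ in \eqref{AfrakEntries} follows. Next I would exploit the determinant hypothesis, which with the above parametrization reads $[(x-\y)(x-\alpha)+z_1][(x-\y)(x-\beta)+z_2]-(x-\y)(\gamma x+\delta)=\mathfrak{p}(x)$. Evaluating at $x=\y$ gives $z_1 z_2=\mathfrak{p}(\y)$, hence $z_2=\mathfrak{p}(\y)/z_1$. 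As the two quartics share leading coefficient $1$, the difference $\mathfrak{p}(x)-[(x-\y)(x-\alpha)+z_1][(x-\y)(x-\beta)+z_2]$ has degree at most three, and the vanishing of its $x^3$-coefficient forces the relation $\alpha+\beta=t\Theta_t+t\overline{\Theta}_t+\Theta_1+\overline{\Theta}_1-2\y$. This difference already vanishes at $x=\y$; dividing it by $(x-\y)$ produces the linear polynomial $-(\gamma x+\delta)$, and evaluating the resulting identity at $x=0$ and $x=1$ pins down $\delta$ and then $\gamma$ as displayed in \eqref{AfrakEntries}.

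\textbf{Splitting $\alpha$ and $\beta$.} It remains to separate $\alpha$ and $\beta$. The spectral hypothesis $\mathrm{Spec}(\mathfrak{A}_0)=\{\Theta_0,\overline{\Theta}_0\}$ contributes two scalar conditions, but the determinant part $\det(\mathfrak{A}_0)=\Theta_0\overline{\Theta}_0$ is automatic: from $M(0)=t\mathfrak{A}_0$ it amounts to $\Theta_\infty\overline{\Theta}_\infty\mathfrak{p}(0)=t^2\Theta_0\overline{\Theta}_0$, which is exactly relation \eqref{thetarel}. The trace condition $\mathrm{tr}(M(0))=t(\Theta_0+\overline{\Theta}_0)$, that is $\overline{\Theta}_\infty(\y\alpha+z_1)+\Theta_\infty(\y\beta+z_2)=t(\Theta_0+\overline{\Theta}_0)$, then supplies the second independent linear relation between $\alpha$ and $\beta$. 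Solving the $2\times2$ linear system formed by this equation together with $\alpha+\beta=t\Theta_t+t\overline{\Theta}_t+\Theta_1+\overline{\Theta}_1-2\y$ — which is invertible precisely because $\Theta_\infty\neq\overline{\Theta}_\infty$ — yields the value of $\alpha$ in \eqref{alphaval}, whereupon $\beta$ follows. To match the displayed forms I would finally rewrite the symmetric-function expression $t\Theta_t+t\overline{\Theta}_t+\Theta_1+\overline{\Theta}_1-2\y$ as the $\mathfrak{p}$-difference expression $1+t-\y+S$ occurring in the $\beta$- and $\alpha$-formulas, where $S$ denotes the sum of the three divided-difference quotients of $\mathfrak{p}$ in \eqref{AfrakEntries}; this is an elementary polynomial identity, verified by testing it on the monomial basis $1,x,x^2,x^3,x^4$ of degree-four polynomials.

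\textbf{Main obstacle.} The conceptual steps are all short, and essentially reproduce the reconstruction in \cite[p.~4]{jimbo1996q}. The only real difficulty is computational: carrying the determinant identity and the linear system through to the compact closed forms for $\gamma$, $\delta$ and $\alpha$ without error, and checking the partial-fraction identity that converts the symmetric-function form of $\alpha+\beta$ into the $\mathfrak{p}$-difference form used in \eqref{AfrakEntries}. I would therefore organize the bookkeeping to parallel that reference and simply record the outcome.
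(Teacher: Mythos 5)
Your proposal is correct and follows essentially the same route as the paper's proof: reparametrize $(x-1)(x-t)\mathfrak{A}$ by $\alpha,\beta,z_1,z_2,\gamma,\delta$, read off $z_1$ from \eqref{eqdefbZZ}, impose the determinant identity as four scalar conditions, and pin down $\alpha$ from $\mathrm{trace}(\mathfrak{A}_0)=\Theta_0+\overline{\Theta}_0$, the resulting linear system being invertible precisely because $\Theta_\infty\neq\overline{\Theta}_\infty$. The only (cosmetic) difference is how the four determinant conditions are extracted: the paper evaluates at $x=\y,0,1,t$, so the evaluation at $x=t$ produces the stated divided-difference form of $\beta$ directly, whereas you match the $x^3$-coefficient instead, which is why you need the extra polynomial identity at the end to convert the symmetric-function form of $\alpha+\beta$ into the form appearing in \eqref{AfrakEntries}.
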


\begin{proof} The general form of $\mathfrak{A}$ in \eqref{Afrak}, together with the equation for $z_1$ in \eqref{AfrakEntries}, its precisely what is needed in order for $\mathfrak{A}_\infty$ to be of the required normalized form, and for $\boldsymbol{\lambda}, \y, \bZZ$ to satisfy the equalities  
\eqref{defqyZ} and  \eqref{eqdefbZZ}.  Via evaluation at $x=0,1,t,\y $, the equation $$\det\left( (x-1)(x-t)\mathfrak{A}(x) \right)=\Theta_\infty \overline{\Theta}_\infty\mathfrak{p}(x)$$ with arbitrary $\alpha$ is equivalent to the remaining equations in   \eqref{AfrakEntries}. More precisely, the successive  evaluations at $\y,0,1,t$ give the lines $2,5,4,$ and $3$ of \eqref{AfrakEntries}.  In particular, using \eqref{thetarel}, we have $$\det\mathfrak{A}_0=\Theta_\infty\overline{\Theta}_\infty\Theta_t\overline{\Theta}_t\Theta_1\overline{\Theta}_1=\Theta_0\overline{\Theta}_0\, .$$ Equation \eqref{alphaval} then is equivalent to $\mathrm{trace}(\mathfrak{A}_0)=\Theta_0+\overline{\Theta}_0$.
\end{proof}

 \subsection{Confluence}\label{sec:confFuchsSys}
 The heuristic equality
 $$\lim_{q\to 1}\partial_{q,x}=\partial_x$$
 motivates the following definition. 
 \begin{defn}
 Let $f\in \C(g,x,q)$ such that $\{q=1\}$ is not an irreducible component of the polar divisor of $f$, i.e. $f(g,x,1)$ is a well defined rational function. Then we say that the $q$-difference equation 
  $\partial_{q,x}g=f(g,x,q)$ \emph{discretises} the differential equation $\partial_xg=f(g,x,1)$. 
 \end{defn}
 This definition generalizes in the obvious way to the case of systems of rational $q$-difference equations in several variables. It can also be generalized, in a more subtle way, to the case when the base field is not $\C$, but for example the field of meromorphic functions on some domain. The term \emph{confluence} is used when the inverse phenomenon occurs: when objects associated to a discretized differential equation (most importantly, solutions), yield the corresponding object of the differential equation by some limit process as $q\to 1$. Confluence is widely studied, see  for instance \cite{sauloy2000regular,zhang2002sommation,
di2009q,dreyfus2015confluence,dreyfus2017isomonodromic}. Before confluence can even be adressed, one of course needs to identify the appropriate discretization. The aim of the current section is to do so for families of $\mathfrak{sl}_2$-Fuchsian systems $\partial_xY=A(x,t)Y$ as in Definition~\ref{defi1}. Note that the naive approach of setting $\mathfrak{A}(x,t,q)=\Itwo +(q-1)xA(x,t)$ does in general not yield a $q$-Fuchsian system as in Definition~\ref{qfuchsian}. 
Instead, we will consider $\mathfrak{A}(x,t,q)$ given by a triple of meromorphic functions as in Lemma~\ref{AfrakfromTriple}, but with an  additional parameter $q$, and study when $\widetilde{\mathfrak{A}}=\frac{\mathfrak{A}-\Itwo }{(q-1)x}$ admits a limit as $q\to 1$. 
Here, in a first step, we ignore the difficulty of the coefficients of $\widetilde{\mathfrak{A}}$ being meromorphic functions with respect to $t$, by simply considering $\boldsymbol{\lambda},\y,\bZZ$ as additional variables. 
\begin{prop}\label{propThetaconf}  Let $\boldsymbol{\theta}=(\theta_0,\theta_1,\theta_t,\theta_\infty)\in  \mathbb{C}^4$ with $\theta_\infty \neq 0$. Let $\boldsymbol{\Theta}(q)=(\Theta_0, \Theta_1,\Theta_t,\Theta_\infty)(q)$ and $\overline{\boldsymbol{\Theta}}(q)=(\overline{\Theta}_0, \overline{\Theta}_1,\overline{\Theta}_t,\overline{\Theta}_\infty)(q)$ be two quadrupels of elements of $\C(q)$, \emph{i.e.} rational functions in a complex variable $q$, such that $\Theta_\infty \neq \overline{\Theta}_\infty$ and such that that \eqref{thetarel} holds.
Let $$\mathfrak{A}\in \mathrm{M}_2(\C(x,\boldsymbol{\lambda},\y,\bZZ,t,q))$$
be the $2\times 2$-matrix with coefficients in $\C(x,\boldsymbol{\lambda},\y,\bZZ,t,q)$ (\emph{i.e.} the set of rational functions in six complex variables named $x,\boldsymbol{\lambda}, \y,\bZZ,t,q$) defined by the formulae in Lemma \ref{AfrakfromTriple}.  
Let $A\in \mathfrak{sl}_2(\C(x,\lambda, y,Z,t))$ be defined by formulae in Lemma \ref{AparlamyZ}. Denote $\widetilde{\mathfrak{A}}=\frac{\mathfrak{A}-\Itwo }{(q-1)x}$. The following are equivalent.
\begin{enumerate}
\item The divisor $\{q=1\}$  in $\C^6_{x,\boldsymbol{\lambda},\y,\bZZ,t,q}$ is not an irreducible component of the polar divisor of $\widetilde{\mathfrak{A}}$ and the therefore well-defined rational matrix function $\widetilde{\mathfrak{A}}|_{q=1}$ equals  
$A(x,\boldsymbol{\lambda},\y,\bZZ,t)$. In other words, 
$$\lim_{q\to 1} \widetilde{\mathfrak{A}}(x,\boldsymbol{\lambda},\y,\bZZ,t,q)=A(x,\boldsymbol{\lambda},\y,\bZZ,t)\, .$$
\item Up to permuting the roles of $\Theta_i$ and $\overline{\Theta}_i$ for $i\in \{0,1,t\}$,  the following holds as $q\to 1$:\\
\begin{equation}\label{ThetaconfcondGen}\left\{\begin{array}{rcll}
 \Theta_i(q)&=& 1+(q-1)\frac{\theta_i}{2}+O(q-1)^2&\forall i\in \{0,1,t,\infty\}
\vspace{.2cm}\\
\overline{\Theta}_i(q)&=& 1-(q-1)\frac{\theta_i}{2}+O(q-1)^2&\forall i\in \{0,1,t,\infty\}
\, .
\end{array}\right.\end{equation}
\end{enumerate}
\end{prop}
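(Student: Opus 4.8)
Since both $\mathfrak{A}$ and $A$ are given by fully explicit formulae (Lemma~\ref{AfrakfromTriple} and Lemma~\ref{AparlamyZ}) in the \emph{same} independent variables $x,\boldsymbol{\lambda},\y,\bZZ,t$, the statement is fundamentally a comparison of the Taylor expansion in $(q-1)$ of the entries of $\mathfrak{A}$ with those of $\Itwo+(q-1)xA$, and I would prove the two implications separately. A convenient anchor is that the $(1,2)$-entry matches \emph{identically}, with no limit involved: from \eqref{defqyZ} one reads $\widetilde{\mathfrak{A}}^{(1,2)}=\frac{\mathfrak{A}^{(1,2)}}{(q-1)x}=\frac{\boldsymbol{\lambda}(x-\y)}{x(x-1)(x-t)}=A^{(1,2)}$. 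Hence the content lies entirely in the two diagonal entries and the $(2,1)$-entry.

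For $(2)\Rightarrow(1)$, I would substitute \eqref{ThetaconfcondGen} into \eqref{Afrak}--\eqref{alphaval} and expand in powers of $(q-1)$. The first task is to verify $\mathfrak{A}|_{q=1}=\Itwo$, so that $\{q=1\}$ is not a polar component of $\widetilde{\mathfrak{A}}$ and $\widetilde{\mathfrak{A}}|_{q=1}=\frac{1}{x}\,\partial_q\mathfrak{A}|_{q=1}$. The delicate point is the quantity $\alpha$ of \eqref{alphaval}: since $\Theta_\infty-\overline{\Theta}_\infty=(q-1)\theta_\infty+O((q-1)^2)$ vanishes at $q=1$, each of its two summands carries a pole $\tfrac{1}{\Theta_\infty-\overline{\Theta}_\infty}$; using $z_1|_{q=1}=z_2|_{q=1}=(\y-1)(\y-t)$ and $\mathfrak{p}(x)|_{q=1}=(x-1)^2(x-t)^2$, one checks that the two polar parts cancel and leave the finite value $\alpha|_{q=1}=1+t-\y$, which is exactly what makes $\mathfrak{A}^{(1,1)}=\frac{\overline{\Theta}_\infty((x-\y)(x-\alpha)+z_1)}{(x-1)(x-t)}\to 1$. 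The same cancellations must be tracked for $\beta,\gamma,\delta$; in particular $\gamma$ and $\delta$ have to vanish to \emph{second} order in $(q-1)$, since $\widetilde{\mathfrak{A}}^{(2,1)}=\frac{\Theta_\infty\overline{\Theta}_\infty(\gamma x+\delta)}{(q-1)^2\boldsymbol{\lambda}\,x(x-1)(x-t)}$. Once $\mathfrak{A}|_{q=1}=\Itwo$ is secured, it remains to compute the order-$(q-1)$ coefficients of $z_1,z_2,\alpha,\beta$ (the $\bZZ$-dependence of $A$ entering through the linear term of $z_1$) and the order-$(q-1)^2$ coefficients of $\gamma,\delta$, and to check termwise that the resulting matrix is the one of Lemma~\ref{AparlamyZ}, with $c_0,c_1$ reproduced by $\gamma,\delta$.

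For the converse $(1)\Rightarrow(2)$, I would read the spectral data off the limiting residues of $\widetilde{\mathfrak{A}}$. At $x=0$, the residue $\widetilde{\mathfrak{A}}_0=\frac{\mathfrak{A}_0-\Itwo}{q-1}$ tends to $A_0$, so its eigenvalues $\frac{\Theta_0-1}{q-1},\frac{\overline{\Theta}_0-1}{q-1}$ tend to $\mathrm{Spec}(A_0)=\{\tfrac{\theta_0}{2},-\tfrac{\theta_0}{2}\}$; as $\Theta_0,\overline{\Theta}_0\in\C(q)$ this forces them to be regular at $q=1$ with value $1$ and Taylor expansion $\Theta_0=1+(q-1)\tfrac{\theta_0}{2}+O((q-1)^2)$, $\overline{\Theta}_0=1-(q-1)\tfrac{\theta_0}{2}+O((q-1)^2)$, up to the unordered choice of eigenvalue, i.e.\ up to permuting $\Theta_0$ and $\overline{\Theta}_0$. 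At infinity, the identity $\widetilde{\mathfrak{A}}_0+\widetilde{\mathfrak{A}}_1+\widetilde{\mathfrak{A}}_t=\frac{\mathfrak{A}_\infty-\Itwo}{q-1}$ must tend to $A_0+A_1+A_t=-A_\infty$; by \eqref{eq10} this is diagonal and \emph{ordered}, which pins down $\Theta_\infty$ and $\overline{\Theta}_\infty$ with no permutation freedom, explaining why $i=\infty$ is excluded in \eqref{ThetaconfcondGen}. Finally, for $i\in\{1,t\}$ I would use the determinant: $\mathfrak{A}=\Itwo+(q-1)x\widetilde{\mathfrak{A}}$ gives $\det\mathfrak{A}=1+(q-1)x\,\mathrm{tr}(\widetilde{\mathfrak{A}})+(q-1)^2x^2\det(\widetilde{\mathfrak{A}})$, and comparing this order by order in $(q-1)$ with $\det\mathfrak{A}=\Theta_\infty\overline{\Theta}_\infty\,\mathfrak{p}(x,t)/((x-1)^2(x-t)^2)$ from Definition~\ref{qfuchsian}, the vanishing of the $O(q-1)$ term (which holds because $\mathrm{tr}(A)=0$) forces the first-order coefficients of $\Theta_1,\overline{\Theta}_1$ (resp.\ $\Theta_t,\overline{\Theta}_t$) to be opposite, while matching the double-pole coefficients at $x=1$ and $x=t$ in the $O((q-1)^2)$ term against those of $x^2\det A$ forces these coefficients to be $\pm\tfrac{\theta_1}{2}$ (resp.\ $\pm\tfrac{\theta_t}{2}$); this is \eqref{ThetaconfcondGen} for $i\in\{1,t\}$, again up to permutation.

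The main obstacle is the first implication, and within it the bookkeeping of the two canceling poles in $\alpha$ and their propagation into $\beta,\gamma,\delta$: establishing $\mathfrak{A}|_{q=1}=\Itwo$ and the vanishing of $\gamma,\delta$ to second order requires expanding \eqref{alphaval} two orders beyond its leading pole and verifying several nontrivial rational identities, and it is here that the relation \eqref{thetarel} and the normalization \eqref{eq10} of $\mathfrak{A}_\infty$ are genuinely used. The remaining steps are routine: the $(1,2)$-entry is already exact, and the $O((q-1)^2)$ remainders in \eqref{ThetaconfcondGen} are automatic from rationality in $q$.
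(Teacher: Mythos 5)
Your proposal is correct, and its skeleton is the same as the paper's: for $(1)\Rightarrow(2)$ you read $\Theta_0,\overline{\Theta}_0$ off $\mathrm{Spec}\bigl(\widetilde{\mathfrak{A}}_0\bigr)$, read $\Theta_\infty,\overline{\Theta}_\infty$ off the diagonal (and ordered) residue at infinity, and treat $i\in\{1,t\}$ through the determinant; for $(2)\Rightarrow(1)$ you expand the formulae of Lemma~\ref{AfrakfromTriple} in powers of $(q-1)$, the heart being the cancellation of the $\frac{1}{\Theta_\infty-\overline{\Theta}_\infty}$ poles in $\alpha,\beta,\gamma,\delta$ (your computation of $\alpha|_{q=1}=1+t-\y$ and of $\mathfrak{A}|_{q=1}=\Itwo$ is right). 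Two differences are worth recording. First, for $i\in\{1,t\}$ your argument is more complete than the paper's one-line deduction: the paper invokes only $\det\widetilde{\mathfrak{A}}_i\to\det A_i=-\theta_i^2/4$, which constrains just the \emph{products} of the first-order coefficients of $\Theta_i,\overline{\Theta}_i$; these product conditions, even combined with \eqref{thetarel}, do not pin the coefficients down to $\pm\theta_i/2$. The missing \emph{sum} conditions are exactly what you extract from the $O(q-1)$ term of $\det\mathfrak{A}=1+(q-1)x\,\mathrm{tr}\widetilde{\mathfrak{A}}+(q-1)^2x^2\det\widetilde{\mathfrak{A}}$ together with $\mathrm{tr}\,A=0$, and your remark that the double-pole coefficients at $x=1,t$ in the $O\bigl((q-1)^2\bigr)$ term are uncontaminated by the unknown second-order part of $\mathrm{tr}\widetilde{\mathfrak{A}}$ (which has only simple poles there) is precisely what makes the product step clean. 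Second, for $(2)\Rightarrow(1)$ the paper is more economical than your plan: it proves only \emph{existence} of the limit by expansion (third-order expansions of the $\Theta_i$ plus the relation \eqref{thetarel} on the second-order terms), and then identifies the limit without computing it, since any such limit is tracefree, has the residue spectra and normalization of Definition~\ref{defi1}, and satisfies $\widetilde{\mathfrak{A}}^{(1,2)}=A^{(1,2)}$ and $\widetilde{\mathfrak{A}}^{(1,1)}|_{x=\y}=\bZZ$ identically in $q$ (as you note), so the uniqueness in Lemma~\ref{AparlamyZ} forces the limit to equal $A$. Adopting that shortcut would spare you the heaviest item on your list, namely matching $c_0,c_1$ against the second-order coefficients of $\gamma,\delta$.
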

  \begin{proof}
  From the particular form of $\mathfrak{A}$ it follows that $\mathfrak{A}$ can be decomposed as $\mathfrak{A}_0+\frac{x}{x-1}\mathfrak{A}_1+\frac{x\mathfrak{A}_t}{t(x-t)}$ where $\mathfrak{A}_0, \mathfrak{A}_1,\mathfrak{A}_t$ do not depend on $x$. It follows that  $\widetilde{\mathfrak{A}}$ can be decomposed as $\frac{\widetilde{\mathfrak{A}}_{0}}{x}+\frac{\widetilde{\mathfrak{A}}_{1}}{x-1}+\frac{\widetilde{\mathfrak{A}}_{t}}{x-t}$ where $\widetilde{\mathfrak{A}}_{0},\widetilde{\mathfrak{A}}_{1},\widetilde{\mathfrak{A}}_{t}$ do not depend on $x$. Similarly, we may denote by $A_0,A_1,A_t$ the residues of $A$ with respect to $x=0,1,t$. We denote $\widetilde{\mathfrak{A}}_\infty:=-\lim_{x\to \infty}x\widetilde{\mathfrak{A}}=\frac{\Itwo -\mathfrak{A}_\infty}{q-1}\, $ and $A_\infty=-A_0-A_1-A_t$. Then $(1)$ holds if and only if, for each $i\in \{0,1,t,\infty\}$, we have 
  $$\lim_{q\to 1} \widetilde{\mathfrak{A}}_i(x,\boldsymbol{\lambda},\y,\bZZ,t,q)=A_i(x,\boldsymbol{\lambda},\y,\bZZ,t).$$
  Assume $(1)$ holds. We have $$\widetilde{\mathfrak{A}}_\infty =\begin{pmatrix}\frac{1-\overline{\Theta}_\infty}{q-1  }&0\\0&\frac{1-\Theta_\infty}{q-1}\end{pmatrix}\, . $$
  Since $A_\infty$ is of normal form, we deduce the estimates for $\Theta_\infty$ and $\overline{\Theta}_\infty.$ 
 Since $\mathrm{Spec}(\mathfrak{A}_{0})=\{\Theta_0, \overline{\Theta}_0\}$, we have 
 $$\mathrm{Spec}\left(\widetilde{\mathfrak{A}}_{0}\right)=\left\{\frac{\Theta_0-1}{q-1}, \frac{\overline{\Theta}_0-1}{q-1}\right\}\, .$$
 Since $\mathrm{Spec}(A_0)=\{\theta_0/2,-\theta_0/2\}$, we deduce the estimates for $\Theta_0$ and $\overline{\Theta}_0$ as in the statement (up to interchanging their roles). Recall that $\det(\mathfrak{A}) =  \frac{\Theta_\infty\overline{\Theta}_\infty(x-t\Theta_t)(x-t\overline{\Theta}_t)(x-\Theta_1)(x-\overline{\Theta}_1)}{(x-1)^2(x-t)^2}$.
 We therefore have
 $$\begin{array}{l}
 \mathrm{det}\left(\widetilde{\mathfrak{A}}_1\right)=\frac{\Theta_\infty \overline{\Theta}_\infty (1-t\Theta_t)(1-t\overline{\Theta}_t)(1-\Theta_1)(1-\overline{\Theta}_1)}{(q-1)^2(1-t)^2}\, , \vspace{.2cm} \\
 \mathrm{det}\left(\widetilde{\mathfrak{A}}_t\right)=\frac{\Theta_\infty \overline{\Theta}_\infty t^2 (1-\Theta_t)(1-\overline{\Theta}_t)(t-\Theta_1)(t-\overline{\Theta}_1)}{(q-1)^2(t-1)^2}\, .\end{array}$$
 Since $\mathrm{det}(A_i)=-\theta_i^2/4$ for $i\in \{1,t\}$, we deduce  the estimates for $\Theta_i$ and $\overline{\Theta}_i$ as in the statement (up to interchanging their roles).   Hence $(1)\Rightarrow (2)$. \\
Conversely, the above calculations show that if $(2)$ holds and the limit $\lim_{q\to 1}\widetilde{\mathfrak{A}}$ is a well defined element of $\mathrm{M}_2(\C(x,\boldsymbol{\lambda},\y,\bZZ,t))$, then this limit is of the required form. Moreover, it is straightforward to check (with some more effort), that if $(2)$ holds, then 
the limit is well defined. Here one needs to use the Taylor series expansion of the $\Theta_i$'s and $\overline{\Theta}_i$'s up to order $O(q-1)^3$ and use the relation on the thereby appearing second order terms imposed by the equality $\Theta_0\overline{\Theta}_0=\Theta_\infty\overline{\Theta}_\infty\Theta_t\overline{\Theta}_t\Theta_1\overline{\Theta}_1$. Hence  $(2)\Rightarrow (1)$. \end{proof}

Note that we have arranged the general definition of $(\boldsymbol{\lambda}, \y, \bZZ)$ associated to a matrix $\mathfrak{A}(x,t)$ as in Definition \ref{qfuchsian}, such that for the matrix $\widetilde{\mathfrak{A}}=\frac{\mathfrak{A}-\Itwo }{(q-1)x}$, equations \eqref{defqyZ} and \eqref{eqdefbZZ} may be written as 
$$\begin{array}{llll}\widetilde{\mathfrak{A}}^{(1,2)}(x,t)=\frac{\boldsymbol{\lambda}(t)(x-\y(t))}{(x-1)(x-t)}& , && \widetilde{\mathfrak{A}}^{(1,1)}(\y(t),t)=\bZZ(t)\, .\end{array}$$
This definition is analogous to the general definition in equations \eqref{defyZ} and \eqref{defZ}  of $(\lambda, y, \ZZ)$ associated to a matrix $A(x,t)$ as in Definition~\ref{defi1}. Indeed, recall that these equations were given by 
$$\begin{array}{llll}A^{(1,2)}(x,t)=\frac{ \lambda(t)(x-y(t))}{(x-1)(x-t)}& , &&A^{(1,1)}(y(t),t)=\ZZ(t)\, .\end{array}$$
Therefore, we expect $\partial_{q,x}Y=\mathfrak{A}(x,t,q)Y$ as in Definition \ref{qfuchsian}, but with an additional parameter $q$, to be an appropriate discretization of family of $\mathfrak{sl}_2$-Fuchsian systems $\partial_xY=A(x,t)Y$ as in Definition~\ref{defi1}, where moreover $(\lambda, y,\ZZ)$ are well defined, if the spectral data $(\boldsymbol{\Theta},\overline{\boldsymbol{\Theta}})(q)$ satisfy  \eqref{ThetaconfcondGen} and if, in some convenient sense, we have 
\begin{equation}\label{eq:GrasVersMince}\lim_{q\to 1} (\boldsymbol{\lambda}, \y, \bZZ)(t,q)=(\lambda, y,\ZZ)(t)\, .\end{equation}

Let us now explain what we shall mean by this convenient sense.  Let $\qDom\subset \C\setminus \{0,1\}$ be a connected, not necessarily open, subset, with $1$ in its closure.  Consider a connected open subset $\mathfrak{D}\subset \C^*$ and let $\boldsymbol{f}(t,q)$ be a  function such that for each fixed $q\in \qDom$ sufficiently close to $1$, we have a well-defined  meromorphic function $t\mapsto \boldsymbol{f}(t,q)$ in $\mathcal{M}(\mathfrak{D})$. We say that $f\in  \mathcal{M}(\mathfrak{D})$  is the \emph{limit} of $\boldsymbol{f}$ as $q\to 1$ if for generic values (\emph{i.e.} outside a proper closed analytic subset) of $t\in  \mathfrak{D}$, we have  
$$\lim_{\begin{smallmatrix}q\to 1\\q\in \qDom\end{smallmatrix}}\boldsymbol{f}(t,q)=f(t)\, .
$$
Analogously, if we have a reduced rational function $\boldsymbol{\phi}\left(x,(\boldsymbol{f}_\ell)_{0\leq \ell \leq m+n+1}\right)=\frac{\sum_{k=0}^n \boldsymbol{f}_kx^k }{\sum_{k=0}^{m} \boldsymbol{f}_{k+n+1}x^{k}}$, where each coefficient $\boldsymbol{f}_\ell(t,q)$ is as above and $\lim_{q\to 1}\boldsymbol{f}_\ell(t,q)={f}_\ell(t)\in \mathcal{M}(\mathfrak{D})$, then we say that $ {\phi}(x,t)=\frac{\sum_{k=0}^n  {f}_kx^k }{\sum_{k=0}^{m} {f}_{k+n+1}x^{k}}$ is the limit of $(x,t,q)\mapsto \boldsymbol{\phi}\left(x, (\boldsymbol{f}_\ell(t,q))_{0\leq \ell \leq m+n+1}\right)$ as $q\to 1$. 

  \begin{prop}\label{prop3}
  Let $\mathfrak{D}$ and $\qDom$ be as above. Let $\boldsymbol{\theta}=(\theta_0,\theta_1,\theta_t,\theta_\infty)\in  \mathbb{C}^4$ with $\theta_\infty \neq 0$. Let $\boldsymbol{\Theta}(q)$ and $\overline{\boldsymbol{\Theta}}(q)$ be two quadrupels of elements of $\C(q)$ such that $\Theta_\infty \neq \overline{\Theta}_\infty$ and such that equations \eqref{thetarel} and \eqref{ThetaconfcondGen} hold. Let $( \boldsymbol{\lambda}(t,q), \y(t,q),\bZZ(t,q))$ be a triple of meromorphic functions in neighborhood of  $\mathfrak{D}\times \qDom\subset \C^2$ such that $\boldsymbol{\lambda}\y(\y-1)(\y-t)(1+(q-1)\y\bZZ)$ does not vanish identically on $\mathfrak{D}\times \qDom$ and let $(\lambda(t),y(t),\ZZ(t))$ be a triple of meromorphic functions on $\mathfrak{D}$ such that $\lambda y(y-1)(y-t)$ does not vanish identically. Then for $\widetilde{\mathfrak{A}}=\frac{\mathfrak{A}-\Itwo }{(q-1)x}$ and $A$ as in Lemmas \ref{AfrakfromTriple} and \ref{AparlamyZ} respectively, we have 
  \begin{equation}\label{eqlim0}\lim_{q\to 1}\widetilde{\mathfrak{A}}(x,t,q)=A(x,t)\end{equation} (in the above sense) if and only if  \eqref{eq:GrasVersMince} holds. 
 \end{prop}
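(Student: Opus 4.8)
The plan is to reduce the functional statement to the purely algebraic identity of rational functions already established in Proposition \ref{propThetaconf}, and then to control the substitution of the meromorphic families $(\boldsymbol{\lambda},\y,\bZZ)(t,q)$ into these rational functions for generic $t$. Throughout, I would regard the entries of $\widetilde{\mathfrak{A}}$ and of $A$ as elements of $\C(x,\boldsymbol{\lambda},\y,\bZZ,t,q)$ and $\C(x,\lambda,y,\ZZ,t)$ respectively, so that the concrete functions of $(x,t,q)$ appearing in the statement arise by substituting $\boldsymbol{\lambda}=\boldsymbol{\lambda}(t,q)$, $\y=\y(t,q)$, $\bZZ=\bZZ(t,q)$, and likewise $\lambda=\lambda(t)$, $y=y(t)$, $\ZZ=\ZZ(t)$.

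For the implication \eqref{eq:GrasVersMince} $\Rightarrow$ \eqref{eqlim0}, I would first invoke Proposition \ref{propThetaconf}: since \eqref{ThetaconfcondGen} is assumed, each entry of $\widetilde{\mathfrak{A}}$ is a rational function of $(x,\boldsymbol{\lambda},\y,\bZZ,t,q)$ for which $\{q=1\}$ is not a component of the polar divisor, and its restriction to $q=1$ is the corresponding entry of $A(x,\boldsymbol{\lambda},\y,\bZZ,t)$. Writing such an entry as $R_0(x,\boldsymbol{\lambda},\y,\bZZ,t)+(q-1)R_1(x,\boldsymbol{\lambda},\y,\bZZ,t,q)$, with $R_0$ the matching entry of $A$ and $R_1$ rational, I would then fix a generic $t\in\mathfrak{D}$ and let $q\to 1$ along $\qDom$. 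By hypothesis \eqref{eq:GrasVersMince} the evaluation point $(x,\boldsymbol{\lambda}(t,q),\y(t,q),\bZZ(t,q),t,q)$ converges to $(x,\lambda(t),y(t),\ZZ(t),t,1)$, and for $t$ outside a proper analytic subset and $x$ off the poles of $A(\cdot,t)$ this limit point avoids the polar loci of $R_0$ and $R_1$. Continuity of rational functions then yields $\lim_{q\to 1}R=R_0(x,\lambda(t),y(t),\ZZ(t),t)$, which is the corresponding entry of $A(x,t)$; performing this entry by entry gives \eqref{eqlim0}.

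For the converse \eqref{eqlim0} $\Rightarrow$ \eqref{eq:GrasVersMince}, I would recover the triple from the matrix through the defining relations \eqref{defqyZ} and \eqref{eqdefbZZ} and their differential analogues \eqref{defyZ} and \eqref{defZ}. Multiplying the $(1,2)$ entries by $(x-1)(x-t)$ turns \eqref{eqlim0} into the assertion that the degree-one polynomials $\boldsymbol{\lambda}(t,q)(x-\y(t,q))$ converge, for generic $t$, to $\lambda(t)(x-y(t))$; comparing coefficients of $x$ gives $\boldsymbol{\lambda}(t,q)\to\lambda(t)$ and $\boldsymbol{\lambda}(t,q)\y(t,q)\to\lambda(t)y(t)$, and since $\lambda(t)\neq 0$ for generic $t$ (this is where the hypothesis $\lambda y(y-1)(y-t)\not\equiv 0$ is used, to preserve the degree and hence let the root depend continuously on the coefficients) we deduce $\y(t,q)\to y(t)$. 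Finally $\bZZ(t,q)=\widetilde{\mathfrak{A}}^{(1,1)}(\y(t,q),t,q)$ converges to $A^{(1,1)}(y(t),t)=\ZZ(t)$, because $\y(t,q)\to y(t)$ with $y(t)\notin\{0,1,t\}$ generically keeps the evaluation point off the poles of $A^{(1,1)}(\cdot,t)$ while $\widetilde{\mathfrak{A}}^{(1,1)}\to A^{(1,1)}$; hence \eqref{eq:GrasVersMince} holds.

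I expect the main obstacle to be the careful bookkeeping of the word \emph{generic}: one must check that substituting the meromorphic families does not make the limit point fall onto the indeterminacy locus of the rational matrix, which is exactly what the non-vanishing assumptions $\boldsymbol{\lambda}\y(\y-1)(\y-t)(1+(q-1)\y\bZZ)\not\equiv 0$ and $\lambda y(y-1)(y-t)\not\equiv 0$ guarantee away from a proper analytic subset of $\mathfrak{D}$. All remaining steps are continuity of rational maps together with the continuous dependence of the root of a degree-one polynomial on its coefficients, so the crux is purely the avoidance of the bad loci rather than any new computation.
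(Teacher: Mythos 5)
Your proposal is correct and takes essentially the same route as the paper's proof: for the "if" direction you use the decomposition of each entry of $\widetilde{\mathfrak{A}}$ as (entry of $A$)$\,+\,(q-1)R_1$ supplied by Proposition \ref{propThetaconf} together with the observation that for generic $t$ (and $x$) the limit point avoids the polar loci, which is exactly the paper's argument with its auxiliary function $L=\frac{\widetilde{\mathfrak{A}}-A}{q-1}$; for the converse you recover $(\boldsymbol{\lambda},\y)$ from the $(1,2)$ entry and then $\bZZ$ from the $(1,1)$ entry evaluated at the moving point $\y(t,q)$, matching the paper's addition of the limits \eqref{eqlim3} and \eqref{eqlim4}. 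The only differences are presentational (entry-by-entry bookkeeping and the explicit coefficient comparison where the paper simply says "we deduce").
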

 \begin{proof}   Let us first prove the ``if'' part of the statement. 
 One the one hand, if \eqref{eq:GrasVersMince} holds, then 
 \begin{equation}\label{eqlim1}\lim_{q\to 1} \left[ (x,t,q)\mapsto A(x,\boldsymbol{\lambda}(t,q), \y(t,q),\bZZ(t,q))\right]=\left[(x,t)\mapsto A(x,\lambda(t),y(t),\ZZ(t))\right]\, .\end{equation}
 On the other hand, let us consider the rational function 
 $$L :=\frac{\widetilde{\mathfrak{A}}(x,\boldsymbol{\lambda}, \y,\bZZ,t)-A(x,\boldsymbol{\lambda}, \y, \bZZ,t)}{q-1}\in \C(x,\boldsymbol{\lambda}, \y,\bZZ,t,q)\, .$$
By definition of $\mathfrak{A}$, $A$ and by Proposition \ref{propThetaconf}, the affine part of the polar divisor of $L$ is contained in 
$$\{x\in \{0,1,t\}\}\cup \{\boldsymbol{\lambda}=0\}\cup  \{\y\in \{0,1,t\}\}\cup\{1+(q-1)\y\bZZ=0\} \cup\{t\in \{0,1\}\}\cup \{\Theta_\infty(q) = \overline{\Theta}_\infty(q)\}\,$$
and does not contain $\{q=1\}$. 
Hence if \eqref{eq:GrasVersMince} holds, then  \begin{equation}\label{eqlim2}
\lim_{q\to 1} \left[(x,t,q)\mapsto (q-1)L(x,\boldsymbol{\lambda}(t,q), \y(t,q),\bZZ(t,q),t,q)\right]=0\, .
\end{equation}
The addition of the limits \eqref{eqlim1} and \eqref{eqlim2} yields \eqref{eqlim0}. 

Let us now prove the ``only if'' part of the statement. If \eqref{eqlim0} holds, then the limit of the $(1,2)$ coefficient  of $\widetilde{\mathfrak{A}}(x,t,q)$ yields the $(1,2)$ coefficient of $A$ as $q\to 1$. From the explicit formulae, we deduce $$\lim_{q\to 1} \left[(x,t,q)\mapsto \frac{\boldsymbol{\lambda}(t,q)(x-\y(t,q))}{(x-1)(x-t)}\right] =\left[(x,t)\mapsto \frac{ {\lambda(t)}(x-y(t))}{(x-1)(x-t)}\right],$$
and thus $ \lim_{q\to 1} \left(\y(t,q), \boldsymbol{\lambda}(t,q)\right) =\left(y(t), \lambda(t)\right)$. 
By assumption, we have  $$\lim_{q\to 1}\left( \widetilde{\mathfrak{A}}(x,t,q)-A(x,t)\right)=0\, .$$
Since $\lim_{q\to 1}\y(t,q)=y(t)$, we deduce 
\begin{equation}\label{eqlim3} 0=\lim_{q\to 1}\left(\left( \widetilde{\mathfrak{A}}^{(1,1)}(x,t,q)-A^{(1,1)}(x,t)\right)|_{x=\y}\right)=\lim_{q\to 1}\left( \bZZ(t,q)-A^{(1,1)}(x,t)|_{x=\y}\right)\, .\end{equation}
 On the other hand, again from  $\lim_{q\to 1}\y(t,q)=y(t)$, we get 
\begin{equation}\label{eqlim4}0=\lim_{q\to 1}\left(A^{(1,1)}(x,t)|_{x=\y}-A^{(1,1)}(x,t)|_{x=y}\right)=\lim_{q\to 1}\left(A^{(1,1)}(x,t)|_{x=\y}-Z(t)\right)\, .\end{equation}
The addition of the limits \eqref{eqlim3} and \eqref{eqlim4}  yields $\lim_{q\to 1}\bZZ(t,q)=Z(t)$. 
  \end{proof}
   
 According to   the above proposition, under some generic hypotheses, for a convenient choice of spectral value functions $(\boldsymbol{\Theta}, \overline{\boldsymbol{\Theta}})(q)$,  equation \eqref{ThetaconfcondGen} provides a convenient setting for the discretization of families of $\mathfrak{sl}_2$-Fuchsian systems  as in Definition \ref{defi1}.  Here the convenient conditions the spectral value functions must satisfy are the following  (up to permutation of the roles of $\Theta_i$ and $\overline{\Theta}_i$ for $i\in \{0,1,t\}$):
  $$\left\{ \begin{array}{ll}
  \Theta_i(q)= 1+(q-1)\frac{\theta_i}{2}+O(q-1)^2&\forall i\in \{0,1,t,\infty\}
\vspace{.2cm}\\
\overline{\Theta}_i(q)= 1-(q-1)\frac{\theta_i}{2}+O(q-1)^2&\forall i\in \{0,1,t,\infty\}
\vspace{.2cm}\\
\Theta_0\overline{\Theta}_0=\Theta_\infty\overline{\Theta}_\infty\Theta_t\overline{\Theta}_t\Theta_1\overline{\Theta}_1\, \\
\Theta_\infty (q)\neq \overline{\Theta}_\infty (q).
  \end{array}\right.$$
 A simple way to achieve these conditions is to choose the following setting:
    $$\left\{ \begin{array}{ll}
  \Theta_i(q)= 1+(q-1)\frac{\theta_i}{2}+O(q-1)^2&\forall i\in \{0,1,t,\infty\}
\vspace{.2cm}\\
\overline{\Theta}_i= \frac{1}{\Theta_i} &\forall i\in \{0,1,t,\infty\}\\
\Theta_{\infty}(q)\neq 1 .
  \end{array}\right.$$
This convention $\Theta_i\overline{\Theta}_i=1$ can be seen as a  $q$-analogue of the tracefreeness of the differential Fuchsian systems we consider. 
Note that if $\Theta_i(q)$ is analytic in a neighborhood of $1$ and $ \Theta_i(q)= 1+(q-1)\frac{\theta_i}{2}+O(q-1)^2$, then the condition $\overline{\Theta}_i=\frac{1}{\Theta_i}$ implies that $$\Theta_i(q)+\overline{\Theta}_i(q)=2+(q-1)^2\frac{\theta_i^2}{4}+O(q-1)^3\, ,$$
independently of the particular value of the second order term in the Taylor series expansion of~$\Theta_i$. 
 
   \section{Schlesinger equations}
  \subsection{Differential case}
  Let $A_0,A_1,A_t\in \mathfrak{sl}_2(\C)$ and $t\in \C\setminus \{0,1\}$. 
  Consider the Fuchsian system $$\partial_x Y(x) =A(x) Y(x)\quad \textrm{with}\quad A(x)=\frac{A_0}{x}+\frac{A_1}{x-1}+\frac{A_t}{x-t}$$
  over $\mathbb{P}^1$.
An important invariant of such a system is its monodromy, defined as follows.

   In a neighborhood $V$ of a point $x_0\in  \C\setminus \{0,1, t\}$, this system admits a fundamental solution $\mathcal{Y}$, \emph{i.e.} a holomorphic function $\mathcal{Y}:V\to \mathrm{SL}_2(\C)$ satisfying 
  $\mathcal{Y}'=A\mathcal{Y}$, yielding a group homomorphism 
  $$\rho : \left\{\begin{array}{ccc}\pi_1( \mathbb{P}^1 \setminus \{0,1,t,\infty\} , x_0) &\to&  \mathrm{SL}_2 (\C)\\ \gamma & \mapsto & (\mathcal{Y}^{\gamma})^{-1} \cdot \mathcal{Y}\, ,
\end{array}\right.$$
  where $\mathcal{Y}^{\gamma}$ denotes the analytic continuation of $\mathcal{Y}$ along $\gamma$. 
  If $V$ is connected, any other fundamental solution on $V$ is of the form $\mathcal{Y}\cdot M$ for some matrix $M\in \mathrm{SL}_2 (\C)$. 
  Hence the conjugacy class 
  $$[\rho]:=\{ M^{-1}\rho M~|~M\in \mathrm{SL}_2 (\C)\} \subset \mathrm{Hom}(\pi_1( \mathbb{P}^1 \setminus \{0,1,t,\infty\} , x_0) ,  \mathrm{SL}_2 (\C))$$
  does not depend on the choice of the fundamental solution $\mathcal{Y}$ near $x_0$ and is referred as the \emph{monodromy} of the Fuchsian system. Note that the monodromy does not depend on the choice of the base point $x_0$ in the following sense. If $x_1\in  \C\setminus \{0,1, t\}$, we may choose a path $\gamma_1$ from $x_0$ to $x_1$ in  $\C\setminus \{0,1, t\}$, yielding an isomorphism 
  $\tau_{\gamma_1} : \pi_1( \mathbb{P}^1 \setminus \{0,1,t,\infty\} , x_1) \stackrel{\sim}{\to} \pi_1( \mathbb{P}^1 \setminus \{0,1,t,\infty\} , x_0)$. 
  The representation $\rho_1:=\rho\circ \tau_{\gamma_1} $ then is the monodromy representation with respect to the fundamental solution $\mathcal{Y}^{\gamma_1}$, and the conjugacy class 
  $[\rho_1]$ does not depend on the choice of the path $\gamma_1$.
  \begin{rem} In general, it is not possible to compute explicity the monodromy of the Fuchsian system associated to three matrices $A_0,A_1,A_t$ as above. 
  However, if $\gamma_0, \gamma_1, \gamma_t,\gamma_\infty$ denote the standard generators of $\pi_1( \mathbb{P}^1 \setminus \{0,1,t,\infty\} , x_0)$ (each $\gamma_i$ turning clockwise around $i$), then the matrix 
 $\rho(\gamma_i)$ is conjugated to the matrix $\mathrm{exp}(2\sqrt{-1}\pi A_i)$. 
    \end{rem}
    
    Let now 
    \begin{equation}\label{sl2Systembis}\partial_x Y(x,t) =A(x,t) Y(x,t)
    \end{equation} 
    be a family, parametrized by $t\in U$, of $\mathfrak{sl}_2$-Fuchsian systems over $\mathbb{P}^1$ with spectral data $\boldsymbol{\theta}$ as in Definition \ref{defi1}. Here   $U\subset \C\setminus \{0,1\}$ is a connected open subset and $\theta_\infty \neq 0$. Let $t_0\in U$ and let $\Delta \subset U$ be a small disc centered at $t_0$ such that $0,1\notin \Delta$. Let $x_0\in \mathbb{P}^1 \setminus (\{0,1,\infty\}\cup \Delta)$.  Then for any $t\in \Delta$ we have a canonical isomorphism
    $$\pi_1( \mathbb{P}^1 \setminus \{0,1,t,\infty\} , x_0)\simeq \pi_1\left(  \mathbb{P}^1 \setminus (\{0,1,\infty\}\cup \Delta) , x_0\right)\, .$$
    By the Cauchy-Kowalewskaja theorem on linear differential equations with parameters \cite[thm. 9.4.5, p. 348]{hormander1985analysis}, see also \cite[p. 14]{MyBook}, if $\Delta$ is sufficiently small, there exists a neighborhood $V$ of $x_0\in \mathbb{P}^1 \setminus (\{0,1,\infty\}\cup \Delta)$ such that there is a local holomorphic fundamental solution $\mathcal{Y}: V\times \Delta \to \mathrm{SL}_2 (\mathbb{C})$ satisfying $\partial_x \mathcal{Y} =A \mathcal{Y}$. 
    For any $t_1\in \Delta$, this fundamental solution provides a group homomorphism 
     $$\rho_{t_1} : \left\{\begin{array}{ccc}\pi_1(  \mathbb{P}^1 \setminus (\{0,1,\infty\}\cup \Delta)  , x_0) &\to&  \mathrm{SL}_2 (\C)\\ \gamma & \mapsto & (\mathcal{Y}^{\gamma})^{-1}(x_0,t_1) \cdot \mathcal{Y}(x_0,t_1)\, .
\end{array}\right.$$
    This yields a holomorphic family $(\rho_t)_{t\in \Delta}$ of representations, and one may consider the induced family $([\rho_t])_{t\in \Delta}$ of conjugacy classes of representations of $\pi_1(  \mathbb{P}^1 \setminus (\{0,1,\infty\}\cup \Delta)  , x_0)$. 
    
    \begin{defn}\label{defisom} 
    We say that \eqref{sl2Systembis} is \emph{isomonodromic} if one of the two following equivalent properties hold. 
        \begin{enumerate}
    \item Any point $t_0\in U$ admits a neighborhood $\Delta$ such that the associated family $([\rho_t])_{t\in \Delta}$ of monodromies is constant, \emph{i.e.} 
    $$[\rho_t]=[\rho_{t'}]\quad \quad \forall t,t'\in \Delta\, .$$
    \item The system \eqref{sl2Systembis} can locally be \emph{completed into a Lax pair}, \emph{i.e.} any point $t_0\in U$ admits a neighborhood $\Delta$ where there exists 
    $B\in \mathfrak{sl}_2(\mathcal{O}(\Delta)(x))$ such that the following holds:
    \begin{itemize} 
    \item[$\bullet$] the polar locus of $B$ is contained in $$D:=\{x=0\}\cup\{x=1\}\cup\{x=t\}\cup\{x=\infty\}\subset \mathbb{P}^1\times \Delta\, ,$$ 
    \item[$\bullet$] and the system of differential equations 
   $$\left\{\begin{array}{lll}\partial_x  Y(x,t)&=&A(x,t) Y(x,t)\vspace{.2cm} \\\partial_t Y(x,t)& =&B(x,t) Y(x,t)\end{array}\right.\quad \quad$$
over $\mathbb{P}^1\times \Delta$ satisfies  the \emph{Lax equation} $$\partial_t A-\partial_x B=[B,A]\, .$$
\end{itemize}
    \end{enumerate}
    \end{defn}
    
 That these properties are indeed equivalent can deduced from   \cite[Thm. 2]{bolibruch1997isomonodromic}. Moreover, considering the special case of non-resonant spectral data in \cite[Thm. 3]{bolibruch1997isomonodromic}, we have the following lemma. 

\begin{lem}\label{lem2} Assume that the spectral data $\boldsymbol{\theta}$ satisfy the non-resonancy condition \eqref{eq:resonance}. 
Under that condition, if the system \eqref{sl2Systembis} can be {completed into a Lax pair} over $\mathbb{P}^1\times \Delta$ \emph{via} a certain matrix function $B$, then this matrix is of the   form
$$B(x,t)=-\frac{A_t(t)}{x-t}+C(t)\, ,$$
where $C\in \mathfrak{sl}_2(\mathcal{O}(\Delta))$ is a tracefree diagonal matrix function. 
\end{lem}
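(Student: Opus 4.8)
The plan is to pin down $B$ entirely from its pole structure, analysing the Lax equation $\partial_t A-\partial_x B=[B,A]$ pole by pole and invoking the non-resonancy condition \eqref{eq:resonance} at each singular point. Since $B\in \mathfrak{sl}_2(\mathcal{O}(\Delta)(x))$ has poles only along $D$, I would write $B$ as the sum of its principal parts at $x=0,1,t$ together with a polynomial part in $x$ accounting for the behaviour at $x=\infty$, and then kill or determine each piece in turn. The recurring mechanism is that the leading coefficient of any putative pole is forced to be an eigenvector of an adjoint operator $N\mapsto[A_i,N]$ for a nonzero integer eigenvalue, which \eqref{eq:resonance} excludes.

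First I would treat the fixed poles $x=0$ and $x=1$. Suppose $B$ had a pole of order $m\geq 1$ at $x=0$ with leading coefficient $B_{0,-m}$. Comparing the coefficients of $x^{-m-1}$ on both sides of the Lax equation, where $\partial_t A$ contributes nothing since it has only a simple pole at the fixed point $x=0$, yields $m\,B_{0,-m}=[B_{0,-m},A_0]$, that is $[A_0,B_{0,-m}]=-m\,B_{0,-m}$. Since $\mathrm{Spec}(A_0)=\{\pm\tfrac12\theta_0\}$, the operator $N\mapsto[A_0,N]$ has eigenvalues $\{0,\pm\theta_0\}$, and non-resonancy forces $\theta_0\neq \pm m$; hence $-m$ is not an eigenvalue and $B_{0,-m}=0$, a contradiction. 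Thus $B$ is holomorphic at $x=0$, and the identical argument applies at $x=1$.

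Next comes the moving pole $x=t$, the delicate case, because here $\partial_t A$ produces the double-pole term $A_t/(x-t)^2$. The same leading-order comparison first rules out poles of order $m\geq 2$ (the relevant coefficient equation $[A_t,B_{t,-m}]=-m\,B_{t,-m}$ is excluded by $\theta_t\neq\pm m$), so $B$ has at most a simple pole at $x=t$, say with residue $\rho$. Matching the coefficients of $(x-t)^{-2}$ now gives $A_t+\rho=[\rho,A_t]$; writing $\rho=-A_t+S$ reduces this to $[A_t,S]=-S$, and non-resonancy ($\theta_t\neq\pm 1$) forces $S=0$, whence $\rho=-A_t$. At this stage $B+\frac{A_t}{x-t}$ is holomorphic on all of the affine $x$-line, hence equals a polynomial $P(x,t)$ in $x$ with coefficients in $\mathcal{O}(\Delta)$.

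Finally I would analyse $x=\infty$. Expanding $A=\sum_{k\geq 1}a_k x^{-k}$ with $a_1=-A_\infty$, if $P$ had degree $N\geq 1$ with top coefficient $P_N$, then matching the coefficient of $x^{N-1}$ (only $-\partial_x P$ and the leading term of $[P,A]$ contribute) gives $[A_\infty,P_N]=-N\,P_N$; since $A_\infty=\mathrm{diag}(\tfrac12\theta_\infty,-\tfrac12\theta_\infty)$, non-resonancy ($\theta_\infty\neq \pm N$) forces $P_N=0$, so $P=C(t)$ is independent of $x$. To see that $C$ is diagonal, I would match the coefficient of $x^{-1}$: there $\partial_t A$ contributes $0$ because $a_1=-A_\infty$ is constant in $t$ by the normalization in Definition \ref{defi1}, $\partial_x B$ starts only at order $x^{-2}$, while $[B,A]$ contributes $[C,-A_\infty]$; hence $[C,A_\infty]=0$. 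Since $\theta_\infty\neq 0$, the centraliser of $A_\infty$ is exactly the diagonal subalgebra, so $C$ is diagonal, and it is tracefree because $B$ and $A_t$ are. The main obstacle is simply the careful bookkeeping of the Laurent and Taylor coefficients of the Lax equation at each singular point, and in particular getting the double-pole contribution at the moving pole $x=t$ right; the conceptual content is uniformly the exclusion of nonzero integer adjoint-eigenvalues by \eqref{eq:resonance}.
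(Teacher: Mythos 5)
Your proof is correct. Each coefficient extraction checks out: at the fixed poles the absence of a $(x-i)^{-m-1}$ term in $\partial_t A$ gives $[A_i,B_{i,-m}]=-mB_{i,-m}$, at the moving pole the extra term $A_t/(x-t)^2$ in $\partial_t A$ is exactly what produces $A_t+\rho=[\rho,A_t]$ and hence $\rho=-A_t$ after the substitution $\rho=-A_t+S$, and at infinity the constancy of $A_\infty$ kills the $x^{-1}$ contribution of $\partial_t A$, giving $[C,A_\infty]=0$ so that $C$ is diagonal (using $\theta_\infty\neq 0$) and tracefree (since $B,A_t\in\mathfrak{sl}_2$). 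Two small points you leave implicit but which hold: the spectral fact you invoke is that the eigenvalues of $N\mapsto[A_i,N]$ are the differences of eigenvalues of $A_i$, namely $\{0,\pm\theta_i\}$, and this remains valid in the degenerate case $\theta_i=0$ with $A_i$ nilpotent (the adjoint is then nilpotent, so again no nonzero integer eigenvalue); and the holomorphy of $C$ on $\Delta$ follows from the requirement that the polar locus of $B$ be contained in $D$, since a pole of $C$ at some $t_*\in\Delta$ would create a polar component $\{t=t_*\}$ of $B$ not contained in $D$.

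Where you differ from the paper: the paper does not prove this lemma at all — it deduces it by specializing Theorem 3 of Bolibruch's work on isomonodromic deformations (cited as \cite{bolibruch1997isomonodromic}) to the non-resonant case. Your pole-by-pole Laurent analysis is the self-contained, elementary argument that underlies that citation; it buys independence from the reference and makes transparent exactly where each non-resonancy hypothesis $\theta_i\notin\Z^*$ (and the normalization $\theta_\infty\neq 0$, $A_\infty$ constant) is used, whereas the paper's route imports a more general statement valid beyond this specific $\mathfrak{sl}_2$, four-pole setting. It is also worth noting that your argument only extracts \emph{necessary} conditions from the Lax equation, which is all the lemma claims; the remaining coefficients (the simple-pole residues) are what produce the deformation equations treated later in the paper.
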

 
 A key ingredient in the proof of the above classical results are the well-known Schlesinger equations. They yield particular types of ismonodromic deformations. 

\begin{defn}\label{defSchles} 
We say that \eqref{sl2Systembis} is \emph{Schlesinger isomonodromic} if one of the two following equivalent properties hold. 
        \begin{enumerate}
    \item The system \eqref{sl2Systembis} can be completed into a Lax pair \emph{via} the matrix 
    $$B(x,t)=-\frac{A_t(t)}{x-t}\, .$$
    \item The residues $A_i(t)$ of  \eqref{sl2Systembis}  satisfy the \emph{Schlesinger equations}
    \begin{equation}\label{Schles}\left\{\begin{array}{rcc}
A_0'(t)&=& \frac{[A_0(t),A_t(t)]}{0-t} \vspace{.2cm}\\
A_1'(t)&=&\frac{[A_1(t),A_t(t)]}{1-t} \vspace{.2cm}\\
A_t'(t)&=& -\frac{\left[A_{0}(t) ,A_{t}(t)\right]}{0-t}-\frac{\left[A_{1}(t),A_{t}(t)\right]}{1-t} \, .
\end{array}\right.
\end{equation}
 \end{enumerate}
    \end{defn}
In order to see that these properties are equivalent, it suffices to compare the residues at ${x=0,1,t}$ of the Lax equation in the case $B =-\frac{A_t(t)}{x-t}$. 
 Of course Schlesinger isomonodromic families are isomonodromic. As is immediate to check, the converse holds locally up to conjugation:

\begin{lem}\label{lemconj}
 Assume that \eqref{sl2Systembis} is isomonodromic and non-resonant. Let us consider  ${C(t)=\mathrm{diag}(c(t),-c(t))\in \mathfrak{sl}_2(\mathcal{O}(\Delta))}$ be the tracefree diagonal matrix appearing in Lemma \ref{lem2}. Let $t_0\in U$ and let $\Delta$ be a sufficiently small neighborhood of $t_0$ such that there exists a non-vanishing holomorphic function $\mu\in \mathcal{O}(\Delta)$ such that $\mu'(t)=c(t)\mu(t)$.
 Then, the gauge transformation $Y=M\widetilde{Y}$ with $M(t)=\mathrm{diag}\left(\mu(t),\frac{1}{\mu (t)}\right)$
 yields a family $$\partial_x \widetilde{Y}=\widetilde{A} \widetilde{Y}$$ of $\mathfrak{sl}_2$-Fuchsian systems  which is Schlesinger isomonodromic. 
  \end{lem}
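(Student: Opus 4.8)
The plan is to apply the gauge transformation directly to the Lax pair furnished by Lemma~\ref{lem2} and to check that the diagonal correction term of $B$ is exactly absorbed by $M$, leaving the purely Schlesinger form $-\widetilde{A}_t/(x-t)$. Since \eqref{sl2Systembis} is isomonodromic and non-resonant, the equivalence in Definition~\ref{defisom} together with Lemma~\ref{lem2} provides, on a neighborhood $\Delta$ of any $t_0\in U$, a completion into a Lax pair
$$\left\{\begin{array}{lll}\partial_x Y&=&A\,Y\\ \partial_t Y&=&B\,Y\end{array}\right.\qquad B(x,t)=-\frac{A_t(t)}{x-t}+C(t),\quad C=\mathrm{diag}(c,-c).$$
Shrinking $\Delta$ if necessary, a non-vanishing $\mu\in\mathcal{O}(\Delta)$ with $\mu'=c\mu$ exists by integrating $c$ and exponentiating (this is the hypothesis recorded in the statement), so that $M=\mathrm{diag}(\mu,1/\mu)\in\mathrm{SL}_2(\mathcal{O}(\Delta))$ is well defined.

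Next I would compute the effect of the substitution $Y=M\widetilde{Y}$. As $M$ does not depend on $x$, the spatial equation becomes $\partial_x\widetilde{Y}=\widetilde{A}\,\widetilde{Y}$ with $\widetilde{A}=M^{-1}AM$; in particular each residue transforms by conjugation, $\widetilde{A}_i=M^{-1}A_iM$ for $i\in\{0,1,t\}$. The temporal equation becomes $\partial_t\widetilde{Y}=\widetilde{B}\,\widetilde{Y}$ with $\widetilde{B}=M^{-1}BM-M^{-1}M'$. The key observation is that $M^{-1}M'=\mathrm{diag}(\mu'/\mu,-\mu'/\mu)=\mathrm{diag}(c,-c)=C$, using precisely the defining relation $\mu'=c\mu$, while $M^{-1}CM=C$ because $C$ and $M$ are both diagonal and hence commute. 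Therefore
$$\widetilde{B}=M^{-1}\!\left(-\frac{A_t}{x-t}+C\right)\!M-C=-\frac{M^{-1}A_tM}{x-t}+C-C=-\frac{\widetilde{A}_t}{x-t},$$
which is exactly the shape required in Definition~\ref{defSchles}(1).

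It then remains to verify that $\partial_x\widetilde{Y}=\widetilde{A}\,\widetilde{Y}$ is still a family of $\mathfrak{sl}_2$-Fuchsian systems with the same spectral data $\boldsymbol{\theta}$: conjugation by $M$ preserves tracefreeness and the spectra $\{\tfrac12\theta_i,-\tfrac12\theta_i\}$ of the residues, and since $A_\infty$ is diagonal and $M$ is diagonal, $\widetilde{A}_\infty=M^{-1}A_\infty M=A_\infty$ retains its normalization. Finally, the gauge-transformed pair again satisfies the Lax equation $\partial_t\widetilde{A}-\partial_x\widetilde{B}=[\widetilde{B},\widetilde{A}]$, because the compatibility $\partial_t\partial_x\widetilde{Y}=\partial_x\partial_t\widetilde{Y}$ for $\widetilde{Y}=M^{-1}Y$ is equivalent to that for $Y$; hence by Definition~\ref{defSchles}(1) the family $\partial_x\widetilde{Y}=\widetilde{A}\,\widetilde{Y}$ is Schlesinger isomonodromic. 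I do not expect a genuine obstacle here, in line with the phrase \emph{``as is immediate to check''}: the entire argument is the one-line cancellation $M^{-1}M'=C$, and the only points requiring a word of care are the existence of $\mu$ on a small enough disc and the fact that the diagonality of both $C$ and $A_\infty$ is what guarantees the normalization survives conjugation.
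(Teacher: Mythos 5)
Your proof is correct and is precisely the computation the paper deems ``immediate to check'' (the paper gives no written proof of this lemma): the gauge transformation yields $\widetilde{B}=M^{-1}BM-M^{-1}M'$, and the cancellation $M^{-1}M'=\mathrm{diag}(\mu'/\mu,-\mu'/\mu)=C$ together with $M^{-1}CM=C$ reduces $\widetilde{B}$ to the Schlesinger form $-\widetilde{A}_t/(x-t)$, while diagonality of $M$ preserves the $\mathfrak{sl}_2$-Fuchsian structure and the normalization of $A_\infty$. Your closing verifications (preservation of the Lax equation under gauge, holomorphy and non-vanishing of $\mu$ on a small disc) are exactly the points of care needed, so nothing is missing.
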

 
By definition, the family  \eqref{sl2Systembis} is Schlesinger isomonodromic if and only if the entries of $A$ satisfy a certain list of differential equations. Since these entries may be written in terms of the triple $(\lambda,y,Z)$, see Lemma \ref{AparlamyZ}, we may translate the differential equations in terms of the entries of $A$ into differential equations in terms of the triple $(\lambda,y,Z)$. This will lead to the sixth Painlev\'e equation, according to the following classical results, due to R. Fuchs \cite{fuchs1907lineare}.

\begin{prop}\label{isoPain}
Let $\partial_x Y=A (x,t)Y$ be a family of Fuchsian $\mathfrak{sl}_2$-systems with spectral data $\boldsymbol{\theta}$ as in Definition~\ref{defi1}, giving rise to a triple 
  $(\lambda,y,\ZZ)$ as in \eqref{lamyZcond}. \\ Then $\partial_x Y=A Y$ is Schlesinger isomonodromic if and only if $(y,\ZZ)$ is a solution of \begin{equation}\label{eqHVI}\left\{ \begin{array}{rcl}
y'(t)&=& \frac{y(y-1)(y-t)}{t(t-1)}\left(2\ZZ+\frac{1}{y-t}\right)\vspace{.2cm}
\\
\ZZ'(t)&=&\frac{-3y^2+2(t+1)y-t}{t(t-1)}\ZZ^2-\frac{2y-1}{t(t-1)}\ZZ+\frac{1}{4}\left( \frac{(\theta_\infty-1)^2-1}{t(t-1)}-\frac{\theta_0^2}{(t-1)y^2}-\frac{\theta_t^2 }{(y-t)^2}+\frac{\theta_1^2}{t(y-1)^2}\right)\, ,\end{array}\right.\end{equation}
 and $\lambda$ is a solution of
\begin{equation}\label{eqlam}\frac{\lambda'(t)}{\lambda(t)} = \frac{(\theta_\infty-1)(y(t)-t)}{t(t-1)} \, .\end{equation}
\end{prop}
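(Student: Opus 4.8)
The plan is to exploit the characterisation of Schlesinger isomonodromy given in Definition~\ref{defSchles}: the family $\partial_x Y = AY$ is Schlesinger isomonodromic if and only if the Lax equation
\[
\partial_t A - \partial_x B = [B,A], \qquad B(x,t) = -\frac{A_t(t)}{x-t},
\]
holds identically in $x$. By Lemma~\ref{AparlamyZ} the entries of $A$ are explicit rational functions of $(\lambda, y, \ZZ, x, t)$ and $\boldsymbol{\theta}$, whereas conversely the triple $(\lambda, y, \ZZ)$ is recovered from $A$ through the defining relations \eqref{defyZ} and \eqref{defZ}. Hence the assignment $A \leftrightarrow (\lambda, y, \ZZ)$ is a bijection between families of $\mathfrak{sl}_2$-Fuchsian systems with spectral data $\boldsymbol{\theta}$ and triples satisfying \eqref{lamyZcond}, and the matrix Lax equation is equivalent to a closed system of scalar ordinary differential equations for $(\lambda, y, \ZZ)$. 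It then remains to identify this system with \eqref{eqHVI}--\eqref{eqlam}; since every manipulation below is reversible, this settles both implications at once.

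For the equation governing $y$, I would differentiate the defining relation $A^{(1,2)}(y(t),t)=0$ of \eqref{defyZ} totally in $t$, giving $y'\,\partial_x A^{(1,2)}(y,t) + \partial_t A^{(1,2)}(y,t) = 0$. Replacing $\partial_t A^{(1,2)}$ by $\partial_x B^{(1,2)} + [B,A]^{(1,2)}$ and evaluating at $x=y$, the commutator collapses: since $A^{(1,2)}(y)=0$ and $A$ is tracefree, one gets $[B,A]^{(1,2)}(y) = B^{(1,2)}(y)\bigl(A^{(2,2)}(y)-A^{(1,1)}(y)\bigr) = -2\ZZ\,B^{(1,2)}(y)$, using \eqref{defZ}. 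As $A^{(1,2)} = \tfrac{\lambda(x-y)}{x(x-1)(x-t)}$, both $B^{(1,2)}(y) = \tfrac{\lambda}{t(t-1)}$ and $\partial_x A^{(1,2)}(y,t)$ are explicit, and solving for $y'$ produces the first line of \eqref{eqHVI}. The equation for $\lambda$ comes from the same $(1,2)$-component, read off this time from the coefficient of $x^{-2}$ in the expansion at $x=\infty$: there $\partial_t A^{(1,2)}$ contributes $\lambda'$, the term $\partial_x B^{(1,2)}$ contributes the residue $A_t^{(1,2)} = \tfrac{\lambda(t-y)}{t(t-1)}$, and $[B,A]^{(1,2)}$ contributes $-\theta_\infty A_t^{(1,2)}$ (using $A^{(1,1)} \sim -\tfrac{\theta_\infty}{2}x^{-1}$, forced by the normalisation of $A_\infty$). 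This yields $\lambda' = (1-\theta_\infty)A_t^{(1,2)}$, which is precisely \eqref{eqlam}.

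The remaining and most laborious step is the equation for $\ZZ$. Here I would differentiate $\ZZ(t) = A^{(1,1)}(y(t),t)$ totally in $t$, substitute the $(1,1)$-entry of the Lax equation, and again evaluate at $x=y$, where the commutator reduces to $B^{(1,2)}(y)\,A^{(2,1)}(y)$. In contrast to the previous step, this computation genuinely requires the full explicit form of $A$ from Lemma~\ref{AparlamyZ}: the entry $A^{(2,1)}$ carries the coefficients $c_0,c_1$ and the auxiliary quantity $a$, and it is precisely their combination that generates the $\theta_0^2, \theta_1^2, \theta_t^2, (\theta_\infty-1)^2$ terms appearing in the second line of \eqref{eqHVI}. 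I expect this to be the main obstacle, although the difficulty is one of organising a sizeable rational simplification rather than of strategy; the partial-fraction structure in $x$ (poles only at $0,1,t$) together with the already-derived expression for $y'$ can be used to keep the bookkeeping under control. Once the three scalar equations are matched with \eqref{eqHVI}--\eqref{eqlam}, the equivalence asserted in the proposition follows from the bijectivity of $A \leftrightarrow (\lambda, y, \ZZ)$ recorded above.
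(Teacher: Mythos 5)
Your forward-direction computations are correct and efficiently organized: evaluating the $(1,2)$ and $(1,1)$ entries of the Lax equation at $x=y$ (where $A^{(1,2)}$ vanishes, so the commutator collapses exactly as you say) and reading off the $x^{-2}$ coefficient at infinity do produce the first line of \eqref{eqHVI}, equation \eqref{eqlam}, and --- after the heavy simplification you defer --- the second line of \eqref{eqHVI}. The gap is in the converse. The proposition is an equivalence, and your justification of the ``if'' direction, namely that ``every manipulation below is reversible'', is not true as stated: you obtain the three scalar equations by evaluating a matrix identity of rational functions of $x$ at one point and one Laurent coefficient, and such extractions lose information. Concretely, set $M:=\partial_t A-\partial_x B-[B,A]$, where $A$ is built from $(\lambda,y,\ZZ)$ via Lemma \ref{AparlamyZ} and $\partial_t A$ is computed from assumed values of $(\lambda',y',\ZZ')$. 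Each entry of $M$ has the form $\frac{r_0}{x}+\frac{r_1}{x-1}+\frac{r_t}{x-t}$ with $r_0+r_1+r_t=0$ (the double poles at $x=t$ cancel automatically, and the $x^{-1}$ term at infinity vanishes because $A_\infty$ is constant by construction). For the $(1,2)$ entry, your two conditions together with this relation do force all three residues to vanish --- the relevant determinant is $t(t-1)/\bigl(y(y-1)(y-t)\bigr)\neq 0$ --- but you never perform this check. For the $(1,1)$ entry you impose only the single condition $M^{(1,1)}(y)=0$, which together with $r_0+r_1+r_t=0$ still leaves a one-parameter family of admissible nonzero $M^{(1,1)}$, and you impose nothing whatsoever on $M^{(2,1)}$. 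So \eqref{eqHVI}--\eqref{eqlam} is a priori strictly weaker than the Lax equation, and Schlesinger isomonodromy does not yet follow from it.

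There are two ways to close the gap. The paper's route (otherwise parallel to yours, but organized through the residue form rather than pointwise evaluations) is to substitute Lemma \ref{AparlamyZ} into $\mathrm{Sch}_0:=A_0'+\frac{1}{t}[A_0,A_t]$ and $\mathrm{Sch}_1:=A_1'-\frac{1}{1-t}[A_1,A_t]$ and verify \emph{all} entries: the $(1,2)$ entries are equivalent to \eqref{eqlam} and the first line of \eqref{eqHVI}, the $(1,1)$ entries to \eqref{eqHVI}, and one then checks (with computer assistance) that the $(2,1)$ entries vanish automatically once \eqref{eqlam} and \eqref{eqHVI} hold; tracefreeness handles $(2,2)$. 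Alternatively, your bijection idea can be salvaged by one structural observation that you use implicitly but never state: the Schlesinger flow is \emph{tangent} to the three-parameter family parametrized by $(\lambda,y,\ZZ)$. Indeed, the equations \eqref{Schles} write each $A_i'$ as a commutator $[\,\cdot\,,A_i]$, hence preserve $\mathrm{Spec}(A_i)$, and they give $A_\infty'=0$, hence preserve the normalization of $A_\infty$; therefore the Schlesinger vector field restricts to the family and pushes forward through your bijection to a well-defined first-order system in normal form for $(\lambda,y,\ZZ)$. Only with this tangency in hand does your forward computation identify that induced system as \eqref{eqHVI}--\eqref{eqlam}, and only then do the two normal-form systems coincide, yielding the equivalence. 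Without it, your assertion that ``the matrix Lax equation is equivalent to a closed system of scalar ordinary differential equations for $(\lambda, y, \ZZ)$'' is precisely the statement that needs proof, not a consequence of the bijection alone.
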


\begin{proof} Since this is less often detailed in the literature, let us explain how to verify this result by direct computation. 
Since $A_\infty'(t)=0$, we have $A_t'(t)=-A_0'(t)-A_1'(t)$.
Then, the Schlesinger equations  \eqref{Schles} are equivalent to the vanishing of the following two matrices
$$\begin{array}{lcccc}\mathrm{Sch}_0&:=&A_0'(t) +\frac{1}{t}[A_0(t),A_t(t)]  \vspace{.2cm}\\
\mathrm{Sch}_1&:=&A_1'(t)-\frac{1}{1-t}[A_1(t),A_t(t)]\, . \end{array} $$
Substituting the explicit values of the entries the $A_i$'s given by Lemma \ref{AparlamyZ}, one easily checks that $\mathrm{Sch}_0^{(1,2)}\equiv \mathrm{Sch}_1^{(1,2)} \equiv 0$ is equivalent to $$\left\{\begin{array}{lll}
\frac{\lambda'(t)}{\lambda(t)} &=& \frac{(\theta_\infty-1)(y(t)-t)}{t(t-1)} \vspace{.2cm}\\
y'(t)&=& \frac{y(y-1)(y-t)}{t(t-1)}\left(2\ZZ+\frac{1}{y-t}\right).
\end{array}\right.$$
With some (computer assisted) effort, one checks that $\mathrm{Sch}_0^{(1,1)}\equiv \mathrm{Sch}_1^{(1,1)} \equiv 0$ is equivalent to  \eqref{eqHVI}. Finally, one checks that if \eqref{eqlam} and \eqref{eqHVI} hold, then the coefficients $\mathrm{Sch}_i^{(2,1)}$ automatically vanish. Since moreover $\mathrm{Sch}_i^{(2,2)}=-\mathrm{Sch}_i^{(1,1)}$ by tracefreeness, the result follows.
\end{proof}

\begin{cor}\label{corisoiff}
Assume that $\boldsymbol{\theta}$ satisfies the non-resonancy condition \eqref{eq:resonance}. 
Let $\partial_x Y=A Y$ be a family of Fuchsian $\mathfrak{sl}_2$-systems   with spectral data $\boldsymbol{\theta}$ as in Definition~\ref{defi1}, parametrized by $t\in U\subset \C\setminus \{0,1\}$, giving rise to a triple 
  $(\lambda,y,\ZZ)$ as in \eqref{lamyZcond}.
  Define $$U^*:=\{t\in U~|~\lambda(t)\neq 0\, , y(t)\neq \infty\}.$$
  Then the family $\partial_x Y=A Y$ parametrized by $U^*$
 is isomonodromic  if and only if $(y,\ZZ)$ is a solution of \eqref{eqHVI}. \end{cor}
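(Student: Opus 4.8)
The plan is to reduce Corollary~\ref{corisoiff} to Proposition~\ref{isoPain} by invoking the equivalence between isomonodromy and Schlesinger isomonodromy up to gauge, which is available under the non-resonancy hypothesis. First I would observe that by Proposition~\ref{isoPain}, Schlesinger isomonodromy of $\partial_x Y=AY$ is equivalent to the conjunction of the system \eqref{eqHVI} for $(y,\ZZ)$ and the scalar equation \eqref{eqlam} for $\lambda$. So the content of the corollary is precisely that \emph{dropping} the constraint \eqref{eqlam} on $\lambda$ corresponds exactly to passing from Schlesinger isomonodromy to plain isomonodromy. This is where the restriction to $U^*=\{t\in U~|~\lambda(t)\neq 0,\ y(t)\neq \infty\}$ enters: on $U^*$ the triple $(\lambda,y,\ZZ)$ genuinely encodes the matrix $A$ via Lemma~\ref{AparlamyZ}, with $\lambda$ holomorphic and non-vanishing and $y$ finite.

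For the ``only if'' direction, I would argue that if the family is isomonodromic and non-resonant, then by Lemma~\ref{lem2} it can be completed into a Lax pair with $B(x,t)=-\frac{A_t(t)}{x-t}+C(t)$ for some tracefree diagonal $C(t)=\mathrm{diag}(c(t),-c(t))$. By Lemma~\ref{lemconj}, the gauge transformation $Y=M\widetilde{Y}$ with $M(t)=\mathrm{diag}(\mu(t),\mu(t)^{-1})$, where $\mu'=c\mu$, yields a Schlesinger isomonodromic family $\partial_x\widetilde{Y}=\widetilde{A}\widetilde{Y}$. The key point is that this diagonal gauge transformation multiplies the $(1,2)$-entry of $A$ by $\mu^2$ and hence changes $\lambda$ into $\tilde\lambda=\mu^2\lambda$, but leaves the locus $A^{(1,2)}=0$ unchanged and thus preserves $y$; it also preserves $\ZZ=A^{(1,1)}(y,t)$ since diagonal conjugation fixes diagonal entries. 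Therefore the associated triple becomes $(\tilde\lambda,y,\ZZ)$ with the \emph{same} $(y,\ZZ)$, and applying Proposition~\ref{isoPain} to the Schlesinger isomonodromic family $\partial_x\widetilde{Y}=\widetilde{A}\widetilde{Y}$ gives that $(y,\ZZ)$ solves \eqref{eqHVI}.

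For the ``if'' direction, I would start from the assumption that $(y,\ZZ)$ solves \eqref{eqHVI} on $U^*$. The equation \eqref{eqlam} is a first-order linear homogeneous scalar ODE for $\lambda$ whose right-hand side $\frac{(\theta_\infty-1)(y(t)-t)}{t(t-1)}$ is holomorphic on $U^*$ (recall $\theta_\infty\neq 0$, $y$ finite, $t\neq 0,1$); locally near any $t_0\in U^*$ it therefore admits a non-vanishing holomorphic solution $\tilde\lambda$. Setting $\tilde\mu(t)=(\tilde\lambda/\lambda)^{1/2}$ locally and applying the inverse diagonal gauge transformation, I obtain a family giving rise to the triple $(\tilde\lambda,y,\ZZ)$ that, by Proposition~\ref{isoPain}, is Schlesinger isomonodromic, hence isomonodromic; since isomonodromy is a gauge-invariant property (it only concerns the conjugacy class of the monodromy representation), the original family $\partial_x Y=AY$ is isomonodromic as well. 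This is the step I expect to be the most delicate to phrase cleanly: one must check that the diagonal gauge transformation relating the two families is well defined and non-vanishing on a neighborhood of each point of $U^*$, and that isomonodromy, being a purely monodromy-theoretic condition, is insensitive to such a gauge change. Once that is granted, the corollary follows by combining the two directions.
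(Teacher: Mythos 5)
Your proposal is correct and follows essentially the same route as the paper's own proof: both directions reduce to Proposition~\ref{isoPain} via Lemma~\ref{lemconj}, using that a $t$-dependent diagonal gauge transformation changes only $\lambda$ (not $y$ or $\ZZ$) and preserves isomonodromy. The only differences are expository (you spell out the gauge-invariance of the monodromy and the local square-root choice for $\mu$, which the paper leaves implicit).
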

 
\begin{proof} If $\partial_x Y =A Y$  is isomonodromic, then by Lemma \ref{lemconj}, it is locally  conjugated to a Schlesinger isomonodromic family \emph{via} a diagonal gauge transformation. The latter does not  affect $(y,\ZZ)$.  Hence by Proposition~\ref{isoPain}, $(y,\ZZ)$ is a solution of \eqref{eqHVI}.\par 
Conversely, if $(y,\ZZ)$ is a solution of \eqref{eqHVI}, then locally in $U^*$ one may choose a local non-vanishing solution $\widetilde{\lambda}$ of \eqref{eqlam}. One obtains a family of Fuchsian $\mathfrak{sl}_2$-systems $\partial_x \widetilde{Y}=\widetilde{A} \widetilde{Y}$ given by $(\widetilde{\lambda}, y, \ZZ)$ as in Lemma \ref{AparlamyZ}. This family of systems is Schlesinger isomonodromic by Proposition~\ref{isoPain} and conjugated to $\partial_x Y =A Y$ by a diagonal gauge transformation of the form $Y=M\widetilde{Y}$ with $M=\mathrm{diag}( \mu,1/\mu)$ satisfying $\widetilde{\lambda}\mu^2=\lambda$. Hence the initial system is isomonodromic. 
  \end{proof}
 
 \subsection{A discrete analogue}\label{sec1}
 
 We will now define a convenient $q$-analogue of isomonodromy, which will lead to a $q$-analogue of Schlesinger equations for families of Fuchsian linear $q$-difference systems. Analogously to the differential case,  we will define $q$-isomonodromy by the existence of the $q$-analogue of the Lax pair. This definition differs from the approach to $q$-isomonodromy used for example in \cite{jimbo1996q}.  The relation between the two will be explained in Section  \ref{app:qFundsols}.
  Let $q\in \C \setminus \{0,1\} \, .$
From now on, we make the additional asumption that $\mathfrak{D}$ is a connected open subset of  $\C^{*}$, which is stable under multiplication by $q$ and $\frac{1}{q}$.  Let \begin{equation}\label{eqq1bis}
\sigma_{q,x} Y(x,t)=\mathfrak{A}(x,t)Y(x,t),\quad  \textrm{ with }\quad   \mathfrak{A}(x,t)=\mathfrak{A}_{0}(t)+x\frac{\mathfrak{A}_{1}(t)}{x-1}+x\frac{\mathfrak{A}_{t}(t)}{t(x-t)},\, 
\end{equation}
be a family of $q$-Fuchsian systems with spectral data $(\boldsymbol{\Theta}\, , \overline{\boldsymbol{\Theta}})$ as in Definition \ref{qfuchsian}.\par

\begin{defn}\label{def:qisom} We say that the system \eqref{eqq1bis} is $q$-\emph{isomonodromic} if it can be completed into \emph{a $q$-Lax pair}, \emph{i.e.} there exists $\mathfrak{B}\in \mathrm{GL}_{2}(\mathcal{M}(\mathfrak{D})(x))$ such that the system 
$$\left\{\begin{array}{rcr}
\sigma_{q,x}Y&=&\mathfrak{A}(x,t)Y\vspace{.2cm}\\
\sigma_{q,t}Y&=&\mathfrak{B}(x,t)Y\\
\end{array}\right.$$
 satisfies the \emph{q-Lax equation} 
 \begin{equation}\label{eq:qlax}
\mathfrak{A}(x,qt)\mathfrak{B}(x,t)=\mathfrak{B}(qx,t)\mathfrak{A}(x,t).
\end{equation}
\end{defn}

We will now establish the first step towards the $q$-analogue of Lemma \ref{lem2}: under the assumption that $q$ is not a root of unity, if \eqref{eqq1bis} is non-resonant and can be completed into a $q$-Lax pair \emph{via} a matrix function $\mathfrak{B}$, then  $\mathfrak{B}$ has a very particular shape. 

\begin{prop}\label{qLaxPair} Assume that $q\not \in \mathrm{e}^{2i\pi \Q}$ and assume that $(\boldsymbol{\Theta}\, , \overline{\boldsymbol{\Theta}})$ 
satisfies the non-resonancy condition \eqref{eq:qresonance}. Let $\mathfrak{A}\in \mathrm{GL}_{2}(\mathcal{O}(\mathfrak{D})(x))$ be as in \eqref{eqq1bis} and let $\mathfrak{B}\in \mathrm{GL}_{2}(\mathcal{M}(\mathfrak{D})(x))$ such that \eqref{eq:qlax} holds. Then,  there are matrices $\mathfrak{C}(t), B_0(t)\in \mathrm{GL}_{2}(\mathcal{M}(\mathfrak{D}))$ with $\mathfrak{C}(t)$ diagonal,  such that
\begin{equation}\label{exprB}
\mathfrak{B}(x,t)=\mathfrak{C}(t)\frac{(x-qt)(x\Itwo +B_{0}(t))}{(x-qt\Theta_t)(x-qt\overline{\Theta}_t)}\, .
\end{equation} 
Moreover, $\mathrm{Spec}(B_0(t))=\left\{-qt\Theta_t,-qt\overline{\Theta}_t\right\}$.
 \end{prop}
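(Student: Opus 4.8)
The plan is to extract the structure of $\mathfrak{B}$ directly from the $q$-Lax equation \eqref{eq:qlax}, using the explicit polar structure of $\mathfrak{A}$ together with the non-resonancy and the irrationality of $q$. First I would rewrite \eqref{eq:qlax} as $\mathfrak{B}(qx,t)=\mathfrak{A}(x,qt)\,\mathfrak{B}(x,t)\,\mathfrak{A}(x,t)^{-1}$, which expresses $\sigma_{q,x}\mathfrak{B}$ as a gauge-type transform of $\mathfrak{B}$ under $\mathfrak{A}$. Taking determinants and using $\det(\mathfrak{A})=\frac{\Theta_\infty\overline{\Theta}_\infty\mathfrak{p}(x,t)}{(x-1)^2(x-t)^2}$ from Definition \ref{qfuchsian} gives a scalar $q$-difference equation $\sigma_{q,x}\det\mathfrak{B}=\frac{\det\mathfrak{A}(x,qt)}{\det\mathfrak{A}(x,t)}\det\mathfrak{B}$, which I would solve to pin down $\det\mathfrak{B}$ as a rational function of $x$ with prescribed zeros and poles. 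This already forces the scalar factor $\frac{(x-qt)}{(x-qt\Theta_t)(x-qt\overline{\Theta}_t)}$ appearing in \eqref{exprB}, up to a $t$-dependent constant.

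Next I would determine the polar and zero locus of $\mathfrak{B}$ itself as a rational function of $x$. The key observation is that \eqref{eq:qlax} relates the poles of $\mathfrak{B}(x,t)$ to those of $\mathfrak{B}(qx,t)$ shifted by the singularities of $\mathfrak{A}$: since $\mathfrak{A}(x,t)$ has poles only at $x=1,t$ and $\mathfrak{A}(x,qt)$ at $x=1,qt$, iterating the functional equation and using that $q\notin \mathrm{e}^{2i\pi\Q}$ (so the orbits $q^{\Z}x_0$ never cycle) prevents the accumulation of spurious poles. Comparing residues and leading behaviour at $x=0$ and $x=\infty$, together with the normalization \eqref{eq10} of $\mathfrak{A}_\infty$, should show that $\mathfrak{B}$ has at worst a simple pole at each of $x=qt\Theta_t$ and $x=qt\overline{\Theta}_t$, a single zero accounting for the factor $(x-qt)$, and is bounded at infinity; writing the numerator as $\mathfrak{C}(t)(x\Itwo+B_0(t))$ is then just reading off the degree-one matrix polynomial in $x$, with $\mathfrak{C}(t)$ the coefficient of $x$. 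That $\mathfrak{C}(t)$ is diagonal should follow from evaluating \eqref{eq:qlax} at $x=\infty$ and using that $\mathfrak{A}_\infty$ is the fixed diagonal matrix in \eqref{eq10}, so that the equation at infinity becomes $\mathfrak{A}_\infty \mathfrak{C}=\mathfrak{C}\mathfrak{A}_\infty$, forcing $\mathfrak{C}$ to commute with a diagonal matrix having distinct entries $\Theta_\infty\neq\overline{\Theta}_\infty$.

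For the spectrum of $B_0$, I would evaluate \eqref{eq:qlax} at the poles. At $x=qt\Theta_t$ (resp.\ $x=qt\overline{\Theta}_t$), where $\det\mathfrak{B}$ has a pole, the matrix $\mathfrak{B}$ must be rank one; equivalently $\det(x\Itwo+B_0)$ vanishes exactly at $x=-qt\Theta_t$ and $x=-qt\overline{\Theta}_t$, which is precisely the statement $\mathrm{Spec}(B_0)=\{-qt\Theta_t,-qt\overline{\Theta}_t\}$. Concretely, $\det(x\Itwo+B_0)$ is the numerator of $\det\mathfrak{B}$ up to the scalar factor already computed, so matching the two determinant expressions identifies the characteristic polynomial of $-B_0$ with $(x-qt\Theta_t)(x-qt\overline{\Theta}_t)$.

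The main obstacle I anticipate is the rigorous control of the $x$-pole structure of $\mathfrak{B}$: \emph{a priori} $\mathfrak{B}\in\mathrm{GL}_2(\mathcal{M}(\mathfrak{D})(x))$ could have poles at many points of the $q$-orbits of $1$ and $t$, and ruling these out is exactly where the hypothesis $q\notin\mathrm{e}^{2i\pi\Q}$ and the non-resonancy condition \eqref{eq:qresonance} must be used in tandem. The delicate point is that the functional equation allows a pole at $x_0$ to be compensated by the vanishing of $\mathfrak{A}(x_0/q,qt)$ or $\mathfrak{A}(x_0,t)^{-1}$; I would argue by considering the minimal/maximal elements of a hypothetical pole orbit and showing that the boundary pole cannot be cancelled, forcing the pole set to be confined to $\{qt\Theta_t,qt\overline{\Theta}_t\}$ as claimed. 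This is the step that genuinely requires $q$ to be non-torsion, and it is where I would spend most of the care.
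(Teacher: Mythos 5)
Your plan follows essentially the same route as the paper's proof: determinants of \eqref{eq:qlax} together with $q\notin\mathrm{e}^{2i\pi\Q}$ pin down $\det\mathfrak{B}$ up to a factor $c(t)$; a hypothetical spurious pole of $\mathfrak{B}$ is propagated along its $q$-orbit until it contradicts finiteness of the pole set of a rational function; the equation at $x=\infty$ together with \eqref{eq10} and $\Theta_\infty\neq\overline{\Theta}_\infty$ forces the leading coefficient to commute with $\mathrm{diag}(\overline{\Theta}_\infty,\Theta_\infty)$, hence to be diagonal; and $\mathrm{Spec}(B_0)$ is read off by matching determinants. The only structural difference is that the paper normalizes first, working with $\widehat{\mathfrak{A}}=\frac{(x-1)(x-t)}{(x-t\Theta_t)(x-t\overline{\Theta}_t)}\,\mathfrak{A}$ and $\widehat{\mathfrak{B}}=\frac{(x-qt\Theta_t)(x-qt\overline{\Theta}_t)}{x-qt}\,\mathfrak{B}$, and then proves that $\widehat{\mathfrak{B}}$ is the degree-one matrix polynomial $\mathfrak{C}(x\Itwo+B_0)$.

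That normalization is not merely cosmetic, and omitting it exposes the one genuine gap in your plan: the matrix factor $(x-qt)$, i.e.\ the fact that $\mathfrak{B}$ \emph{vanishes as a matrix} at $x=qt$. You propose to obtain ``a single zero accounting for the factor $(x-qt)$'' from the determinant computation and from residues and leading behaviour at $x=0$ and $x=\infty$, but none of these can deliver it: a double zero of $\det\mathfrak{B}$ at $x=qt$ does not force the matrix itself to vanish there. Indeed, $N(x)=\mathrm{diag}\bigl((x-qt)^2,\,(x-qt\Theta_t)(x-qt\overline{\Theta}_t)\bigr)$ has invertible leading coefficient and exactly the determinant your first step produces, yet $N(qt)\neq0$, so it admits no factorization of the form $\mathfrak{C}(x-qt)(x\Itwo+B_0)$. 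The vanishing at $x=qt$ is an additional consequence of the Lax equation, and it must be extracted at $x=t$, the pole of $\mathfrak{A}(\cdot,t)$ --- a point your sketch never examines. Concretely: once the poles of $\mathfrak{B}$ are confined to $\{qt\Theta_t,qt\overline{\Theta}_t\}$, the left-hand side of $\mathfrak{A}(x,qt)\mathfrak{B}(x,t)=\mathfrak{B}(qx,t)\mathfrak{A}(x,t)$ is regular at $x=t$, while the right-hand side has residue $\mathfrak{B}(qt,t)\,\mathfrak{A}_t(t)$ there; since the determinant normalization of Definition \ref{qfuchsian} yields $\det\mathfrak{A}_t=\Theta_\infty\overline{\Theta}_\infty\,t^2(1-\Theta_t)(1-\overline{\Theta}_t)(t-\Theta_1)(t-\overline{\Theta}_1)/(t-1)^2$, which is not identically zero (when $\Theta_t,\overline{\Theta}_t\neq1$), this forces $\mathfrak{B}(qt,t)=0$. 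The paper sidesteps this step entirely: dividing by $(x-qt)$ from the outset converts the desired vanishing into the absence of a pole of $\widehat{\mathfrak{B}}$ at $x=qt$, which is then excluded by the very same orbit argument, because $\widehat{\mathfrak{A}}$ --- unlike $\mathfrak{A}$ --- is holomorphic and invertible at $x=t$. Either fix (the residue comparison above, or adopting the paper's normalization before running your orbit argument) completes your proof; as written, the final factorization step fails.
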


\begin{proof}
  Let us write 
 $$\widehat{\mathfrak{A}}(x,t):=\dfrac{(x-t)(x-1){\mathfrak{A}}(x,t)}{(x-t\Theta_t)(x-t\overline{\Theta}_t)}  \, , \quad  \mathfrak{B}(x,t)=\dfrac{(x-qt)\widehat{\mathfrak{B}}(x,t)}{(x-qt\Theta_t)(x-qt\overline{\Theta}_t)}\, .$$ 
 With this notation, equation  \eqref{eq:qlax}  reads
   \begin{equation}\label{eq:qlaxtilde0} \begin{array}{rcl}q\widehat{\mathfrak{A}}(x,qt) \widehat{\mathfrak{B}}(x,t) &=& \widehat{\mathfrak{B}}(qx,t) \widehat{\mathfrak{A}}(x,t)   \, .\end{array}\end{equation}
From the expression of the determinant of ${\mathfrak{A}}(x,t)$ we have   
   \begin{equation}\label{eq:detAtilde}\det\left(\widehat{\mathfrak{A}}(x,t)\right)=\Theta_\infty \overline{\Theta}_\infty \frac{(x-\Theta_1)(x-\overline{\Theta}_1)}{(x-t\Theta_t)(x-t\overline{\Theta}_t)} \, .
   \end{equation}
The determinant of $\widehat{\mathfrak{B}}(x,t)$ is an element of $\mathcal{M}(\mathfrak{D})(x)$. Let us write $\det\left(\widehat{\mathfrak{B}}(x,t)\right)=c(t)\frac{\prod_{i=1}^{n_1}x-a_{i}(t)}{\prod_{j=1}^{n_2}x-b_{j}(t)}$, with $0\not \equiv c(t)\in \mathcal{M}(\mathfrak{D})$ and $a_{i},b_{j}$ being elements of the algebraic closure $\overline{\mathcal{M}(\mathfrak{D})}$ of ${\mathcal{M}(\mathfrak{D})}$ such that $a_i(t)\not \equiv b_j(t)$ for all $i,j$. 
Applying the determinant in the both sides of \eqref{eq:qlaxtilde0}, we find that 
$$
q^{2}c(t)\frac{(x-\Theta_1)(x-\overline{\Theta}_1)}{(x-qt\Theta_t)(x-qt\overline{\Theta}_t)}
\frac{\prod_{i=1}^{n_1}x-a_{i}(t)}{\prod_{j=1}^{n_2}x-b_{j}(t)}
=c(t)\frac{\prod_{i=1}^{n_1}qx-a_{i}(t)}{\prod_{j=1}^{n_2}qx-b_{j}(t)}\frac{(x-\Theta_1)(x-\overline{\Theta}_1)}{(x-t\Theta_t)(x-t\overline{\Theta}_t)} \,,
$$
which simplifies as follows:
$$
\frac{1}{(x-qt\Theta_t)(x-qt\overline{\Theta}_t)}
\frac{\prod_{i=1}^{n_1}x-a_{i}(t)}{\prod_{j=1}^{n_2}x-b_{j}(t)}
=\frac{1}{(qx-qt\Theta_t)(qx-qt\overline{\Theta}_t)}\frac{\prod_{i=1}^{n_1}qx-a_{i}(t)}{\prod_{j=1}^{n_2}qx-b_{j}(t)} \,.
$$
At $x=\infty$, the left hand side behaves like  $x^{n_{1}-n_{2}-2}$, while the right hand side behaves like $q^{n_{1}-n_{2}-2}x^{n_{1}-n_{2}-2}$. Since $q$ is not a root of unity, we must have $n_{1}=n_{2}+2$. We may thus rewrite the equality as 
$$f(x)=f(qx)\, , \quad \textrm{where} \quad f:x\mapsto \frac{1}{(x-qt\Theta_t)(x-qt\overline{\Theta}_t)}
\frac{\prod_{i=1}^{n_2+2}x-a_{i}(t)}{\prod_{j=1}^{n_2}x-b_{j}(t)} \in \mathcal{M}(\mathfrak{D})(x)\, .$$
It follows easily that $f\in \mathcal{M}(\mathfrak{D})(x)$ is constant in $x$, forcing $n_2=0$ and, up to renumbering the $a_{i}$'s, that  $a_1=qt\Theta_t$ and $a_2=qt\overline{\Theta}_t$. In particular, 
 \begin{equation}\label{eq:detBtilde} \det(\widehat{\mathfrak{B}}(x,t))=c(t) {(x-qt\Theta_t)(x-qt\overline{\Theta}_t)}
 \, .\end{equation}
We may rewrite \eqref{eq:qlaxtilde0} as follows.
  \begin{equation}\label{eq:qlaxtilde} \begin{array}{rcl}q \widehat{\mathfrak{B}}(x,t) &=& \widehat{\mathfrak{A}}(x,qt)^{-1}\widehat{\mathfrak{B}}(qx,t) \widehat{\mathfrak{A}}(x,t)
  \, .\end{array}
  \end{equation}
Let us now show that $\widehat{\mathfrak{B}}(x,t)$ is of the form $x\cdot \widehat{B}_\infty(t)+\widehat{B}_0(t)$, \emph{i.e.} let us show that the function $x\mapsto \widehat{\mathfrak{B}}(x,t)$ has only one possible pole, at $x=\infty$, and that this is at most a simple pole.   

\begin{itemize}
\item[$\bullet$] We have $\widehat{\mathfrak{B}}(x,t)= x^kR_\infty(t)+O(x^{k-1})$ as $x\to \infty $ for a certain $k\in \Z$ and a certain non-zero $R_\infty\in \mathrm{M}_2(\mathcal{M}(\mathfrak{D}))$. Setting $\widehat{\mathfrak{A}}_\infty(t):=\widehat{\mathfrak{A}}(\infty,t)$, we have $\widehat{\mathfrak{A}}_\infty(t)={\mathfrak{A}}_\infty (t)$ which is invertible. Moreover, $\mathfrak{A}_\infty(t)= \mathrm{diag}(\overline{\Theta}_\infty,\Theta_\infty)$ is independent of $t$.
 From  equation  \eqref{eq:qlaxtilde}  we get
  $$R_\infty(t) =q^{k-1}\mathrm{diag}(\overline{\Theta}_\infty,\Theta_\infty)^{-1} R_\infty(t)\mathrm{diag}(\overline{\Theta}_\infty,\Theta_\infty). $$ 
Taking the determinant in both sides yield 
$\det(R_\infty(t)) =\det(R_\infty(t))\left(q^{k-1}\right)^{2}$.
By \eqref{eq:detBtilde} we find $\det(R_\infty(t))= c(t)\neq 0$, so that $R_\infty(t)$ must actually be invertible and $1=q^{2k-2}$.
Since $q$ is not a root of unity, we have $k=1$. Hence $\widehat{\mathfrak{B}}(x,t)$ has a simple pole at $x=\infty$. Furthermore, since $\Theta_\infty \neq \overline{\Theta}_\infty$, we deduce that $R_\infty(t)$  is diagonal.
  \item[$\bullet$]   Let $\alpha(t) \in \mathcal{M}(\mathfrak{D})$ such that $\alpha \not \in  \{0,\Theta_1 , \overline{\Theta}_1 ,  t \Theta_t ,  t \overline{\Theta}_t\}\cdot q^{\Z_{> 0}}$. Assume for a contradiction that $x= \alpha$ is a pole of $\widehat{\mathfrak{B}}(x,t)$. Then $x=\frac{1}{q} \alpha$ is a pole of $\widehat{\mathfrak{B}}(qx,t)$. From the particular form of $\widehat{\mathfrak{A}}$ and \eqref{eq:detAtilde} we know that both 
 $\widehat{\mathfrak{A}}(x,qt)^{-1}$ and $\widehat{\mathfrak{A}}(x,t)$ are finite and invertible at $x=\frac{1}{q} \alpha$, \emph{i.e.} $\widehat{\mathfrak{A}}(\alpha/q,t),\widehat{\mathfrak{A}}(\alpha/q,qt)^{-1}\in\mathrm{GL}_{2}(\mathcal{M}(\mathfrak{D}))$. 
 Indeed,  the only possible poles of $\widehat{\mathfrak{A}}(x,t) $ are $x=t\Theta_t$ and $x=t\overline{\Theta}_t$ and the only possible poles of $\widehat{\mathfrak{A}}(x,qt)^{-1} $ are $x=\Theta_1$ and $x=\overline{\Theta}_1$. Hence by \eqref{eq:qlaxtilde}, $\widehat{\mathfrak{B}}(x,t)$ has a pole at $x=\frac{1}{q} \alpha$ as well. By induction, $\widehat{\mathfrak{B}}(x,t)$ has a pole at $x=\frac{1}{q^n} \alpha$ for each $n\in \N$. 
  Yet by assumption, $\widehat{\mathfrak{B}}(x,t)$ is a rational function of $x$ and can therefore only have finitely many poles. 
   \item[$\bullet$]   Let $\alpha(t) \in  \{\Theta_1 , \overline{\Theta}_1 ,  t \Theta_t ,  t \overline{\Theta}_t\}\cdot q^{\Z_{> 0}}$.   Assume for a contradiction that $x= \alpha$ is a pole of $\widehat{\mathfrak{B}}(x,t)$. Then $x=q\alpha$ is a pole of $\widehat{\mathfrak{B}}(x/q,t)$.   We may  rewrite equations \eqref{eq:qlaxtilde} and \eqref{eq:detAtilde}  as 
  $$\det\left(\widehat{\mathfrak{A}}(x/q,t)\right)=\frac{(x-q\Theta_1)(x-q\overline{\Theta}_1)}{(x-qt\Theta_t)(x-qt\overline{\Theta}_t)} \, ,$$
 $$\begin{array}{rcl} \widehat{\mathfrak{B}}(x,t) &=&q\widehat{\mathfrak{A}}(x/q,qt) \widehat{\mathfrak{B}}(x/q,t)\widehat{\mathfrak{A}}(x/q,t)^{-1}\, .\end{array}$$
 The same reasoning as before shows that $\widehat{\mathfrak{B}}(x,t)$ then has a pole at  $x=q\alpha$ as well. Again this leads by induction to an infinite number of poles, and therefore, a contradiction. 
   \item[$\bullet$]  Finally, we treat the case $\alpha=0$. We have $\widehat{\mathfrak{B}}(x,t)= x^kR_0(t)+O(x^{k+1})$ as $x\to 0 $ for a certain $k\in \Z$ and a certain non-zero $R_0 \in \mathrm{M}_2(\mathcal{M}(\mathfrak{D}))$. Setting $\widehat{\mathfrak{A}}_0(t):=\widehat{\mathfrak{A}}(0,t)$, we have $\widehat{\mathfrak{A}}_0(t)=\frac{1}{t\Theta_t\overline{\Theta}_t}{\mathfrak{A}}_0(t)$ which is invertible.
 From  equation  \eqref{eq:qlaxtilde}  we get
 $$R_0(t) =q^{k-1} \widehat{\mathfrak{A}}_0(qt)^{-1}R_0(t)\widehat{\mathfrak{A}}_0(t)=q^{k} \mathfrak{A}_0^{-1}(qt)R_0(t)\mathfrak{A}_0(t). $$
Equation \eqref{eq:detBtilde}  yields $\det(R_0(t))=q^2t^2c(t)\Theta_t \overline{\Theta}_t\neq 0$, so that $R_0$ is actually invertible.
Since $\mathrm{Spec}(\mathfrak{A}_0(t))=\{\Theta_0,\overline{\Theta}_0\}$, we have $\det(\mathfrak{A}_0(t))=\det(\mathfrak{A}_0(qt))$ and we may take the determinant in both sides of $R_{0}(t)=q^{k} \mathfrak{A}_0^{-1}(qt)R_0(t)\mathfrak{A}_0(t)$ and find $q^{2k}=1$. Since $q$ is not a root of unity, we deduce $k=0$.
 \end{itemize}

  We have now proven that $\widehat{\mathfrak{B}}(x,t)$ has a simple pole at $x=\infty$ and is finite and non-zero everywhere else. More precisely, we have proven that $$\widehat{\mathfrak{B}}(x,t)=\mathfrak{C}(t)(x\Itwo +B_0(t))$$ holds for the diagonal matrix $\mathfrak{C}(t):=R_\infty(t)\in \mathrm{GL}_2(\mathcal{M}(\mathfrak{D}))$ and the matrix $B_0:=\mathfrak{C}(t)^{-1}R_0(t)\in \mathrm{GL}_2(\mathcal{M}(\mathfrak{D}))$. 
   Moreover, from 
   \eqref{eq:detBtilde} we get ${\det(x\Itwo +B_0(t))=(x-qt\Theta_t)(x-qt\overline{\Theta}_t)}$, yielding the sought expression for the eigenvalues of $B_0$.\end{proof}

 We will see in the proof of the following proposition that  the matrix $B_0$, as well as the matrix $\mathfrak{C}$ up to a scalar multiple, are uniquely defined by \eqref{eq:qlax}. However, once these matrices are obtained, \eqref{eq:qlax} still imposes strong conditions on the matrix $\mathfrak{A}$, which will yield the $q$-Painlev\'e VI equation.

   \begin{prop}\label{qisomchar} Assume that $q\not \in \mathrm{e}^{2i\pi \Q}$. Let $(\boldsymbol{\Theta}, \overline{\boldsymbol{\Theta}})\in (\C^*)^4\times (\C^*)^4$ such that $\Theta_\infty \neq \overline{\Theta}_\infty$ and such that the relation \eqref{thetarel} as well as the non-resonancy condition \eqref{eq:qresonance} hold. Let $(\boldsymbol{\lambda}, \y, \bZZ)\in (\mathcal{M}(\mathfrak{D}))^3$. Denote 
   $$X:=\frac{(\y-1)(\y-t)(1+(q-1)\y\bZZ)}{\Theta_\infty \overline{\Theta}_\infty (\y-\Theta_1)(\y-\overline{\Theta}_1)}\in \mathcal{M}(\mathfrak{D})$$
   and assume that 
   $\y,(\y-1),(\y-t),\boldsymbol{\lambda}, X,\, \Theta_\infty X-1\, , q\overline{\Theta}_\infty X-1\, , \Theta_\infty X-\frac{ (\y-t\Theta_t)(\y-t\overline{\Theta}_t)}{ (\y-\Theta_1)(\y-\overline{\Theta}_1)}$ are all well defined meromorphic functions on $\mathfrak{D}$ and are each not identically zero. 
   Let $\mathfrak{A}(x,t)=\mathfrak{A}_0(t)+\frac{x}{x-1}\mathfrak{A}_1(t)+\frac{x}{t(x-t)}\mathfrak{A}_t(t)$ be defined by $(\boldsymbol{\lambda}, \y, \bZZ)$ as in Lemma \ref{AfrakfromTriple} and assume that the coefficients of $\mathfrak{A}_0, \mathfrak{A}_1, \mathfrak{A}_t$ are holomorphic on a domain $\mathfrak{D}^*\subset \mathfrak{D}$ stable under multiplication by $q$ and $1/q$.  Let us denote     
   $\mathfrak{A}_\infty=\mathfrak{A}_0+\mathfrak{A}_1+\frac{1}{t}\mathfrak{A}_t
   =\mathrm{diag}(\overline{\Theta}_\infty, \Theta_\infty)$. 
The following are equivalent. 
   \begin{enumerate}
   \item The family of $q$-Fuchsian systems $\sigma_{q,x} Y=\mathfrak{A}Y$, parametrized by $\mathfrak{D}^*$, is $q$-isomonodromic.
      \item Denoting         $$\begin{array}{l}
   B_0:=-qt\left(\mathfrak{A}_\infty+\frac{t-1}{(t\Theta_t-1)(t\overline{\Theta}_t-1)}\mathfrak{A}_1\right)\left(\frac{1}{\Theta_t \overline{\Theta}_t}\mathfrak{A}_0+\frac{t(t-1)}{(t\Theta_t-1)(t\overline{\Theta}_t-1)} \mathfrak{A}_1\right)^{-1}\vspace{.2cm}\\
   \ \mathfrak{C}:=\mathrm{diag}(c,1)\, , \quad \textrm{with} \quad c=\frac{\sigma_{q,t}\boldsymbol{\lambda}}{\boldsymbol{\lambda}} \frac{\Theta_\infty X-1}{q\overline{\Theta}_\infty X-1}\, , 
   \end{array}$$
   the following equations hold:
   $$\left\{ \begin{array}{rcl}
   \sigma_{q,t}\mathfrak{A}_0&=&\mathfrak{C}B_0\mathfrak{A}_0 B_0^{-1}\mathfrak{C}^{-1}\vspace{.2cm}\\
    \sigma_{q,t}\mathfrak{A}_1&=&\displaystyle \frac{t-1}{qt-1}\frac{(qt\Theta_t-1)(qt\overline{\Theta}_t-1)}{q(t\Theta_t-1)(t\overline{\Theta}_t-1)}\cdot \mathfrak{C}(q\Itwo +B_0) \, \mathfrak{A}_1\left(\Itwo +B_0 \right)^{-1}\mathfrak{C}^{-1}\vspace{.2cm}\\
     \sigma_{q,t}\mathfrak{A}_t&=&\displaystyle-\mathfrak{C}B_0 \mathfrak{A}_0 \left( \frac{1}{\Theta_t\overline{\Theta}_t}\Itwo +qtB_0^{-1}\right)\mathfrak{C}^{-1}\vspace{.2cm}\\&&\displaystyle-\frac{ t(t-1)\mathfrak{C}(q\Itwo +B_0)\mathfrak{A}_1}{(t\Theta_t-1)(t\overline{\Theta}_t-1)}\left(\Itwo  + \frac{(qt\Theta_t-1)(qt\overline{\Theta}_t-1)}{qt-1 }(\Itwo +B_0)^{-1} \right) \mathfrak{C}^{-1} \, .
   \end{array} \right.$$
   \item The pair $(\y, \bZZ)$ is a solution of the following system of $q$-difference equations: 
\begin{equation} \label{qPVIyZ} \left\{ \begin{array}{rcl}
   \sigma_{q,t}\y &=& \displaystyle \frac{\Theta_1\overline{\Theta}_1}{\y} \cdot \frac{\left(X-t\frac{\Theta_t\overline{\Theta}_t}{\Theta_0}\right)\left(X-t\frac{\Theta_t\overline{\Theta}_t}{ \overline{\Theta}_0}\right)}{\left( X-\frac{1}{\Theta_\infty}\right)\left( X-\frac{1}{q\overline{\Theta}_\infty}\right)} \vspace{.2cm}\\
    \sigma_{q,t}\bZZ &=&  \displaystyle\frac{1}{ (q-1) \sigma_{q,t}\y}\left(\frac{(\sigma_{q,t}\y-qt\Theta_t)(\sigma_{q,t}\y-qt\overline{\Theta}_t)}{q( \sigma_{q,t}\y-1)( \sigma_{q,t}\y-qt)X}-1\right)
    \, .
   \end{array} \right.\end{equation}

      \end{enumerate}
      \end{prop}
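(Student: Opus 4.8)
The plan is to establish the chain $(1)\Leftrightarrow(2)\Leftrightarrow(3)$, separating the conceptual step relating $q$-isomonodromy to the matrix $q$-Schlesinger equations from the purely computational step translating the latter into an equation for the pair $(\y,\bZZ)$.

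For $(1)\Rightarrow(2)$ I would invoke Proposition~\ref{qLaxPair}: since $q\notin\mathrm{e}^{2i\pi\Q}$ and the non-resonancy condition holds, any completing matrix $\mathfrak{B}$ is of the form \eqref{exprB}, for a diagonal $\mathfrak{C}(t)$ and a matrix $B_0(t)$ with $\mathrm{Spec}(B_0)=\{-qt\Theta_t,-qt\overline{\Theta}_t\}$. Rewriting the $q$-Lax equation \eqref{eq:qlax} as $\mathfrak{A}(x,qt)=\mathfrak{B}(qx,t)\,\mathfrak{A}(x,t)\,\mathfrak{B}(x,t)^{-1}$, I would then recover $\sigma_{q,t}\mathfrak{A}_0,\sigma_{q,t}\mathfrak{A}_1,\sigma_{q,t}\mathfrak{A}_t$ as values and residues of the right-hand side. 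Evaluation at $x=0$ gives $\sigma_{q,t}\mathfrak{A}_0=\mathfrak{C}B_0\mathfrak{A}_0B_0^{-1}\mathfrak{C}^{-1}$, the scalar prefactor of $\mathfrak{B}(0,t)$ cancelling under conjugation; the residue at $x=1$ gives the stated conjugation formula for $\sigma_{q,t}\mathfrak{A}_1$, whose scalar prefactor collapses to $\frac{t-1}{qt-1}\frac{(qt\Theta_t-1)(qt\overline{\Theta}_t-1)}{q(t\Theta_t-1)(t\overline{\Theta}_t-1)}$; and the residue at $x=qt$, arising from the simple pole of $\mathfrak{B}(x,t)^{-1}$ there, gives $\sigma_{q,t}\mathfrak{A}_t$ (equivalently one may use the constancy $\sigma_{q,t}\mathfrak{A}_\infty=\mathfrak{A}_\infty=\mathrm{diag}(\overline{\Theta}_\infty,\Theta_\infty)$).

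It then remains to identify the a priori unknown $\mathfrak{C}$ and $B_0$ with the explicit expressions in the statement. Writing $\mathfrak{C}=\mathrm{diag}(c,1)$ and comparing the $(1,2)$-entries of the two sides, using $\mathfrak{A}^{(1,2)}(x)=\frac{(q-1)\boldsymbol{\lambda}(x-\y)}{(x-1)(x-t)}$ and its $\sigma_{q,t}$-image, yields $c=\frac{\sigma_{q,t}\boldsymbol{\lambda}}{\boldsymbol{\lambda}}\frac{\Theta_\infty X-1}{q\overline{\Theta}_\infty X-1}$. The matrix $B_0$ is pinned down by the consistency of \eqref{eq:qlax} at the poles $x=t\Theta_t,t\overline{\Theta}_t$ of $\widehat{\mathfrak{A}}(\cdot,t)$ (in the notation of the proof of Proposition~\ref{qLaxPair}): since $qt\Theta_t\Itwo+B_0$ and $qt\overline{\Theta}_t\Itwo+B_0$ are each of rank one, finiteness of the left-hand side forces the eigendirections of $B_0$ to be the column directions of the rank-one residues of $\widehat{\mathfrak{A}}(\cdot,t)$; with the eigenvalues already known, $B_0$ is determined, and a short computation using $\widehat{\mathfrak{A}}(\infty,t)=\mathfrak{A}_\infty$, $\widehat{\mathfrak{A}}(0,t)=\frac{\mathfrak{A}_0}{t\Theta_t\overline{\Theta}_t}$ and $\widehat{\mathfrak{A}}(1,t)=\frac{(1-t)\mathfrak{A}_1}{(1-t\Theta_t)(1-t\overline{\Theta}_t)}$ identifies it with the displayed formula. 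The converse $(2)\Rightarrow(1)$ is then obtained by defining $\mathfrak{B}$ through \eqref{exprB} with these explicit $\mathfrak{C},B_0$ and verifying \eqref{eq:qlax} directly: both sides are rational in $x$ with the same behaviour at $x=\infty$ and poles only in the finite set $\{1,t,qt,t\Theta_t,t\overline{\Theta}_t,qt\Theta_t,qt\overline{\Theta}_t\}$, so matching the relevant values and residues reduces exactly to the three equations of (2).

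Finally, for $(2)\Leftrightarrow(3)$ I would substitute the formulae of Lemma~\ref{AfrakfromTriple} for $\mathfrak{A}_0,\mathfrak{A}_1,\mathfrak{A}_t$ (hence for $B_0$) in terms of $(\boldsymbol{\lambda},\y,\bZZ)$ and translate the matrix equations entry by entry: the $(1,2)$-entries produce the first equation of \eqref{qPVIyZ} for $\sigma_{q,t}\y$ together with the relation for $\sigma_{q,t}\boldsymbol{\lambda}$ already encoded in $c$, while the $(1,1)$-entry evaluated at $x=\sigma_{q,t}\y$, via $\mathfrak{A}^{(1,1)}(\y)=1+(q-1)\y\bZZ$, produces the second equation for $\sigma_{q,t}\bZZ$. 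Here $X$ is exactly the combination $z_1/\bigl(\Theta_\infty(\y-\Theta_1)(\y-\overline{\Theta}_1)\bigr)$ that renders these expressions compact. I expect the two main obstacles to be the explicit determination of $B_0$ in the first implication and this final symbolic reduction, which is lengthy but routine and computer-assisted; it is precisely here that the genericity hypotheses — that $\y,\y-1,\y-t,\boldsymbol{\lambda},X,\Theta_\infty X-1,q\overline{\Theta}_\infty X-1$ and $\Theta_\infty X-\frac{(\y-t\Theta_t)(\y-t\overline{\Theta}_t)}{(\y-\Theta_1)(\y-\overline{\Theta}_1)}$ be non-vanishing — are used, to divide by these factors and to guarantee all inverses in (2) exist.
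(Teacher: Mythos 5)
Your proposal is correct and follows essentially the same route as the paper's proof: Proposition~\ref{qLaxPair} fixes the shape of $\mathfrak{B}$, the three matrix equations of $(2)$ are extracted by comparing the two sides of the $q$-Lax equation at the distinguished points ($x=0$, $x=1$, the $\Theta_t$-poles, and $x=qt$ or equivalently the constancy of $\mathfrak{A}_\infty$), $c$ is read off from the $(1,2)$-entries, and $(2)\Leftrightarrow(3)$ is the same computer-assisted entry-wise translation via Lemma~\ref{AfrakfromTriple}. The only real deviation is how $B_0$ is pinned down: the paper solves the residue condition at the $\Theta_t$-poles as a \emph{linear} matrix equation ($R_4=0$ in its notation), which immediately yields the displayed formula and works even when $\Theta_t=\overline{\Theta}_t$, whereas your eigendirection argument tacitly assumes $\Theta_t\neq\overline{\Theta}_t$ (not guaranteed by non-resonancy, since $1\notin q^{\Z^*}$) and still leaves the identification with the explicit formula to an unproved computation — a local variation worth tightening, not a different approach.
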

   \begin{proof}
     Let us first show that $(1)\Leftrightarrow (2)$. 
Since $q$ is not a root of unity, by Proposition \ref{qLaxPair} and the non-resonancy assumption, $(1)$ is equivalent to the existence  of $B_0, \mathfrak{C}\in \mathrm{GL}_2(\mathcal{M}(\mathfrak{D}))$ with $\mathfrak{C}$ diagonal, such that for  $\mathfrak{B}$ given by \eqref{exprB},  
   we have $\left(\sigma_{q,t}\mathfrak{A}(x,t)\right)\mathfrak{B}(x,t)=\left(\sigma_{q,x}\mathfrak{B}(x,t)\right)\mathfrak{A}(x,t)$. We may rewrite $$\begin{array}{l} \frac{1}{x}\left(\sigma_{q,t}\mathfrak{A}(x,t)\right)\mathfrak{B}(x,t)-\frac{1}{x}\left(\sigma_{q,x}\mathfrak{B}(x,t)\right)\mathfrak{A}(x,t)=
   \vspace{.2cm}\\
   =-\frac{1}{qt\Theta_t\overline{\Theta}_t} \frac{R_0(t)}{x}-\frac{qt-1}{(qt\Theta_t-1)(qt\overline{\Theta}_t-1)} \frac{R_1(t)}{x-1}  +\frac{(x-qt(\Theta_t+\overline{\Theta}_t))R_2(t)+qtR_3(t)}{(x-qt{\Theta}_t)(x-qt\overline{\Theta}_t)} -\mathfrak{C}\frac{ (x-t(\Theta_t+\overline{\Theta}_t))R_4-tR_5}{(x-t\Theta_t)(x-t\overline{\Theta}_t)}\, , \end{array}$$
   where 
   $$\begin{array}{rcl}R_0&:=& \left(\sigma_{q,t}\mathfrak{A}_0\right)\mathfrak{C}B_0-\mathfrak{C}B_0\mathfrak{A}_0\, , \vspace{.2cm}\\
   R_1&:=& \left(\sigma_{q,t}\mathfrak{A}_1\right)\mathfrak{C}(\Itwo +B_0)-\frac{q(t-1)}{qt-1}\frac{(qt\Theta_t-1)(qt\overline{\Theta}_t-1)}{q^2(t\Theta_t-1)(t\overline{\Theta}_t-1)}\mathfrak{C}(q\Itwo +B_0)\mathfrak{A}_1
       \vspace{.2cm}\\
   R_2&:=&\frac{1}{qt\Theta_t\overline{\Theta}_t}\left(\sigma_{q,t}\mathfrak{A}_0\right)\mathfrak{C}B_0 +\frac{qt-1}{(qt\Theta_t-1)(qt\overline{\Theta}_t-1)}\left(\sigma_{q,t}\mathfrak{A}_1\right)\mathfrak{C}(\Itwo +B_0) +\left(\sigma_{q,t}\mathfrak{A}_\infty \right)\mathfrak{C}  
        \vspace{.2cm}\\
   R_3&:=&\left(\sigma_{q,t}\mathfrak{A}_0\right)\mathfrak{C}+\frac{qt-1}{qt}\left(\sigma_{q,t}\mathfrak{A}_1\right)\mathfrak{C}-\frac{(qt-1)\left(\sigma_{q,t}\mathfrak{A}_1\right)\mathfrak{C}(\Itwo +B_0)}{qt(qt\Theta_t-1)(qt\overline{\Theta}_t-1)}-\frac{1}{qt}\left(\sigma_{q,t}\mathfrak{A}_\infty \right)\mathfrak{C} (qt\left(\Theta_t+\overline{\Theta}_t)\Itwo +B_0\right)     \vspace{.2cm}\\
   R_4&:=& \frac{1}{qt\Theta_t\overline{\Theta}_t}B_0\mathfrak{A}_0+\frac{t-1}{q(t\Theta_t-1)(t\overline{\Theta}_t-1)}(q\Itwo +B_0)\mathfrak{A}_1+\mathfrak{A}_\infty
    \vspace{.2cm}\\
   R_5&:=& \mathfrak{A}_0+\frac{t-1}{t}\mathfrak{A}_1-\frac{(t-1)}{qt(t\Theta_t-1)(t\overline{\Theta}_t-1)}\left(q\Itwo +B_0\right)\mathfrak{A}_1-\frac{1}{qt}\left(qt(\Theta_t+\overline{\Theta}_t)\Itwo  +B_0\right) \mathfrak{A}_\infty\, .\end{array}
   $$
   Hence $(1)$ is equivalent to the vanishing  $R_i=0$ for all $i\in \{0,1,2,3,4,5\}$  for some $B_0, \mathfrak{C}\in \mathrm{GL}_2(\mathcal{M}(\mathfrak{D}))$ with $\mathfrak{C}$ diagonal. Note that by Proposition \ref{qLaxPair}, the matrix $B_0$ must have eigenvalues  $-qt\Theta_t, -qt\overline{\Theta}_t$, so that $\Itwo +B_0$ is invertible. 
       If $R_0=R_1=R_4=R_5=0$, then $R_2$ and $R_3$ are both equivalent to $\sigma_{q,t}\mathfrak{A}_\infty = \mathfrak{C}\mathfrak{A}_\infty \mathfrak{C}^{-1}$. We may therefore omit $R_3$ in the following. 
   For $i\in \{0,1\}$, the vanishing of $R_i$ is equivalent to the equation for $ \sigma_{q,t}\mathfrak{A}_i$ as in the statement, with general $B_0,\mathfrak{C}$.  Substituting these equations into $R_2=0$ yields 
   the equation for $\sigma_{q,t}\mathfrak{A}_t$ as in the statement, again with general $B_0,\mathfrak{C}$.  However, 
   $R_4=0$ is equivalent to $B_0$ being as in the statement. Note that this matrix $B_0$ is well-defined and invertible under the assumptions. As one can check by direct computation, this $B_0$ solves $R_5=0$. 
   Hence  $(1)$ is equivalent to the existence  of a diagonal matrix $\mathfrak{C}\in \mathrm{GL}_2(\mathcal{M}(\mathfrak{D}))$ such that the equations in $(2)$ hold for $B_0$ as in the statement. 
   If $\mathfrak{C}$ is such a convenient matrix, then for any $f\in \mathcal{M}(\mathfrak{D})$ non-vanishing, $f\mathfrak{C}$ is also convenient. Hence we may require that $\mathfrak{C}$ is of the form $\mathrm{diag}(c,1)$. 
   Since $(q-1)\boldsymbol{\lambda} = (1-t)\mathfrak{A}_1^{(1,2)}-t\mathfrak{A}_0^{(1,2)}$, we must have
   $$(q-1)\sigma_{q,t} \boldsymbol{\lambda} = (1-qt)\sigma_{q,t} \mathfrak{A}_1^{(1,2)}-qt\sigma_{q,t} \mathfrak{A}_0^{(1,2)}\, .$$
    With the equations in $(2)$ for the $ \sigma_{q,t}\mathfrak{A}_i$'s, this is equivalent to $c$ being as in the statement. 
    We conclude that $(1)\Leftrightarrow (2)$. 
    
    Let us now show that $(2)\Rightarrow (3)$. From $\y=-\frac{t}{(q-1)\boldsymbol{\lambda}} \mathfrak{A}_0^{(1,2)}$, we obtain 
    $$\sigma_{q,t}\y =-\frac{qt}{(q-1)\sigma_{q,t} \boldsymbol{\lambda}} \sigma_{q,t} \mathfrak{A}_0^{(1,2)}= -qt \frac{\sigma_{q,t} \mathfrak{A}_0^{(1,2)}}{(1-qt)\sigma_{q,t} \mathfrak{A}_1^{(1,2)}-qt\sigma_{q,t} \mathfrak{A}_0^{(1,2)}}\, .$$
Substituting the values of $\mathfrak{A}_0^{(1,2)}$ and $\mathfrak{A}_1^{(1,2)}$ from $(2)$ then yields
\begin{equation}\label{sigyval}  \sigma_{q,t}\y = \frac{q}{\y} \cdot \frac{\Theta_1\overline{\Theta}_1\Theta_\infty \overline{\Theta}_\infty X^2-t(\Theta_0+\overline{\Theta}_0)X+t^2\Theta_t\overline{\Theta}_t}{(\Theta_\infty X-1)(q\overline{\Theta}_\infty X-1)}\, .\end{equation}
For the purpose of factorization, we use the equality $\Theta_0\overline{\Theta}_0=\Theta_t\overline{\Theta}_t\Theta_1\overline{\Theta}_1\Theta_\infty\overline{\Theta}_\infty$. This yields the expression for $\sigma_{q,t}\y$ in the statement. 
Similarly, from $$1+(q-1)\y \bZZ=\mathfrak{A}_0^{(1,1)}+\frac{\y}{\y-1} \mathfrak{A}_1^{(1,1)}+\frac{\y}{\y-t}\left( \mathfrak{A}_\infty^{(1,1)}-\mathfrak{A}_0^{(1,1)}-\mathfrak{A}_1^{(1,1)}\right),$$
 we obtain 
$$\sigma_{q,t}\left((\y-1)(\y-t)( 1+(q-1)\y \bZZ)\right)=(\sigma_{q,t}\y-1)\left(\overline{\Theta}_\infty \sigma_{q,t}\y-qt\sigma_{q,t}\mathfrak{A}_0^{(1,1)}\right)  + (1-qt)\sigma_{q,t}\y\cdot \sigma_{q,t} \mathfrak{A}_1^{(1,1)}\, .   $$
Substituting the value of $\sigma_{q,t}\y$ from \eqref{sigyval}, the values of $\mathfrak{A}_0^{(1,1)}$ and $\mathfrak{A}_1^{(1,1)}$ from $(2)$, as well as $\bZZ=\frac{1}{(q-1)\y}\left(\frac{\Theta_\infty \overline{\Theta}_\infty (\y-\Theta_1)(\y-\overline{\Theta}_1)}{(\y-1)(\y-t)}X-1\right)$, the right hand side simplifies to 
$$\frac{(\sigma_{q,t}\y-qt\Theta_t)(\sigma_{q,t}\y-qt\overline{\Theta}_t)}{qX}\, .$$
Therefore, $\sigma_{q,t}\bZZ$ is as in the statement. 

 Let us now show that $(3)\Rightarrow (2)$. Note that for each  $i\in \{0,1,t\}$, the matrices $\mathfrak{A}_i$ may be expressed as functions of $\boldsymbol{\lambda}, X$ and $\y$. If $(3)$ holds, then the matrices 
$\mathfrak{C}^{-1}\sigma_{q,t}\mathfrak{A}_i \mathfrak{C}$, with $\mathfrak{C}$ as in $(2)$, can also be expressed as functions of $\boldsymbol{\lambda}, X$ and $\y$. It its straightforward to check (with computer assistance) that the equations in $(2)$ then are satisfied. 
   \end{proof}
  
  In analogy with the differential case, we give a name to the particular case when a family can be completed into a Lax pair \emph{via} a matrix $\mathfrak{B}$ as in \eqref{exprB} with $\mathfrak{C}(t)=\Itwo $:
  
 \begin{defn}\label{def:qSchlesisom} We say that the family $\sigma_{q,x}Y=\mathfrak{A}(x,t)Y$ of Fuchsian systems is $q$-\emph{Schlesinger isomonodromic} if it can be completed into a $q$-Lax pair \emph{via} a matrix $\mathfrak{B} \in \mathrm{GL}_{2}(\mathcal{M}(\mathfrak{D})(x))$ of the form  
 \begin{equation}\label{matrixBinSchles} \mathfrak{B}(x,t)=\dfrac{(x-qt)(x\Itwo +B_0(t))}{(x-qt\Theta_t)(x-qt\overline{\Theta}_t)}\, .\end{equation} \end{defn}

 Let us now say a few words about whether, analogously to the differential setting, a family of $q$-Fuchsian systems which is $q$-isomonodromic can be made $q$-Schlesinger isomonodromic \emph{via} a gauge transformation. Let $\mathfrak{A}\in \mathrm{GL}_{2}(\mathcal{O}(\mathfrak{D})(x))$ be as in \eqref{eqq1bis} and assume that the family of $q$-Fuchsian systems $\partial_{q,x}Y=\mathfrak{A}(x,t)Y$ can be completed into a Lax pair \emph{via} a matrix $\mathfrak{B}\in \mathrm{GL}_{2}(\mathcal{M}(\mathfrak{D})(x))$ of the form \eqref{exprB}, with $\mathfrak{C}\in \mathrm{GL}_{2}(\mathcal{M}(\mathfrak{D}))$ diagonal. Assume there exists   ${\mathfrak{M}\in \mathrm{GL}_2(\mathcal{M}(\mathfrak{D}))}$ which is diagonal and solves the $q$-difference equation $$\sigma_{q,t}\mathfrak{M}(t)=\mathfrak{C}(t)\mathfrak{M}(t)\, .$$
 Since $\mathfrak{M}$ does not depend on $x$, performing the gauge transformation $Y=\mathfrak{M}\widehat{Y}$ yields the family  $\sigma_{q,x}\widehat{Y}=\widehat{\mathfrak{A}}(x,t)\widehat{Y}$ given by 
 $$\widehat{\mathfrak{A}}(x,t)=(\sigma_{q,x}\mathfrak{M}(t))^{-1}\mathfrak{A}(x,t)\mathfrak{M}(t)=\mathfrak{M}(t)^{-1}\mathfrak{A}(x,t)\mathfrak{M}(t)\, .$$
 Since $\mathfrak{M}$ is diagonal, up to shrinking $\mathfrak{D}$ to the domain of holomorphy of the coefficients of $\widehat{\mathfrak{A}}\in \mathrm{GL}(\mathcal{M}(\mathfrak{D})(x)),$ this new family is still a family of $q$-Fuchsian systems in the sense of Definition \ref{qfuchsian}. Moreover, this new family can be completed into a Lax pair \emph{via} the matrix 
 $$\widehat{\mathfrak{B}}:=\mathfrak{M}^{-1}\mathfrak{C}^{-1}\mathfrak{B}\mathfrak{M} \in \mathrm{GL}_{2}(\mathcal{M}(\mathfrak{D})(x))\, .$$
 Indeed, from the $q$-Lax equation for the initial family, we get
 $$\begin{array}{rcl}\widehat{\mathfrak{A}}(x,qt)\widehat{\mathfrak{B}}(x,t)&=&\mathfrak{M}(qt)^{-1}\mathfrak{A}(x,qt)\mathfrak{M}(qt)\mathfrak{M}^{-1}(t)\mathfrak{C}^{-1}(t)\mathfrak{B}(x,t)\mathfrak{M} (t)\vspace{.2cm}\\&=&\mathfrak{M}(t)^{-1}\mathfrak{C}(t)^{-1}\mathfrak{A}(x,qt) \mathfrak{B}(x,t)\mathfrak{M} (t)\vspace{.2cm}\\&=&
 \mathfrak{M}(t)^{-1}\mathfrak{C}(t)^{-1}\mathfrak{B}(qx,t) \mathfrak{A}(x,t)\mathfrak{M} (t)\vspace{.2cm}\\&=&\widehat{\mathfrak{B}}(qx,t)\widehat{\mathfrak{A}}(x,t)\, .\end{array}$$
 Note that $\widehat{\mathfrak{B}}$ is given by 
 $$\widehat{\mathfrak{B}}(x,t)= \frac{(x-qt)(x\Itwo +\widehat{B}_0(t))}{(x-qt\Theta_t)(x-qt\overline{\Theta}_t)}\quad \quad \textrm{with} \quad \quad \widehat{B}_0=\mathfrak{M}^{-1}B_{0}\mathfrak{M}\, .$$
 In other words, the conjugated family $\sigma_{q,x}\widehat{Y}=\widehat{\mathfrak{A}}(x,t)\widehat{Y}$ is $q$-Schlesinger isomonodromic. 
To find this conjugated family, we had to solve a diagonal system of $q$-difference equations, which boils down to solving two scalar  linear $q$-difference equations. Contrarily to the differential case, the resolution of $q$-difference equations even of such simple form is not trivial, and does not seem to be known in full generality. However, if some strong assumptions on the domain of definition $\mathfrak{D}$ are satisfied, one can use for example the following lemma. 
  
  \begin{lem} Assume that $|q|>1$ and $ \mathfrak{D}$ contains an  annulus of the form 
$$\{ t \in \C~|~ a<|t|<b\}$$
for some real numbers $0<a<b<\infty$.
   Let $c\in \mathcal{M}(\mathfrak{D})\setminus \{0\}$. 
Then there exists a meromorphic solution $\mathfrak{m}(t)\in \mathcal{M}\left({\mathfrak{D}}\right)\setminus \{0\}$
of $$\sigma_{q,t}\mathfrak{m}=c\, \mathfrak{m}\, .$$
 \end{lem}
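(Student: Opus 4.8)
The plan is to construct $\mathfrak{m}$ explicitly, combining an elementary theta function with a Laurent-coefficient resolution of the associated additive equation. Since $|q|>1$, the Jacobi theta function $\theta_q(t):=\sum_{n\in\Z}q^{-n(n+1)/2}t^{n}$ converges for every $t\in\C^{*}$ and defines a holomorphic function on $\C^{*}$, nonvanishing except for simple zeros along a single orbit $z_0q^{\Z}$, and (by an index shift) it satisfies the functional equation $\theta_q(qt)=t\,\theta_q(t)$. From this one reads off two basic solvable cases: $\theta_q$ solves $\sigma_{q,t}\mathfrak{m}=t\,\mathfrak{m}$, so $\theta_q^{m}$ solves $\sigma_{q,t}\mathfrak{m}=t^{m}\mathfrak{m}$ for any $m\in\Z$; and for a constant $\mu\in\C^{*}$ the ratio $\theta_q(\mu t)/\theta_q(t)$ solves $\sigma_{q,t}\mathfrak{m}=\mu\,\mathfrak{m}$. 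Because the equation is linear and homogeneous of first order, the product of solutions corresponds to the product of multipliers, so it suffices to peel off from $c$, in turn, its divisor, its winding number around $0$, a constant, and a ``logarithmic'' factor.

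First I would reduce to the case where $c$ is a nowhere-vanishing holomorphic unit. The domain $\mathfrak{D}$ is a connected open subset of $\C^{*}$, stable under multiplication by $q^{\pm1}$; in particular it is a non-compact Riemann surface, hence Stein (Behnke--Stein), so every locally finite divisor on $\mathfrak{D}$ is the divisor of some function in $\mathcal{M}(\mathfrak{D})$. Writing $\mathrm{div}(c)=\sum_p m_p[p]$, I would realize the locally finite divisor $E$ defined orbit-by-orbit by $E_p:=-\sum_{k\geq 0}m_{q^{k}p}$ (a finite sum for each $p$, since each forward orbit meets $\mathrm{supp}(\mathrm{div}(c))$ only finitely often) as $E=\mathrm{div}(h)$ for some $h\in\mathcal{M}(\mathfrak{D})\setminus\{0\}$. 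A telescoping computation then gives $\mathrm{div}(\sigma_{q,t}h/h)=q^{-1}E-E=\mathrm{div}(c)$, so that $c_1:=c\,h/\sigma_{q,t}h$ is a nowhere-vanishing unit; and if $\mathfrak{m}_1$ solves $\sigma_{q,t}\mathfrak{m}_1=c_1\mathfrak{m}_1$, then $\mathfrak{m}:=h\,\mathfrak{m}_1$ solves the original equation.

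It remains to treat a unit $c$. On the annulus $A=\{a<|t|<b\}\subset\mathfrak{D}$ such a $c$ admits a Laurent--logarithm $c(t)=C\,t^{m}\exp(\sum_{n\neq 0}c_nt^{n})$, where $m\in\Z$ is the winding number of $c$ around $0$ and $C\in\C^{*}$. By the two elementary cases the factors $C$ and $t^{m}$ are handled by $\theta_q(Ct)/\theta_q(t)$ and by $\theta_q^{m}$, so it suffices to treat $e^{g}$ with $g(t)=\sum_{n\neq 0}c_nt^{n}$; setting $\mathfrak{m}=e^{\phi}$ reduces this to the additive equation $\sigma_{q,t}\phi-\phi=g$, solved on $A$ coefficientwise by $\phi(t)=\sum_{n\neq 0}\frac{c_n}{q^{n}-1}t^{n}$. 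Here the hypothesis $|q|>1$ is exactly what makes $|q^{n}-1|$ bounded away from $0$ uniformly in $n\neq 0$, so that this series converges on the same annulus as $g$; and the absence of a constant term in $g$ (absorbed into $C$) is precisely the vanishing of the obstruction that would otherwise prevent a single-valued solution.

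Finally I would globalize from $A$ to all of $\mathfrak{D}$: the functional equation $\phi(qt)=\phi(t)+g(t)$ propagates the solution found on $A$ across the $q$-translates covering $\mathfrak{D}$, the overlaps being consistent by construction. I expect this globalization step — together with the realization of $E$ on the possibly non-compact, non-simply-connected quotient $\mathfrak{D}/q^{\Z}$ — to be the main technical obstacle, the convergence estimates being immediate from $|q|>1$; when $\mathfrak{D}=\C^{*}$ all expansions are global and the propagation is vacuous. A clean way to dispose of the obstacle altogether is to read $c$ as a factor of automorphy for the free $\Z$-action of $q$ on $\mathfrak{D}$, defining a holomorphic line bundle on the Riemann surface $\mathfrak{D}/q^{\Z}$; a meromorphic solution $\mathfrak{m}$ is then a nonzero meromorphic section, whose existence is guaranteed in the non-compact case by triviality of the bundle and in the compact case $\mathfrak{D}=\C^{*}$ by Riemann--Roch.
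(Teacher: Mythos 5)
Your elementary building blocks are fine (the theta solutions of $\sigma_{q,t}\mathfrak{m}=\mu\,\mathfrak{m}$ and $\sigma_{q,t}\mathfrak{m}=t^{m}\mathfrak{m}$, and the coefficientwise solution of $\sigma_{q,t}\phi-\phi=g$ on the annulus, using that $|q^{n}-1|$ is bounded below for $n\neq 0$), but both of your global steps have genuine gaps, and they share one cause: multiplication by $q$ moves points only radially, so neither a single $q$-orbit nor the translates $q^{\Z}A$ ever reach most of $\mathfrak{D}$. Concretely: (i) the parenthetical claim that each forward orbit meets $\mathrm{supp}(\mathrm{div}(c))$ only finitely often is false --- take $\mathfrak{D}=\C^{*}$ and $c=\vartheta_{q}$, the paper's own theta function, whose divisor is a full $q$-orbit, so your sum $E_{p}=-\sum_{k\geq 0}m_{q^{k}p}$ diverges; and even when all sums are finite, $E$ need not be locally finite: for a $c$ with simple zeros exactly at the points $e^{i/j}q^{j}$, $j\geq 1$ (a legitimate closed discrete subset of $\C^{*}$, realized by Weierstrass), your $E$ equals $-1$ at every point $e^{i/j}$, and these accumulate at the interior point $1$, so $E$ is not a divisor and no $h$ with $\mathrm{div}(h)=E$ exists. (ii) In the unit case, $q^{\Z}A$ need not cover $\mathfrak{D}$: for $\mathfrak{D}=\C^{*}\setminus\left(q^{\Z}\cup |q|^{1/2}q^{\Z}\right)$ every annulus contained in $\mathfrak{D}$ has ratio at most $|q|^{1/2}$, the $q$-orbit of a point outside $q^{\Z}A$ never enters $q^{\Z}A$, so the functional equation $\phi(qt)=\phi(t)+g(t)$ propagates nothing to such points; moreover $g$ itself (a logarithm of $c/(Ct^{m})$) has no single-valued extension to such a $\mathfrak{D}$, which has infinitely many holes around which a unit can wind. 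Your closing line-bundle remark does not bypass this: a meromorphic $c$ is not a factor of automorphy, so that route presupposes exactly the unit reduction that is gapped.

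Both failures are cured by the one idea that constitutes the paper's proof: split across the annulus into a piece concentrated near $0$ and a piece concentrated near $\infty$. The paper invokes a Cousin-type factorization (\cite[Lemma~4.4]{bachmayr2016differential}) to write $c=c_{1}c_{2}$ with $c_{1}\in\mathcal{M}(O_{1})$, $c_{2}\in\mathcal{M}(O_{2})$, where $O_{1}\supset\{0<|t|<b\}$, $O_{2}\supset\{|t|>a\}$ and $O_{1}\cap O_{2}=\mathfrak{D}$; each factor's equation is then solved by a germ at $0$, resp.\ at $\infty$ (Remark~\ref{rem1}, i.e.\ your theta calculus), and propagation by the functional equation now does reach all of $\mathfrak{D}$, because every backward orbit $q^{-k}t$ eventually enters the punctured disk $\{0<|s|<b\}$ while all intermediate points stay in $\mathfrak{D}\subset O_{1}$ where $c_{1}$ is defined --- the absorbing property that an annulus lacks. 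The same radial splitting would repair your divisor step: sum backward along orbits for the part of $\mathrm{div}(c)$ in $\{|t|\geq b\}$ and forward for the part in $\{|t|<b\}$; then all sums are finite and the resulting $E$ is locally finite (compacts $K\subset\mathfrak{D}$ meet only finitely many translates $q^{\pm k}\,\mathrm{supp}$, by $q$-stability). With that repair, your quotient picture does become a correct alternative ending: a unit is an honest factor of automorphy, $\mathfrak{D}/q^{\Z}$ is compact exactly when $\mathfrak{D}=\C^{*}$, and triviality (open case) or Riemann--Roch (torus case) yields the section; but as written, the proposal's two global steps both break.
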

 
 Note that the assumption on $\mathfrak{D}$ of the above lemma is satisfied if for instance
 ${\mathfrak{D}=\C^* \setminus \displaystyle\bigcup_{\ell=1}^{k} a_{\ell}q^{\Z}}$, for some $a_{\ell}\in \C^*$.
 
\begin{proof}
  Let us define  
$$O_{1}:= \{ t\in \mathbb{P}^1~|~|t| <b\} \cup \big(\mathfrak{D}\cap \{ t\in \mathbb{P}^1~|~|t|\geq b\}\big),$$
$$O_{2}:= \{ t\in \mathbb{P}^1~|~|t|> a \} \cup \big(\mathfrak{D}\cap \{ t\in \mathbb{P}^1~|~|t|\leq a\}\big).$$
These are connected open sets satisfying $O_{1} \cap O_{2}=\mathfrak{D}$ and $O_{1} \cup O_{2}=\mathbb{P}^1 $.
By \cite[Lemma~4.4]{bachmayr2016differential},  there exist $c_{1}\in \mathcal{M}(O_{1})$, and $c_{2}\in \mathcal{M}(O_{2})$ such that $c=c_{1}c_{2}$. By construction, $c_{1}$ is a germ of meromorphic function at $0$. By Remark \ref{rem1}, there exists $0\neq \mathfrak{m}_{1}$ that is meromorphic on a punctured neighborhood of $0$ in $\C^*$ such that $\sigma_{q,t}\mathfrak{m}_{1}=c_{1} \mathfrak{m}_{1}$. Using the functional equation and using the fact that $\mathfrak{D}$ is stable by multiplication by $q$, we find that $\mathfrak{m}_{1}$ may be continued into a meromorphic function on $qO_{1}\setminus \{0\}$ where $qO_{1}=\{qt, t\in O_{1}\}$.  Similarly, we construct a non-zero meromorphic solution of $\sigma_{q,t}\mathfrak{m}_{2}=c_{2} \mathfrak{m}_{2}$ that is meromorphic on $q^{-1}O_{2}\setminus \{\infty\}$.
Since $\mathfrak{D}\subset \C^*$ and $qO_{1} \cap q^{-1}O_{2}\subset O_{1} \cap O_2=\mathfrak{D}$ we find that a convenient solution is $\mathfrak{m}=\mathfrak{m}_{1}\mathfrak{m}_{2}$.\end{proof}

 \subsection{Confluence}\label{sec:confSchlesinger}

 Let $\boldsymbol{\Theta}(q)=(\Theta_0(q), \Theta_1(q),\Theta_t(q),\Theta_\infty(q))$ be a quadruple of rational functions in a complex variable $q$ such that as $q\to 1$, we have 
 \begin{equation}\label{ThetaConfValSchles}\Theta_i(q) = 1+(q-1)\frac{\theta_i}{2}+O(q-1)^2\quad \forall i\in \{0,1,t,\infty\}\, \end{equation}
 with $\theta_i\in \C$. We define $\overline{\boldsymbol{\Theta}}$  by $\overline{\Theta}_i =\frac{1}{\Theta_i }$. Recall from Section \ref{sec:confFuchsSys} that these requirements on $(\boldsymbol{\Theta},\overline{\boldsymbol{\Theta}})$ are a convenient setting for the discretization of $\mathfrak{sl}_2$-Fuchsian systems with spectral data~$\boldsymbol{\theta}$ (if $\theta_\infty\neq 0$).  We shall now see under these requirements, the $q$-Schlesinger equations discretize the (differential) Schlesinger equations, and that the difference equation \eqref{qPVIyZ} generically characterizing  $q$-isomonodromy discretizes  the differential equation \eqref{eqHVI} generically characterizing  isomonodromy.  
 
 \subsubsection{The $q$-Schlesinger equations discretize the differential ones} \label{sub1confSchlesinger}$ $\\
    The \emph{$q$-Schlesinger equations} are obtained from the equations in point $(2)$ of Proposition \ref{qisomchar} by setting $\mathfrak{C}=\Itwo $. With respect to the $\partial_{q,t}$-operator and the matrices $$\widetilde{\mathfrak{A}}_0=\frac{\mathfrak{A}_0-\Itwo }{q-1},\quad  \quad \widetilde{\mathfrak{A}}_1=\frac{\mathfrak{A}_1}{q-1}\,,  \quad \quad \widetilde{\mathfrak{A}}_t=\frac{\mathfrak{A}_t}{t(q-1)}\, , $$ they read as follows:
   \begin{equation} \label{eq:qSchlestilde} \left\{ \begin{array}{rcl}
   \partial_{q,t}\widetilde{\mathfrak{A}}_0&=&\displaystyle \frac{B_0\widetilde{\mathfrak{A}}_0 B_0^{-1} -\widetilde{\mathfrak{A}}_0}{(q-1)t}\vspace{.2cm}\\
    \partial_{q,t}\widetilde{\mathfrak{A}}_1&=&\displaystyle \frac{1}{(q-1)t}\left(  \frac{(t-1)}{qt-1}\frac{(qt\Theta_t-1)(qt\overline{\Theta}_t-1)}{q(t\Theta_t-1)(t\overline{\Theta}_t-1)}(q\Itwo +B_0) \,\widetilde{\mathfrak{A}}_1\left(\Itwo +B_0 \right)^{-1}-\widetilde{\mathfrak{A}}_1\right)\vspace{.2cm}\\
        \partial_{q,t}\widetilde{\mathfrak{A}}_t&=& \displaystyle -\frac{ 1}{(q-1)^2t}\left(\Itwo +\frac{1}{qt\Theta_t\overline{\Theta}_t}B_0\right)- \frac{1}{(q-1)t} \left( B_0\widetilde{\mathfrak{A}}_0  \left(\frac{1}{qt\Theta_t\overline{\Theta}_t}\Itwo +B_0^{-1}\right)+\widetilde{\mathfrak{A}}_t\right)\vspace{.2cm}\\
     &&\displaystyle - \frac{(t-1)(q\Itwo +B_0) \, \widetilde{\mathfrak{A}}_1 }{q(q-1)t(t\Theta_t-1)(t\overline{\Theta}_t-1)}\cdot \left(\Itwo +\frac{(qt\Theta_t-1)(qt\overline{\Theta}_t-1)}{qt-1}\left(\Itwo +B_0 \right)^{-1} \right) \, ,
     \end{array} \right.\end{equation}
     where $B_0=B_0\left(\widetilde{\mathfrak{A}}_0, \widetilde{\mathfrak{A}}_1,t,q\right)$ is the function with values in 
     $$G:=\{M\in \mathrm{GL}_2(\C)~|~\det(M+\Itwo )\neq 0\}\,,$$
     defined, on the complement of some proper Zariski closed   subset of $\mathrm{GL}_2(\C)\times \mathrm{GL}_2(\C) \times \C \times \C$, as $B_0\left(\widetilde{\mathfrak{A}}_0, \widetilde{\mathfrak{A}}_1,t,q\right)$ being given by
     $$\begin{array}{l}-qt\left(\Itwo +(q-1)\left(\frac{t-1}{(t\Theta_t-1)(t\overline{\Theta}_t-1)}\widetilde{\mathfrak{A}}_1-\widetilde{\mathfrak{A}}_\infty\right)\right)\left(\Itwo +(q-1)\left(\widetilde{\mathfrak{A}}_0+\frac{t(t-1)}{(t\Theta_t-1)(t\overline{\Theta}_t-1)} \widetilde{\mathfrak{A}}_1\right)\right)^{-1}\, , \end{array}$$
where $\widetilde{\mathfrak{A}}_\infty(q)=\mathrm{diag}\left(\frac{1-\overline{\Theta}_\infty}{q-1},\frac{1-{\Theta}_\infty}{q-1} \right)=\mathrm{diag}\left(\frac{\theta_\infty}{2},-\frac{{\theta}_\infty}{2} \right)+O(q-1)\, .$
Since $\widetilde{\mathfrak{A}}_\infty(q)$ is given by this context, may write everything as a function of $\widetilde{\mathfrak{A}}_0$ and $\widetilde{\mathfrak{A}}_1$ by  identifying 
  $\widetilde{\mathfrak{A}}_t:=-\left(\widetilde{\mathfrak{A}}_0+  \widetilde{\mathfrak{A}}_1+ \widetilde{\mathfrak{A}}_\infty \right)$. With this notation, as $q\to 1$, up to terms of order $O(q-1)^2$, we have
 $$\begin{array}{rcl}
   B_0\left(\widetilde{\mathfrak{A}}_0, \widetilde{\mathfrak{A}}_1,t,q\right)&\sim &-qt\left(\Itwo +(q-1)\left(\frac{1}{t-1}\widetilde{\mathfrak{A}}_1-\widetilde{\mathfrak{A}}_\infty\right)\right)\left(\Itwo +(q-1)\left(\widetilde{\mathfrak{A}}_0+\frac{t}{t-1} \widetilde{\mathfrak{A}}_1\right)\right)^{-1} \vspace{.2cm}\\
   &\sim &-qt\left(\Itwo +(q-1)\left(\frac{1}{t-1}\widetilde{\mathfrak{A}}_1-\widetilde{\mathfrak{A}}_\infty\right)\right)\left(\Itwo -(q-1)\left(\widetilde{\mathfrak{A}}_0+\frac{t}{t-1} \widetilde{\mathfrak{A}}_1\right)\right)\vspace{.2cm}\\
  &\sim &-qt\left(\Itwo +(q-1) \widetilde{\mathfrak{A}}_t\right)  \, .
   \end{array}$$

     Let $f_{0},f_{1},f_{t}$ be the functions with values in $\mathrm{M}_2(\C)$, defined on some obvious domain of definition inside $ \mathrm{GL}_2(\C)\times  \mathrm{GL}_2(\C)\times \C\times \left(\C\setminus \{1\}\right)$, that  
when evaluated in $\left(\widetilde{\mathfrak{A}}_0,\widetilde{\mathfrak{A}}_1,t,q\right)$, yield the right hand sides of the equations in  \eqref{eq:qSchlestilde}. Then we have 
  $$\begin{array}{rcl}
 f_0(\widetilde{\mathfrak{A}}_0,\widetilde{\mathfrak{A}}_1,t,q)&=&\displaystyle \frac{\left(\Itwo +(q-1)\widetilde{\mathfrak{A}}_t\right)\widetilde{\mathfrak{A}}_0\left(\Itwo -(q-1)\widetilde{\mathfrak{A}}_t\right) -\widetilde{\mathfrak{A}}_0}{(q-1)t} +O(q-1)\vspace{.2cm}\\
 &=&\displaystyle \frac{[\widetilde{\mathfrak{A}}_0,\widetilde{\mathfrak{A}}_t]}{0-t} +O(q-1)\, , \vspace{.2cm}\\ 
    f_1(\widetilde{\mathfrak{A}}_0,\widetilde{\mathfrak{A}}_1,t,q) &=&\displaystyle \frac{ (\Itwo -(q-1)\frac{t}{1-t}\widetilde{\mathfrak{A}}_t)\widetilde{\mathfrak{A}}_1 \left( \Itwo -(q-1)\frac{t}{1-qt}\widetilde{\mathfrak{A}}_t \right)^{-1}-\widetilde{\mathfrak{A}}_1}{(q-1)t}+O(q-1)\vspace{.2cm}\\  
    &=&\displaystyle \frac{ (\Itwo -(q-1)\frac{t}{1-t}\widetilde{\mathfrak{A}}_t)\widetilde{\mathfrak{A}}_1 \left( \Itwo +(q-1)\frac{t}{1-t}\widetilde{\mathfrak{A}}_t \right)-\widetilde{\mathfrak{A}}_1}{(q-1)t}+O(q-1)\vspace{.2cm}\\ 
    &=&\displaystyle \frac{[\widetilde{\mathfrak{A}}_1,\widetilde{\mathfrak{A}}_t]}{1-t}  +O(q-1)\, .    \end{array}$$
    Using similar calculations, and the Taylor series expansion of $B_0$ until its second order term, one finds
  $$\begin{array}{rcl}
    f_t(\widetilde{\mathfrak{A}}_0,\widetilde{\mathfrak{A}}_1,t,q) &=& \displaystyle -\frac{[\widetilde{\mathfrak{A}}_0,\widetilde{\mathfrak{A}}_t]}{0-t}-\frac{[\widetilde{\mathfrak{A}}_1,\widetilde{\mathfrak{A}}_t]}{1-t}  +O(q-1)\, .
    \end{array}$$
To summary, we have proved 
   \begin{equation*}\left\{\begin{array}{rcc}
\partial_{q,t}\widetilde{\mathfrak{A}}_0 (t)&=& \frac{[\widetilde{\mathfrak{A}}_0(t),\widetilde{\mathfrak{A}}_t(t)]}{0-t}+O(q-1) \vspace{.2cm}\\
\partial_{q,t}\widetilde{\mathfrak{A}}_1 (t)&=&\frac{[\widetilde{\mathfrak{A}}_1(t),\widetilde{\mathfrak{A}}_t(t)]}{1-t}+O(q-1) \vspace{.2cm}\\
\partial_{q,t}\widetilde{\mathfrak{A}}_t (t)&=& -\frac{\left[\widetilde{\mathfrak{A}}_{0}(t) ,\widetilde{\mathfrak{A}}_{t}(t)\right]}{0-t}-\frac{\left[\widetilde{\mathfrak{A}}_{1}(t),\widetilde{\mathfrak{A}}_{t}(t)\right]}{1-t}+O(q-1).
\end{array}\right.\end{equation*}

This proves that the $q$-Schlesinger equations \eqref{eq:qSchlestilde} discretize the (differential) Schlesinger equations \eqref{Schles}. 
\\
In order to complement this result, let us consider the function $\widetilde{\mathfrak{B}}$ with values in $\mathrm{M}_2(\C)$, given, on its obvious set of definition inside $\mathrm{GL}_2(\C)\times \mathrm{GL}_2(\C)\times \C\times \C\times \left(\C\setminus \{1\}\right)$, by 
   $$\widetilde{\mathfrak{B}}\left(\widetilde{\mathfrak{A}}_0, \widetilde{\mathfrak{A}}_1,x,t,q\right):= \frac{1}{(q-1)t}\left(\frac{(x-qt)\left(x\Itwo +B_0\left(\widetilde{\mathfrak{A}}_0, \widetilde{\mathfrak{A}}_1,t,q\right)\right)}{(x-qt\Theta_t)(x-qt\overline{\Theta}_t)}-\Itwo \right)\, .$$
   This function corresponds to the right hand side of the $\delta_{q,t}$-version of $\sigma_{q,t}Y=\mathfrak{B}Y$ with $\mathfrak{B}$ as in \eqref{matrixBinSchles}. 
It  behaves, when $q\to 1$, as   $$\begin{array}{rcl}
\widetilde{\mathfrak{B}}\left(\widetilde{\mathfrak{A}}_0, \widetilde{\mathfrak{A}}_1,x,t,q\right)    &\sim &\frac{1}{(q-1)t}\left(\frac{(x-qt)\left((x-qt)\Itwo -(q-1)t \widetilde{\mathfrak{A}}_t\right)}{(x-qt)^2}-\Itwo +O(q-1)^2\right)\vspace{.2cm}\\
  &\sim &\frac{1}{(q-1)t}\left(-(q-1)t\frac{ \widetilde{\mathfrak{A}}_t}{(x-qt)}+O(q-1)^2\right)\vspace{.2cm}\\
  &\sim & - \frac{ \widetilde{\mathfrak{A}}_t}{x-t}+O(q-1) \, .
   \end{array}$$
By the above estimates, $\widetilde{\mathfrak{B}}$ can be continued analytically to $\{q=1\}$ and is there given by  $- \frac{ \widetilde{\mathfrak{A}}_t}{x-t}$.

\subsubsection{The $q$-Lax pairs discretize the differential ones}\label{sub2confSchlesinger}$ $\\
Let $$\qDom\subset \C^*\setminus \mathrm{e}^{2i\pi \Q}$$ be a connected subset with $1$ in its closure. Let $\mathfrak{D}\subset \C^*$ be an open connected subset. 
We shall assume that the pair $(\mathfrak{D},\qDom)$ satisfies the property that  $\mathfrak{D}$ is stable by multiplication by $q^{\pm 1}$, for every $q\in \qDom$.  Note that unless $\mathfrak{D}= \C^*$, the subset $\qDom$ cannot be too large.
Two examples of a convenient pair   $( \mathfrak{D},\qDom )$ with $\mathfrak{D}\subset \C^*$ are the following:
 \begin{itemize}
\item $(\mathfrak{D},\qDom)=\left(\C^{*}\setminus q_{0}^{\R},q_{0}^{\R_{>0}} \right)$ where  $q_{0}\in \C$, with $|q_{0}|\neq 1$.\vspace{.2cm}
\item $\mathfrak{D}$ is an open sector with infinite radius centered at $0$ and  $\qDom=]1,+\infty[$.
\end{itemize} 

In addition to our previous requirements on $(\boldsymbol{\Theta}, \overline{\boldsymbol{\Theta}})$, let us now moreover assume that the (differential) spectral values $\boldsymbol{\theta}$ satisfy the non-resonancy condition $\theta_i\not \in \Z^*$ for $i\in \{0,1,t,\infty\}$ and $\theta_\infty \neq 0$. Note that for values of $q\in \qDom$ sufficiently close to $1$,  the ($q$-difference) non-resonancy condition \eqref{eq:qresonance} then is automatically is satisfied.
 
Let $A_0,A_1,A_t\in \mathrm{GL}_2(\mathcal{O}(\mathfrak{D}))$ such that 
$\partial_xY(x,t)=A(x,t)Y$ with $A=\frac{A_0}{x}+\frac{A_1}{x-1}+\frac{A_t}{x-t}$ is a family of $\mathfrak{sl}_2$-Fuchsian systems with spectral data $\boldsymbol{\theta}$ as in Definition \ref{defi1}. 
Let  $\widetilde{\mathfrak{A}}_0,\widetilde{\mathfrak{A}}_1,\widetilde{\mathfrak{A}}_t$ be holomorphic functions in a neighborhood of $\mathfrak{D}\times \qDom\subset \C^2$ with values in $\mathrm{GL}_2(\C)$, 
such that for each $q\in \qDom$, the $q$-difference equation $\partial_{q,x}Y=\widetilde{\mathfrak{A}}(x,t,q)Y$ with $\widetilde{\mathfrak{A}}=\frac{\widetilde{\mathfrak{A}}_0}{x}+\frac{\widetilde{\mathfrak{A}}_1}{x-1}+\frac{\widetilde{\mathfrak{A}}_t}{x-t}$ yields, \emph{via} $\mathfrak{A}=\Itwo +x(q-1)\widetilde{\mathfrak{A}}$, a family of $q$-Fuchsian systems with spectral data $(\boldsymbol{\Theta}(q), \overline{\boldsymbol{\Theta}}(q))$ as in Definition \ref{qfuchsian}. 
By Proposition \ref{prop3} it is convenient to assume that  
$$\forall i\in \{0,1,t\}\, , \quad \quad \lim_{q\to 1} \widetilde{\mathfrak{A}}_i(t,q)=A_i(t)\, ,$$
and that these limits are uniform on compact subsets of $\mathfrak{D}$, such that 
$$\forall i\in \{0,1,t\}\, , \quad \quad  \lim_{q\to 1} \partial_{q,t}\widetilde{\mathfrak{A}}_i(t,q)=A_i'(t). $$
 Finally, let us assume that the family  $\partial_{q,x}Y=\widetilde{\mathfrak{A}}(x,t,q)Y$ is $q$-isomonodromic for each $q\in \qDom$. 
By non-resonancy, \textsection \ref{sub1confSchlesinger} and the proof of Proposition \ref{qisomchar}, this means that  this family can be completed into a $q$-Lax pair 
\begin{equation*}\left\{ \begin{array}{rcl}\sigma_{q,x}Y&=&\left(\Itwo +(q-1)x\widetilde{\mathfrak{A}}(x,t,q)\right)Y\\
\sigma_{q,t}Y&=&\mathfrak{C}(t,q)\left(\Itwo +(q-1)t\widetilde{\mathfrak{B}}(x,t,q)\right)Y\, . \end{array}\right.\end{equation*}
We then find
\begin{equation}\label{qLaxConf}\left\{ \begin{array}{rcl}\partial_{q,x}Y&=&\widetilde{\mathfrak{A}}(x,t,q)Y\\\partial_{q,t}Y&=&\left(\mathfrak{C}(t,q)\widetilde{\mathfrak{B}}(x,t,q)+\frac{\mathfrak{C}(t,q)-\Itwo }{(q-1)t}\right)Y\, , \end{array}\right.\end{equation}
where 
$$\lim_{q\to 1}\widetilde{\mathfrak{B}}(x,t,q)=-\frac{A_t(t)}{x-t}\quad \quad \textrm{and}\quad \quad \mathfrak{C}(t,q)=f(t,q)\mathrm{diag}(c_1(t,q),c_2(t,q))\, .$$
Here $f\in \mathcal{M}(\mathfrak{D}\times \qDom)\setminus \{0\}$ can be chosen arbitrarily and 
$$c_1=\frac{\sigma_{q,t}\left(t\widetilde{\mathfrak{A}}_0^{(1,2)}+(t-1)\widetilde{\mathfrak{A}}_1^{(1,2)}\right)}{t\widetilde{\mathfrak{A}}_0^{(1,2)}+(t-1)\widetilde{\mathfrak{A}}_1^{(1,2)}}\, , \quad c_2 =\frac{q\overline{\Theta}_\infty X-1 }{\Theta_\infty X-1} \, ,$$
where $X$ is some rational expression in terms of $q,t,\Theta_1, \Theta_\infty, \widetilde{\mathfrak{A}}_0^{(1,1)}, \widetilde{\mathfrak{A}}_0^{(1,2)}, \widetilde{\mathfrak{A}}_1^{(1,1)},\widetilde{\mathfrak{A}}_1^{(1,2)}$ that can easily be made explicit.  We assume these $c_i$ and $X$ to be well-defined and finite. 
Using the Taylor series expansion of $\Theta_1(q)$ and $\Theta_\infty(q)$, we readily compute that up to terms of order $O(q-1)^2$, we have
$$\frac{1}{f}\mathfrak{C}\sim \Itwo +\frac{(q-1)t}{t\widetilde{\mathfrak{A}}_0^{(1,2)}+(t-1)\widetilde{\mathfrak{A}}_1^{(1,2)}} \begin{pmatrix} 
 t\partial_{q,t}\widetilde{\mathfrak{A}}_0^{(1,2)}+(t-1)\partial_{q,t}\widetilde{\mathfrak{A}}_1^{(1,2)}&0\\
 0& (1-\theta_\infty)\left(\widetilde{\mathfrak{A}}_0^{(1,2)}+\widetilde{\mathfrak{A}}_1^{(1,2)}\right)\end{pmatrix}\, .$$
 Choosing $f$ of the form $f(t,q)=1+(q-1)g(t)$ for some meromorphic function $g$, we can make sure that $\lim_{q\to 1}\mathfrak{C}(t,q)=\Itwo $ and that 
 the matrix
 $$C(t):=\lim_{q\to 1}\frac{\mathfrak{C}(t,q)-\Itwo }{(q-1)t} $$
 exists and is tracefree. Consider a subset of  $\mathfrak{D}$ where $C(t)$ is holomorphic.
With $\mathfrak{C}=\Itwo+(q-1)tC$, the $q$-Lax pair induced by \eqref{qLaxConf} is
\begin{multline*}
\left(\Itwo +(q-1)x\sigma_{q,t}\widetilde{\mathfrak{A}}\right)(\Itwo+(q-1)tC)\left(\Itwo +(q-1)t\widetilde{\mathfrak{B}}\right)\\
=(\Itwo+(q-1)tC)\left(\Itwo +(q-1)t\sigma_{q,x}\widetilde{\mathfrak{B}}\right)\left(\Itwo +(q-1)x\widetilde{\mathfrak{A}}\right).
\end{multline*}
The term in $O(1)$ in both sides of the equality is $\Itwo=\Itwo$. The term in $O(q-1)$ is  
$$(q-1)x\sigma_{q,t}\widetilde{\mathfrak{A}}+(q-1)t\widetilde{\mathfrak{B}}=(q-1)t\sigma_{q,x}\widetilde{\mathfrak{B}}+(q-1)x\widetilde{\mathfrak{A}}. $$
Finally, the term in $O(q-1)^{2}$ is 
\begin{multline*}(q-1)^{2}xt\sigma_{q,t}(\widetilde{\mathfrak{A}})C+(q-1)^{2}xt\sigma_{q,t}(\widetilde{\mathfrak{A}})\widetilde{\mathfrak{B}}+(q-1)^{2}t^{2}C\widetilde{\mathfrak{B}}\\
=(q-1)^{2}xt\sigma_{q,x}(\widetilde{\mathfrak{B}})\widetilde{\mathfrak{A}}+(q-1)^{2}t^{2}C\sigma_{q,x}(\widetilde{\mathfrak{B}})+(q-1)^{2}xtC\widetilde{\mathfrak{A}}.\end{multline*}
Then, dividing by $(q-1)^{2}xt$ and we obtain with  $\partial_{q,x}C=0$ that
$$\partial_{q,t}(\widetilde{\mathfrak{A}})-\partial_{q,x}(\widetilde{\mathfrak{B}}+C)=\sigma_{q,x}(\widetilde{\mathfrak{B}})\widetilde{\mathfrak{A}}+x^{-1}tC\sigma_{q,x}(\widetilde{\mathfrak{B}})+C\widetilde{\mathfrak{A}}-\sigma_{q,t}(\widetilde{\mathfrak{A}})C-\sigma_{q,t}(\widetilde{\mathfrak{A}})\widetilde{\mathfrak{B}}-x^{-1}tC\widetilde{\mathfrak{B}}+O(q-1).$$
With $\sigma_{q,t}(\widetilde{\mathfrak{A}})=\widetilde{\mathfrak{A}}+O(q-1)$,  $\sigma_{q,x}(\widetilde{\mathfrak{B}})=\widetilde{\mathfrak{B}}+O(q-1)$,  we obtain 
$$\partial_{q,t}(\widetilde{\mathfrak{A}})-\partial_{q,x}(\widetilde{\mathfrak{B}}+C)=\left[\widetilde{\mathfrak{B}}+C, \widetilde{\mathfrak{A}}
\right] +O(q-1).$$
Since  $\widetilde{\mathfrak{A}}=A+O(q-1)$, $\widetilde{\mathfrak{B}}=-\frac{A_t(t)}{x-t}+O(q-1)$,
this shows that we obtain the confluence of the $q$-Lax pair \eqref{qLaxConf} to the differential Lax pair of 
$$\left\{ \begin{array}{rcl}\partial_{x}Y&=&A(x,t)Y\\\partial_{t}Y&=&\left(-\frac{A_t(t)}{x-t}+C(t) \right)Y\, , \end{array}\right.$$
 which is 
$$\partial_{t}(A)-\partial_{x}\left(-\frac{A_t(t)}{x-t}+C\right)=\left[-\frac{A_t(t)}{x-t}+C, A
\right].$$
 \subsubsection{The difference equations for triples  $(\boldsymbol{\lambda}, \y,\bZZ)$ discretize the differential ones}\label{sub3confSchlesinger}$ $\\
Of particular interest for the results in this paper is the discretization of the characterization of Schlesinger isomonodromy in terms of triples $(\lambda(t), y(t),\ZZ(t))$ given in Proposition \ref{isoPain}, namely the system of differential equations given by \eqref{eqHVI} and \eqref{eqlam}. Proposition \ref{qisomchar} suggests that a convenient $q$-analogue of this differential equation is given by 
 
   \begin{equation}\label{eq:qSchlesdiff}\left\{ \begin{array}{rcl}
 \frac{ \partial_{q,t}\boldsymbol{\lambda}}{\boldsymbol{\lambda}} &=&\displaystyle \frac{(\Theta_\infty-q\overline{\Theta}_\infty) X }{(q-1)t\left(1-\Theta_\infty X\right)}
   \vspace{.2cm}\\
   \partial_{q,t}\y &=&\displaystyle \frac{E-\y}{(q-1)t} \vspace{.2cm}\\
    \partial_{q,t}\bZZ &=&\displaystyle \frac{1}{(q-1)t} \cdot\left( \frac{(E-qt\Theta_t)(E-qt\overline{\Theta}_t)}{q(q-1)E(E-1)( E-qt)X}-\frac{1}{(q-1)E}-\bZZ\right)
    \, ,
   \end{array} \right.\end{equation}
where $X(\y,\bZZ,t,q)$ and $E(\y,\bZZ,q,t)$ are defined respectively as  $$X:=\frac{(\y-1)(\y-t)(1+(q-1)\y\bZZ)}{  (\y-\Theta_1)(\y-\overline{\Theta}_1)}\quad ,  \quad E:=\frac{q}{\y} \cdot \frac{(X-t\Theta_0)(X-t\overline{\Theta}_0)}{(\Theta_\infty X-1)(q\overline{\Theta}_\infty X-1)}\, .$$
Here, as usual when considering confluence, we used our convention $\Theta_i\overline{\Theta}_1=1$.
Let us now show that \eqref{eq:qSchlesdiff} discretizes the system of differential equations   given by equations \eqref{eqlam} and \eqref{eqHVI}. 
 
 Let $f_{\boldsymbol{\lambda}},f_{\y},f_{\bZZ}$ be the rational functions in the variables $ \y,\bZZ,t,q$ forming the right hand sides of the equations in \eqref{eq:qSchlesdiff}. 
 Using the estimates \eqref{ThetaConfValSchles}, we may compute the Taylor series expansion of 
 $X(q):=X(\y,\bZZ,t,q)$ as $q\to 1$. Up to terms of order $O(q-1)^3$, we have 
 $$\begin{array}{rcl}
X(q)&= &\frac{(\y-1)(\y-t)(1+(q-1)\y\bZZ)}{(\y-1)^2+\y(2-\Theta_1-\overline{\Theta}_1)}  \sim  \frac{\frac{\y-t}{\y-1}(1+(q-1)\y\bZZ)}{1-(q-1)^2\frac{\theta_1^2}{4}\frac{\y}{(\y-1)^2}}   \vspace{.2cm}\\ 
 &\sim &\frac{\y-t}{\y-1}(1+(q-1)\y\bZZ) \left(1+(q-1)^2\frac{\theta_1^2}{4}\frac{\y}{(\y-1)^2}\right)
 \vspace{.2cm}\\ 
 &\sim &\frac{\y-t}{\y-1}\left(1+(q-1)\y\bZZ +(q-1)^2\frac{\theta_1^2}{4}\frac{\y}{(\y-1)^2}\right)\, .
    \end{array}$$
So in particular, we have $X(q)=\frac{\y-t}{\y-1}+O(q-1)$. Since moreover $\Theta_\infty=1+O(q-1)$ and $\Theta_\infty-q\overline{\Theta}_\infty=(q-1)(\theta_\infty-1)+O(q-1)^2$, we conclude that 
$$f_{\boldsymbol{\lambda}}(q):=f_{\boldsymbol{\lambda}}(\y,\bZZ,t,q)=\frac{(\theta_\infty-1)(\y-t)}{t(t-1)}+O(q-1)\, .$$
In other words, the first difference equation in \eqref{eq:qSchlesdiff} discretizes \eqref{eqlam}. Similarly, 
up to order terms of order $O(q-1)^3$, we obtain 
 $$\begin{array}{rcl}
E(q)&= &\frac{\left(X(q)-t\Theta_0 \right)\left(X(q)-t\frac{1}{\Theta_0}\right)}{\y \left(X(q)-\frac{1}{\Theta_\infty} \right)\left(X(q)-\frac{\Theta_\infty}{q}\right)} \vspace{.2cm}\\ 
&\sim &   \frac{\frac{1}{\y}\left(X(q)-t\right)^2-(q-1)^2\frac{\theta_0^2}{4} tX(q)}{\left(X(q)-1\right)^2 -(q-1)\frac{t-1}{\y-1}+(q-1)^2\left(\frac{t-1}{\y-1} -\frac{\y-t}{\y-1} \left(\frac{\theta_\infty^2-2\theta_\infty}{4}-\y\bZZ \right) \right)  }\, .
      \end{array}$$
      Substituting the Taylor expansion of $X(q)$ up to order $O(q-1)^3$ yields
       $$\begin{array}{rcl}
E(q)&\sim &  \frac{
     \y \left(\left(1-(q-1)\frac{\y-t}{t-1}\bZZ\right)^2-(q-1)^2\left(\frac{t\theta_0^2}{4}\frac{(\y-t)(\y-1)}{(t-1)^2\y^2}+\frac{\theta_1^2(\y-t)}{2(t-1)(\y-1)^2}\right)\right)
      }{
\left(1-(q-1)\y\frac{\y-t}{t-1}\bZZ\right)^2     -(q-1)\frac{\y-1}{t-1}+(q-1)^2\left(\frac{\y-1}{t-1} -\frac{(\y-t)(\y-1)}{(t-1)^2} \left(\frac{\theta_\infty^2-2\theta_\infty}{4}-\y\bZZ \right) -\frac{\theta_1^2\y(\y-t)}{2(\y-1)^2(t-1)}  \right)
    }
\vspace{.2cm}\\ 
&\sim &   \y +(q-1)\frac{\y(\y-1)(2(\y-t)\bZZ+1)}{t-1}
 \vspace{.2cm}\\
&& \quad + (q-1)^2\cdot\frac{\y(\y-1)(\y-t)}{t-1}\left(\frac{(3\y-1)(\y-t)\bZZ^2+(3\y-2)\bZZ}{t-1}-\frac{t\theta_0^2}{4(t-1)\y^2} +\frac{\theta_1^2}{2(\y-1)^2}+\frac{(\theta_\infty-1)^2+3}{4(t-1)}\right)

\, .
      \end{array}$$
      Already from the Taylor expansion of $E(q)$ up to order $O(q-1)^2$, we deduce that 
      $$f_{\y}(q)= \frac{\y(\y-1)(2(\y-t)\bZZ+1)}{t(t-1)}+O(q-1)\, .$$
      From the Taylor expansion of $E(q)$ and $X(q)$ up to order $O(q-1)^3$, we deduce by a series of tedious but straightforward calculations that 
        $$\begin{array}{rcl}f_{\bZZ}(q)&=&
       -  \frac{\y(\y-1)(\y-t)\bZZ^2}{t(t-1)} \left(\frac{1}{\y} +\frac{1}{\y-1} +\frac{1}{\y-t} \right)  
     -\frac{\left(2\y-1\right)\bZZ}{t(t-1)} \vspace{.2cm}\\
  &&\quad -\frac{\theta_0^2}{4(t-1)\y^2} +\frac{\theta_1^2}{4t(\y-1)^2}-\frac{\theta_t^2}{4( \y-t)^2} +\frac{(\theta_\infty-1)^2-1}{4t(t-1)}+O(q-1)\, .\end{array}$$
It follows that the second and third difference equation in \eqref{eq:qSchlesdiff} together discretize the system of differential equations  \eqref{eqHVI}.

\section{The sixth Painlev\'e equation}
\subsection{Differential case}\label{sec:diffP6}
Let  $\boldsymbol{\theta}=(\theta_0,\theta_1,\theta_t,\theta_\infty)\in ( \mathbb{C})^4$. We define the rational function $H^{\boldsymbol{\theta}}_{\mathrm{VI}}\in \C(y,Z,t)$ in three variables given by 
$$\begin{array}{rcl}H^{\boldsymbol{\theta}}_{\mathrm{VI}}&:=&\frac{y(y-1)(y-t)}{t(t-1)}\left(\ZZ^2+\frac{\ZZ}{y-t}\right)-\frac{1}{4}\left(\frac{(\theta_\infty-1)^2-1}{t(t-1)}y+\frac{\theta_0^2}{(t-1)y}+\frac{\theta_t^2 }{y-t}-\frac{\theta_1^2}{t(y-1)}\right)\, .\end{array}$$ Consider the non-autonomous Hamiltonian system defined by
\begin{equation}\label{eq:HamSys}\left\{ \begin{array}{rcll}
y'(t)&=& &\frac{\partial H^{\boldsymbol{\theta}}_{\mathrm{VI}}}{\partial \ZZ}(y,\ZZ, t)\vspace{.2cm}
\\
\ZZ'(t)&=&-&\frac{\partial H^{\boldsymbol{\theta}}_{\mathrm{VI}}}{\partial y}(y,\ZZ, t)\, .\end{array}\right.\end{equation} Explicitly, it is given by 
\begin{equation}\label{eqHVIbis}\left\{ \begin{array}{rcl}
y'(t)&=& \frac{y(y-1)(y-t)}{t(t-1)}\left(2\ZZ+\frac{1}{y-t}\right)\vspace{.2cm}
\\
\ZZ'(t)&=&\frac{-3y^2+2(t+1)y-t}{t(t-1)}\ZZ^2-\frac{2y-1}{t(t-1)}\ZZ+\frac{1}{4}\left( \frac{(\theta_\infty-1)^2-1}{t(t-1)}-\frac{\theta_0^2}{(t-1)y^2}-\frac{\theta_t^2 }{(y-t)^2}+\frac{\theta_1^2}{t(y-1)^2}\right)\, .\end{array}\right.\end{equation}
Recall from Corollary \ref{corisoiff} that if for all $i\in \{0,1,t,\infty\}$, we have $\theta_i\not \in \Z^*$ and $\theta_\infty \neq 0$, then this system of differential equations characterizes isomonodromy for families of $\mathfrak{sl}_2$-Fuchsian systems. 
Substituting
$Z= \frac{t(t-1)y'(t)}{2y(y-1)(y-t)}-\frac{1}{2(y-t)}$ (from the first equation in \eqref{eqHVIbis}) into the second, we obtain the \emph{sixth Painlev\'e equation} associated to the spectral data $\boldsymbol{\theta}$:
$$P_{\mathrm{VI}}: \quad \left\{\begin{array}{rcl}y''&=&\frac{1}{2}\left(\frac{1}{y}+\frac{1}{y-1}+\frac{1}{y-t}\right){y'}^2-\left(\frac{1}{t}+\frac{1}{t-1}+\frac{1}{y-t}\right){y'} \\&+&\frac{y(y-1)(y-t)}{2t^2(t-1)^2}\left((\theta_\infty-1)^2+\frac{\theta_1^2(t-1)}{(y-1)^2} -\frac{\theta_0^2t}{y^2}-\frac{(\theta_t^2-1)(t-1)t}{(y-t)^2}\right)\, .\end{array}\right.$$
Conversely, given a meromorphic solution $y$ of $P_{\mathrm{VI}}$ (we will see in the sequel that it exists), and assuming it is not identically equal to $0,1,t$ (which is a trivially satisfied if $\theta_0\theta_1\theta_t\neq 0$), then the substitution formula yields a meromorphic function $\ZZ$ such that the pair $(y,\ZZ)$ is a meromorphic solution of  \eqref{eqHVIbis}.

Let us briefly recall the well-known results concerning the existence  of analytic solutions of $P_{\mathrm{VI}}$. 
By the Cauchy-Lipschitz theorem, for every $t_0 \in \C\setminus \{ 0,1\}$ and every choice of $(y_0,y_1)\in \left( \C\setminus \{ 0,1, t_0\}\right) \times \C$, there exists a unique holomorphic function $y(t)$ defined in a neighborhood of $t_0$ such that $y(t_0)=y_0$ and $y'(t_0)=y_1$, and such that $y$ is a solution of the sixth Painlev\'e equation. Equivalently, for every $t_0 \in \C\setminus \{ 0,1\}$ and every choice of $(y_0,Z_0)\in \left( \C\setminus \{ 0,1, t_0\}\right) \times \C$, there exists a unique holomorphic solution $(y(t),Z(t))$ of the Hamiltonian system \eqref{eqHVIbis}, defined in a neighborhood of $t_0$, such that $(y(t_0),Z(t_0))=(y_0,Z_0)$. 
By the so-called \emph{Painlev\'e property}, any such germ of holomorphic solution can be meromorphically continued along any path in $\C\setminus \{ 0,1\}$. In particular, on any simply connected subset $U$ of $\mathbb{P}^1\setminus \{0,1,\infty\}$, there exists a unique meromorphic solution satisfying some initial condition as above at $t_0\in U$ (see for instance  \cite{hinkkanen2004meromorphic,joshi1994direct}, see also Section \ref{sec:oka} for some details).

\subsection{A discrete analogue}\label{sec:32}
Let us fix $q\in \C\setminus \{0,1\}$ and 
let us consider the spectral data $(\Theta_0, \Theta_1,\Theta_t,\Theta_\infty, \overline{\Theta}_0, \overline{\Theta}_1, \overline{\Theta}_t, \overline{\Theta}_\infty)\in (\C^*)^8$ such that 
$$\Theta_0\overline{\Theta}_0=\Theta_1\overline{\Theta}_1\Theta_t\overline{\Theta}_t\Theta_\infty\overline{\Theta}_\infty\, .$$

In \cite{jimbo1996q} the $q$-Painlev\'e ${\mathrm{VI}}$ equation associated to such a spectral data was introduced. It is given by the following system of $q$-difference equations: 
 \begin{equation}\label{qPVIJS} qP_{\mathrm{JS,VI}}(\boldsymbol{\Theta}, \overline{\boldsymbol{\Theta}}) : \left\{\begin{array}{rcl}
\displaystyle  \frac{\y \cdot \sigma_{q,t} \y }{\Theta_1\overline{\Theta}_1}  &=& \displaystyle\frac{\left( \sigma_{q,t}  \zJS - t\frac{\Theta_t\overline{\Theta}_t}{\Theta_0} \right)\left(\sigma_{q,t}  \zJS - t\frac{\Theta_t\overline{\Theta}_t}{\overline{\Theta}_0}\right)}{\left(\sigma_{q,t}  \zJS - \frac{1}{q\overline{\Theta}_\infty}  \right)\left(\sigma_{q,t} \zJS - \frac{1}{\Theta_\infty} \right)},
 \vspace{.2cm}\\
\displaystyle \frac{\zJS \cdot \sigma_{q,t}\zJS }{\frac{1}{q\overline{\Theta}_\infty \Theta_\infty}} &=&\displaystyle \frac{\left(\y -t\Theta_t \right)\left(\y -t\overline{\Theta}_t \right)}{\left(\y -\Theta_1\right)\left(\y -\overline{\Theta}_1 \right)} \, .
\end{array}\right. \end{equation}
The auxiliary parameters in \cite{jimbo1996q} bear other names, but we have written the equation in a way that the dictionary between the auxiliary parameters in \cite{jimbo1996q} and the above $\Theta_i$, $\overline{\Theta}_i$  is obvious, see \eqref{introeq2}. 
This system of difference equations has been derived  in  \cite{jimbo1996q}, for $|q|\neq 1$, from the pseudo-constancy condition of the Birkhoff connection matrix for $q$-Fuchsian systems with non-resonant spectral data $(\boldsymbol{\Theta}, \overline{\boldsymbol{\Theta}})$. Note that the change of variable $$\zJS=\frac{(\y-t{\Theta}_t)(\y-t\overline{\Theta}_t)}{q(\y-1)(\y-t)(1+(q-1)\y\bZZ)}$$
applied to \eqref{qPVIJS} yields the $q$-difference system \eqref{qPVIyZ}. In the case $q\not \in \mathrm{e}^{2i\pi \Q}$ and non-resonant $(\boldsymbol{\Theta}, \overline{\boldsymbol{\Theta}})$, solutions of the latter system have been shown in Proposition \ref{qisomchar} to 
correspond (under some generic assumptions) to $q$-isomonodromic (in the sense of Definition \ref{def:qisom}) families of $q$-Fuchsian systems. 
Conversely, when starting with \eqref{qPVIyZ}, the change of variable 
$$\bZZ=\frac{\frac{(\y-t{\Theta}_t)(\y-t\overline{\Theta}_t)}{q(\y-1)(\y-t)\zJS}-1}{(q-1)\y}$$ yields equation \eqref{qPVIJS}, which has a significantly shorter and more symmetric expression. 
Note that with this change of variable, one has $\sigma_{q,t}\zJS=X$, for $X$ as in \eqref{qPVIyZ}. 

From now on, because we are ultimately interested in the behaviour under confluence, we will use our convention $$ \forall i\in \{0,1,t,\infty\},\, \quad \Theta_i\overline{\Theta}_i=1, $$
by which equations \eqref{qPVIJS} and \eqref{qPVIyZ} can obviously be simplified. In particular, from now on, the following system of $q$-difference equations will be referred to as the \emph{$q$-Painlev\'e ${\mathrm{VI}}$ equation} associated to spectral data 
$\boldsymbol{\Theta}\in (\C^*)^4$:
 \begin{equation}\label{qPVI} qP_{\mathrm{VI}}(\boldsymbol{\Theta}) : \left\{\begin{array}{rcl}
\displaystyle \y \cdot \sigma_{q,t} \y   &=& \displaystyle\frac{\left(\sigma_{q,t}  \zJS - t\Theta_0\right)\left( \sigma_{q,t}  \zJS - t\frac{1}{\Theta_0} \right)}{\left(\sigma_{q,t}  \zJS - \frac{{\Theta}_\infty}{q}  \right)\left(\sigma_{q,t} \zJS - \frac{1}{\Theta_\infty} \right)},
 \vspace{.2cm}\\
\displaystyle \zJS \cdot \sigma_{q,t}\zJS  &=&\displaystyle\frac{1}{q} \frac{\left(\y -t\Theta_t \right)\left(\y -t\frac{1}{\Theta_t} \right)}{\left(\y -\Theta_1\right)\left(\y -\frac{1}{{\Theta}_1} \right)} \, .
\end{array}\right. \end{equation}

Unfortunately, contrarily to the differential situation, the existence of a meromorphic solution having a prescribed value at a point $t_0\in   \mathbb{C}^*$ is in general not known. 
 Let us now focus on discrete solutions, \emph{i.e.} the sequence of values on $q^\Z t_0$ for some $t_0\in \mathbb{C}^*\setminus q^\Z$ that a meromorphic solution defined on a domain containing the spiral $q^\Z t_0$ should interpolate. 
  More precisely, a \emph{discrete solution} of \eqref{qPVI} is a sequence $$( \y_{\ell} ,  \zJS_{\ell},t_\ell)_{\ell \in \Z}$$ of points in $ \mathbb{P}^{1}\times \mathbb{P}^{1}\times \C^* $, such that 
\begin{itemize}
\item[$\bullet$] the sequence $(t_{\ell})_{\ell \in \Z}$ is given by $t_{\ell}=q^{\ell } t_0$ for some $t_0\in\C^* $
\item[$\bullet$] the sequence $(\y_{\ell} ,  \zJS_{\ell})_{\ell \in \Z}$ satisfies the following equations for each $\ell \in \Z$:
\begin{equation}\label{eq:recurrence}\left\{\begin{array}{rcl}
 \y_{\ell} \cdot \y_{\ell+1}  &=& \frac{\left(\zJS_{\ell+1}-t_{\ell}\Theta_0 \right)\left( \zJS_{\ell+1}-t_{\ell} \frac{1}{\Theta_0}\right)}{\left(\zJS_{\ell+1}-\frac{1}{\Theta_\infty} \right)\left( \zJS_{\ell+1}-\frac{\Theta_\infty}{q}\right)}\vspace{.2cm}\\
 \zJS_{\ell}\cdot \zJS_{\ell+1} &=&\frac{1}{q}\frac{\left(\y_{\ell} -t_{\ell}\Theta_t\right)\left(\y_{\ell} -t_{\ell}\frac{1}{\Theta_t}\right)}{\left(\y_{\ell} -\Theta_1\right)\left(\y_{\ell} -\frac{1}{\Theta_1}\right)} \, ,
\end{array}\right.\end{equation}
\item[$\bullet$] moreover, for each $\ell \in \Z$, the rational simplification of the relation implied by \eqref{eq:recurrence} between $(\y_{0} ,  \zJS_{0},t_0)$ and $(\y_{\ell} ,  \zJS_{\ell}, q^\ell t_{0})$ is also satisfied. 
\end{itemize}
Note that a more intrinsic notion of discrete solution will appear in Section \ref{sec:qoka}, where we briefly review the construction in \cite{sakai2001rational} of a $q$-analogue of the Okamoto space. 

Let us explain why for discrete solutions as in the above definition we do not only take into account the relations between successive pairs. Let $t_0\in\C^* $ and consider for example a pair $(\y_0,\zJS_0)$ where $\y_0=\overline{\Theta}_1$ and where $z_0\in \C^*$. 
Then by the recurrence relation \eqref{eq:recurrence}, we have $(\y_1,\zJS_1)=(\Theta_1,\infty)$. Then the second equation of the recurrence relation for $\ell=1$ simply writes $\infty = \infty$. So a priori, $\zJS_2$ could take any value. But  $\zJS_2$ is uniquely determined if we go back to  $(\y_0,\zJS_0)$ and take into account rational simplifications. Indeed, let us introduce an additional variable $\nu_{\ell}:=\zJS_{\ell}\cdot (\y_{\ell}-\Theta_1)$. When we write $\nu_1$ as a rational function of general $(\y_0,\zJS_0,t_0)$ \emph{via} the recurrence relation, then one immediately checks that nominator and denominator can both be factorized by $(\y_0-\overline{\Theta}_1)$. After this rational simplification, $\nu_1$ is well-defined for $\y_0=\overline{\Theta}_1$ and yields
$$\nu_1(\overline{\Theta}_1, \zJS_0, t_0)=\frac{(t_0\Theta_1-{\Theta}_t)(t_0{\Theta}_1-\overline{\Theta}_t)}{(\Theta_1-\overline{\Theta}_1)q\zJS_0}+\Theta_1\left(\left(\frac{\Theta_\infty}{q}+\overline{\Theta}_\infty\right)-t_0(\Theta_0+\overline{\Theta}_0)\right)\, .$$
On the other hand, the recurrence relation at level $\ell=1$ may be written as 
 $$ \left\{\begin{array}{rcl}
 \y_{2}  &=&\frac{1}{\y_1} \frac{\left(\zJS_{2}-qt_0\Theta_0 \right)\left( \zJS_{2}-qt_{0}\overline{\Theta}_0\right)}{\left(\zJS_{2}-\overline{\Theta}_\infty \right)\left( \zJS_{2}-\frac{\Theta_\infty}{q}\right)}\vspace{.2cm}\\
\zJS_{2} &=&\frac{1}{q}\frac{\left(\y_{1} -qt_0\Theta_t\right)\left(\y_{1} -qt_0\overline{\Theta}_t\right)}{\nu_1\left(\y_{1} -\overline{\Theta}_1\right)} \, .
\end{array}\right.$$
Substituting $\y_1=\Theta_1$ and the value of $\nu_1$ above 
yields 
$$\zJS_2= \frac{\left(\Theta_1 -qt_0\Theta_t\right)\left(\Theta_1 -qt_0\overline{\Theta}_t\right)}{\frac{(t_0{\Theta}_1- {\Theta}_t)(t_0\Theta_1-\overline{\Theta}_t)}{ \zJS_0}+q\Theta_1\left(\Theta_1 -\overline{\Theta}_1\right)\left(\left(\frac{\Theta_\infty}{q}+\overline{\Theta}_\infty\right)-t_0(\Theta_0+\overline{\Theta}_0)\right)}\, .$$
Hence $\zJS_2\in  \mathbb{P}^1$ is determined uniquely in terms of $\zJS_0$. Moreover, since $\y_1=\Theta_1$,  $\y_2$ is determined uniquely in terms of $\zJS_2$ by the recurrence relation. 
So in summary, if $(\y_0,\zJS_0)=(\overline{\Theta}_1,\zJS_0)$ with $\zJS_0\in \C^*$, then $(\y_1,\zJS_1)$ and $(\y_2,\zJS_2)$ are uniquely determined in terms of $\zJS_0$. Conversely, from $(\y_2,\zJS_2)$ as above we can recover $(\overline{\Theta}_1,\zJS_0)$ under the condition that $(t_0{\Theta}_1- {\Theta}_t)(t_0\Theta_1-\overline{\Theta}_t)$ is non-zero.

More generally,  \cite[Proposition 1]{sakai2001rational} ensures, in the case of sufficiently generic spectral data, the existence and uniqueness of discrete solutions with sufficiently generic prescribed initial data.
These genericity conditions will be made precise  in Section \ref{sec:qoka}. What we will obtain is the following, see Remark \ref{RemSols}. 
\begin{prop}\label{theo2} Let $q\in \C\setminus \{0,1\}$. Let $\boldsymbol{\Theta}=(\Theta_0,\Theta_1,\Theta_t,\Theta_\infty)\in (\C^*)^4$ such that $1\not \in \{\Theta_0^2, \Theta_1^2, \Theta_t^2\}$ and such that $\Theta_\infty^2\neq q$.  
Denote \begin{equation}\label{eq:defSq} S_q:=  \left\{ \, \Theta_1^{\varepsilon_1}\Theta_t^{\varepsilon_t}\, ,  \,   \Theta_0^{\varepsilon_0}\Theta_\infty^{\varepsilon_\infty} ~\middle|~\varepsilon_0,\varepsilon_1,\varepsilon_t,\varepsilon_\infty \in \{-1,1\}  \right\} \cdot q^\Z\, .\end{equation}
Let $$(\y_0,\zJS_0,t_0)\in \C^*\times \C^*\times \left(\C^*\setminus  S_q\right) \, .$$
Then, there exists a unique discrete solution $(  \y_{\ell} ,  \zJS_{\ell},t_\ell)_{\ell \in \Z}$  of  \eqref{qPVI} with initial value $(\y_0,\zJS_0,t_0)$.  \end{prop}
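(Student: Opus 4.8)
The plan is to realise the recurrence \eqref{eq:recurrence} as the iteration of an explicit family of birational self-maps of $\mathbb{P}^1\times\mathbb{P}^1$ and to show that, under the stated hypotheses, the forward and backward orbits of the initial point never meet the finite indeterminacy locus of the relevant map. For fixed $t$, the two equations of \eqref{qPVI} factor the one-step transition $(\y_\ell,\zJS_\ell)\mapsto(\y_{\ell+1},\zJS_{\ell+1})$ into two half-steps. Solving the second equation for $\zJS_{\ell+1}$ gives $\zJS_{\ell+1}=\frac{(\y_\ell-t_\ell\Theta_t)(\y_\ell-t_\ell/\Theta_t)}{q\,\zJS_\ell\,(\y_\ell-\Theta_1)(\y_\ell-1/\Theta_1)}$, determined by $(\y_\ell,\zJS_\ell,t_\ell)$; solving the first equation for $\y_{\ell+1}$ gives $\y_{\ell+1}=\frac{(\zJS_{\ell+1}-t_\ell\Theta_0)(\zJS_{\ell+1}-t_\ell/\Theta_0)}{\y_\ell\,(\zJS_{\ell+1}-1/\Theta_\infty)(\zJS_{\ell+1}-\Theta_\infty/q)}$, determined by $(\y_\ell,\zJS_{\ell+1},t_\ell)$ and hence by $(\y_\ell,\zJS_\ell,t_\ell)$. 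Each half-step is, in the coordinate it updates, a map of the form $w\mapsto C/w$ with $C$ a ratio of two monic quadratics in the other coordinate. The first thing I would record is that the hypotheses $\Theta_i^2\neq 1$ for $i\in\{0,1,t\}$ and $\Theta_\infty^2\neq q$ guarantee that the two roots in each of these quadratics are distinct (for instance $t_\ell\Theta_t\neq t_\ell/\Theta_t$ and $1/\Theta_\infty\neq\Theta_\infty/q$), so that numerator and denominator share no common root and each half-step is a genuine automorphism of $\mathbb{P}^1$ away from finitely many points.

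I would then pin down the indeterminacy locus precisely. Writing the half-steps in homogeneous coordinates, one sees that the $\zJS$-half-step is indeterminate exactly at the points $(\y_\ell,\zJS_\ell)$ lying in $\left(\{t_\ell\Theta_t,\,t_\ell/\Theta_t\}\times\{0\}\right)\cup\left(\{\Theta_1,\,1/\Theta_1\}\times\{\infty\}\right)$, and symmetrically that the $\y$-half-step is indeterminate exactly at $(\y_\ell,\zJS_{\ell+1})$ in $\left(\{0\}\times\{t_\ell\Theta_0,\,t_\ell/\Theta_0\}\right)\cup\left(\{\infty\}\times\{1/\Theta_\infty,\,\Theta_\infty/q\}\right)$. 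These are the base points whose resolution is exactly what the third bullet in the definition of a discrete solution encodes: at such a configuration the naive single-step relation degenerates to $\infty=\infty$, whereas the composed relation between $(\y_0,\zJS_0,t_0)$ and $(\y_\ell,\zJS_\ell,t_\ell)$ carries a common factor that cancels and selects a single finite value. This is precisely the mechanism of the worked example preceding the statement, where the configuration $(\y_1,\zJS_1)=(\Theta_1,\infty)$ is a base point of the $\zJS$-half-step and is resolved via the auxiliary variable $\nu_\ell=\zJS_\ell(\y_\ell-\Theta_1)$.

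The crux is to show that the orbit of a point of $\C^*\times\C^*$ avoids every genuinely ambiguous configuration as soon as $t_0\notin S_q$. Here I would argue by bookkeeping the distinguished values $0,\infty,\Theta_1^{\pm1},t_\ell\Theta_t^{\pm1},t_\ell\Theta_0^{\pm1},\Theta_\infty^{\pm1}$ along the orbit. A coordinate of the orbit can hit a base point only through a resonance, and tracking the half-steps shows that the two base points of a single fibre can collide (the only way to produce an unresolvable indeterminacy) precisely when $\Theta_1^{\pm1}=t_\ell\Theta_t^{\pm1}$, or when $t_\ell\Theta_0^{\pm1}$ coincides with $1/\Theta_\infty$ or $\Theta_\infty/q$; since $t_\ell=q^\ell t_0$, these conditions read $t_0\in\Theta_1^{\pm1}\Theta_t^{\pm1}q^\Z$ respectively $t_0\in\Theta_0^{\pm1}\Theta_\infty^{\pm1}q^\Z$, that is, exactly $t_0\in S_q$ with $S_q$ as in \eqref{eq:defSq}. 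Hence for $t_0\notin S_q$ each step yields a single well-defined point of $\mathbb{P}^1\times\mathbb{P}^1$, giving existence; and since at every step the value is forced by the recurrence together with its composed-relation refinement, the discrete solution is unique. Running the same analysis with the inverse of each half-step handles the indices $\ell<0$.

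I expect the genuine obstacle to be exactly this last step: establishing that the set of excluded times is \emph{precisely} $S_q$, neither larger nor smaller. This amounts to the $q$-analogue of controlling the cascade of (infinitely near) base points that underlies Sakai's construction in \cite{sakai2001rational}, and the conceptually cleanest way to organise it is to pass to the $q$-Okamoto space: after blowing up the eight base points one obtains a rational surface on which the iteration lifts to a biregular automorphism preserving the boundary divisor, so that a discrete solution is automatically well-defined for every initial point lying off the boundary and off the finitely many exceptional orbits that the hypotheses exclude. This is the route I would ultimately follow, deferring the surface construction to Section \ref{sec:qoka}, where the genericity conditions cut out by $S_q$ are made explicit; compare Remark \ref{RemSols} and \cite[Prop.~1]{sakai2001rational}.
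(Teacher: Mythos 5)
Your proposal follows essentially the same route as the paper: the paper proves this proposition precisely by passing to Sakai's surface $\mathfrak{P}_t$, showing (Proposition \ref{PropSakai}, via a decomposition of $\mathfrak{S}_t$ into elementary transformations along the two rulings of $\mathbb{P}^1\times\mathbb{P}^1$ — your two ``half-steps'') that it lifts to a biregular map $\mathfrak{P}_t\to\mathfrak{P}_{qt}$ whenever \eqref{eq:ThetaRed} holds and $t\in\C^*\setminus S_q$, and then iterating these biregular maps and projecting back to $\mathbb{P}^1\times\mathbb{P}^1$ (Remark \ref{RemSols}). Your bookkeeping of base-point collisions is exactly the cardinality-four condition used in that proof, and your final deferral to the blow-up construction is exactly how the paper closes the argument.
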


\subsection{Confluence}\label{sec:confPainl}
As usual for matters of confluence, in this section we will only consider spectral data $(\boldsymbol{\Theta}, \overline{\boldsymbol{\Theta}})$ related by $\Theta_i\overline{\Theta}_i=1$. 
We will first establish that the sixth Painlev\'e equation, up to the change of variable and spectral data that we previously found to be convenient for confluence,  admits a $q$-analogue of Hamiltonian formulation. From this, we will deduce the confluence of discrete and meromorphic solutions. 

\subsubsection{A $q$-analogue of Hamiltonian system}
  Let us apply the change of variable
\begin{equation}\label{eqzJSbZZ}
\zJS=\frac{(\y-t{\Theta}_t)(\y-t\overline{\Theta}_t)}{q(\y-1)(\y-t)(1+(q-1)\y\bZZ)}\, , \quad \bZZ=\frac{\frac{(\y-t{\Theta}_t)(\y-t\overline{\Theta}_t)}{q(\y-1)(\y-t)\zJS}-1}{(q-1)\y},
\end{equation}
to \eqref{qPVI}. The resulting equation, which is the simplification of \eqref{qPVIyZ} by the convention $\overline{\Theta}_i=1/\Theta_i$, is better adapted for questions of confluence. Indeed, roughly summarizing the result in Section~\ref{sec:confFuchsSys}, if a family of $q$-Fuchsian systems given by $(\boldsymbol{\lambda}, \y, \bZZ)$ discretizes a family of $\mathfrak{sl}_2$-Fuchsian systems given by $(\lambda, y, \ZZ)$, then we have $(\y, \bZZ)\to (y, \ZZ)$ as $q\to 1$. On the other hand, the implied estimate 
$(\y, \zJS)\to \left(y, \frac{y-t}{y-1}\right)$ as $q\to 1$ looses too much information. So we consider, for spectral data $\boldsymbol{\Theta}=(\Theta_0, \Theta_1, \Theta_t,\Theta_\infty)\in (\C^*)^4$, the system of $q$-difference equations 
\begin{equation} \label{qPVIyZbis}q\widetilde{P}_{\mathrm{VI}}(\boldsymbol{\Theta}) : \left\{ \begin{array}{rcl}
   \sigma_{q,t}\y &=&\displaystyle  \frac{1}{\y} \cdot \frac{\left(\frac{(\y-1)(\y-t)(1+(q-1)\y\bZZ)}{(\y-\Theta_1)(\y-\overline{\Theta}_1)}-t{\Theta_0}\right)\left(\frac{(\y-1)(\y-t)(1+(q-1)\y\bZZ)}{(\y-\Theta_1)(\y-\overline{\Theta}_1)}-t \overline{\Theta}_0\right)}{\left( \frac{(\y-1)(\y-t)(1+(q-1)\y\bZZ)}{(\y-\Theta_1)(\y-\overline{\Theta}_1)}-\frac{\Theta_\infty}{q}\right)\left( \frac{(\y-1)(\y-t)(1+(q-1)\y\bZZ)}{(\y-\Theta_1)(\y-\overline{\Theta}_1)}- \overline{\Theta}_\infty\right)} \vspace{.2cm}\\
    \sigma_{q,t}\bZZ &=& \displaystyle \frac{\frac{(\sigma_{q,t}\y-qt\Theta_t)(\sigma_{q,t}\y-qt\overline{\Theta}_t)(\y-\Theta_1)(\y-\overline{\Theta}_1)}{q( \sigma_{q,t}\y-1)( \sigma_{q,t}\y-qt) (\y-1)(\y-t)(1+(q-1)\y\bZZ) }-1}{(q-1) \sigma_{q,t}\y}
    \, ,
   \end{array} \right.\end{equation}
   where we denote $\overline{\Theta}_i:=1/\Theta_i$ for conciseness.

   The following result states that  this \emph{modified} $q$-Painlev\'e $\mathrm{VI}$ equation $q\widetilde{P}_{\mathrm{VI}}(\boldsymbol{\Theta})$ is an appropriate $q$-analogue of a Hamiltonian system.   First, let us introduce the Hamiltonian. To each datum $\boldsymbol{\theta}=(\theta_0,\theta_1,\theta_t,\theta_\infty)\in \C^4,$ we associate the rational function $H^{\boldsymbol{\theta}}_{\mathrm{VI}}\in \C(\y,\bZZ,t)$ in three variables given by
 \begin{equation}\label{HamilDefConf} \begin{array}{rcl}H^{\boldsymbol{\theta}}_{\mathrm{VI}}(\y,\bZZ,t)&:=&\frac{\y (\y -1)(\y -t)}{t(t-1)}\left(\bZZ^2+\frac{\bZZ}{\y -t}\right)-\frac{1}{4}\left(\frac{(\theta_\infty-1)^2-1}{t(t-1)}\y +\frac{\theta_0^2}{(t-1)\y }+\frac{\theta_t^2 }{\y -t}-\frac{\theta_1^2}{t(\y -1)}\right)\, .\end{array}\end{equation} 
   Note this is nothing else than the Hamiltonian for the differential case. In the following,  we  denote abusively $1+(q-1)\frac{\boldsymbol{\theta}}{2}:=\left(1+(q-1)\frac{\theta_0}{2},1+(q-1)\frac{\theta_1}{2} , 1+(q-1)\frac{\theta_t}{2},1+(q-1)\frac{\theta_\infty}{2}\right)$.

   \begin{thm}\label{thmHamilt} Let $\boldsymbol{\theta}\in \C^4$. Let $\mathcal{R}^{\boldsymbol{\theta}}_1,\mathcal{R}_2^{\boldsymbol{\theta}}\in \C(\y,\bZZ,t,q)$ be the (well-defined) rational functions in four variables
   such that the modified $q$-Painlev\'e $\mathrm{VI}$ equation \eqref{qPVIyZbis} with spectral data given by $$\boldsymbol{\Theta}:= 1+(q-1)\frac{\boldsymbol{\theta}}{2}$$ reads
$$q\widetilde{P}_{\mathrm{VI}}\left(\boldsymbol{\Theta} \right) : \left\{ \begin{array}{rcl}
  \partial_{q,t}\y &=& \phantom{+}\partial_{q,\bZZ}H^{\boldsymbol{\theta}}_{\mathrm{VI}}(\y,\bZZ,t)+  (q-1) \mathcal{R}^{\boldsymbol{\theta}}_1(\y,\bZZ,t,q) \vspace{.2cm}\\
 \partial_{q,t}\bZZ &=&  \, -\partial_{q,\y}H^{\boldsymbol{\theta}}_{\mathrm{VI}}(\y,\bZZ,t)+  (q-1) \mathcal{R}^{\boldsymbol{\theta}}_2(\y,\bZZ,t,q)
    \,    \end{array} \right.$$
    \emph{via} \eqref{HamilDefConf} and the operator  identity $1+t(q-1)\partial_{q,t}=\sigma_{q,t}$. Let $\mathcal{R}\in \{\mathcal{R}^{\boldsymbol{\theta}}_1, \mathcal{R}^{\boldsymbol{\theta}}_2\}$.
 The divisor $\{q=1\}$ in $\C^4_{\y,\bZZ,t,q}$ is not an irreducible component of the polar divisor of $\mathcal{R}$. 
 Moreover, the polar locus of the therefore well-defined rational function $\mathcal{R}|_{q=1}$ on $\C^3_{\y,\bZZ,t}$
 is contained in the set $$\mathcal{P}:=\{\y=0\}\cup\{\y=1\}\cup\{\y=t\}\cup \{t=0\}\cup\{t=1\}\, .$$
   \end{thm}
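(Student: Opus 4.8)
The plan is to make the two equations of \eqref{qPVIyZbis} explicit, to read off $\mathcal{R}^{\boldsymbol{\theta}}_1,\mathcal{R}^{\boldsymbol{\theta}}_2$ as the first nontrivial Taylor coefficients in $(q-1)$ of explicit rational functions, and then to prove the two assertions by locating the denominators of these functions at $q=1$. First I would solve the first equation of \eqref{qPVIyZbis} for $\sigma_{q,t}\y$. Writing $P:=(\y-1)(\y-t)(1+(q-1)\y\bZZ)$, $Q:=(\y-\Theta_1)(\y-\overline{\Theta}_1)$ and $X:=P/Q$, so that the recurring quantity is $X$, one clears denominators and observes that the factor $Q^{2}$ cancels, leaving
\[
\sigma_{q,t}\y=\frac{(P-t\Theta_0 Q)(P-t\overline{\Theta}_0 Q)}{\y\,(P-\tfrac{\Theta_\infty}{q}Q)(P-\overline{\Theta}_\infty Q)}=:G_1\in\C(\y,\bZZ,t,q).
\]
Substituting $G_1$ for $\sigma_{q,t}\y$ in the second equation of \eqref{qPVIyZbis} gives $\sigma_{q,t}\bZZ=:G_2\in\C(\y,\bZZ,t,q)$. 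Through $\partial_{q,t}=\tfrac{\sigma_{q,t}-1}{(q-1)t}$, the functions to be studied are
\[
\mathcal{R}^{\boldsymbol{\theta}}_1=\frac{1}{q-1}\Bigl(\frac{G_1-\y}{(q-1)t}-\partial_{q,\bZZ}H^{\boldsymbol{\theta}}_{\mathrm{VI}}\Bigr),\qquad
\mathcal{R}^{\boldsymbol{\theta}}_2=\frac{1}{q-1}\Bigl(\frac{G_2-\bZZ}{(q-1)t}+\partial_{q,\y}H^{\boldsymbol{\theta}}_{\mathrm{VI}}\Bigr).
\]

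For the first assertion I would invoke \textsection\ref{sub3confSchlesinger}, where the system \eqref{qPVIyZbis} (with $\overline{\Theta}_i=1/\Theta_i$) is exactly the one treated: it is shown there that $\tfrac{G_1-\y}{(q-1)t}\to\partial_{\bZZ}H^{\boldsymbol{\theta}}_{\mathrm{VI}}$ and $\tfrac{G_2-\bZZ}{(q-1)t}\to-\partial_{\y}H^{\boldsymbol{\theta}}_{\mathrm{VI}}$ as $q\to 1$, these limits being the right-hand sides of the differential system \eqref{eqHVIbis}. Since $H^{\boldsymbol{\theta}}_{\mathrm{VI}}$ is quadratic in $\bZZ$ one has $\partial_{q,\bZZ}H^{\boldsymbol{\theta}}_{\mathrm{VI}}=\partial_{\bZZ}H^{\boldsymbol{\theta}}_{\mathrm{VI}}+O(q-1)$, and $\partial_{q,\y}H^{\boldsymbol{\theta}}_{\mathrm{VI}}\to\partial_{\y}H^{\boldsymbol{\theta}}_{\mathrm{VI}}$ by the usual $q$-derivative limit. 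Hence each of the two brackets above is a rational function regular at a generic point of $\{q=1\}$ whose restriction to $\{q=1\}$ vanishes identically; dividing by $(q-1)$ therefore does not create a pole along $\{q=1\}$, so $\{q=1\}$ is not a component of the polar divisor of $\mathcal{R}^{\boldsymbol{\theta}}_1$ or $\mathcal{R}^{\boldsymbol{\theta}}_2$.

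For the second assertion the key point is that $\mathcal{R}^{\boldsymbol{\theta}}_i|_{q=1}$ is a fixed Taylor coefficient in $(q-1)$ of the explicit rational functions above, so its poles can lie only on the zero locus of the $q=1$ specialisation of their denominators. Using $\Theta_j=1+O(q-1)$ and $\Theta_j\overline{\Theta}_j=1$ one finds $P|_{q=1}=(\y-1)(\y-t)$, $Q|_{q=1}=(\y-1)^2$, whence $(P-\tfrac{\Theta_\infty}{q}Q)|_{q=1}=(P-\overline{\Theta}_\infty Q)|_{q=1}=(\y-1)(1-t)$; thus the denominator of $G_1$ specialises to $\y(\y-1)^2(1-t)^2$ and $G_1|_{q=1}=\y$. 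Expanding $G_1$ in powers of $(q-1)$ therefore yields coefficients whose poles lie on $\{\y=0\}\cup\{\y=1\}\cup\{t=1\}\subset\mathcal{P}$; adding the poles $\{t=0\}$ from $\partial_{q,t}$ and $\{t=0\}\cup\{t=1\}$ from $\partial_{q,\bZZ}H^{\boldsymbol{\theta}}_{\mathrm{VI}}$ gives the claim for $\mathcal{R}^{\boldsymbol{\theta}}_1$. For $\mathcal{R}^{\boldsymbol{\theta}}_2$ I would argue identically once the denominator of $G_2$ is located at $q=1$: the extra factors $\sigma_{q,t}\y$, $\sigma_{q,t}\y-1$, $\sigma_{q,t}\y-qt$ and $X$ entering through the second equation specialise, using $G_1|_{q=1}=\y$ and $X|_{q=1}=\tfrac{\y-t}{\y-1}$, to $\y$, $\y-1$, $\y-t$ and $\tfrac{\y-t}{\y-1}$, all of whose relevant zeros lie in $\{\y=0\}\cup\{\y=1\}\cup\{\y=t\}\subset\mathcal{P}$, while $\partial_{q,\y}H^{\boldsymbol{\theta}}_{\mathrm{VI}}$ only adds the loci $\{q\y=1\},\{q\y=t\}$, which meet $\{q=1\}$ inside $\{\y=1\}\cup\{\y=t\}\subset\mathcal{P}$.

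The main obstacle is the bookkeeping for $\mathcal{R}^{\boldsymbol{\theta}}_2$: unlike $G_1$, the function $G_2$ is obtained by substitution, so its reduced denominator is not manifest, and one must check both that the inner bracket of the second equation vanishes at $q=1$ (so that no factor of $(q-1)$ survives in the denominator, which is what guarantees regularity) and that no extraneous irreducible component appears in its $q=1$ specialisation. I expect this to be a finite but delicate rational-function verification, best organised by tracking the identity $G_1|_{q=1}=\y$ together with the coprimality at $q=1$ of the numerator and denominator of $G_1$, and most safely confirmed with computer algebra; note, however, that the explicit value of $\mathcal{R}^{\boldsymbol{\theta}}_i|_{q=1}$ is never needed, only the location of its poles.
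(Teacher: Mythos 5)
Your proposal is correct and follows essentially the same route as the paper's proof: both read off $\mathcal{R}^{\boldsymbol{\theta}}_1,\mathcal{R}^{\boldsymbol{\theta}}_2$ from the explicit substituted form of \eqref{qPVIyZbis}, both separate off the discrepancy between $\partial_{q,\star}H^{\boldsymbol{\theta}}_{\mathrm{VI}}$ and $\partial_{\star}H^{\boldsymbol{\theta}}_{\mathrm{VI}}$ (the paper computes it exactly via the operator $\nabla_{\star}=\frac{\partial_{q,\star}-\partial_{\star}}{q-1}$, you estimate it using that $H^{\boldsymbol{\theta}}_{\mathrm{VI}}$ is quadratic in $\bZZ$ and that the $\y$-derivative only adds the loci $\{q\y=1\},\{q\y=t\}$, which specialise into $\mathcal{P}$), and both obtain the key vanishing along $\{q=1\}$ from the Taylor expansions of \textsection\ref{sub3confSchlesinger}. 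If anything, your explicit tracking of the denominators of $G_1$ and $G_2$ at $q=1$ — including the check that the inner bracket of the second equation vanishes there, so the overt $(q-1)$ in its denominator is harmless — is more detailed than the paper's corresponding step, which simply asserts that the pole locations are clear from those same calculations.
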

   
  \begin{proof} 
 Let us first say some words about the well-definedness of $\mathcal{R}^{\boldsymbol{\theta}}_1,\mathcal{R}_2^{\boldsymbol{\theta}}$. For quadrupels $\boldsymbol{\Theta}$, there are well defined rational functions $f,g\in \C(\y,\bZZ,t,q,\Theta_0,\Theta_1,\Theta_t,\Theta_\infty)$ such that $q\widetilde{P}_{\mathrm{VI}}(\boldsymbol{\Theta})$ can be written as
$$\left\{ \begin{array}{rcl}
   \sigma_{q,t}\y &=& f(\y,\bZZ,t,q,\boldsymbol{\Theta})  \vspace{.2cm}\\
    \sigma_{q,t}\bZZ &=&   g(\y,\bZZ,t,q,\boldsymbol{\Theta})
    \, .\end{array}\right.$$
    Indeed, it suffices to substitute the first equation in \eqref{qPVIyZbis} into the second, so that the right hand sides only depends on the variables $\y,\bZZ,t,q,\boldsymbol{\Theta}$. 
    Note that $\partial_{q,\bZZ}H^{\boldsymbol{\theta}}_{\mathrm{VI}}(\y,\bZZ,t)$ and $\partial_{q,\y}H^{\boldsymbol{\theta}}_{\mathrm{VI}}(\y,\bZZ,t)$ can easily be calculated and are elements of  $\C(\y,\bZZ,t,q)$. 
  Then   $$\left\{ \begin{array}{rcl}
   \mathcal{R}^{\boldsymbol{\theta}}_1(\y,\bZZ,t,q)&:=& \frac{1}{q-1}\left(\frac{f\left(\y,\bZZ,t,q,1+\frac{q-1}{2}\boldsymbol{\theta}\right)-\y}{(q-1)t} -\partial_{q,\bZZ}H^{\boldsymbol{\theta}}_{\mathrm{VI}}(\y,\bZZ,t)\right) \vspace{.2cm}\\
\mathcal{R}_2^{\boldsymbol{\theta}}(\y,\bZZ,t,q) &:=&  \frac{1}{q-1}\left(\frac{ g\left(\y,\bZZ,t,q,1+\frac{q-1}{2}\boldsymbol{\theta}\right)-\bZZ}{(q-1)t}+\partial_{q,\y}H^{\boldsymbol{\theta}}_{\mathrm{VI}}(\y,\bZZ,t)\right)
    \, \end{array}\right.$$ are indeed elements of $\C(\y,\bZZ,t,q)$ and are those required by the statement.  Let us define 
    $$\left\{ \begin{array}{rcl}
   \widetilde{\mathcal{R}}^{\boldsymbol{\theta}}_1(\y,\bZZ,t,q)&:=& \frac{1}{q-1}\left(\frac{f\left(\y,\bZZ,t,q,1+\frac{q-1}{2}\boldsymbol{\theta}\right)-\y}{(q-1)t} -\partial_{\bZZ}H^{\boldsymbol{\theta}}_{\mathrm{VI}}(\y,\bZZ,t)\right) \vspace{.2cm}\\
\widetilde{\mathcal{R}}_2^{\boldsymbol{\theta}}(\y,\bZZ,t,q) &:=&  \frac{1}{q-1}\left(\frac{ g\left(\y,\bZZ,t,q,1+\frac{q-1}{2}\boldsymbol{\theta}\right)-\bZZ}{(q-1)t}+\partial_{\y}H^{\boldsymbol{\theta}}_{\mathrm{VI}}(\y,\bZZ,t)\right).
    \, \end{array}\right.$$ 
  Since $\frac{\partial}{\partial \bZZ}H^{\boldsymbol{\theta}}_{\mathrm{VI}}$ and $\frac{\partial}{\partial \y}H^{\boldsymbol{\theta}}_{\mathrm{VI}}$ are rational functions of $(\y,\bZZ,t)$, these $\widetilde{\mathcal{R}}^{\boldsymbol{\theta}}_1,\widetilde{\mathcal{R}}^{\boldsymbol{\theta}}_2 $ are again rational functions in the variables $\y,\bZZ,t,q$. 
Denoting $\nabla_{\star}:= \frac{ \partial_{q,\star} - \partial_{\star} }{q-1}$, we have
${\mathcal{R}}^{\boldsymbol{\theta}}_1-\widetilde{\mathcal{R}}^{\boldsymbol{\theta}}_1= -\nabla_{\bZZ}(H^{\boldsymbol{\theta}}_{\mathrm{VI}})$,
 and ${\mathcal{R}}^{\boldsymbol{\theta}}_2-\widetilde{\mathcal{R}}^{\boldsymbol{\theta}}_2=\nabla_{\y}(H^{\boldsymbol{\theta}}_{\mathrm{VI}})$. In order to compute these differences, note that $H^{\boldsymbol{\theta}}$ is rational with only simple poles independent of $\bZZ$ and for $\star \in \{\y,\bZZ\}$, the operator $\nabla_{\star}$ is $\C(t)$-linear. So it suffices to compute $\nabla_{x} (x^{n})$ for $n\in \N$ and $\nabla_{x} (\frac{1}{x-a})$ for $a$ independent of $x$.
We have $\nabla_{x} (1)=0$, and for $n\in \N^*$, we find
$$  
\nabla_{x} (x^{n})=x^{n-1}\left(\frac{q^{n}-1}{(q-1)^{2}}-\frac{n}{q-1}\right)=x^{n-1}\left(\frac{\sum_{k=0}^{n-1} q^k -n}{q-1}\right)
=x^{n-1}\sum_{k=0}^{n-1}[k]_{q} \, , 
 $$
where $[k]_{q}=\frac{q^{k}-1}{q-1}=\sum_{i=0}^{k-1}q^i$ with $[0]_{q}=0$.
In particular, we find $\nabla_{x} (x)=0$, $\nabla_{x} (x^{2})=x$, $\nabla_{x} (x^{3})=(2+q)x^{2}$.
For $a$ independent of $x$, we find 
$$\nabla_{x} \left(\frac{1}{x-a}\right)=\frac{-1}{(qx-a)(x-a)(q-1)}+\frac{1}{(x-a)^{2}(q-1)} 
=\frac{x}{(qx-a)(x-a)^{2}}.$$
We deduce $$\large \left\{\begin{array}{rcl}
-\nabla_{\bZZ}(H^{\boldsymbol{\theta}}_{\mathrm{VI}}) &=&- \frac{\y (\y -1)(\y -t)}{t(t-1) }\bZZ\, , \vspace{.2cm}\\
\nabla_{\y}(H^{\boldsymbol{\theta}}_{\mathrm{VI}})&=& \frac{((q+2)\y-1-t)\bZZ+1}{t(t-1)}\y\bZZ-\frac{1}{4}\left(\frac{\theta_0^2}{q(t-1)\y^2}+\frac{\theta_t^2\y}{(\y-t)^2(q\y-t)}-\frac{\theta_1^2\y}{t(\y-1)^2(q\y-1)} \right)\, .
 \end{array}\right.$$
Obviously, these differences do not have $\{q=1\}$ as an irreducible component of their respective polar divisors, and  their restrictions to $q=1$ do not have poles outside $\mathcal{P}$. This means that 
the statement holds for $\mathcal{R}\in \{\mathcal{R}^{\boldsymbol{\theta}}_1, \mathcal{R}^{\boldsymbol{\theta}}_2\}$ if and only if it holds for $\mathcal{R}\in \{\widetilde{\mathcal{R}}^{\boldsymbol{\theta}}_1, \widetilde{\mathcal{R}}^{\boldsymbol{\theta}}_2\}$. But for the latter, we have already done most of the work. Indeed,
  the calculations in \textsection \ref{sub3confSchlesinger} at the end of Section \ref{sec:confSchlesinger} show that in restriction to any line $\{(\y,\bZZ,t)=(\y_0,\bZZ_0,t_0)\}\subset \C^4_{\y,\bZZ,t,q}$ with $(\y_0,\bZZ_0,t_0)\in \C^3\setminus \mathcal{P}$,
the two  rational functions $$\begin{array}{lll} \left.\left( (q-1) \widetilde{\mathcal{R}}^{\boldsymbol{\theta}}_1\right)\right|_{\{(\y,\bZZ,t)=(\y_0,\bZZ_0,t_0)\}}(q)&, &  \left.\left( (q-1) \widetilde{\mathcal{R}}^{\boldsymbol{\theta}}_2\right)\right|_{\{(\y,\bZZ,t)=(\y_0,\bZZ_0,t_0)\}}(q)
  \end{array}
 $$
 vanish both at $q=1$. It follows that $\{q=1\}$ is an irreducible component of the zero divisor of both $\left( (q-1) \widetilde{\mathcal{R}}^{\boldsymbol{\theta}}_2\right)$ and $\left( (q-1) \widetilde{\mathcal{R}}^{\boldsymbol{\theta}}_2\right)$. 
 In particular, $\{q=1\}$ is not an irreducible component of the polar divisor of  $\widetilde{\mathcal{R}}^{\boldsymbol{\theta}}_1 $ or $ \widetilde{\mathcal{R}}^{\boldsymbol{\theta}}_2$. 
 Moreover,  even though we did not push the Taylor series expansions in Section \ref{sec:confSchlesinger} far enough as to have an explicit expression for $\widetilde{\mathcal{R}}^{\boldsymbol{\theta}}_1|_{q=1} $ and $ \widetilde{\mathcal{R}}^{\boldsymbol{\theta}}_2|_{q=1}$, it is still clear from the calculations that these functions cannot have poles outside $\mathcal{P}$. The result follows.
  \end{proof}

\subsubsection{Confluence of discrete solutions}

We will now see that discrete solutions of the modified $q$-Painlev\'e $\mathrm{VI}$ equation yield holomorphic solutions of the differential Painlev\'e  $\mathrm{VI}$ equation by some limit process. The idea is that the successive $\partial_{q,t}$-derivations should lead, when $q\to 1$, to the coefficients of the Taylor series expansion of the limit functions. Let us consider the operator  $$\delta_{q,t}:=t\partial_{q,t}=\frac{\sigma_{q,t}-1}{q-1}\, .$$
As one can easily check, for each $n\in \N$, we have
\begin{equation}\label{eq:signtodeln} \delta_{q,t}^n=\frac{1}{(q-1)^n}\cdot \sum_{k=0}^n {n \choose k} (-1)^{n-k} \sigma_{q,t}^k\, .
\end{equation}
Here we use the convention $\delta_{q,t}^0=\sigma_{q,t}^0=1$.
We will prove the following. 
\begin{thm}\label{confDiscSol} Let $\boldsymbol{\theta}\in \C^4$. Let $t_0\in \C\setminus \{0,1\}$ and $(\y_0,\bZZ_0)\in \left(\C\setminus \{0,1,t_0\}\right)\times \C$. 
Then, there exists a family $(\y_n, \bZZ_n)_{n\in \N}$ of pairs of rational functions  $(\y_n, \bZZ_n) \in \C(q)\times \C(q)$ such that for generic values of $q$, 
the sequence $(\y_n(q), \bZZ_n(q),q^nt_0)_{n\in \N}$ is the positive part of the discrete 
solution of  $q\widetilde{P}_{\mathrm{VI}}\left(1+(q-1)\frac{\boldsymbol{\theta}}{2}\right)$ with initial value $( \y_0, \bZZ_0, t_0)$. 
Consider the sequence  $(a_n, b_n)_{n\in \N}$ of pairs of rational functions  $(a_n, b_n) \in \C(q)\times \C(q)$ defined by
$$\left\{\begin{array}{rcl} 
a_{n}(q)&=&\frac{1}{(q-1)^n}\cdot \sum_{k=0}^n {n \choose k} (-1)^{n-k} \y_k(q)
\vspace{.2cm}\\
b_n(q)&=& \frac{1}{(q-1)^n}\cdot \sum_{k=0}^n {n \choose k} (-1)^{n-k} \bZZ_k(q)\, .
\end{array}\right.$$
Then for each $n\in \N$, neither $a_n(q)$ nor $b_n(q)$ has a pole at $q=1$. Moreover,  the power series 
$$\sum_{n\geq 0}\frac{a_n(1)}{n!}(q-1)^n\, , \quad \sum_{n\geq 0}\frac{b_n(1)}{n!}(q-1)^n,$$
both converge and yield the pair of functions $q\mapsto (y(q\cdot t_0), Z(q\cdot t_0))$, where $(y(t), Z(t))$ is  the unique solution  of the Painlev\'e Hamiltonian system \eqref{eq:HamSys} with initial condition $(\y_0,\bZZ_0)$ at $t_0$. 
\end{thm}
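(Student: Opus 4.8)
The plan is to prove the three assertions in turn: the existence of the rational functions $(\y_n,\bZZ_n)$, the regularity of $(a_n,b_n)$ at $q=1$, and the identification of the resulting series with the Painlev\'e solution. \emph{Construction.} Starting from $(\y_0,\bZZ_0,t_0)$ and iterating $q\widetilde{P}_{\mathrm{VI}}(\boldsymbol{\Theta})$ with $\boldsymbol{\Theta}=1+(q-1)\frac{\boldsymbol{\theta}}{2}$, and using $t_\ell=q^\ell t_0$, one obtains by induction $\y_n,\bZZ_n\in\C(q)$, since each step substitutes the rational expression \eqref{qPVIyZbis} into itself with $t$ replaced by the rational function $q^n t_0$ of $q$. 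That these coincide, for generic $q$, with the genuine discrete solution of Proposition \ref{theo2} follows by transporting the latter through the change of variables \eqref{eqzJSbZZ}. The crucial preliminary observation, from Theorem \ref{thmHamilt}, is that the recurrence reads $\y_{n+1}=\y_n+(q-1)q^n t_0\big(\partial_{q,\bZZ}H^{\boldsymbol{\theta}}_{\mathrm{VI}}+(q-1)\mathcal{R}^{\boldsymbol{\theta}}_1\big)$ and analogously for $\bZZ$; in particular every step is $O(q-1)$, so $\y_n(1)=\y_0$ and $\bZZ_n(1)=\bZZ_0$ for all $n$, and the recurrence is exactly the explicit Euler scheme for the Hamiltonian system \eqref{eq:HamSys} on the geometric grid $t_k=q^k t_0$.

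\emph{Structure of the expansion and regularity.} I would expand $\y_n(q)=\sum_{j\ge0}c_j^{\y}(n)(q-1)^j$ and $\bZZ_n(q)=\sum_{j\ge0}c_j^{\bZZ}(n)(q-1)^j$ near $q=1$ and prove, by induction on $j$, that $c_j^{\y}(n)$ and $c_j^{\bZZ}(n)$ are \emph{polynomials in $n$ of degree at most $j$}. Indeed, matching the coefficient of $(q-1)^j$ in the Euler recurrence expresses the difference $c_j^{\y}(n+1)-c_j^{\y}(n)$ in terms of the lower coefficients $c_{j'}^{\y},c_{j'}^{\bZZ}$ with $j'<j$ (of degree $\le j-1$ by induction) and of the binomial coefficients $\binom{n}{m}$, $m\le j-1$, coming from $q^n=(1+(q-1))^n$; hence this difference is polynomial in $n$ of degree $\le j-1$, and summing gives $\deg_n c_j^{\y}\le j$. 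Granting this, regularity of $a_n,b_n$ at $q=1$ is immediate: writing $a_n(q)=(q-1)^{-n}\sum_{k}\binom{n}{k}(-1)^{n-k}\y_k$ via \eqref{eq:signtodeln} and substituting the expansion, the $k$-finite difference of order $n$ annihilates every $c_j^{\y}$ with $j<n$ (a polynomial of degree $<n$), so the numerator is divisible by $(q-1)^n$. Moreover $a_n(1)=n!\,\ell_n^{\y}$, where $\ell_n^{\y}$ is the leading coefficient of $c_n^{\y}$, and likewise for $b_n$.

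\emph{Identification of the limit.} It remains to recognise these constants as the Taylor data of the analytic solution. Because $a_n(q)=\big(\delta_{q,t}^{\,n}\y\big)(t_0)$ and $b_n(q)=\big(\delta_{q,t}^{\,n}\bZZ\big)(t_0)$ with $\delta_{q,t}=t\partial_{q,t}=\frac{\sigma_{q,t}-1}{q-1}$, and since the calculations of \textsection\ref{sub3confSchlesinger} show that \eqref{eq:qSchlesdiff}, equivalently \eqref{qPVIyZbis}, discretises \eqref{eqHVIbis}, the $q$-difference operator $\delta_{q,t}$ tends, on the solution, to $\delta_t:=t\tfrac{d}{dt}$, the infinitesimal generator of the dilation $t\mapsto q\,t$. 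Combined with the polynomial-degree control above, this yields $a_n(1)=(\delta_t^{\,n}y)(t_0)$ and $b_n(1)=(\delta_t^{\,n}Z)(t_0)$, where $(y,Z)$ is the unique holomorphic solution of \eqref{eq:HamSys} with $(y,Z)(t_0)=(\y_0,\bZZ_0)$ given by the Cauchy theorem recalled in \textsection\ref{sec:diffP6}. As $\delta_t$ generates this flow and $(y,Z)$ is holomorphic near $t_0$, the two exponential generating series converge on a disc and sum to the germ of $(y,Z)$ transported along the $q$-spiral, i.e.\ to $q\mapsto\big(y(q\,t_0),Z(q\,t_0)\big)$; uniqueness of the Cauchy solution then closes the argument.

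\emph{Main obstacle.} The delicate point is this last step: one must interchange the limit $q\to1$ with the infinite summation and pass rigorously from the $q$-difference operator $\delta_{q,t}$ to the differential operator $\delta_t$ acting on a \emph{$q$-dependent} object, the discrete solution itself. The polynomial-in-$n$ control of the coefficients $c_j^{\y},c_j^{\bZZ}$ obtained in the second step is precisely what legitimises this interchange and simultaneously pins down $a_n(1),b_n(1)$ as the sought Taylor coefficients; establishing that polynomial structure from the Euler recurrence is the technical heart of the proof.
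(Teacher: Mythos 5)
Your first two steps are fine and the second one is genuinely different from the paper: the paper never expands the discrete solution itself in powers of $(q-1)$; instead of your "coefficients $c_j^{\y}(n)$ are polynomials in $n$ of degree $\le j$" argument, it iterates $\delta_{q,t}$ \emph{on the equation}, using a $q$-chain rule (Lemma \ref{lem1}) to write $\delta_{q,t}^{n+1}\y=H_1^{(n)}(\y,\bZZ,t)+(q-1)R_1^{(n)}(\y,\bZZ,t,q)$ with polar-locus control, and then evaluates at the fixed point $(\y_0,\bZZ_0,t_0)\notin\mathcal{P}$. Your route to regularity of $a_n,b_n$ at $q=1$ (the $n$-th finite difference annihilates the polynomial-in-$n$ coefficients of order $<n$) is plausible and could be pushed through, since $(\y_0,\bZZ_0,t_0)\notin\mathcal{P}$ keeps the induction alive.

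The genuine gap is in your identification step, and it is not a technicality — it is the content of the theorem. You need $a_n(1)=(\delta_t^{\,n}y)(t_0)$, i.e.\ $n!\,\ell_n^{\y}=(\delta_t^{\,n}y)(t_0)$, and you derive it from the assertion that "$\delta_{q,t}$ tends, on the solution, to $\delta_t$". But the object on which $\delta_{q,t}^{\,n}$ acts is the $q$-dependent discrete solution, so this assertion presupposes that the discrete solution converges to the continuous one \emph{together with all its iterated differences} — which is exactly what is being proved; as stated the argument is circular. Nor does the polynomial-degree control "legitimise the interchange" by itself: what the discretisation results give you (cf.\ Remark \ref{remdiffsols}) is only $\y_k(q)-y(q^kt_0)=O(q-1)$ for each fixed $k$, and an $O(q-1)$ error per term does not survive division by $(q-1)^n$ in the $n$-th difference; you would need $o\left((q-1)^n\right)$ closeness, which you never establish. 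The paper avoids the analytic comparison altogether: the functions $H_i^{(n)}$ produced by iterating the equation obey precisely the chain-rule recursion $H_i^{(n)}=\frac{\partial H_i^{(n-1)}}{\partial \y}\cdot\delta_t\y+\frac{\partial H_i^{(n-1)}}{\partial \bZZ}\cdot\delta_t\bZZ+\frac{\partial H_i^{(n-1)}}{\partial t}\cdot t$ that computes the $\delta_t$-derivatives of any solution of \eqref{eq:HamSys}, so the equality $\widetilde{a}_{n+1}(1)=H_1^{(n)}(\y_0,\bZZ_0,t_0)=(\delta_t^{\,n+1}y)(t_0)$ holds \emph{identically}, with no limit interchange; the inverse binomial transform \eqref{eq:sigfromdelta} then identifies $(\widetilde{a}_n,\widetilde{b}_n)$ with your $(a_n,b_n)$. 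To repair your proof you would have to show that $n!\,\ell_n^{\y}$ satisfies the same recursion as the Taylor coefficients of the ODE solution — which is the paper's symbolic argument in disguise.
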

This result will be proven by the end of this section. As it turns out, rather than trying to calculate the limit for $q\to 1$ for the $(a_n(q), b_n(q))$ directly, it is easier to first construct a particular sequence of rational functions that are finite at $q=1$ and then show that this sequence is actually the one from the statement. First we will need some general remarks. 

The $\delta_{q,t}$-operator on a field of functions with complex variable $t$  is additive and satisfies the following algebraic properties:
 \begin{equation}\label{eq1}\left\{\begin{array}{ccc}
\delta_{q,t}(fg)&=& \displaystyle(\delta_{q,t}f)g+f\delta_{q,t}g+(q-1) \delta_{q,t}( f) \delta_{q,t}(g),\vspace{.2cm}\\
\delta_{q,t}\left(\frac{1}{f}\right)&=& \displaystyle \frac{-\delta_{q,t}f}{f(f+(q-1)\delta_{q,t} f)} .
\end{array}\right.
\end{equation}
In particular, for any rational function $F\in\C(\y,\bZZ,t,q)$ we may define, by treating $\y$ and $\bZZ$ like functions of $t$, a rational function $\Delta_F$ with two additional variables such that
$$\delta_{q,t}F(\y,\bZZ,t,q)=\Delta_F\left(\y,\bZZ,t, q, \delta_{q,t}\y,\delta_{q,t}\bZZ\right) .$$
Let $\delta_{t}=t\partial_{t}$ that is the formal limit of $\delta_{q,t}$ when $q$ goes to $1$. The $q$-analogue of the chain rule that we will need is the following. 

\begin{lem}\label{lem1}
Let $F\in\C(\y,\bZZ,t,q)$ and $\Delta_F\in \C(\y,\bZZ,t, q, \delta_{q,t}\y,\delta_{q,t}\bZZ)$ be as above. 
Define $R_{F}\in \C(\y,\bZZ,t, q, \delta_{q,t}\y,\delta_{q,t}\bZZ)$ by 
$$R_{F}:=\frac{\Delta_F-\left(\frac{\partial F}{\partial \y}\cdot \delta_{t} \y+\frac{\partial F}{\partial \bZZ}\cdot \delta_{t} \bZZ+\frac{\partial F}{\partial t}\cdot t\right) }{q-1}\, .$$
If $\{q=1\}$ is not an irreducible component of the polar locus of $F$, then it is not an irreducible component of the polar locus of $\Delta_F$ and $R_{F}$. Moreover, when $F$ is seen as an element of $\C(\y,\bZZ,t, q, \partial_{q,t}\y,\partial_{q,t}\bZZ)$,  then the affine parts of the polar locus of $\Delta_F|_{q=1}$ and $R_{F}|_{q=1}$ are contained in the polar locus of $F|_{q=1}$.
\end{lem}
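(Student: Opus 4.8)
The plan is to use the composition form of the operator $\delta_{q,t}$ and to expand everything near the hyperplane $\{q=1\}$ in powers of $q-1$; since every assertion concerns polar divisors, it suffices to argue at the generic point of each relevant codimension-one component. Abbreviating $p:=\delta_{q,t}\y$ and $r:=\delta_{q,t}\bZZ$, and reading $\delta_{t}\y,\delta_{t}\bZZ$ in the definition of $R_F$ as the very same indeterminates $p,r$ (the subscript $t$ only signalling the differential limit), I would first record that, writing $\sigma_{q,t}\y=\y+(q-1)p$ and $\sigma_{q,t}\bZZ=\bZZ+(q-1)r$, the product and inverse rules \eqref{eq1} identify
$$\Delta_F=\frac{F\bigl(\y+(q-1)p,\ \bZZ+(q-1)r,\ qt,\ q\bigr)-F(\y,\bZZ,t,q)}{q-1}\in\C(\y,\bZZ,t,q,p,r).$$

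Next I would show that $\{q=1\}$ is not a component of the polar divisor of $\Delta_F$. Let $N$ be the numerator above. At a generic point of $\{q=1\}$ both terms of $N$ specialize to $F(\y,\bZZ,t,1)$, which by hypothesis has no pole there, so $N$ is regular along $\{q=1\}$; moreover $N$ vanishes identically on $\{q=1\}$, whence $\Delta_F=N/(q-1)$ is again regular there. For $R_F$ I would Taylor-expand $N$ to first order in $q-1$. The essential point is a cancellation: differentiating the fourth argument $q=1+(q-1)$ produces a term $\partial F/\partial q$, and this is exactly cancelled by the corresponding term from the subtracted $F(\y,\bZZ,t,q)$; what survives is
$$\Delta_F\big|_{q=1}=\left(\frac{\partial F}{\partial\y}\,p+\frac{\partial F}{\partial\bZZ}\,r+\frac{\partial F}{\partial t}\,t\right)\Bigg|_{q=1}.$$
This is precisely $L|_{q=1}$, where $L:=\frac{\partial F}{\partial\y}\,p+\frac{\partial F}{\partial\bZZ}\,r+\frac{\partial F}{\partial t}\,t$ is the quantity subtracted in the definition of $R_F$. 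Since $\partial F/\partial\y,\partial F/\partial\bZZ,\partial F/\partial t$ inherit regularity along $\{q=1\}$ from $F$, the difference $\Delta_F-L$ is regular along $\{q=1\}$ and vanishes there, so $R_F=(\Delta_F-L)/(q-1)$ is regular along $\{q=1\}$.

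To locate the affine poles of the restrictions, I would write $F=N_F/D_F$ with coprime $N_F,D_F\in\C[\y,\bZZ,t,q]$. The poles of the shifted term in $N$ lie on $\{D_F(\y+(q-1)p,\bZZ+(q-1)r,qt,q)=0\}$, whose restriction to $\{q=1\}$ is $\{D_F(\y,\bZZ,t,1)=0\}$, i.e.\ exactly the polar locus of $F|_{q=1}$; the poles of $F(\y,\bZZ,t,q)$ and of the coefficients of $L$ (being partial derivatives of $F$) share the support of the polar locus of $F$ and restrict to $\{q=1\}$ in the same manner. Since $\Delta_F|_{q=1}=L|_{q=1}$, and since $R_F|_{q=1}=\frac{\partial}{\partial q}(\Delta_F-L)\big|_{q=1}$ (using that $\Delta_F-L$ vanishes on $\{q=1\}$), while neither differentiation in $q$ nor restriction to $\{q=1\}$ enlarges the support of the affine polar divisor, the affine poles of both $\Delta_F|_{q=1}$ and $R_F|_{q=1}$ are contained in the polar locus of $F|_{q=1}$.

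The hard part will be this last step: one must verify that the $q$-deformation of the poles of $F$ collapses back onto the poles of $F|_{q=1}$ as $q\to1$, and that neither the division by $q-1$ nor the differentiation $\partial/\partial q$ introduces a spurious polar component along $\{q=1\}$. An alternative to the Taylor-expansion bookkeeping, should it become delicate, would be to prove both claims by induction on the construction of $F$ from $\y,\bZZ,t$ and their inverses, propagating them through the additivity of $\delta_{q,t}$ together with the rules \eqref{eq1}; on this route the cancellation of the $\partial F/\partial q$ terms reappears as the statement that $\delta_{q,t}$ specializes to $\delta_{t}$ at $q=1$ on each generator.
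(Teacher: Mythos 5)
Your proposal is correct in substance, but it takes a genuinely different route from the paper's proof. The paper never passes to the substitution form of $\delta_{q,t}$: it writes $F=P/Q$ with $P,Q\in\C[\y,\bZZ,t,q]$ and uses the two rules \eqref{eq1} to derive closed formulas, first for $\Delta_{Q^{-1}}$ and $R_{Q^{-1}}$, then for $\Delta_F=\Delta_{P/Q}$ and $R_F=R_{P/Q}$, in terms of $\Delta_P,\Delta_Q,R_P,R_Q$, which are \emph{polynomials} in $\C[\y,\bZZ,t,q,\delta_{q,t}\y,\delta_{q,t}\bZZ]$. Both claims are then read off from these formulas: the only denominators occurring are $Q$ and $Q+(q-1)\Delta_Q=\sigma_{q,t}Q$, neither of which is divisible by $q-1$ (for a reduced representation), and after setting $q=1$ the denominators become powers of $Q|_{q=1}$. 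You instead argue at the generic point of the divisor $\{q=1\}$, identify $\Delta_F$ with the shifted difference quotient of $F$, and obtain regularity of $\Delta_F$ and $R_F$ along $\{q=1\}$ from the identical vanishing of the relevant numerators there; the cancellation of the $\partial F/\partial q$ terms, which you isolate explicitly, is the heart of the matter and remains implicit in the paper's computation. Your route is shorter and more conceptual for the regularity claim; the paper's explicit formulas make the second claim (location of the affine poles of the restrictions) immediate, whereas you need the bookkeeping of your last paragraph. Note also that the fallback you sketch at the end (induction on the construction of $F$ through the rules \eqref{eq1}) is essentially the paper's actual proof, collapsed into the single step $F=P\cdot Q^{-1}$.

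One caveat applies to your write-up and, in fairness, to the paper's proof as well: for coprime $N_F,D_F$, the zero locus of $D_F|_{q=1}$ is in general only a \emph{superset} of the polar locus of $F|_{q=1}$, not ``exactly'' equal to it, since $N_F|_{q=1}$ and $D_F|_{q=1}$ may acquire a common factor (e.g.\ $F=\frac{\y-q}{\y-2+q}$ has $F|_{q=1}=1$, with empty polar locus). So, as written, both your argument and the paper's establish containment of the affine poles in the zero locus of the restricted denominator, which is formally weaker than the statement of the lemma. Your Taylor method repairs this if pushed one order further: expanding the shifted numerator to second order in $(q-1)$ shows that \emph{all} $\partial_q$-derivatives cancel, leaving
$$\Delta_F|_{q=1}=\sum_{s}\partial_s\bigl(F|_{q=1}\bigr)\,\delta s\,,\qquad
R_F|_{q=1}=\frac{1}{2}\sum_{s,s'}\partial_s\partial_{s'}\bigl(F|_{q=1}\bigr)\,\delta s\,\delta s'\,,$$
where $s,s'$ run over $\{\y,\bZZ,t\}$ and $\delta\y:=\delta_{q,t}\y$, $\delta\bZZ:=\delta_{q,t}\bZZ$, $\delta t:=t$. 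Since partial derivatives of $F|_{q=1}$ have no poles outside those of $F|_{q=1}$, this yields the lemma exactly as stated, and is in fact sharper than what either written proof establishes.
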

\begin{proof}
Let $P,Q \in \C[\y,\bZZ,t, q]$, $0\neq Q$, such that $F=P/Q$. We use \eqref{eq1} to compute successively  $\Delta_{Q^{-1}}$, $R_{Q^{-1}}$ and $\Delta_{P/Q}$, $R_{P/Q}$.
We have 
$$\Delta_{Q^{-1}}=\frac{-\Delta_Q}{Q(Q+(q-1)\Delta_Q)}=\frac{-\Delta_Q}{Q^{2}}+\frac{(q-1)\Delta_Q^{2}}{Q^{2}(Q+(q-1)\Delta_Q)} .$$
Then $(q-1)R_{Q^{-1}}$ is given by 
$$ \Delta_{Q^{-1}} +\frac{1}{Q^{2}}\left( \frac{\partial Q}{\partial \y} \cdot \delta_{t} \y +\frac{\partial Q}{\partial \bZZ}\cdot \delta_{t} \bZZ +\frac{\partial Q}{\partial t}\cdot t\right)=-\frac{(q-1)R_{Q}}{Q^{2}}+\frac{(q-1)\Delta_Q^{2}}{Q^{2}(Q+(q-1)\Delta_Q)}.$$
We have 
$$\begin{array}{rcl}
\Delta_F&=&\displaystyle \Delta_{P/Q}=\frac{\Delta_P}{Q}+P\Delta_{Q^{-1}}+(q-1)\Delta_P\Delta_{Q^{-1}}\vspace{.2cm}\\
&=&\displaystyle \frac{\Delta_P}{Q}+P\left(\frac{-\Delta_Q}{Q^{2}}+\frac{(q-1)\Delta_Q^{2}}{Q^{2}(Q+(q-1)\Delta_Q)}\right)-(q-1)\Delta_P \frac{\Delta_Q}{Q(Q+(q-1)\Delta_Q)} .
\end{array}$$
Finally, 
$$\begin{array}{rcl} R_{F}&=&\displaystyle R_{P/Q}=R_{P}Q^{-1}+PR_{Q^{-1}}+\Delta_P\Delta_{Q^{-1}}\vspace{.2cm}\\
&=&\displaystyle\frac{R_{P}}{Q}+P\left(\frac{-R_{Q}}{Q^{2}}+\frac{\Delta_Q^{2}}{Q^{2}(Q+(q-1)\Delta_Q)}\right)- \frac{\Delta_P\Delta_Q}{Q(Q+(q-1)\Delta_Q)} .
\end{array}$$
This proves that $\{q=1\}$ is not an irreducible component of the polar locus of $\Delta_F$ and $R_{F}$.
Note that  $\Delta_P|_{q=1},\Delta_Q|_{q=1},R_{P}|_{q=1},R_{Q}|_{q=1}\in \C[\y,\bZZ,t, \delta_{q,t}\y,\delta_{q,t}\bZZ]$. Hence the affine parts of the polar loci of $\Delta_F|_{q=1}$ and $R_{F}|_{q=1}$ are contained in the zero locus of $Q|_{q=1}$. This concludes the proof.
  \end{proof}
  Let $H_1,H_2\in \C(\y,\bZZ,t)$ and $R_1,R_2\in \C(\y,\bZZ,t,q)$ be rational functions in three and four complex variables respectively such that 
\begin{itemize}
\item[$\bullet$] the affine part of the polar locus of each of the functions $H_i$ with $i\in \{1,2\}$  is contained in the subset $\mathcal{P}\subset \C^3_{\y,\bZZ,t}$ given by $\mathcal{P}:=\{\y=0\}\cup\{\y=1\}\cup\{\y=t\}\cup \{t=0\}\cup\{t=1\}\, .$
\item[$\bullet$] for each $i\in \{1,2\}$,  the polar locus of $R_i$ does not contain $\{q=1\}$, and the affine part of the polar locus of $R_i|_{\{q=1\}}$ is contained in  $\mathcal{P}$. 
\end{itemize}
Consider the system of $q$-difference equations 
\begin{equation}\label{generalHamil} \left\{\begin{array}{rcl}\delta_{q,t}\y &=& H_1(\y,\bZZ,t)+  (q-1) R_1(\y,\bZZ,t,q) \vspace{.2cm}\\
 \delta_{q,t}\bZZ &=& H_2(\y,\bZZ,t)+  (q-1) R_2(\y,\bZZ,t,q)\, .\end{array}\right.
\end{equation}
Applying the  operator $\delta_{q,t}$ on both sides and then substituting the values of $\delta_{q,t}\y, \delta_{q,t}\bZZ$ imposed by this system yields a second order relation. 
There exist rational functions $R_1^{(1)},R_2^{(1)}\in \C(\y,\bZZ,t,q)$
such that this second order system is of the form
$$ \left\{\begin{array}{rcl}\delta_{q,t}^2\y &=& H_1^{(1)}(\y,\bZZ,t)+  (q-1) R_1^{(1)}(\y,\bZZ,t,q) \vspace{.2cm}\\
 \delta_{q,t}^2\bZZ &=& H_2^{(1)}(\y,\bZZ,t)+  (q-1) R_2^{(1)}(\y,\bZZ,t,q)\,  ,\end{array}\right.
$$
where $H_1^{(1)},H_2^{(1)}\in \C(\y,\bZZ,t)$ and $R_1^{(1)},R_2^{(1)}\in \C(\y,\bZZ,t,q)$  are given for $i\in \{1,2\}$ by 
$$\begin{array}{l}
H_i^{(1)}=\displaystyle\frac{\partial H_i}{\partial \y}\cdot \delta_{t} \y +\frac{\partial H_i}{\partial \bZZ}\cdot \delta_{t} \bZZ +\frac{\partial H_i}{\partial t}\cdot t \vspace{.2cm}
\\ R_i^{(1)}=\displaystyle \Delta_{R_i}(\y,\bZZ,t, q, H_1,H_2)+\frac{\Delta_{H_i}(\y,\bZZ,t, q, H_1,H_2)-\left(\frac{\partial H_i}{\partial \y}\cdot \delta_{t} \y +\frac{\partial H_i}{\partial \bZZ}\cdot \delta_{t} \bZZ  +\frac{\partial H_i}{\partial t}\cdot t\right) }{q-1}\,.\end{array}$$
By Lemma \ref{lem1}, for $i\in \{1,2\}$,  the polar locus of $R_i^{(1)}$ does not contain $\{q=1\}$, and the affine part of the polar locus of $R_i^{(1)}|_{\{q=1\}}$ is contained in  $\mathcal{P}$. 

We may apply this discussion to the modified $q$-Painlev\'e $\mathrm{VI}$ equation \eqref{qPVIyZbis} with spectral data given by $\boldsymbol{\Theta}:= 1+(q-1)\frac{\boldsymbol{\theta}}{2}$. 
Let us fix $\boldsymbol{\theta}\in \C^4$ and set 
$$\begin{array}{lll}H_1:=H_1^{(0)}:=\phantom{+}t\frac{\partial}{\partial \bZZ}H^{\boldsymbol{\theta}}_{\mathrm{VI}}&,&R_1:=t\widetilde{\mathcal{R}}^{\boldsymbol{\theta}}_1\, , \vspace{.2cm} \\
H_2:=H_2^{(0)}:=-t\frac{\partial}{\partial \y}H^{\boldsymbol{\theta}}_{\mathrm{VI}}&,&R_2:=t\widetilde{\mathcal{R}}^{\boldsymbol{\theta}}_2\, ,\end{array}$$
where $H^{\boldsymbol{\theta}}_{\mathrm{VI}}(\y,\bZZ,t)$ is given by \eqref{HamilDefConf} and where for $i\in \{1,2\}$, $\widetilde{\mathcal{R}}^{\boldsymbol{\theta}}_i(\y,\bZZ,t,q)$ is as in the proof of Theorem \ref{thmHamilt}. 
With this convention, by Theorem \ref{thmHamilt}, the modified $q$-Painlev\'e $\mathrm{VI}$ equation  with spectral data  $1+(q-1)\frac{\boldsymbol{\theta}}{2}$ is, when expressed with respect to the $\delta_{q,t}$-operator, given by \eqref{generalHamil}. Moreover, by induction on $n\in \N$, the associated system of order $n+1$ is of the form
$$ \left\{\begin{array}{rcl}\delta_{q,t}^{n+1}\y &=& H_1^{(n)}(\y,\bZZ,t)+  (q-1) R_1^{(n)}(\y,\bZZ,t,q) \vspace{.2cm}\\
 \delta_{q,t}^{n+1}\bZZ &=& H_2^{(n)}(\y,\bZZ,t)+  (q-1) R_2^{(n)}(\y,\bZZ,t,q)\,  ,\end{array}\right.
$$
for some well defined rational functions $H_1^{(n)},H_2^{(n)}\in \C(\y,\bZZ,t)$ and $R_1^{(n)},R_2^{(n)}\in \C(\y,\bZZ,t,q)$ such that
\begin{itemize}
\item[$\bullet$] $H_i^{(n)}=\frac{\partial H_i^{(n-1)}}{\partial \y}\cdot \delta_{t} \y+\frac{\partial H_i^{(n-1)}}{\partial \bZZ}\cdot \delta_{t} \bZZ  +\frac{\partial H_i^{(n-1)}}{\partial t}\cdot t$ and
\item[$\bullet$]  the polar locus of $R_i^{(n)}$ does not contain $\{q=1\}$, and the affine part of the polar locus of $R_i^{(n)}|_{\{q=1\}}$  is contained in  $\mathcal{P}$. 
\end{itemize}
Now let us fix
$$t_0\in \C\setminus \{0,1\}\, , \quad (\y_0,\bZZ_0)\in  \left(\C\setminus \{0,1, t_0\}\right)\times \C\, .$$
Note that if the line $\{(\y,\bZZ,t)=(\y_0,\bZZ_0,t_0)\} \subset \C^4_{\y,\bZZ,t,q}$ were contained in the polar divisor of $R_i^{(n)}$ for some $n$, then $R_i^{(n)}|_{q=1}$ would have a pole at $(\y_0,\bZZ_0,t_0)$. But this cannot happen because $(\y_0,\bZZ_0,t_0)\not \in \mathcal{P}$. 
Therefore, we may define a sequence of pairs of rational functions $( \widetilde{a}_n,\widetilde{b}_n)_{n\in \N}\in (\C(q)\times \C(q))^{\N}$ as follows. We take the initial functions to be the constants  $\widetilde{a}_0(q):=\y_0,\widetilde{b}_0(q):=\bZZ_0$, and 
for $n\in \N$, we set 
$$ \left\{\begin{array}{rcl}\widetilde{a}_{n+1}(q) &=& H_1^{(n)}(\y_0,\bZZ_0,t_0)+  (q-1) R_1^{(n)}(\y_0,\bZZ_0,t_0,q) \vspace{.2cm}\\
\widetilde{b}_{n+1}(q) &=& H_2^{(n)}(\y_0,\bZZ_0,t_0)+  (q-1) R_2^{(n)}(\y_0,\bZZ_0,t_0,q)\,  .\end{array}\right.
$$
Note that by construction, for each $n\in \N$,  the pair $(\widetilde{a}_n,\widetilde{b}_n)$ is well defined and finite when evaluated at $q=1$. 
If $(y(t), Z(t))$ is the unique solution of the Painlev\'e Hamiltonian system \eqref{eq:HamSys} with initial condition $(\y_0,\bZZ_0)$ at $t_0$, then 
its successive derivations with respect to the differential operator $\delta_t$ satisfy precisely $\left((\delta_t^{n+1}y)(t_0), (\delta_t^{n+1}Z)(t_0)\right)=(H_1^{(n)}(\y_0,\bZZ_0,t_0), H_2^{(n)}(\y_0,\bZZ_0, t_0))$. 
On the other hand, the successive $\delta_{t}$-derivatives of $(y(t),Z(t))$ evaluated at $t_0$ coincide with the evaluation at $q=1$ of the the successive $\partial_q$-derivatives of  the holomorphic functions  
$$q\mapsto y(q\cdot t_0) \, , \quad q\mapsto Z(q\cdot t_0)\, .$$
Therefore, the power series
$\sum_{n\geq 0}\frac{\widetilde{a}_n(1)}{n!}(q-1)^n\, , \quad \sum_{n\geq 0}\frac{\widetilde{b}_n(1)}{n!}(q-1)^n$
both converge and yield the pair of functions $q\mapsto (y(qt_0),Z(qt_0))$, where $(y(t), Z(t))$ is this unique solution. It remains to prove that the sequence $(\widetilde{a}_n,\widetilde{b}_n)_{n\in \N}$ coincides with  the $(a_n,b_n)_{n\in \N}$ defined in the statement of Theorem \ref{confDiscSol}.  
On the other hand, since $\sigma_{q,t}=1+(q-1)\delta_{q,t}$,  for each $n\in \N$
we have 
\begin{equation}\label{eq:sigfromdelta}\sigma_{q,t}^n=(1+(q-1)\delta_{q,t})^{n}=\sum_{k=0}^n{n \choose k}(q-1)^k\delta_{q,t}^k\, .\end{equation}
We are therefore inclined to define the sequence $(\y_n,\bZZ_n)_{n\in \N}\in (\C(q)\times \C(q))^{\N}$ given by 
$$\left\{\begin{array}{rcl} 
\y_n(q)&=&\sum_{k=0}^n{n \choose k}(q-1)^k \widetilde{a}_k(q)
\vspace{.2cm}\\
\bZZ_n(q)&=&\sum_{k=0}^n{n \choose k}(q-1)^k\widetilde{b}_k(q)\, .
\end{array}\right.$$

By construction, the elements of the sequence $(\y_n,\bZZ_n,q^nt_0)_{n\in \N}\in (\C(q)\times\C(q)\times \C(q))^{\N}$ are related to $(\y_0,\bZZ_0, t_0)$ by the same rational relation as those of a discrete solution with initial value $(\y_0,\bZZ_0, t_0)$ of the modified $q$-Painlev\'e $\mathrm{VI}$ equation \eqref{qPVIyZbis} with spectral data $\boldsymbol{\Theta}= 1+(q-1)\frac{\boldsymbol{\theta}}{2}$. 
Therefore, the sequence of rational functions $(\y_n,\bZZ_n,q^nt_0)_{n\in \N}$ \emph{is} the (positive part of) the solution with initial value $(\y_0,\bZZ_0, t_0)$, seen as a rational function of the variable $q$.\par  
The process in \eqref{eq:sigfromdelta} allowing to recover $\sigma_{q,t}^n$ from $1,\delta_{q,t}, \dots, \delta_{q,t}^n$ is inverse to the process in \eqref{eq:signtodeln} allowing to recover $\delta_{q,t}^n$ from $\sigma_{q,t}^{0}, \dots, \sigma_{q,t}^n$.  Therefore, the sequence $(\widetilde{a}_n,\widetilde{b}_n)_{n\in \N}$ constructed above coincides with  the $(a_n,b_n)_{n\in \N}$ defined in the statement of Theorem \ref{confDiscSol}.   This concludes the proof of Theorem \ref{confDiscSol}. 

\begin{rem}\label{remdiffsols}Note that with respect to the notation in Theorem \ref{confDiscSol}, for each $n\in \N$, as $q\to 1$, we have
$$\y_{n}(q)-y(q^{n}t_0)=O(q-1)\quad \quad \hbox{ and }\quad \quad \bZZ_{n}(q)-Z(q^{n}t_0)=O(q-1).$$
Indeed, we have $$\y_n(q)=\sum_{k=0}^n{n\choose k} (q-1)^k a_k(q)=a_0(q)+O(q-1)=y(t_0)+O(q-1)= y(q^n t_0)+O(q-1)\, .$$
 The argument for $\left(\bZZ_n(q)-Z(q^nt_0)\right)=O(q-1)$ is identical. 
\end{rem}
\begin{cor}Let $\boldsymbol{\theta}\in \C^4$. Let $\qDom \subset \C\setminus \{0,1\}$ be a subset with $1$ in its closure. Let $t_0\in \C\setminus \{0,1\}$. Let $(\y_0(q),\bZZ_0(q))$ be a pair of continuous functions 
$\qDom\to \C$ such that the limit 
$$(y_0,Z_0):=\lim_{\begin{smallmatrix} q\to 1 \\ q\in \qDom\end{smallmatrix}}(\y_0(q),\bZZ_0(q))$$
exists in $\C^2$ and satisfies $y_0\not \in \{0,1,t_0\}$. Then, there exists a family $(\y_n, \bZZ_n)_{n\in \N} \in \C(\y_0(q),q) \times \C(\bZZ_0(q),q)$ of pairs of continuous functions such that  for generic values of $q$, 
the sequence $(\y_n(q), \bZZ_n(q),q^nt_0)_{n\in \N}$ is the positive part of the discrete 
solution of  $q\widetilde{P}_{\mathrm{VI}}\left(1+(q-1)\frac{\boldsymbol{\theta}}{2}\right)$ with initial value $( \y_0 (q), \bZZ_0 (q), t_0)$. 
Moreover, for each $n\in \N$, as $q\to 1$, we have 
$$\y_n(q)-\y_0(q)=O(q-1)\, , \quad \quad \quad \bZZ_n(q)-\bZZ_0(q)=O(q-1)\,.$$
\end{cor}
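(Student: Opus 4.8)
The plan is to deduce the Corollary from Theorem \ref{confDiscSol} by letting the initial datum vary with $q$; the only new ingredient is a uniform control, near the limit point $(y_0,Z_0,t_0)$, of the one-step map defining $q\widetilde{P}_{\mathrm{VI}}$. For existence, let $\Phi_q$ denote the rational map (depending on the time $t$) defined by the modified $q$-Painlev\'e $\mathrm{VI}$ equation in the form \eqref{qPVIyZbis} with spectral data $1+(q-1)\frac{\boldsymbol{\theta}}{2}$, i.e. $(\sigma_{q,t}\y,\sigma_{q,t}\bZZ)=\Phi_q(\y,\bZZ;t)=(f,g)$ with $f,g\in\C(\y,\bZZ,t,q)$. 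Setting $\y_0:=\y_0(q)$, $\bZZ_0:=\bZZ_0(q)$ and iterating, $\y_{n+1}:=f(\y_n,\bZZ_n,q^nt_0,q)$ and $\bZZ_{n+1}:=g(\y_n,\bZZ_n,q^nt_0,q)$, produces the $\y_n,\bZZ_n$ as rational functions of $\y_0(q),\bZZ_0(q)$ and $q$. Exactly as in Theorem \ref{confDiscSol}, the global rational relations defining a discrete solution are rational identities in the initial-value variables, hence survive the specialization of those variables to $\y_0(q),\bZZ_0(q)$; thus for generic $q$ (where no denominator along the orbit degenerates) the sequence $(\y_n(q),\bZZ_n(q),q^nt_0)_{n\in\N}$ is the positive part of the discrete solution with initial value $(\y_0(q),\bZZ_0(q),t_0)$.

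Second, I would establish that $\Phi_q=\mathrm{id}+O(q-1)$ uniformly near the limit point. Since $\sigma_{q,t}=1+(q-1)\delta_{q,t}$, the components read $f=\y+(q-1)\delta_{q,t}\y$ and $g=\bZZ+(q-1)\delta_{q,t}\bZZ$, where by Theorem \ref{thmHamilt}, in the $\delta_{q,t}$-formulation \eqref{generalHamil}, one has $\delta_{q,t}\y=H_1+(q-1)R_1$ and $\delta_{q,t}\bZZ=H_2+(q-1)R_2$ with $H_i$ and $R_i|_{q=1}$ regular off $\mathcal{P}$. By hypothesis $(y_0,Z_0,t_0)\notin\mathcal{P}$, so one may fix a compact neighbourhood $K$ of $(y_0,Z_0,t_0)$ with $K\cap\mathcal{P}=\emptyset$ on which, inspecting the explicit denominators of $f$ and $g$ in \eqref{qPVIyZbis}, these maps remain regular for $q$ in a punctured neighbourhood of $1$ and satisfy the identities $f|_{q=1}=\y$ and $g|_{q=1}=\bZZ$. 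Consequently $\delta_{q,t}\y$ and $\delta_{q,t}\bZZ$ are bounded on $K\times\{|q-1|<\epsilon\}$, and there exist constants $C,\epsilon>0$ with $\|\Phi_q(\y,\bZZ;t)-(\y,\bZZ)\|\le C|q-1|$ for all $(\y,\bZZ,t)\in K$ and $|q-1|<\epsilon$.

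Third, I would conclude by a finite induction on $m\le n$. Using continuity of $(\y_0(q),\bZZ_0(q))$ together with $q^mt_0\to t_0$, the base point $(\y_0(q),\bZZ_0(q),t_0)$ lies in $K$ for $q$ close to $1$; and if $(\y_m(q),\bZZ_m(q),q^mt_0)\in K$, then one application of $\Phi_q$ displaces it by $O(q-1)$, so it converges to $(y_0,Z_0,t_0)$ and stays in $K$, while accumulating only $O(q-1)$ of total displacement over the fixed number $n$ of steps. This yields $\y_n(q)-\y_0(q)=O(q-1)$ and $\bZZ_n(q)-\bZZ_0(q)=O(q-1)$, in the same spirit as Remark \ref{remdiffsols}.

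The hard part is the second step. The polar control furnished by Theorem \ref{thmHamilt} is a statement on the slice $\{q=1\}$, whereas the iteration requires genuine regularity of $f,g$ — equivalently, boundedness of $\delta_{q,t}\y,\delta_{q,t}\bZZ$ — on a full neighbourhood $K\times\{|q-1|<\epsilon\}$, which is strictly stronger (a rational function may be finite on a hyperplane yet unbounded near a point of it). One must therefore verify directly that the denominators occurring in \eqref{qPVIyZbis} stay bounded away from $0$ on $K$ for $q$ near $1$; this is precisely where the hypotheses $y_0\notin\{0,1,t_0\}$, i.e. $(y_0,Z_0,t_0)\notin\mathcal{P}$, and the continuity of $\y_0(q),\bZZ_0(q)$ are essential, the identities $f|_{q=1}=\y$ and $g|_{q=1}=\bZZ$ being what makes $\Phi_q$ contract displacements to order $(q-1)$.
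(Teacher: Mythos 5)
Your strategy---iterate the one-step map $\Phi_q$ along the orbit and combine a uniform displacement estimate $\Phi_q=\mathrm{id}+O(q-1)$ on a compact set $K$ disjoint from $\mathcal{P}$ with a finite induction---is genuinely different from the paper's proof, and in outline it could be completed. But the execution of what you yourself call the hard part fails at a concrete point. You propose to ``verify directly that the denominators occurring in \eqref{qPVIyZbis} stay bounded away from $0$ on $K$ for $q$ near $1$''. This is false for the second component of the map: the denominator of the $\sigma_{q,t}\bZZ$-equation in \eqref{qPVIyZbis} is $(q-1)\,\sigma_{q,t}\y$, which carries the explicit factor $(q-1)$ and hence tends to $0$ uniformly on $K$ as $q\to 1$. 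The map $g$ has the indeterminate form $\tfrac{0}{0}$ along $\{q=1\}$: the inner fraction tends to $1$ (its own denominators, namely $q$, $\sigma_{q,t}\y-1$, $\sigma_{q,t}\y-qt$, $\y-1$, $\y-t$, $1+(q-1)\y\bZZ$, are indeed bounded away from zero on $K$ for $q$ near $1$, using $y_0\notin\{0,1,t_0\}$, $t_0\notin\{0,1\}$ and $\Theta_i(q)\to1$), while the outer denominator vanishes. What you actually need is a cancellation statement: after clearing the inner denominators, the numerator of $g$ vanishes identically on $\{q=1\}$ in a neighbourhood of $K\times\{1\}$, hence is divisible by $(q-1)$ with holomorphic quotient, and only then does $g$ extend holomorphically across $q=1$ with $g|_{q=1}=\bZZ$ and $|g-\bZZ|\le C|q-1|$ on $K$. (In particular ``$g|_{q=1}=\bZZ$'' cannot be obtained by naive substitution, for the same reason.) This is exactly the slice-versus-neighbourhood issue you correctly flagged for Theorem \ref{thmHamilt}; your direct inspection does not circumvent it for the $\bZZ$-equation, it reproduces it.

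For comparison, the paper avoids orbit iteration altogether: it re-runs the proof of Theorem \ref{confDiscSol} with $(\y_0,\bZZ_0)$ replaced by $(\y_0(q),\bZZ_0(q))$. There the quantities $(a_n(q),b_n(q))$ are evaluations of fixed rational functions $H_i^{(n)}+(q-1)R_i^{(n)}$ at the single point $(\y_0(q),\bZZ_0(q),t_0)$---the time variable never moves---and their continuity at $q=1$ comes from the denominator control propagated inductively by Lemma \ref{lem1} (the new denominators are $Q$ and $Q+(q-1)\Delta_Q$, whose restrictions to $q=1$ vanish only on $\mathcal{P}$). The estimates $\y_n(q)-\y_0(q)=O(q-1)$ then drop out of the purely algebraic resummation $\y_n=\sum_{k=0}^n{n \choose k}(q-1)^k a_k=a_0+O(q-1)$, as in Remark \ref{remdiffsols}, with no uniform one-step estimate and no need to check that the orbit stays in $K$. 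If you wish to keep your dynamical argument, the missing cancellation can be supplied by expanding the numerator of $g$ (holomorphic near $K\times\{1\}$) in powers of $(q-1)$, using $f|_{q=1}=\y$ and $\Theta_i|_{q=1}=1$; but at that point you are essentially re-proving a local uniform version of Theorem \ref{thmHamilt}, which is what the paper's route gives you for free.
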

\begin{proof} In the proof of Theorem  \ref{confDiscSol}, we may replace $(\y_0,\bZZ_0)$ by $(\y_0(q),\bZZ_0(q))$ in the definition of $(a_n(q),b_n(q))_{n\in \N}$. Note that the latter then is a sequence of pairs of rational functions, each evaluated in a pair of continuous functions in $q$ which, as $q\to 1$, admit a finite limit which is not in the polar locus of the restriction to $q=1$ of these rational functions.  
We conclude that for each $n\in \N$, the pair $(a_n(q),b_n(q))$ may be continued to a continuous function on $\qDom\cup \{1\}$ with finite value at $q=1$. Moreover, as before, we have the relation  
$$\begin{array}{rcl} (\y_n(q)\, , \bZZ_n(q)&=&\displaystyle \sum_{k=0}^n{n \choose k}(q-1)^k(a_k(q),b_k(q)) = (a_0(q),b_0(q))+O(q-1)\vspace{.2cm}\\
&=&(\y_0(q), \bZZ_{0}(q))+O(q-1)\, .\end{array}$$    \end{proof}

 \section{Okamoto's space of initial conditions}\label{sec:4}
 \subsection{Differential case}\label{sec:oka}
 Let us review the construction in \cite{okamoto1986studies} of a convenient space of initial conditions for the Painlev\'e ${\mathrm{VI}}$ equation, and recall why it proves the Painlev\'e property. 
 Let $\boldsymbol{\theta}=(\theta_0,\theta_1,\theta_t,\theta_\infty)\in \C^4$. Recall the Hamiltonian system (\ref{eq:HamSys}) associated to the Painlev\'e ${\mathrm{VI}}$ differential equation with spectral data $\boldsymbol{\theta}$:
$$
\left\{ \begin{array}{rcll}
y'(t)&=& &\frac{\partial H^{\boldsymbol{\theta}}_{\mathrm{VI}}}{\partial \ZZ}(y,\ZZ, t)\vspace{.2cm}
\\
\ZZ'(t)&=&-&\frac{\partial H^{\boldsymbol{\theta}}_{\mathrm{VI}}}{\partial y}(y,\ZZ, t)\, .\end{array}\right.$$
Let us fix a time $t_0\in \C\setminus \{0,1\}$. Recall from Section \ref{sec:diffP6}  that for any initial value $(y_0,Z_0)\in (\C\setminus \{0,1,t_0\})\times \C$, there exists a unique germ of holomorphic solution $(y(t),\ZZ(t))$ of  (\ref{eq:HamSys}) such that $(y(t_0),\ZZ(t_0))=(y_0,Z_0)$. Since we have $y_0\not \in \{0,1,t_0\}$ here, we may equivalently consider the space of initial conditions $(u_0,v_0)\in (\C\setminus \{0,1,t_0\})\times \C$, 
where we identify \begin{equation}\label{uvfromyZ}(u_0,v_0)=(y_0,y_0(y_0-1)(y_0-t_0)Z_0)\, .\end{equation}
We compactify this space of initial values to the second Hirzebruch surface $\mathbb{F}_2$, using the following coordinate charts of $\C^2$-spaces, endowed with their obvious rational transition maps:
\begin{align*}
&(u_0 ,v_0 )=:(u,v),&(u_1,v_1)&=\left(u,\frac{1}{v}\right),\\
&(u_2,v_2)=\left(\frac{1}{u},\frac{v}{u^2}\right), & (u_3,v_3)&=\left(\frac{1}{u},\frac{u^2}{v}\right).
\end{align*}
Here what we have added by the compactification is the union of the horizontal line $$ {\mathcal{H}}:=\{v_1=0\}\cup \{v_3
=0\}$$ and the four vertical lines given by $$\mathcal{D}_i:=\{u_0=i\}\cup \{u_1=i\}\quad \textrm{for}\quad i \in \{0,1,t_0\}\, , \quad  \mathcal{D}_\infty:=\{u_2=0\}\cup \{u_3=0\}\, .$$  Now the Hamiltonian system (\ref{eq:HamSys}) defines a meromorphic vector field on $ \mathbb{F}_2\times(\C\setminus \{0,1\})$. Explicitly, it is given with respect to the coordinates $(u,v, t)$ by 
$$  \left\{\begin{array}{ccl}
  t'&=&1\vspace{.2cm}\\
u ' &=& \frac{1}{t(t-1)}\left(2v +u (u -1)\right) \vspace{.2cm}\\
v ' &=& \frac{1}{4}\left( \frac{4v ^2-t^2\theta_{0} ^2}{t(t-1)u }+\frac{4v ^2- (t-1)^2\theta_1^2}{t(t-1)(u -1)}+\frac{4v ^2-t^2(t-1)^2\theta_t^2}{t(t-1)(u -t)}\right)+\frac{\theta_\infty(\theta_\infty-2)u (u -1)(u -t)}{4t(t-1)} \vspace{.2cm}\\
&+&\frac{1}{4}\left(- \frac{\theta_{0} ^2(u -1-t)}{t-1} +\frac{\theta_1^2(u +1-t)}{t}-\frac{(t(t-1)\theta_t^2-4v )(u -1+t)}{t(t-1)}  \right)\, . 
\end{array}\right.$$
 One realizes that the vector field is infinite on the set given for each fixed $t=t_0$ by $$ \mathcal{H}\cup \mathcal{D}_0\cup \mathcal{D}_1\cup \mathcal{D}_t\cup \mathcal{D}_\infty.$$ More precisely, it is infinite or undetermined (of the form ``$\frac{0}{0}$'') precisely there. These indeterminacy points will be called \emph{base points} in the following. If we assume that 
 \begin{equation}\label{cond8bp}\theta_0\neq 0\, ,\quad  \theta_1\neq 0\, , \quad \theta_t\neq 0\, , \quad \theta_\infty \neq 1\, , \end{equation}
 then there are precisely eight such base points. With respect to the four charts of $\mathbb{F}_2$, these base points, each possibly visible in several charts, are precisely the following:
 $$\begin{array}{ | c | c |  c  | c | c | }
\hline 
 \begin{array}{c} \vspace{.3cm} \end{array} & ( u_0 , v_0) & ( u_1, v_1) & ( u_2, v_2) & ( u_3, v_3) \\
\hline 
 \begin{array}{c} \vspace{.4cm} \end{array} \beta_0^\pm & \left(0, \pm\frac{t\theta_0}{2}\right)&\left(0, \pm\frac{2}{t\theta_0}\right) & &   \\
\hline 
 \begin{array}{c} \vspace{.4cm} \end{array} \beta_1^\pm &  \left(1, \pm\frac{(t-1)\theta_1}{2}\right) &   \left(1, \pm\frac{2}{(t-1)\theta_1}\right) &    \left(1, \pm\frac{(t-1)\theta_1}{2}\right)  &   \left(1, \pm\frac{2}{(t-1)\theta_1}\right)  \\
\hline 
 \begin{array}{c} \vspace{.4cm} \end{array} \beta_t^\pm  &  \left(t, \pm\frac{t(t-1)\theta_t}{2}\right)  & \left(t, \pm\frac{2}{t(t-1)\theta_t}\right)   &   \left(\frac{1}{t}, \pm\frac{(t-1)\theta_t}{2t}\right)   & \left(\frac{1}{t}, \pm\frac{2t}{(t-1)\theta_t}\right)   \\
\hline 
 \begin{array}{c} \vspace{.4cm} \end{array} \beta_\infty^+ & & & \left(0, \frac{\theta_\infty-2}{2}\right) &   \left(0, \frac{2}{\theta_\infty-2}\right)
\\ \hline
 \begin{array}{c} \vspace{.4cm} \end{array} \beta_\infty^- & & & \left(0, -\frac{\theta_\infty}{2} \right) &  \left(0, -\frac{2}{ {\theta}_\infty}\right)
\\ \hline
\end{array}$$
In the following discussion, we assume \eqref{cond8bp}. 
For fixed $t$, the Hirzebruch surface, as well as the configuration of particular lines and base points, are resumed in the following picture. Here ``$(n)$'' indicates ``self-intersection number equal to $n$''.
  
\begin{center}
\begin{tikzpicture}[scale=0.7]

\draw[thick] (-6.5,4.3)--(6.5,4.3);
  \draw (6.9,4.3) node {$\mathcal H$} ;
    \draw[gray] (-7.1,4.3) node {$(-2)$} ;

\draw[thick] (5.5,-4.5)--(5.5,5.3);
  \draw (5.5,5.6) node {$\mathcal{D}_\infty$} ;  \draw[gray] (5.5,-4.9) node {$(0)$} ;
  \draw[thick] (1.9,-4.5)--(1.9,5.3);
  \draw (1.9,5.6) node {$\mathcal{D}_t$} ; \draw[gray] (1.9,-4.9) node {$(0)$} ;
    \draw[thick] (-1.9,-4.5)--(-1.9,5.3);
  \draw (-1.9,5.6) node {$\mathcal{D}_1$} ; \draw[gray] (-1.9,-4.9) node {$(0)$} ;
      \draw[thick] (-5.5,-4.5)--(-5.5,5.3);
  \draw (-5.5,5.6) node {$\mathcal{D}_0$} ;  \draw[gray] (-5.5,-4.9) node {$(0)$} ;
 
    \draw (-5.5,1.6) node {$\bullet$} ;   \draw (-5,1.6) node {$\beta_0^+$} ;
     \draw (-5.5,-1.9) node {$\bullet$} ; \draw (-5,-1.9) node {$\beta_0^-$} ;

       \draw (-1.9,2.5) node {$\bullet$} ;      \draw (-1.4,2.5) node {$\beta_1^+$} ;
     \draw (-1.9,-2.8) node {$\bullet$} ; 	  \draw (-1.4,-2.8) node {$\beta_1^-$} ;

       \draw (1.9,0.5) node {$\bullet$} ;		    \draw (2.4,0.5) node {$\beta_t^-$} ;
     \draw (1.9,1.5) node {$\bullet$} ;			 \draw (2.4,1.5) node {$\beta_t^+$} ;
     
      \draw (5.5,1.2) node {$\bullet$} ;			  \draw (6,1.2) node {$\beta_\infty^+$} ;
     \draw (5.5,-1.5) node {$\bullet$} ;			    \draw (6,-1.5) node {$\beta_\infty^-$} ;

                \draw[blue!50, thick, ->] (-5.5,-0.3)--(-5.5,1);
            \draw[blue!50, thick, ->] (-5.5,-0.3)--(-4.2,-0.3);
               \draw[blue!50] (-3.8 , -0.3)  node  {$u$} ;
         \draw[blue!50] (-5.9 , 1.3)  node  {$v$} ;
         \end{tikzpicture}
\end{center}
For any fixed $t$, let us denote by  $\widehat{\mathbb{F}}_2^t$ the result of the above Hirzebruch surface ${\mathbb{F}}_2$ after blow up of the the eight base points. For each $i\in \{0,1,t,\infty\}$, we denote by $\mathcal{D}_i^{**}$ the strict transform of $\mathcal{D}_i$ after blow up of $\beta_i^\pm$, \emph{i.e.} the closure of $\mathcal{D}_i\setminus \{\beta_i^\pm\}$ in $\widehat{\mathbb{F}}_2^t$. Note that each $\mathcal{D}_i^{**}$ has self-intersection number $-2$.  The \emph{Okamoto space of initial values} at the time $t$ for the sixth Painlev\'e equation with spectral data $\boldsymbol{\theta}$ is by definition 
 $$\mathrm{Oka}_t:=\widehat{\mathbb{F}}_2^t\setminus \mathcal{I}^t\, , \quad \textrm{where} \quad \mathcal{I}^t:= \mathcal{D}_0^{**}\cup \mathcal{D}_1^{**}\cup \mathcal{D}_t^{**}\cup \mathcal{D}_\infty^{**}\cup \mathcal{H}\, .$$
 For example in order to blow up $\beta_0^-$, one replaces a neighborhood of $\beta_0^-$ containing none of the other seven base points, by the corresponding neighborhood in the spaces 
 $\C^2_{u_{01},v_{01}}$ and  $\C^2_{u_{02},v_{02}}$ related to $\C^2_{u,v}$ according to the following transition maps:
  $$\begin{array}{rclcrcl}(u_{01},v_{01})&=&\left(u, \frac{2v+t\theta_0}{2u} \right) ,
&\quad 
&(u_{02},v_{02})&=&\left(\frac{2u}{2v+t\theta_0}, v+\frac{t\theta_0}{2}\right).
\end{array}$$
Note that $(u_{02},v_{02})=\left(\frac{1}{v_{01}},u_{01}v_{01}\right)$ whenever $v_{01}\neq 0$. 
In these two new charts, what in $\C^2_{u,v}$ was the point $\beta_0^-$ now corresponds to the exceptional line, isomorphic to $\mathbb{P}^1$, given by $$\mathcal{E}_0^-:=\{u_{01}=0\}\cup \{v_{02}=0\}\, .$$ 
The complementary of $\mathcal{E}_0^-$ however is in biholomorphic correspondence with the corresponding open subset of $\C^2_{u,v}$. 
The vector field in these new charts is given as follows: $$
 \left\{\begin{array}{ccl}
u_{01}' &=& \frac{1}{t(t-1)}\left(2\left(u_{01}v_{01}-\frac{t\theta_0}{2}\right)+u_{01}(u_{01}-1)\right)  \vspace{.2cm}\\
v_{01}' 
 &=&-\frac{1}{t(t-1)}\left(v_{01}^2 -v_{01}+\frac{t\theta_0}{2}\right)  - \frac{t+1 }{t^2(t-1)}\left(u_{01}v_{01}-t\theta_0\right) v_{01} +\frac{v_{01}}{t}+\frac{1}{4}\left(-\frac{\theta_0^2}{t-1}+\frac{\theta_1^2}{t}-\theta_t^2  \right) 
\vspace{.2cm}\\&+&\frac{1}{4}\left(  \frac{\left(2u_{01}v_{01}- t\theta_0 \right)^2- (t-1)^2\theta_1^2}{t(t-1)(u_{01}-1)}+ \frac{\left(2u_{01}v_{01}- t\theta_0 \right)^2-t^2(t-1)^2\theta_t^2}{t^2(t-1)(u_{01}-t)}+ \frac{\theta_\infty(\theta_\infty-2)(u_{01}-1)(u_{01}-t)}{t(t-1)}\right) \, , 
\end{array}\right.
$$
$$
 \left\{\begin{array}{ccl}
u_{02}' &=&\frac{1}{t(t-1)}\left(1-u_{02}+\frac{t\theta_0}{2 }u_{02}^2\right)  + \frac{t+1 }{t^2(t-1)}\left(v_{02}-t\theta_0\right) u_{02} -\frac{u_{02}}{t}-\frac{u_{02}^2}{4}\left(-\frac{\theta_0^2}{t-1}+\frac{\theta_1^2}{t}-\theta_t^2  \right)
\vspace{.2cm}\\&-&\frac{u_{02}^2}{4}\left(  \frac{\left(2v_{02}- t\theta_0 \right)^2- (t-1)^2\theta_1^2}{t(t-1)(u_{02}v_{02}-1) }+ \frac{\left(2v_{02}- t\theta_0 \right)^2-t^2(t-1)^2\theta_t^2}{t^2(t-1)(u_{02}v_{02}-t)}+\frac{\theta_\infty(\theta_\infty-2)(u_{02}v_{02}-1)(u_{02}v_{02}-t)}{t(t-1)} \right)  \vspace{.2cm}\\
v_{02}'&=&  \frac{ v_{02}-t\theta_0 }{t(t-1)u_{02}}+\frac{1}{4}\left(\frac{\left(2v_{02}-t\theta_0\right)^2- (t-1)^2\theta_1^2}{t(t-1)(u_{02}v_{02}-1)}+\frac{\left(2v_{02}-t\theta_0\right)^2-t^2(t-1)^2\theta_t^2}{t(t-1)(u_{02}v_{02}-t)}\right) \vspace{.2cm}\\&+&\frac{\theta_\infty(\theta_\infty-2)u_{02}v_{02}(u_{02}v_{02}-1)(u_{02}v_{02}-t)}{4t(t-1)} \vspace{.2cm}\\
&+&\frac{1}{4}\left(- \frac{\theta_0^2(u_{02}v_{02}-1-t)}{t-1} +\frac{\theta_1^2(u_{02}v_{02}+1-t)}{t}-\frac{(t(t-1)\theta_t^2-2\left(2v_{02}- t\theta_0\right))(u_{02}v_{02}-1+t)}{t(t-1)}  \right) +\frac{\theta_0}{2}\, .
\end{array}
\right. 
$$

We see that there is no base point on $\mathcal{E}_0^-=\{u_{01}=0\}\cup \{v_{02}=0\}$. In other words, the blow up was sufficient to \emph{resolve} the base point $\beta_0^-$. Moreover, we see that the vector field is finite on all points of $\mathcal{E}_0^-$ except the one given by the intersection with the strict transform of $\mathcal{D}_0$, which is  visible in these charts as $\{u_{02}=0\}$. 
As shown in \cite{okamoto1986studies}, this actually holds true for all eight base points, \emph{i.e.} on $\mathrm{Oka}_t$, the vector field is everywhere finite and free of base points. 

As explained in \cite{loray2004theoremes}, this situation can be conveniently reformulated as follows. On $ \mathbb{F}_2\times (\C \setminus \{0,1\})$, the meromorphic vector field given by  (\ref{eq:HamSys}) defines a singular holomorphic foliation. The singular locus corresponds to the eight families (parametrized by $t\in \C\setminus \{0,1\}$) of base points. After blowing up the singular locus, the induced foliation on $$\bigcup_{t\in \C\setminus \{0,1\}}\widehat{\mathbb{F}}_2^t$$ is non-singular. Moreover, it is transversal to $\{t=cst\}$ on the complementary of 
$\bigcup_{t\in \C\setminus \{0,1\}} \mathcal{I}^t$, \emph{i.e.} on $$\mathrm{Oka}:=\bigcup_{t\in \C\setminus \{0,1\}}\mathrm{Oka}_t\, . $$  This implies that for any $t_0\in   \C\setminus \{0,1\}$ and any initial condition given by a point in $\mathrm{Oka}_{t_0}$, in turn given by a point in some chart $\C^2_{u_{ij},v_{ij}}$ of $\widehat{\mathbb{F}}_2^{t_0}$,  one obtains a unique germ of holomorphic integral curve of the form 
$(u_{ij}(t),v_{ij}(t),t)$. Translated back into the variables $(y,Z)$, this yields a meromorphic solution $y$ of the sixth Painlev\'e equation, associated to this initial condition. Moreover, since  $(u_{ij}(t),v_{ij}(t),t)$ parametrizes a germ of leaf of the Painlev\'e foliation on $\mathrm{Oka}$, and this foliation is transversal to $\{t=cst\} $, this parametrization of a germ of leaf can be analytically continued in $\mathrm{Oka}$ along any path  with starting point $t_0$. In other words, the  meromorphic solution $y$ of the Painlev\'e equation can be meromorphically continued along any path in $ \C\setminus \{0,1\}$ with starting point $t_0$.  As for the usual analytic continuation, this meromorphic continuation has no reason to be uniform. But 
we obtain a well-defined meromorphic function on every simply connected subset of $\C\setminus \{0,1\}$ containing $t_0$. 
 This phenomenon, which is also observed for the other five Painlev\'e equations, is also known as the \emph{Painlev\'e  property} of solutions of Painlev\'e  equations. 
\\
Let us illustrate the above by an example. Consider a germ of solution $y$ of $P_{\mathrm{VI}}$, associated to an initial condition on $\mathcal{E}_0^-\setminus \mathcal{D}_0^{**}$ at $t=t_0$. That is, we have $(u_{01},v_{01})(t_0)=(0,\alpha)$ for some $\alpha \in \C$. 
 Hence $y=u_{01}$ is holomorphic near $t_0$. From the explicit formula of the vector field, we readily calculate the first terms of the Taylor series expansion of  $y$:
$$y(t)= -\frac{\theta_0}{(t_0-1)}(t-t_0)-\frac{\theta_0(2\alpha-1-t_0)}{2t_0(t_0-1)^2}(t-t_0)^2+O(t-t_0)^3\, .$$
Note that $y$ has a simple zero at $t_0$, and its Taylor series expansion at $t_0$ up to order two is uniquely determined by $\alpha$.

  \subsection{A discrete analogue}\label{sec:qoka} Let us first review the construction in \cite{sakai2001rational} of a convenient $q$-analogue of the Okamoto space for the $q$-Painlev\'e $\mathrm{VI}$ equation, which proves the existence of discrete solutions, et then adapt this discussion to our modified $q$-Painlev\'e $\mathrm{VI}$ equation. 
  
  Let $q\in \C\setminus \{0,1\}$. 
 Let $\boldsymbol{\Theta}=(\Theta_0,\Theta_1,\Theta_t,\Theta_\infty)\in (\C^*)^4$ and denote, as usual, $\overline{\Theta}_i:=1/\Theta_i$ for each $i\in \{0,1,t,\infty\}$. 
 Consider the rational functions $f,g\in \C(\y,\zJS,t)$ in three complex variables given by 
  \begin{equation}\label{eqfg}   \left\{\begin{array}{rcl} g(\y,\zJS,t)  &:=&\frac{1}{q} \frac{\left(\y -t\Theta_t \right)\left(\y -t\overline{\Theta}_t \right)}{ \zJS \left(\y -\Theta_1\right)\left(\y -\overline{\Theta}_1 \right)}, \vspace{.2cm}\\
f(\y,\zJS,t) &:=&\frac{\left(g(\y,\zJS,t) - t\Theta_0\right)\left(g(\y,\zJS,t)- t\overline{\Theta}_0 \right)}{ \y \left(g(\y,\zJS,t) - \frac{{\Theta}_\infty}{q}  \right)\left(g(\y,\zJS,t) - \overline{\Theta}_\infty \right)}

= \frac{\left(\frac{1}{q} \frac{\left(\y -t\Theta_t \right)\left(\y -t\overline{\Theta}_t \right)}{ \zJS \left(\y -\Theta_1\right)\left(\y -\overline{\Theta}_1 \right)} - t\Theta_0\right)\left(\frac{1}{q} \frac{\left(\y -t\Theta_t \right)\left(\y -t\overline{\Theta}_t \right)}{ \zJS \left(\y -\Theta_1\right)\left(\y -\overline{\Theta}_1 \right)}- t\overline{\Theta}_0 \right)}{ \y \left(\frac{1}{q} \frac{\left(\y -t\Theta_t \right)\left(\y -t\overline{\Theta}_t \right)}{ \zJS \left(\y -\Theta_1\right)\left(\y -\overline{\Theta}_1 \right)} - \frac{{\Theta}_\infty}{q}  \right)\left(\frac{1}{q} \frac{\left(\y -t\Theta_t \right)\left(\y -t\overline{\Theta}_t \right)}{ \zJS \left(\y -\Theta_1\right)\left(\y -\overline{\Theta}_1 \right)} - \overline{\Theta}_\infty \right)}.
\end{array}\right. \end{equation}
 Note that with this notation, the sixth $q$-Painlev\'e equation \eqref{qPVI} with spectral data  $\boldsymbol{\Theta}$ writes 
 $$\sigma_{q,t} \y  =f(\y,\zJS,t)\, , \quad \sigma_{q,t} \zJS  =g(\y,\zJS,t).$$
 For each fixed $t=t_0\in \C^*$ such that \begin{equation}\label{eq:t_0Red}t\neq\Theta_0^{\pm 1} \Theta_\infty/q\, ,\quad t \neq\Theta_0^{\pm 1} \overline{\Theta}_\infty\,, \quad t\neq\Theta_t^{\pm 1} \Theta_1\, ,\quad t \neq\Theta_t^{\pm 1} \overline{\Theta}_1\,,\end{equation}
the expressions of $f(\y,\zJS,t_0)$ and $g(\y,\zJS,t_0)$ in $\eqref{eqfg}$ are reduced, \emph{i.e.} they admit no common factor in nominator and denominator. 
Now choose any $t=t_0\in \C^*$ satisfying \eqref{eq:t_0Red} and consider the rational map
\begin{equation}\label{defFrakSt}\mathfrak{S}_t:\left\{\begin{array}{ccc} \mathbb{P}^1\times \mathbb{P}^1& \dashrightarrow &\mathbb{P}^1\times \mathbb{P}^1\\
(\y,\zJS)&\mapsto& \left(f(\y,\zJS,t_0) ,  g(\y,\zJS,t_0)\right)\, .\end{array}\right.\end{equation}
Note that for any point $(\y_0,\zJS_0)\in \C^*\times \C^*\subset \mathbb{P}^1\times \mathbb{P}^1$, the image $\mathfrak{S}(\y_0,\zJS_0)$ is a well defined point in $\mathbb{P}^1\times \mathbb{P}^1$. However, on the complement of $\C^*\times \C^*$ in $\mathbb{P}^1\times \mathbb{P}^1$, there are some points for which the image under $\mathfrak{S}_t$ is undetermined, \emph{i.e.} at least one of the coordinates of the image contains, even after switching to homogeneous coordinates in the source, an expression of the form ``$\frac{0}{0}$''. These points will we called \emph{critical points} in the following. Let us assume \begin{equation}\label{eq:ThetaRed}\Theta_0^2\neq 1\, , \quad\Theta_1^2\neq 1\, , \quad \Theta_t^2\neq 1\, , \quad \Theta_\infty^2\neq q\, . 
 \end{equation}
 Then there are precisely eight critical points, which are given, for $t=t_0$, as follows:
   \begin{align*}
&\boldsymbol{\gamma}_0^-(t) : ~(\y,\zJS)=\left(0, t\Theta_0/q\right),&&\boldsymbol{\gamma}_0^+(t) : ~(\y,\zJS)=\left(0, t\overline{\Theta}_0/q\right),\vspace{.2cm}\\
&\boldsymbol{\gamma}_1^- (t): ~ (\y,\zJS)=\left(\overline{\Theta}_1, \infty \right),&&\boldsymbol{\gamma}_1^+(t) : ~(\y,\zJS)=\left(\Theta_1, \infty \right),\vspace{.2cm}\\
&\boldsymbol{\gamma}_t^-(t) : ~ (\y,\zJS)=\left(t{\Theta}_t, 0 \right),&&\boldsymbol{\gamma}_t^+(t) : ~(\y,\zJS)=\left(t\overline{\Theta}_t, 0 \right),\vspace{.2cm}\\
&\boldsymbol{\gamma}_\infty^-(t) : ~ (\y,\zJS)=\left(\infty, \overline{\Theta}_\infty \right),&&\boldsymbol{\gamma}_\infty^+(t) : ~(\y,\zJS)=\left(\infty,{\Theta}_\infty /q\right)\, . \end{align*}
Now let us denote, for each $t\in \C^*$ satisfying \eqref{eq:t_0Red}, by 
\begin{equation}\label{defFrakPt}\mathfrak{P}_t:=\mathrm{Bl}\left(\mathbb{P}^1\times \mathbb{P}^1\right)_{\boldsymbol{\gamma}_0^-(t),\boldsymbol{\gamma}_0^+(t),  \boldsymbol{\gamma}_1^-(t),\boldsymbol{\gamma}_1^+(t),\boldsymbol{\gamma}_t^-(t),\boldsymbol{\gamma}_t^+(t),\boldsymbol{\gamma}_\infty^-(t),\boldsymbol{\gamma}_\infty^+(t)}\end{equation}
the blow up of $\mathbb{P}^1\times \mathbb{P}^1$ at the eight points $\boldsymbol{\gamma}_i^\pm(t)$. Here we continue to assume \eqref{eq:ThetaRed}, so that these are indeed eight distinct points, the blow up is well defined, and does moreover not depend on the order in which the points are successively blown up. As a slight precision to \cite[Proposition 1]{sakai2001rational}, we obtain the following. 
\begin{prop}\label{PropSakai} Let $q\in \C\setminus \{0,1\} $. Let $\boldsymbol{\Theta}\in (\C^*)^4$ such that \eqref{eq:ThetaRed} holds. 
Denote \begin{equation}\label{eq:DefSq}S_q:= \left\{ \, \Theta_1^{\varepsilon_1}\Theta_t^{\varepsilon_t}\, ,  \,   \Theta_0^{\varepsilon_0}\Theta_\infty^{\varepsilon_\infty} ~\middle|~\varepsilon_0,\varepsilon_1,\varepsilon_t,\varepsilon_\infty \in \{-1,1\}  \right\} \cdot q^\Z\, .\end{equation}
Then for any $t\in \C^*\setminus S_q$, the map 
$$\mathfrak{S}_t:\mathfrak{P}_t \dashrightarrow \mathfrak{P}_{qt}$$
induced by \eqref{defFrakSt}, \emph{via} pre-composition and post-composition with the blow ups in \eqref{defFrakPt},  is biregular. That is, this rational map contains no indeterminacy points and is bijective. In particular, it is a biholomorphism. 
\end{prop}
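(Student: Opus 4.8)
The plan is to lift $\mathfrak{S}_t$ to a genuine morphism on the blown-up surfaces and to exhibit an explicit inverse, both being local computations in blow-up charts; the genericity hypotheses \eqref{eq:ThetaRed} and $t\in \C^*\setminus S_q$ will enter precisely to guarantee that a single blow-up at each critical point suffices and that no exceptional curve is contracted. First I would record the indeterminacy structure of the inverse. Solving the two defining relations $\sigma_{q,t}\y=f(\y,\zJS,t)$, $\sigma_{q,t}\zJS=g(\y,\zJS,t)$ of \eqref{eqfg} for $(\y,\zJS)$ in terms of $(\sigma_{q,t}\y,\sigma_{q,t}\zJS)$ produces a birational self-map of $\mathbb{P}^1\times\mathbb{P}^1$ of exactly the same shape as $\mathfrak{S}_t$, but with $t$ replaced by $qt$ and the roles of the two factors interchanged. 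After the analogous reduction its own critical points are exactly the eight points $\boldsymbol{\gamma}_i^\pm(qt)$, so that $\mathfrak{S}_t^{-1}$, viewed on $\mathbb{P}^1\times\mathbb{P}^1$ at time $qt$, is indeterminate precisely at the centers blown up in $\mathfrak{P}_{qt}$ \emph{via} \eqref{defFrakPt}.

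The heart of the argument is then local. For each of the eight critical points I would pass to a blow-up chart, exactly as was done for the point $\beta_0^-$ in Section~\ref{sec:oka}, and compute the two coordinates of $\mathfrak{S}_t$ as functions on the exceptional divisor $\mathcal{E}_i^\pm(t)\cong\mathbb{P}^1$. Three things must be checked: that after clearing the common factor the coordinate expressions extend holomorphically across $\mathcal{E}_i^\pm(t)$; that the image of $\mathcal{E}_i^\pm(t)$ is a curve in $\mathfrak{P}_{qt}$ meeting its exceptional locus as prescribed by Sakai's configuration; and, most importantly, that $\mathfrak{S}_t|_{\mathcal{E}_i^\pm(t)}$ is nonconstant, so that $\mathcal{E}_i^\pm(t)$ is not contracted. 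Here the reducedness of $f,g$ ensured by \eqref{eq:t_0Red} (which is implied by $t\notin S_q$, since every excluded value in \eqref{eq:t_0Red} lies in $S_q$ as defined in \eqref{eq:DefSq}) guarantees that $\mathfrak{S}_t$ has no residual indeterminacy on $\mathbb{P}^1\times\mathbb{P}^1$ away from the $\boldsymbol{\gamma}_i^\pm$, while \eqref{eq:ThetaRed} keeps the eight centers distinct so that $\mathfrak{P}_t$ is well defined independently of the order of blow-ups.

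Combining these local resolutions, $\mathfrak{S}_t$ lifts to a morphism $\mathfrak{P}_t\to\mathfrak{P}_{qt}$ with no indeterminacy points, and by the symmetric computation the inverse lifts to a morphism $\mathfrak{P}_{qt}\to\mathfrak{P}_t$. Since the two lifts agree with $\mathfrak{S}_t^{\pm1}$ on the dense open set $\C^*\times\C^*$ and are genuine morphisms of smooth projective surfaces, they are mutually inverse; hence $\mathfrak{S}_t:\mathfrak{P}_t\to\mathfrak{P}_{qt}$ is biregular and in particular a biholomorphism. I expect the main obstacle to be the bookkeeping in the local step: tracking, for each of the eight centers, which curve the exceptional divisor is sent to and verifying non-contraction, and then isolating the exact place where $t\notin S_q$ is indispensable. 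My expectation is that, once \eqref{eq:ThetaRed} is assumed, the only way biregularity could still fail is for an exceptional divisor at time $t$ to be mapped onto a curve that collapses at one of the centers $\boldsymbol{\gamma}_i^\pm(qt)$ at time $qt$, and that the hypothesis $t\notin S_q$ is precisely what rules this out. This last point is where the statement refines \cite[Proposition~1]{sakai2001rational}.
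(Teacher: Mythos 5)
Your plan is viable in outline and genuinely different from the paper's proof, but as written it has one concrete gap: your checklist never addresses the curves that $\mathfrak{S}_t$ \emph{contracts}. Besides its eight indeterminacy points, the birational map $\mathfrak{S}_t$ of $\mathbb{P}^1\times\mathbb{P}^1$ collapses eight curves onto the eight centers $\boldsymbol{\gamma}_i^\pm(qt)$ blown up in the target: the four fibers $\{\y=\Theta_1\}$, $\{\y=\overline{\Theta}_1\}$, $\{\y=t\Theta_t\}$, $\{\y=t\overline{\Theta}_t\}$, and the four curves $\{g(\y,\zJS,t)=c\}$ with $c\in\{t\Theta_0,\,t\overline{\Theta}_0,\,\Theta_\infty/q,\,\overline{\Theta}_\infty\}$, where $g$ is as in \eqref{eqfg}. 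At a point of such a curve away from the $\boldsymbol{\gamma}_i^\pm(t)$, the map $\mathfrak{S}_t$ into $\mathbb{P}^1\times\mathbb{P}^1$ is perfectly well defined, but its value is a blown-up center of $\mathfrak{P}_{qt}$, so the post-composition with the inverse of the blow-down $\mathfrak{P}_{qt}\to\mathbb{P}^1\times\mathbb{P}^1$ is a priori undefined there. Your three local checks (extension across the source exceptional divisors, their images being curves, non-contraction) only produce a morphism $\mathfrak{P}_t\to\mathbb{P}^1\times\mathbb{P}^1$; to get a morphism into $\mathfrak{P}_{qt}$ you must additionally verify, in the blow-up charts of the \emph{target}, that the lift extends along each of the eight contracted curves and maps its strict transform onto the corresponding exceptional curve of $\mathfrak{P}_{qt}$. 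This is not automatic: for a quadratic map whose base points have become infinitely near (which is exactly what the excluded parameter values produce), the lift across a contracted curve can be constant, a single blow-up does not suffice, and biregularity fails. The symmetric treatment of the inverse has the identical missing item, so it cannot repair this; and the final gluing step (``both lifts are genuine morphisms, hence mutually inverse'') is only valid once this extra verification is done. Incidentally, once the lift is known to be a morphism, bijectivity is free: a birational morphism between smooth projective surfaces of equal Picard number (here both have Picard number $10$) is an isomorphism, so your explicit inverse is not even needed.

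The missing verification is precisely what the paper's proof packages away. The paper first analyses one model ``unit'': the elementary transformation $(\y,\zJS)\mapsto(\y,\zJS(\y-\Theta_1))$, which simultaneously blows up one point and contracts one fiber, and which becomes biregular after pre- and post-composing with the two blow-ups. It then factors $\mathfrak{S}_t$ as a quadruple of commuting elementary transformations in the ruling $(\y,\zJS)\mapsto\y$, a global inversion $\zJS\mapsto 1/(q\zJS)$, a quadruple of elementary transformations in the other ruling, and a second inversion; the whole proof reduces to tracking where the eight centers travel under each factor and observing that the final configuration is exactly that of $\mathfrak{P}_{qt}$. This also isolates where the hypotheses enter: \eqref{eq:ThetaRed} and $t\notin S_q$ guarantee that the four centers $\{\Theta_1,\overline{\Theta}_1,t\Theta_t,t\overline{\Theta}_t\}$, respectively $\{t\Theta_0,t\overline{\Theta}_0,\Theta_\infty/q,\overline{\Theta}_\infty\}$, are pairwise distinct, so no two of the commuting elementary transformations cancel — this is the precise form of the ``collapsing'' obstruction you only conjecture at the end (the full $q^\Z$-orbit in $S_q$ is then needed for the iteration in Remark \ref{RemSols}, not for a single $t$). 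Your side remarks are correct: every value excluded by \eqref{eq:t_0Red} does lie in $S_q$, and the inverse map has the same structure with the two factors interchanged, with critical points at the $\boldsymbol{\gamma}_i^\pm(qt)$. So your route can certainly be completed, but it requires eight more families of target-side chart computations than you budgeted for, whereas the factorization argument needs essentially none.
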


\begin{proof}
We simply follow the proof in \cite{sakai2001rational}, where generic $t,\boldsymbol{\Theta}$, were considered, and make sure that it goes through for fixed  $t,\boldsymbol{\Theta}$ as in the statement. In order to make the following argumentation shorter, let us first consider an example. 

Consider the rational map $\varphi$ from $\mathbb{P}^1\times \mathbb{P}^1$ with standard coordinates $(\y, \zJS)$ to $\mathbb{P}^1\times \mathbb{P}^1$ with standard coordinates $\left(\hat{\y}, \hat{\zJS}\right)$ defined by 
$(\y, \zJS)\mapsto \left(\hat{\y}, \hat{\zJS}\right)=\left(\y, \zJS (\y-\Theta_1)\right)\,.$ This map is actually a so-called \emph{elementary transformation} with respect to the ruling $\mathbb{P}^1\times \mathbb{P}^1\to  \mathbb{P}^1$ given by $(\y, \zJS)\mapsto \y$. In the complement of the fiber $\{y=\Theta_1\}$ of this ruling, its defines a biholomorphism onto its image. Moreover, this map has an indeterminacy point at $(\y, \zJS)=({\Theta}_1,\infty)$. With the exception of this indeterminacy point, every point in the fibre $\{\y=\Theta_1\}$ is mapped to the point $(\hat \y, \hat \zJS)=({\Theta}_1,0)$. Conversely, the inverse rational map 
$ \left(\hat{\y}, \hat{\zJS}\right)\mapsto (\y, \zJS)= \left(\hat{\y}, \frac{\hat{\zJS}}{\hat{\y}-\Theta_1}\right)\,$ has an indeterminacy point at $(\hat \y, \hat \zJS)=({\Theta}_1,0)$ and maps the rest of the fiber $\{\hat \y=\Theta_1\}$ to the point $(\y, \zJS)=({\Theta}_1,\infty)$.  However, as one can easily check, the rational map  obtained by considering the composition $$\mathrm{Bl}(\mathbb{P}^1\times \mathbb{P}^1)_{(\y, \zJS)=({\Theta}_1,\infty)}\longrightarrow \mathbb{P}^1\times \mathbb{P}^1 \stackrel{\varphi}{\dashrightarrow} \mathbb{P}^1\times \mathbb{P}^1 \dashrightarrow \mathrm{Bl}(\mathbb{P}^1\times \mathbb{P}^1)_{(\hat \y, \hat \zJS)=({\Theta}_1,0)}\,, $$  is biregular. So in summary, the elementary transformation $\varphi$ blows up the point $(\y, \zJS)=({\Theta}_1,\infty)$ and contracts the strict transform of the line $\{y=\Theta_1\}$, and becomes biregular when pre- and post-composed with the blow-ups of $(\y, \zJS)=({\Theta}_1,\infty)$ and $(\hat \y, \hat \zJS)=({\Theta}_1,0)$ respectively. 

The key is now to use elementary transformations in order to decompose the map $\mathfrak{S}_t$ into a sequence of biregular isomorphisms. 
 \begin{itemize}
  \item[$\bullet$] Consider the rational map from $\mathbb{P}^1\times \mathbb{P}^1$ with standard coordinates $(\y, \zJS)$ to $\mathbb{P}^1\times \mathbb{P}^1$ with standard coordinates $\left(\y, \widehat{\zJS}\right)$ defined by 
$(\y, \zJS)\mapsto \left(\y, \widehat{\zJS}\right)=\left(\y, \zJS\frac{(\y-\Theta_1)(\y-\overline{\Theta}_1)}{(\y-t\Theta_t)(\y-t\overline{\Theta}_t)}\right)\, .$ This map can be seen as the composition of four (commuting) elementary transformations with respect to the ruling $(\y, \zJS)\mapsto \y$. Note that by  assumption, the set $\{\Theta_1, \overline{\Theta}_1,t\Theta_t, t\overline{\Theta}_t\}$ has cardinality four, which implies that no two of these elementary transformations cancel each other out. 
Therefore, the considered rational map induces a biregular isomorphism from $\mathfrak{P}_t$ to the surface $\mathfrak{P}_t^{(1)}$, where $\mathfrak{P}_t^{(1)}$ denotes the blow up of $\mathbb{P}^1\times \mathbb{P}^1$ with standard coordinates $\left(\y, \widehat{\zJS}\right)$ at the eight points given, with respect to these coordinates, by 
$$\begin{array}{lllll}\left(0, \frac{{\Theta}_0}{qt}\right),&    \left(0,\frac{\overline{\Theta}_0}{qt}\right),&  \left(\overline{\Theta}_1,0\right),&   \left({\Theta}_1,0\right),&  \\ \left(t\Theta_t,\infty \right),&    \left(t\overline{\Theta}_t,\infty \right),&   \left(\infty, \overline{\Theta}_\infty\right),&   \left(\infty, {\Theta}_\infty/q\right). \end{array} $$
 \item[$\bullet$] Consider the biregular map from $\mathbb{P}^1\times \mathbb{P}^1$ with standard coordinates  $\left(\y, \widehat{\zJS}\right)$ to $\mathbb{P}^1\times \mathbb{P}^1$ with standard coordinates $\left(\y, \widetilde{\zJS}\right)$ defined by 
$ \left(\y, \widehat{\zJS}\right)\mapsto \left(\y, \widetilde{\zJS}\right)=\left(\y, \frac{1}{q\widehat{\zJS}}\right)\, .$ This map induces a biregular isomorphism from $\mathfrak{P}_t^{(1)}$ to the surface $\mathfrak{P}_t^{(2)}$, where $\mathfrak{P}_t^{(2)}$ denotes the blow up of $\mathbb{P}^1\times \mathbb{P}^1$ with standard coordinates $\left(\y, \widetilde{\zJS}\right)$ at the eight points given, with respect to these coordinates, by 
$$\begin{array}{lllll}\left(0, t\overline{\Theta}_0 \right),&    \left(0,t\Theta_0 \right),&  \left(\overline{\Theta}_1,\infty \right),&   \left({\Theta}_1,\infty\right),  \\ \left(t\Theta_t,0 \right),  &  \left(t\overline{\Theta}_t,0 \right),&   \left(\infty,  {\Theta}_\infty/q\right),&   \left(\infty,  \overline{\Theta}_\infty \right). \end{array} $$
  \item[$\bullet$] Consider the rational map from $\mathbb{P}^1\times \mathbb{P}^1$ with standard coordinates $(\y, \widetilde{\zJS})$ to $\mathbb{P}^1\times \mathbb{P}^1$ with standard coordinates $\left(\widehat{\y}, \widetilde{\zJS}\right)$ defined by 
$(\y, \widetilde{\zJS})\mapsto \left(\widehat{\y}, \widetilde{\zJS}\right)=\left(\y\frac{( \widetilde{\zJS}-\Theta_\infty/q)( \widetilde{\zJS}-\overline{\Theta}_\infty)}{( \widetilde{\zJS}-t\Theta_0)( \widetilde{\zJS}-t\overline{\Theta}_0)}\, , \widetilde{\zJS}\right)\, .$ This map can be seen as the composition of four (commuting) elementary transformations with respect to the ruling $(\y, \widetilde{\zJS})\mapsto \widetilde{\zJS}$. Note that by  assumption, the set $\{t\Theta_0, t\overline{\Theta}_0,\Theta_\infty/q, \overline{\Theta}_\infty\}$ has cardinality four, which implies that   the considered rational map induces a biregular isomorphism from $\mathfrak{P}_t^{(2)}$ to the surface $\mathfrak{P}_t^{(3)}$, where $\mathfrak{P}_t^{(3)}$ denotes the blow up of $\mathbb{P}^1\times \mathbb{P}^1$ with standard coordinates $\left(\widehat{\y}, \widetilde{\zJS}\right)$ at the eight points given, with respect to these coordinates, by 
$$\begin{array}{lllll}\left(\infty, t\overline{\Theta}_0 \right),&    \left(\infty,t\Theta_0 \right),&  \left(\overline{\Theta}_1,\infty \right),&   \left({\Theta}_1,\infty\right),  \\ \left(\frac{\Theta_t}{qt},0 \right),&   \left(\frac{\overline{\Theta}_t}{qt},0 \right),&   \left(0,  {\Theta}_\infty/q\right),&   \left(0,  \overline{\Theta}_\infty \right). \end{array} $$
 \item[$\bullet$] Consider the biregular map from $\mathbb{P}^1\times \mathbb{P}^1$ with standard coordinates  $\left(\widehat{\y}, \widetilde{\zJS}\right)$ to $\mathbb{P}^1\times \mathbb{P}^1$ with standard coordinates $\left(\widetilde{\y}, \widetilde{\zJS}\right)$ defined by 
$\left(\widehat{\y}, \widetilde{\zJS}\right)\mapsto \left(\widetilde{\y}, \widetilde{\zJS}\right)=\left(\frac{1}{\widehat{\y}}, \widetilde{\zJS}\right)\, .$ This map induces a biregular isomorphism from $\mathfrak{P}_t^{(3)}$ to the surface $\mathfrak{P}_t^{(4)}$, where $\mathfrak{P}_t^{(4)}$ denotes the blow up of $\mathbb{P}^1\times \mathbb{P}^1$ with standard coordinates $\left(\widetilde{\y}, \widetilde{\zJS}\right)$ at the eight points given, with respect to these coordinates, by 
$$\begin{array}{lllll}\left(0, t\overline{\Theta}_0 \right),&    \left(0,t\Theta_0 \right),&  \left( {\Theta}_1,\infty \right),&   \left(\overline{\Theta}_1,\infty\right),  \\ \left(qt\overline{\Theta}_t,0 \right),&    \left(qt{\Theta}_t,0 \right),&   \left(\infty,  {\Theta}_\infty/q\right),&   \left(\infty,  \overline{\Theta}_\infty \right). \end{array} $$
\end{itemize}
It now suffices to see that $\mathfrak{P}_t^{(4)}=\mathfrak{P}_{qt}$ and that the biregular map $\mathfrak{P}_{t}\stackrel{\sim}{\to}  \mathfrak{P}_t^{(4)}$, obtained by composing all of the above, coincides with $\mathfrak{S}_t$. 
\end{proof}
\begin{rem}\label{RemSols}
 The above proposition implies that if  \eqref{eq:ThetaRed} holds, then for each $t\in \C^*\setminus S_q$, for each $n\in \N$, we have biregular maps 
$$\mathfrak{S}_t^{(n)}:=\mathfrak{S}_{q^nt}\circ \dots \circ \mathfrak{S}_{qt}\circ \mathfrak{S}_t : \mathfrak{P}_{t}\to \mathfrak{P}_{q^nt}\,, \quad \mathfrak{S}_t^{(-n)}:=\left(\mathfrak{S}_{q^{-n}t}^{(n)}\right)^{-1}: \mathfrak{P}_{t}\to  \mathfrak{P}_{q^{-n}t}\, .$$ In particular, if $t_0\in \C^*\setminus S_q$, and $(\y_0,\zJS_0)\in \C^*\times \C^*\subset  \mathfrak{P}_{t_0}$, then for each $n \in \Z$, we obtain a well-defined element
$$(\widetilde{\y}_n,\widetilde{\zJS}_n):=\mathfrak{S}_{t_0}^{(n)}(\y_0,\zJS_0)\in \mathfrak{P}_{q^n t_0}$$ and a well-defined element $(\y_n, \zJS_n)\in \mathbb{P}^1\times \mathbb{P}^1$ obtained by projecting \emph{via} the natural regular map
$\mathfrak{P}_{q^n t_0}\to \mathbb{P}^1\times \mathbb{P}^1$. But the fact that the association $(\y_0,\zJS_0)\to (\y_n, \zJS_n)$ is well-defined for each $n \in \Z$ is simply a reformulation of the statement of Proposition \ref{theo2}. So the above proposition proves the latter.
\end{rem}

 The $q$-analogue of an Okamoto space for fixed $t\in \C^*\setminus S_q$ will be a certain Zariski-open subset of $\mathfrak{P}_t$. In order to define it, let us go back to $\mathbb{F}_0:=\mathbb{P}^1\times \mathbb{P}^1$ with standard coordinates $(\y,\zJS)$, endowed with the eight distinct points $\boldsymbol{\gamma}_i^\pm$ and identify some particular components of this space, namely, the following vertical and horizontal lines:
 \begin{center}$\boldsymbol{\mathcal{H}}_0:\{\zJS=0\}\, , \quad \boldsymbol{\mathcal{H}}_\infty:\{\zJS=\infty\} \, , \quad \boldsymbol{\mathcal{V}}_0:\{\y=0\}\, , \quad \boldsymbol{\mathcal{V}}_\infty:\{\y=\infty\}\,, $\vspace{.2cm}\\
$ \boldsymbol{\mathcal{V}}_t^+:\{\y=t\overline{\Theta}_t\}\, , \quad \boldsymbol{\mathcal{V}}_t^-:\{\y=t\Theta_t\}\, .$\end{center}
This configuration of points and lines is illustrated in the following picture. Here the colors can be ignored for now, their use will become clear later, see Remark \ref{remcolor}. 

\begin{center}
\begin{tikzpicture}[scale=0.7]

\draw[teal,thick] (-6.5,4.3)--(6.5,4.3); \draw[teal] (7.1,4.3) node {$\boldsymbol{\mathcal{H}}_\infty$};
\draw[teal, thick] (-6.5,-3.5)--(6.5,-3.5);\draw[teal] (7.1,-3.5) node {$\boldsymbol{\mathcal{H}}_0$};
\draw[teal, thick] (5.5,-4.5)--(5.5,5.3);\draw[teal] (5.6,5.8) node {$\boldsymbol{\mathcal{V}}_\infty$};
\draw[teal, thick] (-5.5,-4.5)--(-5.5,5.3);\draw[teal] (-5.4,5.8) node {$\boldsymbol{\mathcal{V}}_0$};
\draw[red, thick] (-1.5,-4.5)--(-1.5,5.3); \draw[red] (-1.3,5.8) node {$\boldsymbol{\mathcal{V}}_t^+$};
\draw[red, thick] (3,-4.5)--(3,5.3); \draw[red] (3.2,5.8) node {$\boldsymbol{\mathcal{V}}_t^-$};

     \draw (-3,4.3) node {$\bullet$} ;   \draw(-3,3.8) node {$\boldsymbol{\gamma}_1^+$} ;
      \draw (1,4.3) node {$\bullet$} ;   \draw(1,3.8) node {$\boldsymbol{\gamma}_1^-$} ;
     \draw[blue] (-1.5,-3.5) node {$\bullet$} ;   \draw[blue] (-1,-2.9) node {$\boldsymbol{\gamma}_t^+$} ;
      \draw[blue] (3,-3.5) node {$\bullet$} ;   \draw[blue](3.5,-2.9) node {$\boldsymbol{\gamma}_t^-$} ;      
    \draw (-5.5,2.2) node {$\bullet$} ;   \draw (-5,2.2) node {$\boldsymbol{\gamma}_0^+$} ;
     \draw (-5.5,-0.5) node {$\bullet$} ; \draw (-5,-0.5) node {$\boldsymbol{\gamma}_0^-$} ;

      \draw (5.5,1.2) node {$\bullet$} ;			  \draw (6,1.2) node {$\boldsymbol{\gamma}_\infty^+$} ;
     \draw (5.5,-1.5) node {$\bullet$} ;			    \draw (6,-1.5) node {$\boldsymbol{\gamma}_\infty^-$} ;

                \draw[ thick, ->] (-5.5,-3.5)--(-5.5,-2.2); 
            \draw[ thick, ->] (-5.5,-3.5)--(-4.2,-3.5); 
               \draw  (-3.8 ,-3.8)  node  {$\y$} ; 
         \draw (-5.9 , -2)  node  {$\zJS$} ; 
            \draw[gray] (-7.1,4.3) node {$(0)$} ;
  \draw[gray] (5.5,-4.9) node {$(0)$} ;
 \draw[gray] (-7.1,-3.5) node {$(0)$} ;
 \draw[gray] (-5.5,-4.9) node {$(0)$} ;

         \end{tikzpicture}
\end{center}

Recall that $\mathfrak{P}_t$ is the blow up of $\mathbb{F}_0$ in the eight points $\boldsymbol{\gamma}_i^\pm$ with $i\in \{0,1,t,\infty\}$. 
In $\mathfrak{P}_t$, we denote by $$\boldsymbol{\mathcal{H}}_0^{**}, \boldsymbol{\mathcal{H}}_\infty^{**}, \boldsymbol{\mathcal{V}}_0^{**}, \boldsymbol{\mathcal{V}}_\infty^{**}, \boldsymbol{\mathcal{V}}_t^{+*}, \boldsymbol{\mathcal{V}}_t^{-*}$$
the strict transforms of the corresponding projective lines in $\mathbb{F}_0$. We denote 
 $$\mathcal{\boldsymbol{J}}^t:=\boldsymbol{\mathcal{H}}_0^{**}\cup \boldsymbol{\mathcal{H}}_\infty^{**}\cup  \boldsymbol{\mathcal{V}}_0^{**}\cup \boldsymbol{\mathcal{V}}_\infty^{**}\, .$$
Under the map $\mathfrak{S}_t:\mathfrak{P}_t\to \mathfrak{P}_{qt}$, the set $\mathcal{\boldsymbol{J}}^t$ is mapped to the set $\mathcal{\boldsymbol{J}}^{qt}$. So in some sense, the set $\mathcal{\boldsymbol{J}}^t$ can be seen as invariant under 
the $q$-Painlev\'e map $\mathfrak{S}_t$. Moreover, a discrete solution $(\y_n,\zJS_n,q^n t_0)_{n \in \Z}$ given by an initial condition in $\mathcal{\boldsymbol{J}}^t$ is not very interesting in the sense that both 
$(\y_n )_{n \in \Z}$ and $(\zJS_n )_{n \in \Z}$ simply oscillate between $0$ and $\infty$. The Okamoto space $\qOka{t}$ for fixed $t\in \C^{*}\setminus S_q$ as introduced in \cite{sakai2001rational}  is by definition the complement of  $\mathcal{\boldsymbol{J}}^t$ in $\mathfrak{P}_t$. So we define 
$$\qOka{t}:=\mathfrak{P}_t\setminus \mathcal{\boldsymbol{J}}^t\, .$$
Note that the strict transforms $\boldsymbol{\mathcal{V}}_t^{\pm*}$ of the vertical lines $\boldsymbol{\mathcal{V}}_t$ in $\mathbb{F}_0$ are not contained in $\mathcal{\boldsymbol{J}}^t$. They do however play a particular role in the relation to the  construction of the \emph{modified} Okamoto space $\qOkaMod{t}$ that we will now define.  The letter will be better suited for the confluence problem.

Let us recall that $\bZZ=\frac{\frac{(\y-t{\Theta}_t)(\y-t\overline{\Theta}_t)}{q(\y-1)(\y-t)\zJS}-1}{(q-1)\y}$. Motivated by \eqref{uvfromyZ}, we apply the change of variable 
 \begin{equation}\label{varchangeUgrasVgras}(\boldsymbol{u},\boldsymbol{v})=(\y, \y(\y-1)(\y-t)\bZZ)=\left(\y ,\frac{(\y-t\Theta_{t})(y-t/\Theta_{t})}{q(q-1)\zJS }-\frac{(\y-1)(\y-t)}{(q-1)} \right),
 \end{equation} to the modified $q$-Painlev\'e $\mathrm{VI}$ equation \eqref{qPVIyZbis} with spectral data $\boldsymbol{\Theta}$.  
There are well-defined rational functions $\widetilde{f}, \widetilde{g}\in \C(\boldsymbol{u},\boldsymbol{v},t)$ such that with respect to these variables, \eqref{qPVIyZbis} is of the form
$$ \sigma_{q,t}\boldsymbol{u} =\widetilde{f}(\boldsymbol{u},\boldsymbol{v},t)\, , \quad \sigma_{q,t}\boldsymbol{v}  =\widetilde{g}(\boldsymbol{u},\boldsymbol{v},t) \, .$$
More precisely, we have 
 
$$ \left\{\begin{array}{rcl}
 \widetilde{f}(\boldsymbol{u},\boldsymbol{v},t)  &=&  \frac{\left( \frac{(\boldsymbol{u}-1)(\boldsymbol{u}-t)+(q-1)\boldsymbol{v}}{\left(\boldsymbol{u} -\Theta_1\right)\left(\boldsymbol{u} -\frac{1}{\Theta_1}\right)}-t\Theta_0 \right)\left( \frac{(\boldsymbol{u}-1)(\boldsymbol{u}-t)+(q-1)\boldsymbol{v}}{\boldsymbol{u} \left(\boldsymbol{u} -\Theta_1\right)\left(\boldsymbol{u} -\frac{1}{\Theta_1}\right)}-t\frac{1}{\Theta_0}\right)}{\left( \frac{(\boldsymbol{u}-1)(\boldsymbol{u}-t)+(q-1)\boldsymbol{v}}{\left(\boldsymbol{u} -\Theta_1\right)\left(\boldsymbol{u} -\frac{1}{\Theta_1}\right)}-\frac{1}{\Theta_\infty} \right)\left( \frac{(\boldsymbol{u}-1)(\boldsymbol{u}-t)+(q-1)\boldsymbol{v}}{\left(\boldsymbol{u} -\Theta_1\right)\left(\boldsymbol{u} -\frac{1}{\Theta_1}\right)}-\frac{\Theta_\infty}{q}\right)}\vspace{.2cm}\\
\widetilde{g}(\boldsymbol{u},\boldsymbol{v},t)&=& 
 \frac{\left(\boldsymbol{u}  -\Theta_1\right)\left(\boldsymbol{u}  -\frac{1}{\Theta_1}\right)\left( \widetilde{f}(\boldsymbol{u},\boldsymbol{v},t)-qt\Theta_t\right)\left( \widetilde{f}(\boldsymbol{u},\boldsymbol{v},t) -qt\frac{1}{{\Theta}_t}\right)}{ q(q-1)(\boldsymbol{u}-1)(\boldsymbol{u}-t)+q(q-1)^2\boldsymbol{v} }  -
 \frac{\left(  \widetilde{f}(\boldsymbol{u},\boldsymbol{v},t) -1\right)\left( \widetilde{f}(\boldsymbol{u},\boldsymbol{v},t)-qt\right)}{(q-1) }
\, .
\end{array}\right.$$
 We consider the second Hirzebruch surface ${\mathbb{F}}_2$ with $\C^2$-charts $\C^2_{\boldsymbol{u}_0, \boldsymbol{v}_0},\C^2_{\boldsymbol{u}_1, \boldsymbol{v}_1}, \C^2_{\boldsymbol{u}_2, \boldsymbol{v}_2},\C^2_{\boldsymbol{u}_3, \boldsymbol{v}_3}$ glued together along their $\C^*\times \C^*$-subsets according to the transition maps given by \begin{equation}\label{HirzCoord}\begin{array}{llll}
&\displaystyle (\boldsymbol{u}_0 ,\boldsymbol{v}_0 )=(\boldsymbol{u},\boldsymbol{v}),&&\displaystyle(\boldsymbol{u}_1,\boldsymbol{v}_1)\displaystyle=\left(\boldsymbol{u},\frac{1}{\boldsymbol{v}}\right),\vspace{.2cm}\\
&\displaystyle(\boldsymbol{u}_2,\boldsymbol{v}_2)=\left(\frac{1}{\boldsymbol{u}},\frac{\boldsymbol{v}}{\boldsymbol{u}^2}\right),& &\displaystyle (\boldsymbol{u}_3,\boldsymbol{v}_3)\displaystyle=\left(\frac{1}{\boldsymbol{u}},\frac{\boldsymbol{u}^2}{\boldsymbol{v}}\right).
\end{array}\end{equation} Here we will consider $ (\boldsymbol{u},\boldsymbol{v})$ as the standard coordinates, with respect to which we will define rational maps such as the following.
For each fixed $t=t_0\in \C^*$ satisfying \eqref{eq:t_0Red}, we obtain a   rational map
\begin{equation}\label{eq:DefSfrakTilde}\widetilde{\mathfrak{S}}_t : \left\{\begin{array}{ccc}  {\mathbb{F}}_2&\dashrightarrow &{\mathbb{F}}_2\vspace{.2cm}\\ (\boldsymbol{u},\boldsymbol{v})&\mapsto& \left(\widetilde{f}(\boldsymbol{u},\boldsymbol{v}, t),\widetilde{g}(\boldsymbol{u},\boldsymbol{v}, t)\right)\, .\end{array}\right.\end{equation}

 \begin{lem}\label{lem:CritvalsOkatilde} Let $\boldsymbol{\Theta}\in (\C^*)^4$ such that \eqref{eq:ThetaRed} holds. Let $t\in \C^*\setminus S_q$, where $S_q$ is defined in \eqref{eq:DefSq}. The indeterminacy points of the rational map $\widetilde{\mathfrak{S}}_t$
 defined in \eqref{eq:DefSfrakTilde} are precisely the following  eight points (in the source ${\mathbb{F}}_2$).  
 \begin{align*}
&\boldsymbol{\beta}_0^-(t) : ~(\boldsymbol{u} ,\boldsymbol{v})=\left(0, \frac{t(\overline{\Theta}_0-1)}{q-1}\right),&&\boldsymbol{\beta}_0^+(t) : ~(\boldsymbol{u} ,\boldsymbol{v})=\left(0, \frac{t(\Theta_0-1)}{q-1}\right),&\\
&\boldsymbol{\beta}_1^-(t) : ~ (\boldsymbol{u} ,\boldsymbol{v})=\left(\overline{\Theta}_1, \frac{(\overline{\Theta}_1-1)(t-\overline{\Theta}_1)}{q-1}\right),&&\boldsymbol{\beta}_1^+(t) : ~(\boldsymbol{u} ,\boldsymbol{v})=\left(\Theta_1, \frac{(\Theta_1-1)(t-\Theta_1)}{q-1}\right),&\\
&\boldsymbol{\beta}_t^-(t) : ~ (\boldsymbol{u} ,\boldsymbol{v})=\left(t\Theta_t, -\frac{t(\Theta_t-1)(t\Theta_t-1)}{q-1}\right) ,&&\boldsymbol{\beta}_t^+(t) : ~(\boldsymbol{u} ,\boldsymbol{v})=\left(t\overline{\Theta}_t, -\frac{t(\overline{\Theta}_t-1)(t\overline{\Theta}_t-1)}{q-1}\right),\\
&\boldsymbol{\beta}_\infty^-(t) : ~ (\boldsymbol{u}_2 ,\boldsymbol{v}_2)=\left(0, \frac{\overline{\Theta}_\infty-1 }{q-1}\right),&&\boldsymbol{\beta}_\infty^+(t) : ~(\boldsymbol{u}_2 ,\boldsymbol{v}_2)=\left(0, \frac{{\Theta}_\infty-q }{q(q-1)}\right) .\\
\end{align*}
\end{lem}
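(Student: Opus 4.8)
The plan is to exploit the fact that $\widetilde{\mathfrak{S}}_t$ is merely the expression of the $q$-Painlev\'e map in the coordinates $(\boldsymbol{u},\boldsymbol{v})$ obtained from $(\y,\zJS)$ \emph{via} the birational change of variables \eqref{varchangeUgrasVgras}, and is therefore birationally conjugate to the map $\mathfrak{S}_t$ of \eqref{defFrakSt}, whose indeterminacy locus is already known to be the eight critical points $\boldsymbol{\gamma}_i^\pm(t)$. Writing $\psi_t:(\y,\zJS)\mapsto(\boldsymbol{u},\boldsymbol{v})$ for this coordinate change — a birational map $\mathbb{F}_0\dashrightarrow\mathbb{F}_2$ fixing the first coordinate $\boldsymbol{u}=\y$ — we have $\widetilde{\mathfrak{S}}_t=\psi_{qt}\circ\mathfrak{S}_t\circ\psi_t^{-1}$. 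The first step is thus to analyse $\psi_t$ on its own: on each fibre $\{\y=\mathrm{cst}\}$ it is a linear-fractional map in $\zJS$ with $\y$-dependent coefficients, so it is biregular away from the fibres $\{\y=t\Theta_t\}$, $\{\y=t\overline{\Theta}_t\}$ and the sections $\{\zJS=0\},\{\zJS=\infty\}$, and its only indeterminacy points are $(\y,\zJS)=(t\Theta_t,0)$ and $(t\overline{\Theta}_t,0)$, that is, precisely $\boldsymbol{\gamma}_t^\pm(t)$.

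The computation is considerably streamlined by the observation that both $\widetilde{f}$ and $\widetilde{g}$ factor through the single auxiliary function
$$w:=\frac{(\boldsymbol{u}-1)(\boldsymbol{u}-t)+(q-1)\boldsymbol{v}}{(\boldsymbol{u}-\Theta_1)(\boldsymbol{u}-\overline{\Theta}_1)},$$
which is exactly the quantity $X$ of \eqref{qPVIyZ} read off in the $(\boldsymbol{u},\boldsymbol{v})$-coordinates (equivalently $w=\sigma_{q,t}\zJS$). Indeed $\widetilde{f}$ is the rational function of $(\boldsymbol{u},w)$ coming from the first line of \eqref{qPVI}, and upon clearing the denominator $q(q-1)(\boldsymbol{u}-\Theta_1)(\boldsymbol{u}-\overline{\Theta}_1)\,w$ one checks that $\widetilde{g}=\tfrac{1}{q-1}\bigl(\tfrac{(\widetilde{f}-qt\Theta_t)(\widetilde{f}-qt\overline{\Theta}_t)}{q\,w}-(\widetilde{f}-1)(\widetilde{f}-qt)\bigr)$, i.e. $\widetilde{g}$ is the rational function of $(\boldsymbol{u},w,\widetilde{f})$ given by the second line of \eqref{qPVIyZ}. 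Hence I would factor $\widetilde{\mathfrak{S}}_t$ as $(\boldsymbol{u},\boldsymbol{v})\mapsto(\boldsymbol{u},w)\mapsto(\widetilde{f},\widetilde{g})$, the first arrow being an elementary-transformation type map of $\mathbb{F}_2$ and the second arrow mirroring $\mathfrak{S}_t$. Determining $\mathrm{Ind}(\widetilde{\mathfrak{S}}_t)$ then reduces to locating the common zeros of numerator and denominator of $w$ on $\mathbb{F}_2$ and transporting the eight points $\boldsymbol{\gamma}_i^\pm(t)$ through this factorization.

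Carrying this out, I would verify chart by chart that each candidate $\boldsymbol{\beta}_i^\pm(t)$ is genuinely an indeterminacy point by exhibiting a ``$0/0$'' in the relevant target coordinate: for $\boldsymbol{\beta}_0^\pm$ and $\boldsymbol{\beta}_1^\pm$ this is a direct evaluation in the chart $\C^2_{\boldsymbol{u}_0,\boldsymbol{v}_0}$, while for $\boldsymbol{\beta}_\infty^\pm$ one must first pass to $\C^2_{\boldsymbol{u}_2,\boldsymbol{v}_2}$ (where $\y=\infty$), recovering, up to the obvious relabelling, the $\psi_t$-images of $\boldsymbol{\gamma}_\infty^\mp(t)$. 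Conversely, to see that there are no further indeterminacy points, I would invoke that $\widetilde{\mathfrak{S}}_t$ is conjugate to $\mathfrak{S}_t$, which by Proposition \ref{PropSakai} becomes biregular after a single blow-up of eight points on source and target; since $\psi_t$ contributes no base locus outside the fibres $\{\boldsymbol{u}=t\Theta_t\},\{\boldsymbol{u}=t\overline{\Theta}_t\}$ already accounted for, the indeterminacy set cannot grow beyond the eight listed points.

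The main obstacle is the pair $\boldsymbol{\beta}_t^\pm(t)$. These occur exactly where the conjugating change of variables (equivalently the auxiliary function $w$, or $\psi_t$ itself) is \emph{itself} indeterminate, since there $\boldsymbol{u}-t\Theta_t$ (resp. $\boldsymbol{u}-t\overline{\Theta}_t$) vanishes simultaneously with the locus governing $w=\infty$. One cannot simply push the point $\boldsymbol{\gamma}_t^\pm$ forward through $\psi_t$; instead a local blow-up analysis is needed to confirm that the degeneration of $\widetilde{\mathfrak{S}}_t$ takes place at the value $\boldsymbol{v}=-t(\Theta_t-1)(t\Theta_t-1)/(q-1)$, namely the finite part $-(\boldsymbol{u}-1)(\boldsymbol{u}-t)/(q-1)$ of the coordinate change evaluated at $\boldsymbol{u}=t\Theta_t$, and that no spurious indeterminacy point is created on the exceptional divisor. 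Once this local picture at $\boldsymbol{\beta}_t^\pm$ is settled, the remaining verifications are routine (if lengthy) evaluations in the four charts of $\mathbb{F}_2$.
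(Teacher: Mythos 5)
Your proposal is correct and takes essentially the route the paper itself indicates: the paper's proof simply remarks that the lemma follows either by direct chart computation or, with less work, from Proposition \ref{Prop:RelDeuxOkas}, whose content (the fibrewise linear-fractional structure of the change of variables \eqref{varchangeUgrasVgras}, its elementary-transformation behaviour at the fibres $\{\boldsymbol{u}=t\Theta_t^{\pm 1}\}$, and the resulting biregular conjugation of $\widetilde{\mathfrak{S}}_t$ with Sakai's map $\mathfrak{S}_t$ after blowing up the eight points) is exactly what you re-derive and combine with the direct $0/0$ verification. Your treatment of the two delicate points also matches the facts: $\boldsymbol{\beta}_t^\pm$ arise as the images of the contracted fibres, at the value $-(\boldsymbol{u}-1)(\boldsymbol{u}-t)/(q-1)$ evaluated at $\boldsymbol{u}=t\Theta_t^{\pm1}$, and at infinity the labels genuinely swap, $\boldsymbol{\beta}_\infty^{\pm}$ being the image of $\boldsymbol{\gamma}_\infty^{\mp}$.
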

\begin{proof}
 The statement can easily be verified by direct computation. Note however that this lemma can also be deduced, with much less computation, from Proposition \ref{Prop:RelDeuxOkas} below. 
\end{proof}
   
    In addition to the eight indeterminacy points $\boldsymbol{\beta}_i^\pm$ for $i\in \{0,1,t,\infty\}$, we identify the following particular projective lines in $\mathbb{F}_2$:
 \begin{center} $\boldsymbol{\mathcal{H}}:\{\boldsymbol{v}_1=0\}\cup \{\boldsymbol{v}_3=0\}\, , \quad \boldsymbol{\mathcal{D}}_0:=\{\boldsymbol{u}=0\}\cup \{\boldsymbol{u}_1=0\}\, , \quad \boldsymbol{\mathcal{D}}_\infty:=\{\boldsymbol{u}_2=0\}\cup \{\boldsymbol{u}_3=0\}\, ,$ \vspace{.2cm}\\
 $\boldsymbol{\mathcal{D}}_1^+:=\{\boldsymbol{u}={\Theta}_1\}\cup \{\boldsymbol{u}_1=  {\Theta}_1\}\, , \quad \boldsymbol{\mathcal{D}}_1^-:=\{\boldsymbol{u}= \overline{\Theta}_1\}\cup \{\boldsymbol{u}_1=\overline{\Theta}_1\}\, ,$\vspace{.2cm}\\
 $\boldsymbol{\mathcal{D}}_t^+:=\{\boldsymbol{u}=t\overline{\Theta}_t\}\cup \{\boldsymbol{u}_1=t\overline{\Theta}_t\}\, , \quad \boldsymbol{\mathcal{D}}_t^-:=\{\boldsymbol{u}=t {\Theta}_t\}\cup \{\boldsymbol{u}_1=t {\Theta}_t\}\, .$\end{center}
 Moreover, we introduce the following curve  (that corresponds to $\boldsymbol{\mathcal{H}}_0:\{\zJS=0\}$):
 $$ \boldsymbol{\mathcal{C}}:=\{(\boldsymbol{u}-1)(\boldsymbol{u}-t)=(1-q)\boldsymbol{v}\}\, \cup\,  \{(\boldsymbol{u}_1-1)(\boldsymbol{u}_1-t)\boldsymbol{v}_1=1-q \}\, \cup\,  
 \{(1-\boldsymbol{u}_3)(1-t\boldsymbol{u}_3)\boldsymbol{v}_3=1-q \}\, .$$ 
 The configuration of these points, lines and the curve $ \boldsymbol{\mathcal{C}}$ in $\mathbb{F}_2$ is illustrated in the following figure. Here the grey numbers indicate the self-intersection number of the corresponding curve. 
 The use of colors will became clear later, see Remark \ref{remcolor}.
 \begin{center}
\begin{tikzpicture}[scale=0.7]

\draw[teal,thick] (-6.5,4.3)--(6.5,4.3); \draw[teal] (7.1,4.3) node {$\boldsymbol{\mathcal{H}} $};
\draw[dotted] (-6.5,-3.5)--(6.5,-3.5);
\draw[teal, thick] (5.5,-4.5)--(5.5,5.3);\draw[teal] (5.6,5.8) node {$\boldsymbol{\mathcal{D}}_\infty$};
\draw[teal, thick] (-5.5,-4.5)--(-5.5,5.3);\draw[teal] (-5.4,5.8) node {$\boldsymbol{\mathcal{D}}_0$};
\draw[blue, thick] (-1.5,-4.5)--(-1.5,5.3); \draw[blue] (-1.3,5.8) node {$\boldsymbol{\mathcal{D}}_t^+$};
\draw[blue, thick] (3,-4.5)--(3,5.3); \draw[blue] (3.2,5.8) node {$\boldsymbol{\mathcal{D}}_t^-$};
\draw[  thick] (-3,-4.5)--(-3,5.3); \draw  (-3,5.8) node {$\boldsymbol{\mathcal{D}}_1^+$};
\draw[  thick] (1,-4.5)--(1,5.3); \draw  (1.2,5.8) node {$\boldsymbol{\mathcal{D}}_1^-$};
\draw[dotted] (2,-4.5)--(2,5.3); \draw (2,-5) node {\tiny{$\boldsymbol{u}=t$}};
\draw[dotted] (-0.5,-4.5)--(-0.5,5.3); \draw (-0.5,-5) node {\tiny{$\boldsymbol{u}=1$}};

\draw [teal, thick] plot  [smooth, domain=-6.5:6.5] (\x,{-3.5-(\x+12)*0.004*exp(0.2*\x)*(\x-2)*(\x+0.5)*(\x-7.1)});
\draw [teal] (-6.3,1) node {$\boldsymbol{\mathcal{C}}$};

     \draw (-3,-1) node {$\bullet$} ;   \draw(-2.5,-0.5) node {$\boldsymbol{\beta}_1^+$} ;
      \draw (1,-4.1) node {$\bullet$} ;   \draw(1.5,-3.4) node {$\boldsymbol{\beta}_1^-$} ;
     \draw[red] (-1.5,-2.6) node {$\bullet$} ;   \draw[red] (-1 ,-2) node {$\boldsymbol{\beta}_t^+$} ;
      \draw[red] (3,-1.95) node {$\bullet$} ;   \draw[red](3.5,-2.4) node {$\boldsymbol{\beta}_t^-$} ;      
    \draw (-5.5,2.2) node {$\bullet$} ;   \draw (-5,2.2) node {$\boldsymbol{\beta}_0^+$} ;
     \draw (-5.5,-0.5) node {$\bullet$} ; \draw (-5,-0.5) node {$\boldsymbol{\beta}_0^-$} ;

      \draw (5.5,1.2) node {$\bullet$} ;			  \draw (6,1.2) node {$\boldsymbol{\beta}_\infty^+$} ;
     \draw (5.5,-1.5) node {$\bullet$} ;			    \draw (6,-1.5) node {$\boldsymbol{\beta}_\infty^-$} ;

                \draw[ thick, ->] (-5.5,-3.5)--(-5.5,-2.2); 
            \draw[ thick, ->] (-5.5,-3.5)--(-4.2,-3.5); 
               \draw  (-3.8 ,-3.8)  node  {$\boldsymbol{u}$} ; 
         \draw (-5.9 , -2)  node  {$\boldsymbol{v}$} ; 

             \draw[gray] (-7.1,4.3) node {$(-2)$} ;
  \draw[gray] (5.5,-4.9) node {$(0)$} ;
 \draw[gray] (-7.1,.6) node {$(+2)$} ;
 \draw[gray] (-5.5,-4.9) node {$(0)$} ;

         \end{tikzpicture}
\end{center}
Note that the points $\boldsymbol{\beta}_1^\pm$, respectively  $\boldsymbol{\beta}_t^\pm$, are precisely the intersection between $\boldsymbol{\mathcal{C}}$ and $\boldsymbol{\mathcal{D}}_1^\pm$, respectively $\boldsymbol{\mathcal{C}}$ and $\boldsymbol{\mathcal{D}}_t^\pm$. Note further that $\boldsymbol{\mathcal{C}}\cap \boldsymbol{\mathcal{H}}=\emptyset$,  that $\boldsymbol{\beta}_0^\pm \not \in \boldsymbol{\mathcal{H}}\cup \boldsymbol{\mathcal{C}}$ because $\Theta_0, \overline{\Theta}_0\neq 0$ and that 
$\boldsymbol{\beta}_\infty^\pm \not \in \boldsymbol{\mathcal{H}}\cup \boldsymbol{\mathcal{C}}$ because $\Theta_\infty, \overline{\Theta}_\infty\neq 0$.

Now let us denote, for each $t\in \C^*$ satisfying \eqref{eq:t_0Red}, by 
$$\widetilde{\mathfrak{P}}_t:=\mathrm{Bl}\left(\mathbb{P}^1\times \mathbb{P}^1\right)_{\boldsymbol{\beta}_0^-(t),\boldsymbol{\beta}_0^+(t),  \boldsymbol{\beta}_1^-(t),\boldsymbol{\beta}_1^+(t),\boldsymbol{\beta}_t^-(t),\boldsymbol{\beta}_t^+(t),\boldsymbol{\beta}_\infty^-(t),\boldsymbol{\beta}_\infty^+(t)}$$
the blow up of $\mathbb{F}_2$ at the eight points $\boldsymbol{\beta}_i^\pm(t)$. Here we continue to assume \eqref{eq:ThetaRed}.  
In $\widetilde{\mathfrak{P}}_t$, we denote by $$\boldsymbol{\mathcal{H}}, \boldsymbol{\mathcal{D}}_0^{**}, \boldsymbol{\mathcal{D}}_\infty^{**}, \boldsymbol{\mathcal{D}}_1^{+*}, \boldsymbol{\mathcal{D}}_1^{-*}, \boldsymbol{\mathcal{D}}_t^{+*}, \boldsymbol{\mathcal{D}}_t^{-*}, \boldsymbol{\mathcal{C}}^{****}$$
the strict transforms of the corresponding projective lines/curves in $\mathbb{F}_2$. We define  
$$q\mathrm{-}\widetilde{\mathrm{Oka}}_t:=\widetilde{\mathfrak{P}}_t\setminus \mathcal{\boldsymbol{I}}^t\, , \textrm{ where } \mathcal{\boldsymbol{I}}^t:=\boldsymbol{\mathcal{H}} \cup    \boldsymbol{\mathcal{D}}_0^{**}\cup \boldsymbol{\mathcal{D}}_\infty^{**}\cup \boldsymbol{\mathcal{C}}^{****}\, .$$
As we shall see, this is an {alternative} $q$-Okamoto space of initial values of $qP_{\mathrm{VI}}$, and $\qOkaMod{t}$ is convenient for the study of confluence. 
Before formulating the equivalence of $q\textrm{-}{\mathrm{Oka}}_t$ and $\qOkaMod{t}$, let us give a name to the exceptional curves. We denote, for each $i\in \{0,1,t,\infty\}$, by $\boldsymbol{\mathcal{F}}_i^{\pm}$ the exceptional lines in ${\mathfrak{P}}_t$ corresponding to   blow up of $\boldsymbol{\gamma}_t^\pm$ and by $\boldsymbol{\mathcal{E}}_i^{\pm}$ the exceptional lines in $\widetilde{\mathfrak{P}}_t$ corresponding to   blow up of $\boldsymbol{\beta}_i^\pm$.  

\begin{prop}\label{Prop:RelDeuxOkas}Let $q\in \C\setminus \{0,1\}$. 
Let $\boldsymbol{\Theta}\in (\C^*)^4$ such that \eqref{eq:ThetaRed} holds. Let $t\in \C^*\setminus S_q$, where $S_q$ is defined in \eqref{eq:DefSq}.  
 Consider the birational map given, with respect to the standard coordinates and the above notation, by 
$$\varphi : \left\{\begin{array}{rcl}{\mathfrak{P}}_t&\dashrightarrow &\widetilde{\mathfrak{P}}_t\vspace{.2cm}\\ (\y,\zJS)&\mapsto &(\boldsymbol{u},\boldsymbol{v})= \left(\y ,  \frac{ (\y -t\Theta_t)\left(\y -t\overline{\Theta}_t\right)-q(\y-1)(\y-t)  \zJS}{q(q-1) \zJS} \right)\, .\end{array}\right.$$
This map is biregular and induces bijections $$\boldsymbol{\mathcal{V}}_t^{\pm *}\simeq  \boldsymbol{\mathcal{E}}_t^{\pm}\, , \quad  \boldsymbol{\mathcal{F}}_t^{\pm}\simeq  \boldsymbol{\mathcal{D}}_t^{\pm*}\quad \textrm{and} \quad  \boldsymbol{\mathcal{F}}_i^{\pm}\simeq \boldsymbol{\mathcal{E}}_i^{\pm}\quad \forall i\in \{0,1,\infty\}\, .$$
Moreover, it induces a bijection $\boldsymbol{J}^t \stackrel{\sim}{\to}\boldsymbol{I}^t$ and therefore provides an isomorphism 
$$ \qOka{t}\stackrel{\sim}{\longrightarrow} \qOkaMod{t}\, .$$
\end{prop}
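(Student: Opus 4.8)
The plan is to recognize $\varphi$ as the composition of the two changes of variable already at hand and then to analyse it as a fibre-preserving birational map between Hirzebruch surfaces, built from elementary transformations exactly as in the proof of Proposition~\ref{PropSakai}. Indeed, $\varphi$ is the passage from $(\y,\zJS)$ on $\mathbb{F}_0$ to $(\y,\bZZ)$ via \eqref{eqzJSbZZ} followed by the passage to $(\boldsymbol{u},\boldsymbol{v})$ on $\mathbb{F}_2$ via \eqref{varchangeUgrasVgras}; in coordinates it reads $\boldsymbol{u}=\y$ and $\boldsymbol{v}=\tfrac{A(\y)}{\zJS}+B(\y)$ with $A(\y)=\tfrac{(\y-t\Theta_t)(\y-t\overline{\Theta}_t)}{q(q-1)}$ and $B(\y)=-\tfrac{(\y-1)(\y-t)}{q-1}$. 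Since $\boldsymbol{u}=\y$, the map $\varphi$ respects the rulings $\y=\boldsymbol{u}$, and over a fixed value of $\y$ it is the Möbius transformation $\zJS\mapsto\boldsymbol{v}$, an isomorphism of fibres as soon as $A(\y)\neq 0$, i.e.\ whenever $\y\notin\{t\Theta_t,t\overline{\Theta}_t\}$ (conditions \eqref{eq:ThetaRed} and $t\notin S_q$ keeping the relevant values distinct). Over the two remaining fibres $\varphi$ exhibits the pattern of an elementary transformation: the fibre $\boldsymbol{\mathcal{V}}_t^{-}=\{\y=t\Theta_t\}$ is contracted to the point $\boldsymbol{\beta}_t^{-}=(t\Theta_t,B(t\Theta_t))\in\boldsymbol{\mathcal{C}}$ while $\varphi$ is indeterminate at $\boldsymbol{\gamma}_t^{-}=(t\Theta_t,0)$, and symmetrically for the $+$ index.

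For biregularity I would argue as follows. For $i\in\{0,1,\infty\}$ the map is a local isomorphism near $\boldsymbol{\gamma}_i^{\pm}$, and a one-line substitution shows $\varphi(\boldsymbol{\gamma}_i^{\pm})=\boldsymbol{\beta}_i^{\pm}$ (for instance $\varphi(0,t\Theta_0/q)=(0,\tfrac{t(\overline{\Theta}_0-1)}{q-1})=\boldsymbol{\beta}_0^{-}$, using $\Theta_t\overline{\Theta}_t=1$); hence blowing up these six points on the source and their images on the target makes $\varphi$ biregular there and yields $\boldsymbol{\mathcal{F}}_i^{\pm}\simeq\boldsymbol{\mathcal{E}}_i^{\pm}$. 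For $i=t$ the two elementary transformations above are resolved by blowing up $\boldsymbol{\gamma}_t^{\pm}$ on $\mathfrak{P}_t$ and $\boldsymbol{\beta}_t^{\pm}$ on $\widetilde{\mathfrak{P}}_t$: the exceptional curve $\boldsymbol{\mathcal{F}}_t^{\pm}$ is sent isomorphically onto the strict transform $\boldsymbol{\mathcal{D}}_t^{\pm*}$, and the strict transform $\boldsymbol{\mathcal{V}}_t^{\pm*}$ of the contracted fibre is sent isomorphically onto the exceptional curve $\boldsymbol{\mathcal{E}}_t^{\pm}$. Checking in the charts \eqref{HirzCoord} that after these eight blow-ups no indeterminacy survives (and doing the same for $\varphi^{-1}$, which has the same structure with $\mathbb{F}_0$ and $\mathbb{F}_2$ interchanged) gives the biregular isomorphism $\mathfrak{P}_t\xrightarrow{\sim}\widetilde{\mathfrak{P}}_t$; note that the indeterminacy points of $\widetilde{\mathfrak{S}}_t$ recorded in Lemma~\ref{lem:CritvalsOkatilde} then appear automatically as the images $\boldsymbol{\beta}_i^{\pm}$, which is why that lemma can be deduced from here.

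Finally, I would trace the four components of $\boldsymbol{J}^t$ through $\varphi$. Since $\boldsymbol{u}=\y$, the vertical lines map as $\boldsymbol{\mathcal{V}}_0=\{\y=0\}\mapsto\{\boldsymbol{u}=0\}=\boldsymbol{\mathcal{D}}_0$ and $\boldsymbol{\mathcal{V}}_\infty\mapsto\boldsymbol{\mathcal{D}}_\infty$; from $\boldsymbol{v}=A(\y)/\zJS+B(\y)$ one reads off $\boldsymbol{\mathcal{H}}_0=\{\zJS=0\}\mapsto\{\boldsymbol{v}=\infty\}=\boldsymbol{\mathcal{H}}$ and $\boldsymbol{\mathcal{H}}_\infty=\{\zJS=\infty\}\mapsto\{\boldsymbol{v}=B(\boldsymbol{u})\}=\boldsymbol{\mathcal{C}}$. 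A self-intersection count confirms the picture: $\boldsymbol{\mathcal{H}}_\infty$ is a $0$-section carrying two of the $\boldsymbol{\gamma}_i^{\pm}$, so $\boldsymbol{\mathcal{H}}_\infty^{**}$ has self-intersection $-2$, while $\boldsymbol{\mathcal{C}}$ is the $(+2)$-section carrying the four points $\boldsymbol{\beta}_1^{\pm},\boldsymbol{\beta}_t^{\pm}$, so $\boldsymbol{\mathcal{C}}^{****}$ also has self-intersection $-2$, the two extra blow-ups being produced precisely by the elementary transformations. Hence $\varphi$ induces a bijection $\boldsymbol{J}^t\xrightarrow{\sim}\boldsymbol{I}^t$, and restricting the biregular isomorphism to the complements gives $\qOka{t}=\mathfrak{P}_t\setminus\boldsymbol{J}^t\xrightarrow{\sim}\widetilde{\mathfrak{P}}_t\setminus\boldsymbol{I}^t=\qOkaMod{t}$.

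I expect the main obstacle to be the bookkeeping of the two elementary transformations over the fibres $\y=t\Theta_t,t\overline{\Theta}_t$: verifying in explicit charts that the blow-ups of $\boldsymbol{\gamma}_t^{\pm}$ and $\boldsymbol{\beta}_t^{\pm}$ simultaneously resolve the indeterminacy of $\varphi$ and undo the contraction, and that the exchange $\boldsymbol{\mathcal{V}}_t^{\pm*}\leftrightarrow\boldsymbol{\mathcal{E}}_t^{\pm}$, $\boldsymbol{\mathcal{F}}_t^{\pm}\leftrightarrow\boldsymbol{\mathcal{D}}_t^{\pm*}$ is the correct one. Everything else reduces to direct substitution, and the whole argument can be organised to mirror, step by step, the elementary-transformation decomposition used in the proof of Proposition~\ref{PropSakai}.
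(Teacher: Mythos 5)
Your proposal is correct and follows essentially the same route as the paper's proof: viewing $\varphi$ as a ruling-preserving birational map $\mathbb{F}_0\dashrightarrow\mathbb{F}_2$ that is an isomorphism off the fibres $\y=t\Theta_t^{\pm1}$, sends $\boldsymbol{\gamma}_i^\pm$ to $\boldsymbol{\beta}_i^\pm$ for $i\in\{0,1,\infty\}$, acts as an elementary transformation near $\boldsymbol{\mathcal{V}}_t^\pm$ (blowing up $\boldsymbol{\gamma}_t^\pm$ and contracting $\boldsymbol{\mathcal{V}}_t^\pm$ onto $\boldsymbol{\beta}_t^\pm$), and then tracing the boundary components exactly as in the proof of Proposition~\ref{PropSakai}. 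Your added checks (the explicit computation $\varphi(\boldsymbol{\gamma}_0^-)=\boldsymbol{\beta}_0^-$ and the self-intersection bookkeeping for $\boldsymbol{\mathcal{H}}_\infty^{**}$ and $\boldsymbol{\mathcal{C}}^{****}$) are consistent with, and slightly amplify, the paper's argument.
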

\begin{proof}
First, consider the rational map $\mathbb{F}_0\dashrightarrow  \mathbb{F}_2$ given, with respect to the standard coordinates, by the same formula as $\varphi$. We will abusively denote it again by $\varphi$.  Note that 
$\varphi$ preserves the fibers of the rulings $ \mathbb{F}_2\to  \mathbb{P}^1$ and $ \mathbb{F}_0\to  \mathbb{P}^1$ given by $(\boldsymbol{u},\boldsymbol{v}) \mapsto \boldsymbol{u}$ and $(\y,\zJS)\mapsto \y$. Hence we may  restrict and corestrict $\varphi$ to a map $\varphi^\circ : \mathbb{F}_0\setminus \boldsymbol{\mathcal{V}}_t^\pm \dashrightarrow  \mathbb{F}_2\setminus \boldsymbol{\mathcal{D}}_t^\pm$.  It is however immediate to check that $\varphi^\circ$ is  regular and a bijection, with 
inverse map given by
$$\psi :\left\{\begin{array}{rcl} \mathbb{F}_2\setminus \boldsymbol{\mathcal{D}}_t^\pm&\to& \mathbb{F}_0\setminus \boldsymbol{\mathcal{V}}_t^\pm\vspace{.2cm}\\  (\boldsymbol{u},\boldsymbol{v}) &\mapsto &(\y,\zJS)= \left(\boldsymbol{u} ,  \frac{1}{q}\cdot\frac{ (\boldsymbol{u} -t\Theta_t)\left(\boldsymbol{u} -t\overline{\Theta}_t\right)}{ (\boldsymbol{u}-1)(\boldsymbol{u}-t)+(q-1)\boldsymbol{v}  }\right)\, .\end{array}\right.$$
Moreover, it is immediate to check that $\varphi^\circ$ maps the points $\boldsymbol{\gamma}_i^\pm$ to the points $\boldsymbol{\beta}_i^\pm$ for each $i\in \{0,1,\infty\}$, and that 
it induces bijections $$\boldsymbol{\mathcal{V}}_0\simeq \boldsymbol{\mathcal{D}}_0\, , \quad \boldsymbol{\mathcal{V}}_\infty \simeq \boldsymbol{\mathcal{D}}_\infty \, , \quad   \boldsymbol{\mathcal{H}}_0\setminus \{\boldsymbol{\gamma}_t^\pm\} \simeq \boldsymbol{\mathcal{H}}_\infty \setminus \boldsymbol{\mathcal{D}}_t^\pm,  \quad \,  \boldsymbol{\mathcal{H}}  \setminus  \boldsymbol{\mathcal{V}}_t^\pm \simeq \boldsymbol{\mathcal{C}} \setminus \{\boldsymbol{\beta}_t^\pm\}\, .$$

To conclude, we use the same argument as in the proof of Proposition \ref{PropSakai}.
Namely, in a neighborhood of the fibers $\boldsymbol{\mathcal{V}}_t^\pm$ and $\boldsymbol{\mathcal{D}}_t^\pm$, $\varphi$ is an elementary transformation blowing up the point $\boldsymbol{\gamma}_t^\pm$ and contracting the strict transform of the fiber $\boldsymbol{\mathcal{V}}_t^\pm$ onto the point $\boldsymbol{\beta}_t^\pm$. Therefore, the induced map $\varphi:\mathrm{Bl}(\mathbb{F}_0)_{\boldsymbol{\gamma}_t^\pm}\to \mathrm{Bl}(\mathbb{F}_2)_{\boldsymbol{\beta}_t^\pm}$ is biregular. The result follows.
 \end{proof}
\begin{rem}\label{remcolor} As shown in the above proof, the rational map $\varphi : \mathbb{F}_2\dashrightarrow \mathbb{F}_0$ corresponding to the change of variable \eqref{varchangeUgrasVgras} composed with the change of variable \eqref{eqzJSbZZ} (which relates $q{P}_{\mathrm{VI}}(\boldsymbol{\Theta})$ to the modified $q\widetilde{P}_{\mathrm{VI}}(\boldsymbol{\Theta})$), respects the scheme of colors in the diagrams representing the particular lines in $\mathbb{F}_2$ and $\mathbb{F}_0$.
\end{rem}

\subsection{Confluence}\label{sec:ConfOkaSpaces} In this section, we will see that the differential Okamoto space can be obtained from the second version of the $q$-difference one by a limit process. More precisely, we will show that they smoothly fit together into a family of Okamoto-spaces, parametrized by a neighborhood of $q=1$ in $\C$. 


Let $\boldsymbol{\theta}=(\theta_0,\theta_1,\theta_t,\theta_\infty)\in \C^*\times \C^*\times \C^*\times \left(\C\setminus \{1\}\right)$, and consider, as in Section \ref{sec:confPainl}, the quadrupel of rational functions
\begin{equation}\label{Thetaconfb}\boldsymbol{\Theta}(q)=1+\frac{q-1}{2}\boldsymbol{\theta}.\end{equation}
Let $t\in \C^*$. Let us consider the second Hirzebruch surface $\mathbb{F}_2$ with coordinates $(\boldsymbol{u}, \boldsymbol{v}), (\boldsymbol{u}_1, \boldsymbol{v}_1), (\boldsymbol{u}_2, \boldsymbol{v}_2) , (\boldsymbol{u}_3, \boldsymbol{v}_3)$ as in \eqref{HirzCoord}. For each $i\in \{0,1,t,\infty\}$, we may define meromorphic functions in the variable $q\in \C$, holomorphic in a neighborhood of $\{q=1\}$, of the form
   $$\boldsymbol{\beta}_i^\pm:\C\to \mathbb{F}_2\, , $$
   given, for each fixed $q\in \C$, as follows.
     \begin{align*}
&\boldsymbol{\beta}_0^-(q) : ~(\boldsymbol{u} ,\boldsymbol{v})=\left(0,  \frac{-t\theta_0/2}{\Theta_0(q)}\right)  ,&&\boldsymbol{\beta}_0^+(q) : ~(\boldsymbol{u} ,\boldsymbol{v})=\left(0, \frac{t\theta_0}{2}\right),&\\
&\boldsymbol{\beta}_1^-(q) : ~ (\boldsymbol{u} ,\boldsymbol{v})=\left(\frac{1}{\Theta_1(q)}, \frac{-\theta_1/2\left(t-\frac{1}{\Theta_1(q)}\right)}{\Theta_1(q)} \right),&&\boldsymbol{\beta}_1^+(q) : ~(\boldsymbol{u} ,\boldsymbol{v})=\left(\Theta_1(q), \frac{\theta_1 (t-\Theta_1(q))}{2}\right),&\\
&\boldsymbol{\beta}_t^-(q) : ~ (\boldsymbol{u} ,\boldsymbol{v})=\left(t\Theta_t(q), - \frac{t\theta_t}{2}\left(t \Theta_t(q)-1 \right)\right) ,&&\boldsymbol{\beta}_t^+(q) : ~(\boldsymbol{u} ,\boldsymbol{v})=\left(\frac{t}{ {\Theta}_t(q)},  \frac{t(t-{\Theta}_t(q))\theta_t/2}{ {\Theta}_t(q)^2}\right),\\
&\boldsymbol{\beta}_\infty^-(q) : ~ (\boldsymbol{u}_2 ,\boldsymbol{v}_2)=\left(0,  \frac{-\theta_\infty/2}{\Theta_\infty(q)} \right),&&\boldsymbol{\beta}_\infty^+(q) : ~(\boldsymbol{u}_2 ,\boldsymbol{v}_2)=\left(0, \frac{ \theta_\infty/2-1  }{q }\right) .\\
 \end{align*}
Note that one the one hand, for generic values of $q$ and $t$, these correspond in the confluence setting \eqref{Thetaconfb} to the values of the $\boldsymbol{\beta}_i^\pm$ in Lemma \ref{lem:CritvalsOkatilde}. On the other hand, for $q=1$ and $t\neq 1$, we have $\boldsymbol{\beta}_i^\pm(1)=\beta_i^\pm$ with $\beta_i^\pm$ as in Section~\ref{sec:oka}. 
  Now we may see these functions as the parametrized curves $\left\{\left(q,\boldsymbol{\beta}_i^\pm\right)~|~q\in \C^*\right)\}$ in the product 
  $$\C^* \times \mathbb{F}_2\, .$$
  We will abusively call them the curves $\boldsymbol{\beta}_i^\pm$. Moreover, in this product space, we may identify the following planes $$\boldsymbol{\mathcal{H}}:\{\boldsymbol{v}_1=0\}\cup \{\boldsymbol{v}_3=0\}\, , \quad \boldsymbol{\mathcal{D}}_0:=\{\boldsymbol{u}=0\}\cup \{\boldsymbol{u}_1=0\}\, , \quad \boldsymbol{\mathcal{D}}_\infty:=\{\boldsymbol{u}_2=0\}\cup \{\boldsymbol{u}_3=0\}$$ and the surface
   $$ \boldsymbol{\mathcal{C}}:=\{(\boldsymbol{u}-1)(\boldsymbol{u}-t)=(1-q)\boldsymbol{v}\}\, \cup\,  \{(\boldsymbol{u}_1-1)(\boldsymbol{u}_1-t)\boldsymbol{v}_1=1-q \}\, \cup\,  
 \{(1-\boldsymbol{u}_3)(1-t\boldsymbol{u}_3)\boldsymbol{v}_3=1-q \}\, .$$ 
Note that the curve defined by the restriction of $\boldsymbol{\mathcal{C}}$ to $\{q=1\}$ is degenerate: it has three irreducible components, given, with respect to the notation in Section \ref{sec:oka}, by $\mathcal{H}$, $\mathcal{D}_1$ and $\mathcal{D}_t$ respectively. Now denote by $$\Omega_t:=\mathrm{Bl}\left(\C^*\times \mathbb{F}_2\right)_{\boldsymbol{\beta}_i^\pm} ~\setminus \boldsymbol{J}\quad \textrm{where} \quad \boldsymbol{J}:\boldsymbol{\mathcal{H}}\cup \boldsymbol{\mathcal{C}}^{****}\cup \boldsymbol{\mathcal{D}}_0^{**} \cup \boldsymbol{\mathcal{D}}_\infty^{**}$$ the blow up of this product space along the eight curves $\boldsymbol{\beta}_i^\pm$ minus the strict transforms of the mentioned particular surfaces.\footnote{Strictly speaking, here one has to choose an order for the eight curves to be blown up in order to obtain a well-defined result in restriction to those $q\in \C^*$ where the curves intersect. We will however neglect these values of $q$ anyway afterwards, because they are not close to $1$. }
Then we have $$\left\{\begin{array}{rclll}\Omega_t|_{q=1}&=&\mathrm{Oka}_t & \textrm{for all } t\in \C\setminus \{0,1\} \vspace{.2cm}\\
  \Omega_t|_{q=q_0}&=&\qOkaMod{t}(q_0) & \textrm{for all } (t,q_0)\in \C^*\times \left(\C\setminus \{0,1\}\right) \textrm{ such that \eqref{eq:ThetaRed} holds and }t\not \in S_{q_0}\, .\end{array}\right.  $$
  Recall that $S_{q_0}$ was defined in \eqref{eq:DefSq}. Let us see how these two conditions of validity fit together in the family 
  $$\boldsymbol{\Omega}:=\bigcup_{t\in \C\setminus \{0,1\}}  \Omega_t\, .$$
  The condition \eqref{eq:ThetaRed} is vacuous for $q_0$ sufficiently close to $1$, because we assumed \eqref{cond8bp}. However, in the parameter space $\C^*\times  \C^* $ with coordinates $(t,q)$ of  $\boldsymbol{\Omega}$, the set $\{(t,q)~|~t\in  S_{q}, q\neq 1\}$ decomposes as an infinite union of curves of the form $\left\{ \, \Theta_1^{\varepsilon_1}\Theta_t^{\varepsilon_t}q^k ~\middle|~\varepsilon_0,\varepsilon_1 \in \{-1,1\},k\in \Z \right\} $ and $\left\{ \Theta_0^{\varepsilon_0}\Theta_\infty^{\varepsilon_\infty}q^k ~\middle|~\varepsilon_t,\varepsilon_\infty \in \{-1,1\},k\in \Z \right\} $. By \eqref{Thetaconfb}, the adherence of each of these curves at $\{q=1\}$   is given by $\{q=1,t=1\}$. 
  
  \begin{rem} The behavior of the set $S_{q}$ when $q$ goes to $1$ may be wild. In order to fix this, analouglsy to \cite{sauloy2000regular}, we need to make $q$ goes to $1$ following a $q$-spiral. More precisely, let $t\in \C\setminus \{0,1\}$  and fix $|q_{0}|>1$ with $t\notin q_{0}^{\R}$. We have $q_{0}^{\varepsilon}\to 1$, when $\varepsilon>0 $ is a real number going to $0$ and for  $\varepsilon>0 $ sufficiently close to $0$, we find $t\notin S_{q_{0}^{\varepsilon}}$.
     \end{rem}

\section{Appendix: The relation between two notions of $q$-isomonodromy}\label{app:qFundsols}
Some authors  interpret the pseudo-constancy of the Birkhoff connection matrix as a suitable discrete analogue for the isomonodromy of families of Fuchsian systems. We will explain here how this is related to our notion of $q$-isomonodromy (see Section \ref{sec1}). This Birkhoff connection matrix is defined \emph{via} certain fundamental solutions of the family of $q$-Fuchsian systems parameterized by $t\in \mathfrak{D}$. Therefore, we shall first recall from \cite{sauloy2000regular}  the construction of fundamental solutions   (see also  \cite{praagman1986fundamental,ramis2009local,dreyfus2014building} for constructions in some more general settings). Note that these fundamental solutions  will be meromorphic matrix functions on $\C^*\times \mathfrak{D}$. In particular, they are uniform in $t$, which is one of the reasons why the definition of monodromy in the differential case should not be translated literally to the $q$-difference setting.

Let $q$  be a complex number with $|q|>1$. Let $\mathfrak{D}$ be open connected  subset of $\C^*$.
 Let \begin{equation}\label{eqq1ter}
\sigma_{q,x} Y(x,t)=\mathfrak{A}(x,t)Y(x,t),\quad  \textrm{ with }\quad   \mathfrak{A}(x,t)=\mathfrak{A}_{0}(t)+x\frac{\mathfrak{A}_{1}(t)}{x-1}+x\frac{\mathfrak{A}_{t}(t)}{t(x-t)}\, 
\end{equation}
be a family of $q$-Fuchsian systems  as in Definition \ref{qfuchsian} with \underline{non-resonant} spectral data $(\boldsymbol{\Theta}, \overline{\boldsymbol{\Theta}})$. 
Note that in particular, we assume that that for each $i\in \{0,1,t\}$, we have $\mathfrak{A}_{i} \in \mathrm{M}_2(\mathcal{O}(\mathfrak{D}))$, \emph{\emph{i.e.}} these matrices have holomorphic entries. 
Moreover, by the requirements of Definition \ref{qfuchsian},  $$\mathrm{Spec}(\mathfrak{A}_{0}(t))=\left\{ \overline{\Theta}_0,\Theta_0\right\}\quad \textrm{and} \quad \mathfrak{A}_{\infty}=\begin{pmatrix}\overline{\Theta}_\infty& 0\\0& \Theta_\infty \end{pmatrix}\, ,$$
where $\mathfrak{A}_{\infty}=\mathfrak{A}_{0}+\mathfrak{A}_{1}+\frac{\mathfrak{A}_{t}}{t}$. Note that we may have $\overline{\Theta}_0= \Theta_0$ so that the matrix $\mathfrak{A}_0(t)$ may be not diagonalisable.
Let us choose $P\in \mathrm{GL}_2(\mathcal{M}(\mathfrak{D}))$ such that  
 \begin{equation}\label{eqDiagA0} P(t)\mathfrak{A}_0(t)P(t)^{-1}=J_{0}\,,
 \end{equation}
 where $J_{0}$ is in Jordan normal form, with eigenvelues $\overline{\Theta}_0, \Theta_0$.
 \begin{lem} \label{lemL0L8} For each $i\in \{0,\infty\}$, there exists a matrix $L_i(x) \in \mathrm{GL}_2(\mathcal{O}(\C^*))$ satisfying the $q$-difference equation 
 $$\sigma_{q,x}L_{0}=J_{0}L_{0},\quad \sigma_{q,x}L_{\infty} = \begin{pmatrix}\overline{\Theta}_{\infty}&0\\0&\Theta_{\infty}\end{pmatrix}L_{\infty} .$$
 \end{lem}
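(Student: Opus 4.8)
The plan is to reduce the statement to the explicit construction of fundamental solutions of the two constant-coefficient systems $\sigma_{q,x}L = M L$ with $M = J_0$ and $M = \mathrm{diag}(\overline{\Theta}_\infty, \Theta_\infty)$, and to build these solutions from a single transcendental brick, the Jacobi theta function adapted to $|q|>1$. First I would fix the function $\theta_q(x) = \sum_{n\in\Z} q^{-n(n+1)/2} x^n$, whose series converges for every $x\in\C^*$ (the coefficients decay like $|q|^{-n^2/2}$ as $|n|\to\infty$), so that $\theta_q$ is holomorphic on $\C^*$, and which satisfies the functional equation $\sigma_{q,x}\theta_q = x\,\theta_q$; by the Jacobi triple product its zeros form a single $q$-spiral of simple zeros. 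From $\theta_q$ I then build the two elementary solutions that will serve as entries: the $q$-character $e_{q,c}(x) := \theta_q(x)/\theta_q(c^{-1}x)$, which by the quasi-periodicity of $\theta_q$ solves $\sigma_{q,x}e_{q,c} = c\,e_{q,c}$ for any $c\in\C^*$, and the $q$-logarithm $\ell_q(x):= x\,\theta_q'(x)/\theta_q(x)$, which solves $\sigma_{q,x}\ell_q = \ell_q + 1$ (this comes out by logarithmically differentiating $\theta_q(qx) = x\,\theta_q(x)$).

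Next I would assemble the two matrices. Since $\mathfrak{A}_\infty = \mathrm{diag}(\overline{\Theta}_\infty, \Theta_\infty)$ is diagonal, the choice $L_\infty := \mathrm{diag}\bigl(e_{q,\overline{\Theta}_\infty},\, e_{q,\Theta_\infty}\bigr)$ satisfies $\sigma_{q,x}L_\infty = \mathrm{diag}(\overline{\Theta}_\infty, \Theta_\infty)\,L_\infty$ entry by entry. For $L_0$ I distinguish two cases according to the shape of the Jordan form $J_0$ from \eqref{eqDiagA0}. If $\overline{\Theta}_0 \neq \Theta_0$, then $J_0 = \mathrm{diag}(\overline{\Theta}_0, \Theta_0)$ and I take $L_0 := \mathrm{diag}\bigl(e_{q,\overline{\Theta}_0},\, e_{q,\Theta_0}\bigr)$. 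If $\overline{\Theta}_0 = \Theta_0 =: c$ and $\mathfrak{A}_0(t)$ fails to be diagonalisable, then $J_0 = \left(\begin{smallmatrix} c & 1 \\ 0 & c\end{smallmatrix}\right)$, and I take $L_0 := e_{q,c}\left(\begin{smallmatrix} 1 & c^{-1}\ell_q \\ 0 & 1\end{smallmatrix}\right)$; a one-line computation using $\sigma_{q,x}e_{q,c} = c\,e_{q,c}$ together with $\sigma_{q,x}(c^{-1}\ell_q) = c^{-1}\ell_q + c^{-1}$ then yields $\sigma_{q,x}L_0 = J_0 L_0$.

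Finally I would record that each $L_i$ is invertible: its determinant is, up to the unipotent factor of determinant one, a product of theta quotients $\theta_q(x)^2/\bigl(\theta_q(\overline{\Theta}_i^{-1}x)\theta_q(\Theta_i^{-1}x)\bigr)$, which is a nonzero meromorphic function on $\C^*$, so $L_i$ lies in $\mathrm{GL}_2$ over the relevant ring of functions on $\C^*$ (holomorphic and invertible off the $q$-spiral carrying the zeros and poles of these theta quotients). The main obstacle here is conceptual rather than computational: there is in general no single-valued holomorphic solution of the scalar equation $\sigma_{q,x}e = c\,e$ on $\C^*$, since the naive candidate $x^{\log_q c}$ is multivalued, and this is exactly what forces the introduction of $\theta_q$ and the ratio $e_{q,c}$, whose construction and functional equation are the heart of the argument. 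The only extra difficulty, the non-semisimple eigenvalue case, is handled uniformly by the $q$-logarithm $\ell_q$ built from the logarithmic derivative of $\theta_q$.
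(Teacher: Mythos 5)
Your proof is correct and follows essentially the same route as the paper: the Jacobi theta function $\T_q$ with $\sigma_{q,x}\T_q = x\T_q$, the $q$-characters $\mathfrak{e}_{q,c}=\T_q(x)/\T_q(x/c)$ for the diagonal entries, and the $q$-logarithm $\ell_q = x\T_q'/\T_q$ to handle a non-semisimple Jordan block, with your matrix $e_{q,c}\left(\begin{smallmatrix}1 & c^{-1}\ell_q\\ 0 & 1\end{smallmatrix}\right)$ identical to the paper's $L_0$. Your closing remark that the entries are in fact meromorphic (holomorphic and invertible only off the $q$-spirals of zeros and poles of the theta quotients) is a fair observation about the statement's phrasing $\mathrm{GL}_2(\mathcal{O}(\C^*))$, which the paper's own construction also only achieves in this meromorphic sense.
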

 \begin{proof}
As explained for example in \cite[Page 1024]{sauloy2000regular}, for any  $a\in \C$, there exists a meromorphic function $\mathfrak{e}_{q,a}(x)$ on $\C^*$ satisfying the $q$-difference equation 
 $$\sigma_{q,x} \mathfrak{e}_{q,a} =a\mathfrak{e}_{q,a} \, .$$ Indeed, one may set $\mathfrak{e}_{q,a}(x):=\frac{\T_{q}(x)}{\T_{q}(x/a)}$, where $\T_{q}(x)=\displaystyle \sum_{n \in \Z} q^{\frac{-n(n+1)}{2}}x^{n}$ is the Jacobi theta function satisfying $\sigma_{q,x}\T_{q}(x)=x\T_{q}(x)$.
 We may now choose $L_{\infty}(x):=\mathrm{diag}\left( \mathfrak{e}_{q,\overline{\Theta}_{\infty}}(x)\, ,  \mathfrak{e}_{q, {\Theta}_{\infty}}(x)\right)\, .$
 If $J_{0}$ is diagonal, the construction of $L_0$ is analogue. Let us assume that $J_{0}=\begin{pmatrix}\Theta_{0}&1\\0&\Theta_{0}\end{pmatrix}$ is not diagonal. Let us introduce the $q$-logarithm $\ell_{q}(x)=\frac{x\partial_{x}\T_{q}(x) }{\T_{q}(x)}$, that satisfies $\sigma_{q,x}(\ell_{q})=\ell_{q}+1$. Then, we may take $$L_{0}(x)=\begin{pmatrix}\mathfrak{e}_{q,\Theta_{0}}(x)&\frac{\mathfrak{e}_{q,\Theta_{0}}(x)\ell_{q}(x)}{\Theta_{0}}\\0&\mathfrak{e}_{q,\Theta_{0}}(x)\end{pmatrix}.$$
 \end{proof}
\begin{rem} Of course the choice of the matrices $L_i$ in the above lemma is not unique. For instance, some authors prefer to replace $\mathfrak{e}_{q,a}(x)$ and $\ell_{q}(x)$ respectively by $$a^{\frac{\ln (x)}{\ln(q)}}\,\quad  \hbox{and}\quad  \frac{\ln(x)}{\ln(q)} , $$ which satisfies the same $q$-difference equation, and yields matrices $L_i$ whose entries are defined no longer on $\C^*$, but on the Riemann surface of the complex logarithm. 
\end{rem}
 \begin{prop}\label{prop:qsolfond} Let $L_0,L_\infty$ be as in Lemma \ref{lemL0L8}. 
 Let $P\in \mathrm{GL}_2(\mathcal{M}(\mathfrak{D}))$ such that \eqref{eqDiagA0} holds. 
  There exists a unique pair $( \mathfrak{H}_{0},  \mathfrak{H}_{\infty})$ of meromorphic matrix functions satisfying the following:
 \begin{itemize}
  \item[$\bullet$] $ \mathfrak{H}_{0}(x,t), \mathfrak{H}_{\infty}\left(x^{-1},t\right) \in \mathrm{GL}_{2}(\mathcal{M}(\C\times \mathfrak{D}))$,\vspace{.2cm}
  \item[$\bullet$]$\mathfrak{H}_{0}\left(0,t\right)=\mathfrak{H}_{\infty}\left(\infty,t\right)=\Itwo $,\vspace{.2cm}
    \item[$\bullet$]  the matrix functions  $\mathfrak{U}_{0}$, $\mathfrak{U}_{\infty}\in  \mathrm{GL}_{2}(\mathcal{M}(\C^{*}\times \mathfrak{D}))$ defined by 
    $$\mathfrak{U}_{0}(x,t):= \mathfrak{H}_{0}(x,t)P(t)^{-1}L_{0}(x)\, \quad \quad 
 \mathfrak{U}_{\infty}(x,t):= \mathfrak{H}_{\infty}(x,t)L_{ \infty}(x) $$
are both solutions of \eqref{eqq1ter}.
 \end{itemize}
 \end{prop}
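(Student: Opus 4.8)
The plan is to construct the pair $(\mathfrak{H}_0,\mathfrak{H}_\infty)$ by a formal power series Ansatz at $x=0$ and $x=\infty$ respectively, then prove convergence of the resulting series using the non-resonancy hypothesis, and finally establish uniqueness by a gauge argument. First I would treat the solution $\mathfrak{U}_0$ at the origin. Writing $\mathfrak{U}_0 = \mathfrak{H}_0 P^{-1}L_0$ with $\mathfrak{H}_0(0,t)=\Itwo$, the requirement that $\mathfrak{U}_0$ solve \eqref{eqq1ter} translates, using $\sigma_{q,x}L_0 = J_0 L_0$ and $P\mathfrak{A}_0 P^{-1}=J_0$, into a functional equation for $\mathfrak{H}_0$ of the shape
\begin{equation*}
\sigma_{q,x}\mathfrak{H}_0(x,t)\,P^{-1}J_0 = \mathfrak{A}(x,t)\,\mathfrak{H}_0(x,t)\,P^{-1}J_0\,,
\end{equation*}
which, after right-multiplication by $J_0^{-1}P$ and using $\mathfrak{A}(0,t)=\mathfrak{A}_0(t)=P^{-1}J_0 P$, becomes a relation $\sigma_{q,x}\mathfrak{H}_0 = \mathfrak{A}\,\mathfrak{H}_0\,\mathfrak{A}_0^{-1}$. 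Expanding $\mathfrak{H}_0(x,t)=\Itwo+\sum_{n\geq 1}H_n(t)x^n$ and $\mathfrak{A}(x,t)=\mathfrak{A}_0(t)+\sum_{n\geq 1}\mathfrak{A}^{(n)}(t)x^n$ (recall $\mathfrak{A}$ is rational in $x$ with poles only at $x=1,t$, hence analytic near $0$), the coefficient of $x^n$ yields a recursion of the form
\begin{equation*}
q^n H_n \mathfrak{A}_0 - \mathfrak{A}_0 H_n = \sum_{k=1}^{n}\mathfrak{A}^{(k)}H_{n-k}\,,
\end{equation*}
i.e. $q^n H_n\mathfrak{A}_0-\mathfrak{A}_0H_n$ equals a quantity determined by $H_0,\dots,H_{n-1}$.

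The key point is that the linear operator $M\mapsto q^nM\mathfrak{A}_0-\mathfrak{A}_0M$ on $\mathrm{M}_2$ is invertible for every $n\geq 1$: its eigenvalues are $q^n\mu-\lambda$ where $\lambda,\mu$ range over the eigenvalues $\{\Theta_0,\overline{\Theta}_0\}$ of $\mathfrak{A}_0$. These vanish only if $q^n = \lambda/\mu \in\{1,\Theta_0/\overline{\Theta}_0,\overline{\Theta}_0/\Theta_0\}$; since $q$ is not a root of unity and the non-resonancy condition \eqref{eq:qresonance} forces $\Theta_0/\overline{\Theta}_0\notin q^{\Z^*}$, none of these can equal $q^n$ for $n\geq 1$. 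Thus each $H_n(t)$ is uniquely and meromorphically determined, which simultaneously gives existence of the formal series and a first instance of uniqueness. I would run the symmetric argument at $x=\infty$ for $\mathfrak{H}_\infty$, writing $\mathfrak{U}_\infty=\mathfrak{H}_\infty L_\infty$, normalizing $\mathfrak{H}_\infty(\infty,t)=\Itwo$, and expanding in powers of $1/x$; here the relevant operator involves $\mathfrak{A}_\infty=\mathrm{diag}(\overline{\Theta}_\infty,\Theta_\infty)$ and its invertibility rests on $\Theta_\infty/\overline{\Theta}_\infty\notin q^{\Z^*}$, again guaranteed by \eqref{eq:qresonance}.

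The hard part will be convergence: I must show that the formally defined series $\mathfrak{H}_0(x,t)=\Itwo+\sum H_n(t)x^n$ actually defines a meromorphic function on $\C\times\mathfrak{D}$ (and likewise $\mathfrak{H}_\infty$ in $1/x$ on all of $\mathbb{P}^1\setminus\{0\}$), rather than merely a germ near $x=0$. The standard approach, which I would follow from \cite{sauloy2000regular}, is a two-step estimate. Since $\mathfrak{A}(x,t)$ is rational in $x$ with poles at $x=1,x=t$, the radius of convergence of its Taylor expansion at $0$ is $\min(1,|t|)>0$; combined with the lower bound $|q^n\mu-\lambda|\geq C q^n$ for large $n$ coming from $|q|>1$ (the inverse of the cohomological operator has norm decaying like $q^{-n}$), a majorant-series argument shows $\sum\|H_n(t)\|x^n$ converges locally uniformly, giving holomorphy of $\mathfrak{H}_0$ on a disc. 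The functional equation $\sigma_{q,x}\mathfrak{H}_0=\mathfrak{A}\mathfrak{H}_0\mathfrak{A}_0^{-1}$ then propagates this analytic continuation across the whole plane: because $\mathfrak{D}$ and the argument are $q$-stable and $|q|>1$, repeatedly applying $\sigma_{q,x}$ extends $\mathfrak{H}_0$ meromorphically from a neighborhood of $0$ to $\C$, with poles controlled by the poles of $\mathfrak{A}$ along the $q$-spiral.

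Finally, for uniqueness at the level of the full statement, suppose $(\mathfrak{H}_0,\mathfrak{H}_\infty)$ and $(\mathfrak{H}_0',\mathfrak{H}_\infty')$ both satisfy all three bullet points. Then $\mathfrak{H}_0^{-1}\mathfrak{H}_0'$ is a meromorphic solution of a $q$-difference equation $\sigma_{q,x}G=P^{-1}J_0\,G\,J_0^{-1}P$ invariant matrix normalized to $\Itwo$ at $x=0$; the coefficient recursion above shows its Taylor coefficients all vanish, whence $\mathfrak{H}_0=\mathfrak{H}_0'$, and symmetrically $\mathfrak{H}_\infty=\mathfrak{H}_\infty'$. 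This confirms that the normalizations $\mathfrak{H}_0(0,t)=\mathfrak{H}_\infty(\infty,t)=\Itwo$ pin down the pair uniquely, completing the proof.
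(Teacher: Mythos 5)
Your proposal is correct and takes essentially the same route as the paper's proof: reduce to the functional equation $\sigma_{q,x}(\mathfrak{H}_0)\,\mathfrak{A}_0=\mathfrak{A}\,\mathfrak{H}_0$, determine the Taylor coefficients through the operator $X\mapsto q^nX\mathfrak{A}_0-\mathfrak{A}_0X$ whose invertibility follows from non-resonancy, prove convergence by a Sauloy-type majorant estimate (uniform in $t$ on a small disc), and then extend meromorphically in $x$ \emph{via} the functional equation, with uniqueness coming from the formal recursion. Two harmless slips to fix: your first displayed equation carries a spurious $J_0$ on the right-hand side (it should read $\sigma_{q,x}\mathfrak{H}_0\,P^{-1}J_0=\mathfrak{A}\,\mathfrak{H}_0\,P^{-1}$), and in the uniqueness step $G=\mathfrak{H}_0^{-1}\mathfrak{H}_0'$ satisfies $\sigma_{q,x}G=\mathfrak{A}_0\,G\,\mathfrak{A}_0^{-1}$ rather than the equation you wrote, but both corrections leave the argument intact.
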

\begin{proof} We will closely follow  \cite[p. 1034]{sauloy2000regular}, where an analogous result for fixed $t$ has been established, but we also need to take the $t$-dependency into account. We focus on the  existence  of $ \mathfrak{H}_{0}$ as in the statement; the construction of $ \mathfrak{H}_{\infty}$ is analogous. 
The change of variable $Y= \mathfrak{H}_{0}P^{-1} L_0$ leads us to the $q$-difference equation 
\begin{equation}\label{focusQdiff}\sigma_{q,x}( \mathfrak{H}_{0}(x,t)) \mathfrak{A}_0(t)=\mathfrak{A}(x,t) \mathfrak{H}_{0}(x,t)\, .\end{equation} 
It suffices to show that for each $t_0\in \mathfrak{D}$, there exists a unique germ of holomorphic solution  $\mathfrak{H}_{0}$ of  \eqref{focusQdiff} with $\mathfrak{H}_{0}(0,t)=\Itwo $  defined in a neighborhood $U\times \Delta$ of $(x,t)=(0,t_0)\in \C \times \mathfrak{D}$. Indeed,   the functional equation \eqref{focusQdiff} then allows to extend this holomorphic solution to a meromorphic solution on $\C \times \Delta$. By uniqueness, we obtain a unique meromorphic solution on  $\C \times \mathfrak{D}$. 

Let $t_0\in \mathfrak{D}$ and let $\Delta \subset  \mathfrak{D}$ be a sufficiently small disc with center $t_0$. 
Recall that $\mathfrak{D} \subset \C^*$, so that we may assume there exists some $R\in ]0,1[$ such that $|t|>R$ for each $t\in \Delta$. 
Let $\mathbb{D}(0;R)=\{x\in \C| |x|<R\}$.
Moreover, we may assume that on $\mathbb{D}(0;R)\times \Delta$, the matrix function $\mathfrak{A}(x,t)$ is holomorphic and given as the sum of a normally convergent series $\sum_{i,j\geq 0} A_{ij} x^i(t-t_0)^j$ with $A_{ij}\in \mathrm{M}_2(\C)$. In particular, on this product we may write $\mathfrak{A}(x,t) =\sum_{i,j\geq 0} A_{i}(t) x^i$ with  $A_{i}(t)\in \mathrm{M}_2(\mathcal{O}_b(\Delta))$,   such that the power series 
$$\alpha(x):= \sum_{i \geq 0} \alpha_i    x^i \, , \quad \quad \textrm{where} \quad \quad \alpha_i:= ||A_{i  }(t)||_\infty:=\mathrm{sup}_{t\in \Delta} ||A_i(t)||,$$
converges on $\mathbb{D}(0;R)$. Here $||\cdot ||$ is some submultiplicative norm on $\mathrm{M}_2(\C)$ and $\mathcal{O}_b(\Delta)$ denotes the ring of uniformly bounded holomorphic functions on $\Delta$. 
 For $\lambda \in \C^*$, let us consider the map 
$$
\Psi_{\lambda }:\left\{\begin{array}{ccc}
\mathrm{M}_{2}(\mathcal{O}_b(\Delta))&\rightarrow&\mathrm{M}_{2}(\mathcal{O}_b(\Delta))\\
X&\mapsto& \lambda X\mathfrak{A}_0 (t)  - \mathfrak{A}_0 (t) X.
\end{array}\right.
$$
As we may see for instance in \cite[p. 1033]{sauloy2000regular}, the set of eigenvalues of $\Psi_{\lambda }$ is given by 
$$\mathrm{Spec}(\Psi_{\lambda })=\left\{ \lambda \Theta_0- {\Theta}_0,\lambda \Theta_0-\overline{\Theta}_0, \lambda \overline{\Theta}_0-\Theta_0, \lambda \overline{\Theta}_0-\overline{\Theta}_0\right\}. $$

In particular, by the non-resonancy assumption, for every $n\in \N_{>0}$, the endomorphism $\Psi_{q^n}$ is invertible. 
If we  write   $ \mathfrak{H}_{0}(x,t)=\sum_{i=0}^{\infty}H_{i}(t)x^{i}$, with $H_{0}(t)=\mathrm{I}_{2}$, then equation \eqref{focusQdiff} is formally   equivalent to 
$$ \forall n\in \N_{> 0}, \quad    H_{n}(t)=\Psi_{q^{n}}^{-1}\left(\sum_{i=1}^{n}A_i (t) H_{n-i}(t)\right).$$
In particular, $H_{n}(t)$ is uniquely determined from the lower order terms. The endomorphism $\Psi_{q^{n}}$ is equivalent, when $n\to \infty$,  to the endomorphism $X\mapsto q^n X\mathfrak{A}_0 (t) \, .$
As in  \cite[p. 1034]{sauloy2000regular}, one can deduce that, there exists a bound $\beta$ such that
$$\forall n\in \N, \quad \forall X\in \mathrm{M}_{2}(\mathcal{O}_b(\Delta))\, , \quad \quad ||\Psi_{q^{n}}^{-1}(X)||_\infty \leq \beta ||X||_\infty\, .$$
 By induction, one deduces that for each $n\in \N$, the value of $||H_n(t)||_\infty$ is less or equal to the $n$-th coefficient in the power series expansion of 
 $$\frac{||H_0(t)||_\infty}{1-\beta \sum_{k=1}^\infty \alpha_k x^k}\, .$$ Since this power series has positive radius of convergence, we obtain that $ \mathfrak{H}_{0}(x,t)$ is holomorphic on $U\times \Delta$, where $U$ is a neighborhood of $x=0$ in $\C$. 
     \end{proof}
 
 \begin{rem}\label{rem1}
We may also solve order one equations having only meromorphic coefficients. More precisely,  let $\C(\{x\})$ be the field  of germs of meromorphic functions at $x=0$. Let $0\neq c\in \C(\{x\})$, let $v$ be its valuation, and let $c_0\in \C^*$ such that $c=c_0 x^{v}+\dots$. By \cite[p. 1034]{sauloy2000regular}, there exists $0\neq \mathfrak{m}\in \C(\{x\})$ solution of  $\sigma_{q,x}\mathfrak{m}=cc_{0}^{-1}x^{-v}\mathfrak{m}$. Consider the Jacobi theta function $\vartheta_{q}$ and $\mathfrak{e}_{q,c_{0}}$, that are defined in the proof of Lemma~\ref{lemL0L8}. Then, $\mathfrak{e}_{q,c_{0}}\vartheta_{q}^{v}\mathfrak{m}$ is solution of $\sigma_{q,x}(\mathfrak{e}_{q,c_{0}}\vartheta_{q}^{v}\mathfrak{m})=c\mathfrak{e}_{q,c_{0}}\vartheta_{q}^{v}\mathfrak{m}$ and is meromorphic on a punctured neighborhood of $0$ in $\C^*$.
\end{rem}
 
 \begin{prop}\label{PropAnnex2} Let $\mathfrak{U}_\infty$ be as in Proposition \ref{prop:qsolfond}. The following are equivalent. 
 \begin{enumerate}
\item The family \eqref{eqq1ter} is $q$-Schlesinger isomonodromic. \vspace{.2cm} 
\item The matrix $\mathfrak{B}_\infty := \sigma_{q,t} \mathfrak{U}_\infty \cdot  \mathfrak{U}_\infty^{-1} \in \mathrm{GL}_2(\mathcal{M}(\C^*\times \mathfrak{D} ))$ is rational in $x$:
$$\mathfrak{B}_\infty\in \mathrm{GL}_2(\mathcal{M}(\mathfrak{D})(x))\, .$$
 \end{enumerate}
 \end{prop}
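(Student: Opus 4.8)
The plan is to prove the equivalence by relating the pseudo-constancy of the connection matrix $\mathfrak{B}_\infty$ to the existence of a $q$-Lax pair of the special form required in Definition~\ref{def:qSchlesisom}. First I would establish the direction $(1)\Rightarrow(2)$. If the family is $q$-Schlesinger isomonodromic, then by definition there exists a matrix $\mathfrak{B}(x,t)$, rational in $x$ and of the form \eqref{matrixBinSchles}, satisfying the $q$-Lax equation $\mathfrak{A}(x,qt)\mathfrak{B}(x,t)=\mathfrak{B}(qx,t)\mathfrak{A}(x,t)$. The fundamental solution $\mathfrak{U}_\infty$ satisfies $\sigma_{q,x}\mathfrak{U}_\infty=\mathfrak{A}\,\mathfrak{U}_\infty$. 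I would then show that $\mathfrak{B}\,\mathfrak{U}_\infty$ and $\sigma_{q,t}\mathfrak{U}_\infty$ both solve the same $q$-difference system in $x$, namely $\sigma_{q,x}Z=\mathfrak{A}(x,qt)Z$, using the $q$-Lax equation for the former. The key computation is that $\sigma_{q,x}(\mathfrak{B}\mathfrak{U}_\infty)=\sigma_{q,x}\mathfrak{B}\cdot\sigma_{q,x}\mathfrak{U}_\infty=\sigma_{q,x}\mathfrak{B}\cdot\mathfrak{A}\mathfrak{U}_\infty=\mathfrak{A}(x,qt)\mathfrak{B}\mathfrak{U}_\infty$, matching $\sigma_{q,x}(\sigma_{q,t}\mathfrak{U}_\infty)=\sigma_{q,t}(\sigma_{q,x}\mathfrak{U}_\infty)=\sigma_{q,t}(\mathfrak{A})\cdot\sigma_{q,t}\mathfrak{U}_\infty=\mathfrak{A}(x,qt)\sigma_{q,t}\mathfrak{U}_\infty$. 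Hence $\sigma_{q,t}\mathfrak{U}_\infty=\mathfrak{B}\,\mathfrak{U}_\infty\cdot K(x,t)$ where $K$ satisfies $\sigma_{q,x}K=K$, i.e. $K$ is $q$-constant in $x$; I would argue using the normalizations at $x=\infty$ (that $\mathfrak{H}_\infty(\infty,t)=\Itwo$ and the explicit form of $L_\infty$) that $K$ reduces to $\Itwo$, forcing $\mathfrak{B}_\infty=\sigma_{q,t}\mathfrak{U}_\infty\cdot\mathfrak{U}_\infty^{-1}=\mathfrak{B}$, which is rational in $x$.

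For the converse $(2)\Rightarrow(1)$, I would start from the hypothesis that $\mathfrak{B}_\infty:=\sigma_{q,t}\mathfrak{U}_\infty\cdot\mathfrak{U}_\infty^{-1}$ is rational in $x$. By construction $\mathfrak{B}_\infty$ automatically satisfies the $q$-Lax equation: indeed $\sigma_{q,x}\mathfrak{B}_\infty\cdot\mathfrak{A}=\sigma_{q,x}(\sigma_{q,t}\mathfrak{U}_\infty)\cdot\sigma_{q,x}(\mathfrak{U}_\infty^{-1})\cdot\mathfrak{A}$, and expanding both sides using $\sigma_{q,x}\mathfrak{U}_\infty=\mathfrak{A}\mathfrak{U}_\infty$ yields $\mathfrak{A}(x,qt)\mathfrak{B}_\infty$ after simplification. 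So $(\mathfrak{A},\mathfrak{B}_\infty)$ is a $q$-Lax pair, and by Proposition~\ref{qLaxPair} (which applies since we are in the non-resonant setting with $q$ not a root of unity) the matrix $\mathfrak{B}_\infty$ must have the shape \eqref{exprB}, namely $\mathfrak{C}(t)\frac{(x-qt)(x\Itwo+B_0(t))}{(x-qt\Theta_t)(x-qt\overline{\Theta}_t)}$ for some diagonal $\mathfrak{C}$. It then remains to show that the particular normalization of $\mathfrak{U}_\infty$ forces the diagonal factor $\mathfrak{C}(t)$ to equal $\Itwo$, so that $\mathfrak{B}_\infty$ is precisely of the $q$-Schlesinger form \eqref{matrixBinSchles}, giving $(1)$.

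The central technical step, and the one I expect to be the main obstacle, is the computation of the behavior of $\mathfrak{B}_\infty$ as $x\to\infty$ and the identification of $\mathfrak{C}(t)$ with the identity. This requires understanding $\sigma_{q,t}\mathfrak{U}_\infty\cdot\mathfrak{U}_\infty^{-1}$ near $x=\infty$ in terms of the factorization $\mathfrak{U}_\infty=\mathfrak{H}_\infty L_\infty$. Since $\mathfrak{H}_\infty(\infty,t)=\Itwo$ is independent of $t$, one has $\sigma_{q,t}\mathfrak{H}_\infty(\infty,t)=\Itwo$ as well, so the $x\to\infty$ limit of $\mathfrak{B}_\infty$ is governed by $\sigma_{q,t}L_\infty\cdot L_\infty^{-1}$. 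The matrix $L_\infty=\mathrm{diag}(\mathfrak{e}_{q,\overline{\Theta}_\infty}(x),\mathfrak{e}_{q,\Theta_\infty}(x))$ does not depend on $t$ (the spectral data $\boldsymbol{\Theta}$ at infinity are constants, as $\mathfrak{A}_\infty$ is normalized and constant), so $\sigma_{q,t}L_\infty=L_\infty$ and the limiting diagonal factor is $\Itwo$. Comparing with the leading coefficient $\mathfrak{C}(t)$ of \eqref{exprB} at $x=\infty$, where $\frac{(x-qt)(x\Itwo+B_0)}{(x-qt\Theta_t)(x-qt\overline{\Theta}_t)}\to\Itwo$, yields $\mathfrak{C}(t)=\Itwo$. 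The delicate points to handle carefully are the possibility that $\mathfrak{A}_0$ is non-diagonalizable (so that $L_0$ has a logarithmic entry), which affects $\mathfrak{U}_0$ but not $\mathfrak{U}_\infty$ and hence does not interfere here, and the verification that $\mathfrak{B}_\infty$ being rational in $x$ together with the $q$-Lax equation genuinely places us in the hypotheses of Proposition~\ref{qLaxPair}.
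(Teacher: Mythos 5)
Your proposal is correct, and the direction $(2)\Rightarrow(1)$ is exactly the paper's argument: verify that $\mathfrak{B}_\infty$ solves the $q$-Lax equation by commuting $\sigma_{q,x}$ and $\sigma_{q,t}$, observe that $\mathfrak{B}_\infty=\sigma_{q,t}\mathfrak{H}_\infty\cdot\mathfrak{H}_\infty^{-1}$ because $L_\infty$ is $t$-independent, hence $\mathfrak{B}_\infty(\infty,t)=\Itwo$, and then use Proposition~\ref{qLaxPair} together with the fact that the shape \eqref{exprB} tends to $\mathfrak{C}(t)$ as $x\to\infty$ to force $\mathfrak{C}=\Itwo$, i.e.\ the $q$-Schlesinger form \eqref{matrixBinSchles}.

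For $(1)\Rightarrow(2)$ you take a genuinely different route. The paper notes that, by non-resonancy of the exponents at infinity, the $q$-Lax equation $\sigma_{q,t}\mathfrak{A}\cdot X=\sigma_{q,x}X\cdot\mathfrak{A}$ has a \emph{unique} formal solution $X=\sum_{n\geq 0}\xi_n(t)x^{-n}$ with $\xi_0=\Itwo$ (imported from Sauloy), so the rational Schlesinger matrix $\mathfrak{B}$ and $\mathfrak{B}_\infty$, both having this normalized expansion at $x=\infty$, coincide. You instead compare the two solutions $\mathfrak{B}\,\mathfrak{U}_\infty$ and $\sigma_{q,t}\mathfrak{U}_\infty$ of $\sigma_{q,x}Z=\mathfrak{A}(x,qt)Z$ and study the discrepancy $K$. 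This can be made to work, but the step ``$\sigma_{q,x}K=K$ plus normalization at $x=\infty$ gives $K=\Itwo$'' is precisely where the paper's uniqueness citation does its work, and as stated it is under-justified on two counts. First, $\sigma_{q,x}$-invariance does \emph{not} imply constancy: the invariant meromorphic functions on $\C^*$ form the field of elliptic functions of $\C^*/q^{\Z}$. Second, $K$ itself cannot be evaluated at $x=\infty$, since the theta-quotients building $L_\infty$ are meromorphic only on $\C^*$. The correct closure of your argument is to conjugate: $M:=L_\infty K L_\infty^{-1}=\mathfrak{H}_\infty^{-1}\mathfrak{B}^{-1}\sigma_{q,t}\mathfrak{H}_\infty$ \emph{is} meromorphic at $x=\infty$ with $M(\infty,t)=\Itwo$, and satisfies $\sigma_{q,x}M=\mathfrak{A}_\infty M\mathfrak{A}_\infty^{-1}$ with $\mathfrak{A}_\infty=\mathrm{diag}\bigl(\overline{\Theta}_\infty,\Theta_\infty\bigr)$; writing $M=\Itwo+\sum_{n\geq 1}m_n x^{-n}$ and comparing coefficients gives $q^{-n}m_n=\mathfrak{A}_\infty m_n\mathfrak{A}_\infty^{-1}$, which forces $m_n=0$ because $q$ is not a root of unity (diagonal entries) and $\Theta_\infty/\overline{\Theta}_\infty\notin q^{\Z^*}$ by \eqref{eq:qresonance} (off-diagonal entries), whence $M\equiv\Itwo$ and $K\equiv\Itwo$. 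So your route is viable, but it requires invoking the non-resonancy condition at infinity at this very step — you only invoke non-resonancy for Proposition~\ref{qLaxPair} in the other direction — and once spelled out it amounts to reproving, in this special case, the uniqueness statement the paper cites.
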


\begin{proof} By definition, we have $\mathfrak{B}_\infty=\sigma_{q,t}\mathfrak{H}_\infty \cdot \sigma_{q,t}L_\infty \cdot L_\infty^{-1}\cdot \mathfrak{H}_\infty^{-1}.$ Since $L_\infty$ does not depend on $t$, we actually have 
$\mathfrak{B}_\infty=\sigma_{q,t}\mathfrak{H}_\infty \cdot  \mathfrak{H}_\infty^{-1} \in \mathrm{GL}_2(\mathcal{M}\left((\mathbb{P}^1\setminus \{0\})\times \mathfrak{D}\right)$ and $\mathfrak{B}_\infty(\infty,t)=\Itwo $. 
Moreover, using  $\sigma_{q,x}\sigma_{q,t} \mathfrak{U}_\infty = \sigma_{q,t}\sigma_{q,x} \mathfrak{U}_\infty$ and the definition of  $\mathfrak{B}_\infty$, we find that $X=\mathfrak{B}_\infty$ solves the $q$-difference equation 
\begin{equation}\label{qLaxX} \sigma_{q,t}\mathfrak{A}\cdot X=\sigma_{q,x} X \cdot \mathfrak{A}\, .\end{equation}
It follows from  \cite[Section 1.1.3]{sauloy2000regular} and the non-resonancy condition on the eigenvalues of $\mathfrak{A}_\infty$, that this $q$-difference equation \eqref{qLaxX} admits a unique formal solution  $X=\sum_{n\geq 0} \xi_n(t) x^{-n} \in \mathrm{GL}_2\left(\mathcal{M}(\mathfrak{D})[[x^{-1}]]\right)$ with $\xi_0=\Itwo $. Hence  the power series expansion of $\mathfrak{B}_\infty$ at $x=\infty$ coincides with this unique formal solution. \\
On the other hand, by Proposition  \ref{qLaxPair}, the (non-resonant) family \eqref{eqq1ter} is $q$-Schlesinger isomonodromic if and only if there exists a solution $\mathfrak{B} \in \mathrm{GL}_2(\mathcal{M}(\mathfrak{D})(x))$ of the $q$-Lax equation \eqref{qLaxX} which satisfies $\mathfrak{B}(\infty,t)=\Itwo $. By uniqueness of the formal solution of \eqref{qLaxX}, the equality holds $\mathfrak{B}=\mathfrak{B}_{\infty}$.  The result follows. 
\end{proof}
Given $\mathfrak{U}_0,\mathfrak{U}_\infty$  as in Proposition \ref{prop:qsolfond}, we may define the \emph{Birkhoff connection matrix} $ {\mathfrak{P}}(x,t)$  by 
\begin{equation}\label{BirkhoffDef}  {\mathfrak{P}} :=\mathfrak{U}_{\infty }^{-1}\cdot \mathfrak{U}_{0} \in \mathrm{GL}_2(\mathcal{M}(\C^*\times \mathfrak{D} ))\, .
\end{equation} 
\begin{prop}\label{PropAnnex3}
Let $\mathfrak{U}_0,\mathfrak{U}_\infty,{\mathfrak{P}}$ be as above. The following are equivalent. \vspace{.2cm} 
\begin{enumerate}
\item The {Birkhoff connection matrix} ${\mathfrak{P}}$ is pseudo constant, \emph{\emph{i.e.}} $\sigma_{q,t}\mathfrak{P}=\mathfrak{P}.$\vspace{.2cm} 
\item For $\mathfrak{B}_0, \mathfrak{B}_\infty \in \mathrm{GL}_2(\mathcal{M}(\C^*\times \mathfrak{D} ))$ defined by $\mathfrak{B}_i := \sigma_{q,t} \mathfrak{U}_i \cdot  \mathfrak{U}_i^{-1}$, we have $\mathfrak{B}_0= \mathfrak{B}_\infty\, .$ 
 \end{enumerate}
 \end{prop}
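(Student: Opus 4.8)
The plan is to reduce the equivalence to a one-line algebraic manipulation, using only that $\sigma_{q,t}$ is a ring endomorphism of $\mathrm{M}_2(\mathcal{M}(\C^*\times\mathfrak{D}))$ (so it commutes with products and with taking inverses of invertible elements) and that $\mathfrak{U}_0,\mathfrak{U}_\infty$ are invertible over the field $\mathcal{M}(\C^*\times\mathfrak{D})$. First I would rewrite the defining relations $\mathfrak{B}_i=\sigma_{q,t}\mathfrak{U}_i\cdot\mathfrak{U}_i^{-1}$ in the equivalent form $\sigma_{q,t}\mathfrak{U}_i=\mathfrak{B}_i\mathfrak{U}_i$ for $i\in\{0,\infty\}$, which makes sense precisely because $\mathfrak{U}_i\in\mathrm{GL}_2(\mathcal{M}(\C^*\times\mathfrak{D}))$.

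Next I would apply $\sigma_{q,t}$ to the definition $\mathfrak{P}=\mathfrak{U}_\infty^{-1}\mathfrak{U}_0$ of the Birkhoff connection matrix. Since $\sigma_{q,t}$ is multiplicative and commutes with inversion, we obtain
$$\sigma_{q,t}\mathfrak{P}=\bigl(\sigma_{q,t}\mathfrak{U}_\infty\bigr)^{-1}\bigl(\sigma_{q,t}\mathfrak{U}_0\bigr)=\bigl(\mathfrak{B}_\infty\mathfrak{U}_\infty\bigr)^{-1}\bigl(\mathfrak{B}_0\mathfrak{U}_0\bigr)=\mathfrak{U}_\infty^{-1}\,\mathfrak{B}_\infty^{-1}\mathfrak{B}_0\,\mathfrak{U}_0\,.$$
Comparing this with $\mathfrak{P}=\mathfrak{U}_\infty^{-1}\mathfrak{U}_0$, the pseudo-constancy condition $\sigma_{q,t}\mathfrak{P}=\mathfrak{P}$ reads $\mathfrak{U}_\infty^{-1}\mathfrak{B}_\infty^{-1}\mathfrak{B}_0\mathfrak{U}_0=\mathfrak{U}_\infty^{-1}\mathfrak{U}_0$. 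Multiplying on the left by $\mathfrak{U}_\infty$ and on the right by $\mathfrak{U}_0^{-1}$ (again legitimate by invertibility of $\mathfrak{U}_0,\mathfrak{U}_\infty$) collapses this to $\mathfrak{B}_\infty^{-1}\mathfrak{B}_0=\Itwo$, that is, $\mathfrak{B}_0=\mathfrak{B}_\infty$. Since every implication in this chain is an equivalence, this establishes $(1)\Leftrightarrow(2)$.

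I do not expect any genuine obstacle here; the statement is a purely formal consequence of the two factorizations $\sigma_{q,t}\mathfrak{U}_i=\mathfrak{B}_i\mathfrak{U}_i$ together with $\mathfrak{P}=\mathfrak{U}_\infty^{-1}\mathfrak{U}_0$. The only points worth making explicit are that the fundamental solutions $\mathfrak{U}_0,\mathfrak{U}_\infty$ supplied by Proposition~\ref{prop:qsolfond} indeed lie in $\mathrm{GL}_2(\mathcal{M}(\C^*\times\mathfrak{D}))$, so that all inverses above are well defined, and that $\sigma_{q,t}$ being induced by the substitution $t\mapsto qt$ acts as an automorphism of this matrix ring, justifying $\sigma_{q,t}(\mathfrak{U}_\infty^{-1})=(\sigma_{q,t}\mathfrak{U}_\infty)^{-1}$. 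With these remarks in place the computation above is complete.
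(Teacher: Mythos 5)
Your proof is correct and is essentially the same argument as the paper's: both reduce the equivalence to the identity $\sigma_{q,t}\mathfrak{P}=\mathfrak{U}_\infty^{-1}\mathfrak{B}_\infty^{-1}\mathfrak{B}_0\mathfrak{U}_0$ (the paper phrases it as $\mathfrak{P}^{-1}\sigma_{q,t}\mathfrak{P}=\mathfrak{U}_0^{-1}\mathfrak{B}_\infty^{-1}\mathfrak{B}_0\mathfrak{U}_0$) and then cancel the invertible factors $\mathfrak{U}_0,\mathfrak{U}_\infty$. The only cosmetic difference is that you compare $\sigma_{q,t}\mathfrak{P}$ with $\mathfrak{P}$ directly while the paper examines the product $\mathfrak{P}^{-1}\sigma_{q,t}\mathfrak{P}$; the algebra is identical.
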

 \begin{proof} 
 We have $$\begin{array}{rclclcl}\displaystyle \mathfrak{P}^{-1}\cdot \sigma_{q,t}\mathfrak{P}&=& \displaystyle \mathfrak{U}_0^{-1}\cdot \mathfrak{U}_\infty \cdot \sigma_{q,t} \mathfrak{U}_\infty^{-1}\cdot \sigma_{q,t}\mathfrak{U}_0&=&\displaystyle \mathfrak{U}_0^{-1}\cdot \mathfrak{B}_\infty^{-1}\cdot \sigma_{q,t}\mathfrak{U}_0
 &=&\displaystyle \mathfrak{U}_0^{-1}\cdot \mathfrak{B}_\infty^{-1}\cdot \mathfrak{B}_0\cdot \mathfrak{U}_0\, .\end{array} $$
 The result follows. 
  \end{proof}
  
  \begin{cor}
If the {Birkhoff connection matrix} ${\mathfrak{P}}$ given in \eqref{BirkhoffDef}  is pseudo constant, then the family  \eqref{eqq1ter} is $q$-Schlesinger isomonodromic. \end{cor}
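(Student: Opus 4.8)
The plan is to combine the two preceding propositions in the appendix, Proposition~\ref{PropAnnex2} and Proposition~\ref{PropAnnex3}, which together already contain all the analytic content needed. The corollary asserts a one-directional implication, so I would simply chain the relevant equivalences. First I would invoke Proposition~\ref{PropAnnex3}: by hypothesis the Birkhoff connection matrix $\mathfrak{P}$ is pseudo constant, i.e.\ $\sigma_{q,t}\mathfrak{P}=\mathfrak{P}$, which is condition~$(1)$ there. Hence condition~$(2)$ holds, namely $\mathfrak{B}_0=\mathfrak{B}_\infty$, where $\mathfrak{B}_i=\sigma_{q,t}\mathfrak{U}_i\cdot \mathfrak{U}_i^{-1}$ for $i\in\{0,\infty\}$.

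The next step is to upgrade this equality into the rationality of $\mathfrak{B}_\infty$ in the variable $x$, which is the hypothesis of Proposition~\ref{PropAnnex2}$(2)$. The key observation is that $\mathfrak{B}_0$ is, by construction, a germ of meromorphic function near $x=0$ (it is built from $\mathfrak{U}_0=\mathfrak{H}_0 P^{-1}L_0$, which is meromorphic on $\C\times\mathfrak{D}$), while $\mathfrak{B}_\infty$ is meromorphic near $x=\infty$ (built from $\mathfrak{U}_\infty=\mathfrak{H}_\infty L_\infty$, meromorphic on $(\mathbb{P}^1\setminus\{0\})\times\mathfrak{D}$ with $\mathfrak{B}_\infty(\infty,t)=\Itwo$, as noted in the proof of Proposition~\ref{PropAnnex2}). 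Since $\mathfrak{B}_0=\mathfrak{B}_\infty$ as elements of $\mathrm{GL}_2(\mathcal{M}(\C^*\times\mathfrak{D}))$, this common matrix function extends meromorphically across both $x=0$ and $x=\infty$, hence defines a meromorphic matrix function on all of $\mathbb{P}^1\times\mathfrak{D}$. A matrix function that is meromorphic on the whole of $\mathbb{P}^1$ in the variable $x$ is necessarily rational in $x$, so $\mathfrak{B}_\infty\in\mathrm{GL}_2(\mathcal{M}(\mathfrak{D})(x))$. This is precisely condition~$(2)$ of Proposition~\ref{PropAnnex2}.

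Finally, applying the implication $(2)\Rightarrow(1)$ of Proposition~\ref{PropAnnex2} yields that the family \eqref{eqq1ter} is $q$-Schlesinger isomonodromic, which is the desired conclusion.

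The main (and really only) obstacle is the middle step: one must argue cleanly that the equality $\mathfrak{B}_0=\mathfrak{B}_\infty$, together with the complementary domains of meromorphy of the two sides, forces rationality in $x$. I expect this to be a short GAGA-style argument — a meromorphic function on the compact Riemann surface $\mathbb{P}^1$ (for each fixed generic $t$) is rational — but one should be mildly careful that the removable-singularity/extension statement is uniform in $t$, i.e.\ takes place in $\mathcal{M}(\mathfrak{D})(x)$ rather than just fiberwise. Since both $\mathfrak{U}_0$ and $\mathfrak{U}_\infty$ were constructed in Proposition~\ref{prop:qsolfond} as honest meromorphic matrix functions on the respective products with $\mathfrak{D}$, this uniformity is automatic, and the argument goes through without further analytic input.
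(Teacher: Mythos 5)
Your proposal follows exactly the same route as the paper: Proposition~\ref{PropAnnex3} converts pseudo-constancy into $\mathfrak{B}_0=\mathfrak{B}_\infty$, the two complementary domains of meromorphy combine to give rationality in $x$, and Proposition~\ref{PropAnnex2}, direction $(2)\Rightarrow(1)$, concludes. The overall structure is correct and is precisely the paper's argument.

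However, your justification of the middle step contains a false claim that needs repair: $\mathfrak{U}_0=\mathfrak{H}_0P^{-1}L_0$ is \emph{not} meromorphic on $\C\times\mathfrak{D}$. The factor $L_0(x)$ is built from the Jacobi theta function $\vartheta_{q}(x)=\sum_{n\in\Z}q^{-n(n+1)/2}x^n$ (and possibly the $q$-logarithm $\ell_q$), which is holomorphic on $\C^*$ but has an essential singularity at $x=0$; so $\mathfrak{U}_0$ only lies in $\mathrm{GL}_2(\mathcal{M}(\C^*\times\mathfrak{D}))$, and meromorphy of $\mathfrak{B}_0$ at $x=0$ cannot be read off from $\mathfrak{U}_0$ alone. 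The correct argument, which is the actual content of the paper's proof, is a cancellation: since $L_0$ is independent of $t$ and $P$ is independent of $x$, one has
$$\mathfrak{B}_0=\sigma_{q,t}\mathfrak{U}_0\cdot\mathfrak{U}_0^{-1}=\sigma_{q,t}\mathfrak{H}_0\cdot\sigma_{q,t}P^{-1}\cdot L_0\cdot L_0^{-1}\cdot P\cdot\mathfrak{H}_0^{-1}=\sigma_{q,t}\mathfrak{H}_0\cdot\sigma_{q,t}P^{-1}\cdot P\cdot\mathfrak{H}_0^{-1}\,,$$
which is meromorphic on $\C\times\mathfrak{D}$ because $\mathfrak{H}_0\in\mathrm{GL}_2(\mathcal{M}(\C\times\mathfrak{D}))$ by Proposition~\ref{prop:qsolfond} and $P\in\mathrm{GL}_2(\mathcal{M}(\mathfrak{D}))$ by \eqref{eqDiagA0}; the analogous cancellation of the $t$-independent $L_\infty$ is what yields $\mathfrak{B}_\infty=\sigma_{q,t}\mathfrak{H}_\infty\cdot\mathfrak{H}_\infty^{-1}$ in the proof of Proposition~\ref{PropAnnex2}. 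With this one-line fix, your extension-plus-rationality argument (a matrix meromorphic on all of $\mathbb{P}^1$ in $x$, uniformly in $t$, is rational in $x$) goes through and coincides with the paper's proof.
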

\begin{proof} Recall from the proof of Proposition \ref{PropAnnex2} that $\mathfrak{B}_\infty=\sigma_{q,t}\mathfrak{H}_\infty \cdot  \mathfrak{H}_\infty^{-1}$. Similarly, we obtain 
$\mathfrak{B}_0=\sigma_{q,t}\mathfrak{H}_0 \cdot \sigma_{q,t}P^{-1}\cdot P\cdot \mathfrak{H}_0^{-1}$, where $P$ is as in \eqref{eqDiagA0}. Note that $\mathfrak{B}_\infty(\infty, t)=\Itwo $ and 
$\mathfrak{B}_0(0, t)=P(qt)^{-1}P(t)$. By assumption and Proposition \ref{PropAnnex3}, we have $\mathfrak{B}_\infty=\mathfrak{B}_0$. It follows that $\mathfrak{B}_\infty\in \mathrm{GL}_2\left( \mathcal{M}(\C^*\times \mathfrak{D})\right)$ can be meromorphically continued to $x=0$ and $x=\infty$. Hence $\mathfrak{B}_\infty$ is rational in $x$. We conclude by Proposition \ref{PropAnnex2}.
\end{proof}

The above corollary establishes the sought relation between the two notions of $q$-isomonodromy (in the non-resonant case with $|q|>1$):   pseudo-constancy of the Birkhoff connection matrix is a  stronger  requirement than 
$q$-isomonodromy as in Section \ref{sec1}. Note that the  Birkhoff connection matrix in \eqref{BirkhoffDef}  is not canonically defined: it depends on the choices of $L_0, L_\infty$ and $P$. However, for any choice, its pseudo-constancy implies $q$-Schlesinger isomonodromy. As a final remark, we indicate that for example in \cite{dreyfus2017isomonodromic,jimbo1996q}, another type (again non-canonical) of Birkhoff connection matrix has been considered, namely 
$$\widetilde{\mathfrak{P}}:=\mathfrak{U}_{\infty }^{-1}\cdot \widetilde{\mathfrak{U}}_{0}\, , $$ with $\widetilde{\mathfrak{U}}_{0}:= {\mathfrak{U}}_{0}\cdot P$. 
It has been shown in
 \cite[Proposition 1]{dreyfus2017isomonodromic}, see also \cite[Theorem 3]{jimbo1996q}, that the analogue of Proposition \ref{PropAnnex3} for $\widetilde{\mathfrak{U}}_{0},\mathfrak{U}_{\infty },\widetilde{\mathfrak{P}}$ holds under the additional assumption that $\mathfrak{A}_0(t)$ is either constant or proportional to $t$. The choice of ${\mathfrak{U}}_{0}$ in  the above exposition was made to circumvent this assumption.  
 
    \bibliographystyle{alpha}
\bibliography{biblio}
\end{document}